\newcommand{\tabitem}{~~\llap{\textbullet}~~}
\newtheorem{theorem}{Theorem}
\newtheorem{definition}[theorem]{Definition}
\newtheorem{proposition}[theorem]{Proposition}
\newtheorem{lemma}[theorem]{Lemma}
\newtheorem{corollary}[theorem]{Corollary}
\newtheorem{remark}[theorem]{Remark}
\newtheorem{conjecture}{Conjecture}
\numberwithin{equation}{section}
\numberwithin{theorem}{section}
\newcommand{\sign}{{\rm sign }}
\newcommand{\RR}{\mathbb{R}}
\newcommand{\AC}{\mathrm{ac}}
\newcommand{\eps}{\varepsilon}
\newcommand{\supp}{{\rm supp\ }}
\newcommand{\resc}{{\bf resc}}
\newcommand{\bW}{{\bf W}}
\newcommand{\mU}{{\mathcal U}}
\newcommand{\mW}{{\mathcal W}}
\newcommand{\mV}{{\mathcal V}}
\newcommand{\mF}{{\mathcal F}}
\newcommand{\mFr}{{\mathcal F_{k,\resc}}}
\newcommand{\mG}{{\mathcal G}}
\newcommand{\mH}{{\mathcal H}}
\newcommand{\mY}{{\mathcal Y}}
\newcommand{\la}{\left\langle}
\newcommand{\ra}{\right\rangle}
\newcommand{\osc}{{\rm osc\, }}
\newcommand{\id}{{\rm id }}
\newcommand{\bla}{\big\langle}
\newcommand{\bra}{\big\rangle}
\title[The one-dimensional fair-competition regime]{The geometry of diffusing and self-attracting particles in a one-dimensional fair-competition regime}
\author{V. Calvez, J. A. Carrillo, F. Hoffmann}
\begin{document}

\maketitle

\begin{abstract}
We consider an aggregation-diffusion equation modelling particle interaction with non-linear diffusion and non-local attractive interaction using a homogeneous kernel (singular and non-singular) leading to variants of the Keller-Segel model of chemotaxis. We analyse the \emph{fair-competition regime} in which both homogeneities scale the same with respect to dilations. Our analysis here deals with the one-dimensional case, building on the work in \cite{CCH1}, and provides an almost complete classification. In the singular kernel case and for critical interaction strength, we prove uniqueness of stationary states via a variant of the Hardy-Littlewood-Sobolev inequality. Using the same methods, we show uniqueness of self-similar profiles in the sub-critical case by proving a new type of functional inequality. Surprisingly, the same results hold true for any interaction strength in the non-singular kernel case. 
Further, we investigate the asymptotic behaviour of solutions, proving convergence to equilibrium in Wasserstein distance in the critical singular kernel case, and convergence to self-similarity for sub-critical interaction strength, both under a uniform stability condition. Moreover, solutions converge to a unique self-similar profile in the non-singular kernel case. 
Finally, we provide a numerical overview for the asymptotic behaviour of solutions in the full parameter space demonstrating the above results. We also discuss a number of phenomena appearing in the numerical explorations for the diffusion-dominated and attraction-dominated regimes.
\end{abstract}

\section{Introduction}
\label{sec:Introduction}

Mean field macroscopic models for interacting particle systems have been derived in the literature \cite{O90,MCO} with the objective of explaining the large time behaviour, the qualitative properties and the stabilisation of systems composed by a large number of particles with competing effects such as repulsion and attraction between particles. They find natural applications in mathematical biology, gravitational collapse, granular media and self-assembly of nanoparticles, see \cite{Chan,KeSe70,CaMcCVi03,ToBeLe06,HP,review} and the references therein. These basic models start from particle dynamics in which their interaction is modelled via pairwise potentials. By assuming the right scaling between the typical interaction length and the number of particles per unit area one can obtain different mean field equations, see for instance \cite{BV}. In the mean-field scaling they lead to non-local equations with velocity fields obtained as an average force from a macroscopic density encoding both repulsion and attraction, see \cite{BCL,BCLR} and the references therein. However, if the repulsion strength is very large at the origin, one can model repulsive effects by (non-linear) diffusion while attraction is considered via non-local long-range forces \cite{MCO,ToBeLe06}.

In this work, we concentrate on this last approximation: repulsion is modelled by diffusion and attraction by non-local forces. We will make a survey of the main results in this topic exemplifying them in the one dimensional setting while at the same time we will provide new material in one dimension with alternative proofs and information about long time asymptotics which are not known yet in higher dimensions. In order to understand the interplay between repulsion via non-linear diffusion and attraction via non-local forces, we concentrate on the simplest possible situation in which both the diffusion and the non-local attractive potential are homogeneous functions. We will focus on models with a variational structure that dissipate the free energy of the system. This free energy is a natural quantity that is already dissipated at the underlying particle systems.\\

The plan for this work is twofold. In a first part we shall investigate some properties of the following class of homogeneous functionals, defined for centered probability densities $\rho(x)$,
belonging to suitable weighted $L^p$-spaces, and some interaction
strength coefficient $\chi> 0$ and diffusion power $m>0$:
\begin{align}
&\mathcal F_{m,k}[\rho] =  \int_{\RR}   U_m\left(\rho(x)\right)
\, dx + \chi  \iint_{\RR\times\RR} \rho(x)  W_k(x-y) \rho(y)\,
dxdy := \mathcal U_m[\rho] + \chi \mathcal W_k[\rho] \, ,
\label{eq:functional}\\
&\rho(x)\geq 0\, , \quad \int_{\RR}\rho(x)\, dx = 1\, , \quad
\int_{\RR} x\rho(x)\, dx = 0\,  , \nonumber
\end{align}
with
\begin{equation*}
U_m(\rho) = \left\{\begin{array}{ll} \dfrac1{m-1}\, \rho^m\, ,
&\mbox{if}\quad m\neq 1 \medskip\\ \rho\log \rho \, ,
&\mbox{if}\quad  m=1  \end{array}\right.\,, 
\end{equation*}
and
\begin{equation}
W_k(x) = \left\{\begin{array}{ll} \dfrac{|x|^k}k\, , &
\mbox{if}\quad k\in(-1,1)\setminus\{0\}
\medskip\\ \log |x|\, ,  & \mbox{if}\quad  k = 0
\end{array}\right.\, . \label{eq:ass2}
\end{equation}
The center of mass of the density $\rho$ is assumed to be zero since the free energy functional is invariant by translation. 
Taking mass preserving dilations, one can see that $\mU_m[\cdot]$ scales with a power $m-1$, whilst $\mW_k[\cdot]$ scales with power $-k$, indicating that the relation between the parameters $k$ and $m$ plays a crucial role here. And indeed, one observes different types of behaviour depending on which of the two forces dominates, non-linear diffusion or non-local attraction. This motivates the definition of three different regimes: the \emph{diffusion-dominated regime} $m-1>-k$, the \emph{fair-competition regime} $m-1=-k$, and the \emph{attraction-dominated regime} $m-1<-k$. We will here concentrate mostly on the fair-competition regime.\\

This work can be viewed as a continuation of the seminal paper  by
McCann \cite{McCann97} in a non-convex setting. Indeed McCann used
the very powerful toolbox of Euclidean optimal transportation to
analyse functionals like \eqref{eq:functional} in the case $m\geq0$ and for a convex interaction kernel $W_k$. He discovered that
such functionals are equipped with an underlying convexity
structure, for which the interpolant $[\rho_0,\rho_1]_t$ follows
the line of optimal transportation \cite{Villani03}. This provides
many interesting features among which a natural framework to show
uniqueness of the ground state as soon as it exists.
In this paper we deal with concave homogeneous interaction kernels
$W_k$ given by \eqref{eq:ass2} for which McCann's results \cite{McCann97}
do not apply. Actually, the conditions on $k$ imply that the interaction kernel $W_k$ is locally integrable on $\RR$ and concave on $\RR_+$, which means that $\mathcal W_k[\cdot]$ is displacement concave as shown in \cite{CarrilloFerreiraPrecioso12}. We explain in this paper how some ideas from \cite{McCann97} can be extended to some convex-concave competing effects. Our main statement is that the functional \eqref{eq:functional} -- the sum of a convex
and a concave functional -- behaves almost like a convex
functional in some good cases detailed below. In particular,
existence of a critical point implies  uniqueness (up to translations and
dilations). The bad functional contribution is somehow absorbed by
the convex part for certain homogeneity relations and parameters $\chi$.

The analysis of these free energy functionals and their respective
gradient flows is closely related to some functional inequalities
of Hardy-Littlewood-Sobolev (HLS) type
\cite{LieLo01,Gardner02,CarlenCarrilloLoss10,BCL}. To give a flavour,
we highlight the case $(m = 1,k = 0)$, called the \emph{logarithmic
case}. It is known from \cite{DoPe04,BlaDoPe06} using \cite{CaLo92,Beckner93} that the
functional $\mathcal F_{1,0}$ is bounded from below if and only if
$0< \chi \leq 1$. Moreover, $\mathcal F_{1,0}$ achieves its
minimum if and only if $\chi =1$ and the extremal functions are
mass-preserving dilations of Cauchy's density:
\begin{equation} \bar \rho_0(x) = \dfrac{1}{\pi} \left(\dfrac{1}{1+ |x|^2}\right)\, .
\label{eq:StSt log}
\end{equation}
In \cite{CaLo92} authors have proved the uniqueness (up to
dilations and translations) of this logarithmic HLS
inequality based on a competing-symmetries argument. We develop in
the present paper an alternative argument based on some accurate
use of the Jensen's inequality to get similar results in the porous medium 
case $-1<k<0$. This goal will be achieved for some variant of the HLS
inequality as in \cite{BCL}, indeed being a combination of the HLS inequality and 
interpolation estimates, see Theorem \ref{thm:HLSm}. 
The case $0<k<1$ has been a lot less studied, and we will show here that no critical interaction strength exists as there is no $\chi>0$ for which $\mF_{m,k}$ admits global minimisers. On the other hand, we observe certain similarities with the behaviour of the fast diffusion equation ($0<m<1$, $\chi=0$) \cite{VazquezFDE}. 
The mass-preserving dilation homogeneity of the functional $\mF_{m,k}$ is shared by the range of parameters $(m,k)$ with $N(m-1)+k=0$ for all  dimensions, $m>0$ and $k\in (-N,N)$. This general fair-competition regime, has recently been studied in \cite{CCH1}.\\

In a second stage, here we also tackle the behaviour of the following family
of partial differential equations modelling self-attracting
diffusive particles at the macroscopic scale,
\begin{equation}
\left\{
\begin{array}{l}
\partial_t \rho  =  \partial_{xx}\left(\rho ^m \right)+
2\chi \partial_x \left( \rho \, \partial_x S_k\right)\,
, \quad t>0\, , \quad x\in \RR\, , \smallskip\\
\displaystyle \rho(t=0,x) = \rho_0(x)\, .
\end{array}
\right. \label{eq:KS}
\end{equation}
where we define the mean-field potential $S_k(x) := W_k(x)*\rho(x)$. For $k>0$, the gradient $\partial_x S_k:= \partial_x \left(W_k \ast \rho\right)$ is well defined.
For $k<0$ however, it becomes a singular integral, and we thus define it via a Cauchy principal value. Hence, the mean-field potential gradient in equation \eqref{eq:KS} is given by
\begin{equation}\label{gradS}
 \partial_x S_k(x) :=
 \begin{cases}
  \partial_x W_k \ast \rho\, ,
  &\text{if} \, \, 0<k<1\, , \\[2mm]
  \displaystyle\int_{\RR} \partial_x W_k (x-y)\left(\rho(y)-\rho(x)\right)\, dy\, ,
  &\text{if} \, \, -1<k<0\, . 
 \end{cases}
\end{equation}
Further, it is straightforward to check that equation \eqref{eq:KS} formally preserves positivity, mass and centre of mass, and so we can choose to impose
$$
\displaystyle \rho_0(x)\geq 0\, ,\quad \int
\rho_0(x)\, dx = 1\, ,
\quad \int x \rho_0(x)\, dx=0\, .
$$
This class of PDEs are one of the prime examples for 
competition between the diffusion (possibly non-linear), and
the non-local, quadratic non-linearity which is due to the
self-attraction of the particles through the mean-field potential
$S_k(x)$. The parameter $\chi>0$ measures the
strength of the interaction.
We would like to point out that we are here not concerned with the regularity of solutions or existence/uniqueness results for equation \eqref{eq:KS},  allowing ourselves to assume solutions are 'nice' enough in space and time for our analysis to hold (for more details on regularity assumptions, see Section \ref{sec:LTA}).\\

There exists a strong link between the PDE \eqref{eq:KS} and the functional \eqref{eq:functional}. Not only is $\mathcal F_{m,k}$ decreasing along the
trajectories of the system,
but more importantly, system \eqref{eq:KS} is the formal gradient flow
of the free energy functional \eqref{eq:functional} when the
space of probability measures is endowed with the Euclidean
Wasserstein metric $\bW$:
\begin{equation}
\partial_t\rho(t)
=- \nabla_{\bf W} \mathcal F_{m,k}[\rho(t)]\, .
\label{eq:gradient flow}
\end{equation}
This illuminating statement has been clarified in the seminal
paper by Otto \cite{Otto}. We also refer to the books by Villani
\cite{Villani03} and Ambrosio, Gigli and Savar\'e \cite{AmGiSa05}
for a comprehensive presentation of this theory of gradient flows
in Wasserstein metric spaces, particularly in the convex case.
Performing gradient flows of a convex functional is a
natural task, and suitable estimates from below on the Hessian of
$\mF_{m,k}$ in \eqref{eq:functional} translate into a rate of convergence
towards equilibrium for the PDE \cite{CaMcCVi03,Villani03,CaMcCVi06}.
However, performing gradient flow of functionals with convex and concave contributions
is more delicate, and one has to seek compensations. Such
compensations do exist in our case, and one can prove convergence
in Wasserstein distance towards some stationary state under
suitable assumptions, in some cases with an explicit rate of convergence. It is
of course extremely important to understand how the convex and the
concave contributions are entangled.

The results obtained in the fully convex case generally consider
each contribution separately, resp. internal energy, potential
confinement energy or interaction energy, see \cite{CaMcCVi03,Villani03,AmGiSa05,CaMcCVi06}. It happens however that
adding two contributions provides better convexity estimates. In
\cite{CaMcCVi03} for instance the authors prove exponential speed
of convergence towards equilibrium when a degenerate convex
potential $W_k$ is coupled with strong enough diffusion, see
\cite{BGG13} for improvements.\\

The family of non-local PDEs \eqref{eq:KS} has been
intensively studied in various contexts arising in physics and
biology. 
The two-dimensional logarithmic case $(m = 1,k=0)$ is the
so-called Keller-Segel system in its simplest formulation
\cite{KeSe70,KeSe71a,Nanjundiah73,JaLu92,BlaDoPe06,Perthame06}. It
has been proposed as a model for chemotaxis in cell populations.
The three-dimensional configuration $(m =
1,k = -1)$ is the so-called Smoluchowski-Poisson system arising in
gravitational physics \cite{Chan,CM,CLM}. It describes macroscopically a
density of particles subject to a self-sustained gravitational
field.

Let us describe in more details the two-dimensional Keller-Segel
system, as the analysis of its peculiar structure will serve as a guideline to understand other cases. The corresponding gradient flow is subject to a
remarkable dichotomy, see  \cite{ChiPe81,JaLu92,Nagai95,GaZa98,DoPe04,BlaDoPe06} . The density
exists globally in time if $\chi<1$ (diffusion overcomes
self-attraction), whereas blow-up occurs in finite time when $\chi
>1$ (self-attraction overwhelms diffusion). In the sub-critical case, it
has been proved that solutions decay to self-similarity solutions
exponentially fast in suitable rescaled variables \cite{CaDo12,CaDo14,EM16}. In the super-critical case, solutions blow-up in finite time with by now well studied blow-up profiles for close enough to critical cases, see \cite{Herrero-Velazquez97,Raphael-Schweyer14}.

Substituting linear diffusion by non-linear diffusion with $m>1$ in two dimensions and higher is a way of regularising the Keller-Segel model as proved in \cite{CaCa06,Sugi1} where it is shown that solutions exist globally in time regardless of the value of the parameter $\chi>0$.
It corresponds to the diffusion-dominated case in two dimensions for which the existence of compactly supported stationary states and global minimisers of the free energy has only been obtained quite recently in \cite{CHVY}. The fair-competition case for Newtonian interaction $k=2-N$ was first clarified in \cite{BCL}, see also \cite{Sugi2}, where the authors find that there is a similar dichotomy to the two-dimensional classical Keller-Segel case $(N=2,m = 1,k = 0)$, choosing the non-local term as the Newtonian potential, $(N\geq 3,m = 2- 2/N,k = 2-N)$. The main difference is that the stationary states found for the critical case are compactly supported. We will see that such dichotomy also happens for $k<0$ in our case while for $k>0$ the system behaves totally differently. In fact, exponential convergence towards equilibrium seems to be the generic behaviour in rescaled variables as observed in Figure \ref{fig:chi=07_k=-02_resc=1_intro}. 

\begin{figure}[h!]
\centering
\includegraphics[width=.5\textwidth]{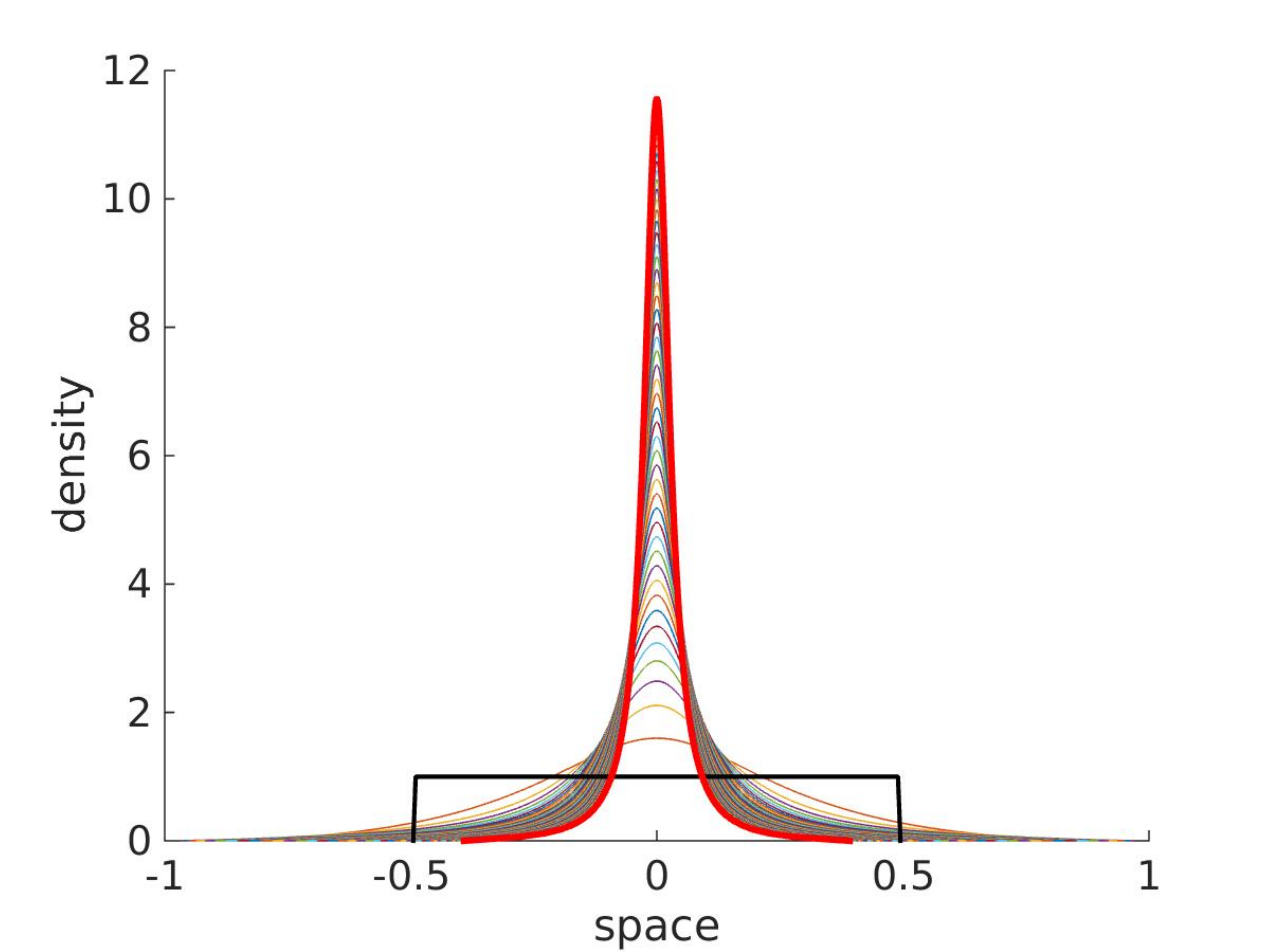}
\caption{Density evolution for parameter choices $\chi=0.7$, $k=-0.2$, $m=1.2$ following the PDE \eqref{eq:KS} in rescaled variables from a characteristic supported on $B(0,1/2)$ (black) converging to a unique stationary state (red). For more details, see Figure \ref{fig:chi=07_k=-02_resc=1} and the explanations in Section \ref{sec:numerics}.}
\label{fig:chi=07_k=-02_resc=1_intro}
\end{figure}

The paper is structured as follows: in Section \ref{sec:Preliminaries}, we give an analytic framework with all necessary definitions and assumptions. In cases where no stationary states exist for the aggreg-ation-diffusion equation \eqref{eq:KS}, we look for self-similar profiles instead. Self-similar profiles can be studied by changing variables in \eqref{eq:KS} so that stationary states of the rescaled equation correspond to self-similar profiles of the original system. Further, we give some main results of optimal transportation needed for the analysis of Sections \ref{sec: Functional inequalities} and \ref{sec:LTA}. In Section \ref{sec: Functional inequalities}, we establish several functional inequalities of HLS type that allow us to make a connection between minimisers of $\mF_{m,k}$ and stationary states of \eqref{eq:KS}, with similar results for the rescaled system. Section \ref{sec:LTA} investigates the long-time asymptotics where we demonstrate convergence to equilibrium in Wasserstein distance under certain conditions, in some cases with an explicit rate. Finally, in Section \ref{sec:numerics}, we provide numerical simulations of system \eqref{eq:KS} to illustrate the properties of equilibria and self-similar profiles in the different parameter regimes for the fair-competition regime. In Section \ref{sec:otherregimes}, we use the numerical scheme to explore the asymptotic behaviour of solutions in the diffusion- and attraction-dominated regimes.

\section{Preliminaries}
\label{sec:Preliminaries}

\subsection{Stationary States: Definition \& Basic Properties}
\label{sec:sstates}

Let us define precisely the notion of stationary states to the aggregation-diffusion equation \eqref{eq:KS}.

\begin{definition}\label{def:sstates}
 Given $\bar \rho \in L_+^1\left(\RR\right) \cap L^\infty\left(\RR\right)$ with $||\bar \rho||_1=1$, it is a \textbf{stationary state} for the evolution equation \eqref{eq:KS} if $\bar \rho^{m} \in \mathcal{W}_{loc}^{1,2}\left(\RR\right)$, $\partial_x \bar S_k\in L^1_{loc}\left(\RR\right)$, and it satisfies
 \begin{equation*}
  \partial_x \bar \rho^m=-2 \chi \, \bar \rho \partial_x \bar S_k
 \end{equation*}
in the sense of distributions in $\RR$. 
If $k \in (-1,0)$, we further require $\bar \rho \in C^{0,\alpha}\left(\RR\right)$ with $\alpha \in (-k,1)$.
\end{definition}

In fact, the function $ S_k$ and its gradient defined in \eqref{gradS} satisfy even more than the regularity $\partial_x  S_k\in L^1_{loc}\left(\RR\right)$ required in Definition \ref{def:sstates}. We have from \cite{CCH1}:

\begin{lemma}\label{lem:regS} Let $\rho \in  L_+^1\left(\RR\right) \cap L^\infty\left(\RR\right)$ with $||\rho||_1=1$. If $k \in (0,1)$, we additionally assume $|x|^k \rho \in L^1\left(\RR\right)$. Then the following regularity properties hold:
\begin{itemize}
\item[i)] $S_k \in L^\infty_{loc}\left(\RR\right)$ for $0<k<1$ and $ S_k \in L^{\infty}\left(\RR\right)$ for $-1<k<0$.
\item[ii)]  $\partial_x  S_k \in L^{\infty}\left(\RR\right)$ for $k \in (-1,1)\backslash\{0\}$, assuming additionally $\rho \in C^{0,\alpha}\left(\RR\right)$ with $\alpha \in (-k,1)$ in the range $-1<k< 0$. 
\end{itemize}
\end{lemma}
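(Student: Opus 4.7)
The plan is to treat the four combinations of cases (part (i) versus (ii), and $k>0$ versus $k<0$) by direct convolution estimates, splitting each integral into a near region $\{|x-y|<1\}$ and a far region $\{|x-y|\geq 1\}$. The key observation is that the kernel behaves like $|z|^k$ for $S_k$ and like $|z|^{k-1}$ for $\partial_x S_k$, and the threshold of integrability on each region is what dictates which assumption (the $L^\infty$ bound, the $L^1$ bound, the moment bound, or the Hölder bound) is required.

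For part (i) with $0<k<1$, I would use the subadditivity inequality $|x-y|^k\leq |x|^k+|y|^k$ to split $S_k(x)=\frac{1}{k}\int|x-y|^k\rho(y)\,dy$ into a term bounded by $|x|^k/k$ on any compact set and a term bounded by $\frac{1}{k}\int|y|^k\rho(y)\,dy$, which is finite by the additional moment assumption; this gives $S_k\in L^\infty_{loc}$. For $-1<k<0$, I would split $W_k\ast\rho$ into its near and far parts: on $|x-y|<1$ the bound $\|\rho\|_\infty\int_{|z|<1}|z|^k/|k|\,dz<\infty$ uses only $k>-1$ and $\rho\in L^\infty$; on $|x-y|\geq 1$ the kernel satisfies $|x-y|^k\leq 1$, so the contribution is bounded by $\|\rho\|_1/|k|$. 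Both bounds are uniform in $x$, yielding $S_k\in L^\infty(\RR)$.

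For part (ii) with $0<k<1$, $\partial_x W_k(x)=|x|^{k-1}\mathrm{sign}(x)$, so $|\partial_x S_k(x)|\leq\int|x-y|^{k-1}\rho(y)\,dy$. On $|x-y|<1$ the exponent $k-1\in(-1,0)$ is still integrable and one bounds by $\|\rho\|_\infty\cdot 2/k$; on $|x-y|\geq 1$ one has $|x-y|^{k-1}\leq 1$ and the bound reduces to $\|\rho\|_1=1$. The main obstacle is the singular case $-1<k<0$ of part (ii), where $|z|^{k-1}$ fails to be locally integrable and one must use the principal-value formulation \eqref{gradS}. Writing
\begin{equation*}
|\partial_x S_k(x)|\leq\int_{|x-y|<1}|x-y|^{k-1}|\rho(y)-\rho(x)|\,dy+\int_{|x-y|\geq 1}|x-y|^{k-1}\bigl(\rho(y)+\rho(x)\bigr)\,dy,
\end{equation*}
the near integral is controlled by the Hölder seminorm: $|\rho(y)-\rho(x)|\leq[\rho]_{C^{0,\alpha}}|x-y|^\alpha$ converts the integrand to $|x-y|^{k-1+\alpha}$, which is locally integrable precisely when $\alpha>-k$, matching the standing assumption; the far integral is handled as in the $0<k<1$ case using $|x-y|^{k-1}\leq 1$ and $k-1<-1$ so that $\int_{|z|\geq 1}|z|^{k-1}\,dz=2/|k|$, giving a uniform $L^\infty$ bound. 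With these four estimates assembled, both (i) and (ii) follow; the only genuinely subtle point is matching the Hölder exponent to the kernel singularity in the last case.
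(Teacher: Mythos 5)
Your proof is correct, and it follows the same strategy as the proof in the cited companion paper \cite{CCH1}, from which this lemma is imported without proof in the present paper: a near/far splitting of the convolution, with $L^\infty$ controlling the integrable singularity near the diagonal, $L^1$ plus the $k$th moment (for $k>0$) controlling the far field, and the H\"older seminorm absorbing the non-integrable singularity $|z|^{k-1}$ in the principal-value case $-1<k<0$, where the condition $\alpha>-k$ is exactly what makes the subtracted integrand $|x-y|^{k-1+\alpha}$ locally integrable so that the principal value converges absolutely. All four case estimates are accurate (including the far-field bound $\int_{|z|\geq 1}|z|^{k-1}\,dz=2/|k|$ multiplying $\rho(x)$), so there is nothing to fix.
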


Furthermore, for certain cases, see \cite{CCH1}, there are no stationary states to \eqref{eq:KS} in the sense of Definition \ref{def:sstates} (for a dynamical proof of this fact, see Remark \ref{no sstates} in Section \ref{sec: LTA sub-critical k neg}), and so the scale invariance of \eqref{eq:KS} motivates us to look for self-similar solutions instead. To this end, we rescale equation \eqref{eq:KS} to a non-linear Fokker-Planck type equation as in \cite{CaTo00}. Let us define 
$$
u(t,x):= \alpha(t)\rho\left( \beta(t), \alpha(t) x\right),
$$
where $\rho(t,x)$ solves \eqref{eq:KS} and the functions $\alpha(t)$, $\beta(t)$ are to be determined. If we assume $u(0,x)=\rho(0,x)$, then $u(t,x)$ satisfies the rescaled drift-diffusion equation
\begin{equation}
\left\{
\begin{array}{l}
\partial_t u =   \partial_{xx} u ^m +
2\chi \partial_x \left( u \, \partial_x S_k \right)+ \partial_x\left(x u\right)\,
, \quad t>0\, , \quad x\in \RR\, , \smallskip\\
\displaystyle u(t=0,x) = \rho_0(x)\geq 0\, ,
\quad \int_{-\infty}^\infty \rho_0(x)\, dx = 1\, ,
\quad \int_{-\infty}^\infty x\rho_0(x)\, dx = 0\, ,
\end{array}
\right. \label{eq:KSresc}
\end{equation}
for the choices
\begin{equation}\label{scaling}
\alpha(t)= e^t, \quad \beta(t)=
\begin{cases}
 \frac{1}{2-k} \left(e^{(2-k)t} -1\right), &\text{if} \, k\neq2\, ,\\
 t, &\text{if} \, k=2\, ,
\end{cases}
\end{equation}
and with $\partial_x S_k$ given by \eqref{gradS} with $u$ instead of $\rho$. By differentiating the centre of mass of $u$, we see easily that 
\begin{equation*}\label{COM}
 \int_{\RR} x u(t,x)\, dx = e^{-t} \int_{\RR} x \rho_0(x)\, dx=0\, , \qquad \forall t>0\, ,
\end{equation*}
and so the initial zero centre of mass is preserved for all times. Self-similar solutions to \eqref{eq:KS} now correspond to stationary solutions of \eqref{eq:KSresc}. Similar to Definition \ref{def:sstates}, we state what we exactly mean by stationary states to the aggregation-diffusion equation \eqref{eq:KSresc}.


\begin{definition}\label{def:sstates resc}
Given $\bar u \in L_+^1\left(\RR\right) \cap L^\infty\left(\RR\right)$ with $||\bar u||_1=1$, it is a \textbf{stationary state} for the evolution equation \eqref{eq:KSresc} if $\bar u^{m} \in \mathcal{W}_{loc}^{1,2}\left(\RR\right)$, $\partial_x \bar S_k\in L^1_{loc}\left(\RR\right)$, and it satisfies
 \begin{equation*}
  \partial_x \bar u^m=-2 \chi \, \bar u \partial_x \bar S_k - x\, \bar u
 \end{equation*}
in the sense of distributions in $\RR$. 
If $-1<k<0$, we further require $\bar u \in C^{0,\alpha}\left(\RR\right)$ with $\alpha \in (-k,1)$.
\end{definition}
From now on, we switch notation from $u$ to $\rho$ for simplicity, it should be clear from the context if we are in original or rescaled variables. In fact, stationary states as defined above have even more regularity:
\begin{lemma}\label{lem:sstatesreg}
 Let $k \in (-1,1)\backslash\{0\}$ and $\chi>0$. 
 \begin{enumerate}[(i)]
  \item If $\bar \rho$ is a stationary state of equation \eqref{eq:KS} with $|x|^k\bar\rho\in L^1\left(\RR\right)$ in the case $0<k<1$, then $\bar \rho$ is continuous on $\RR$.
  \item If $\bar \rho_\resc$ is a stationary state of equation \eqref{eq:KSresc} with $|x|^k\bar\rho_\resc\in L^1\left(\RR\right)$ in the case $0<k<1$, then $\bar \rho_\resc$ is continuous on $\RR$. 
 \end{enumerate}
\end{lemma}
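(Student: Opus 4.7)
\medskip

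\noindent\textbf{Proof plan.} The singular case $-1<k<0$ is immediate: continuity (indeed Hölder regularity) is built into the definitions \ref{def:sstates} and \ref{def:sstates resc}. So the content of the lemma lies in the porous-medium range $0<k<1$, and both parts (i) and (ii) will follow from the same bootstrap, once applied to the respective distributional identity.

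\medskip

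\noindent The plan is to use the stationary equation to upgrade regularity of $\bar\rho^m$, and then extract continuity of $\bar\rho$ by taking the $m$-th root. First I would apply Lemma \ref{lem:regS}(ii), whose hypotheses are satisfied since $\bar\rho\in L^1_+\cap L^\infty$ with $\|\bar\rho\|_1=1$ and $|x|^k\bar\rho\in L^1(\RR)$ in the range $0<k<1$: this gives $\partial_x \bar S_k\in L^\infty(\RR)$. Combining this with $\bar\rho\in L^\infty(\RR)$, the right-hand side of
\[
 \partial_x \bar\rho^m = -2\chi\,\bar\rho\,\partial_x \bar S_k
\]
for part (i), respectively
\[
 \partial_x \bar\rho_{\resc}^m = -2\chi\,\bar\rho_{\resc}\,\partial_x \bar S_k - x\,\bar\rho_{\resc}
\]
for part (ii), belongs to $L^\infty_{loc}(\RR)$ (in the rescaled case the extra drift contribution $x\,\bar\rho_{\resc}$ is locally bounded because $\bar\rho_{\resc}\in L^\infty$).

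\medskip

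\noindent Since $\bar\rho^m\in \mathcal{W}^{1,2}_{loc}(\RR)$ and its weak derivative is in $L^\infty_{loc}$, I get $\bar\rho^m\in W^{1,\infty}_{loc}(\RR)$, so $\bar\rho^m$ admits a locally Lipschitz (in particular continuous) representative. Finally, since $\bar\rho\geq 0$ and the map $s\mapsto s^{1/m}$ is continuous on $[0,+\infty)$ for $m>0$, the pointwise identity $\bar\rho=(\bar\rho^m)^{1/m}$ yields a continuous representative of $\bar\rho$ on $\RR$.

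\medskip

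\noindent The step I expect to require the most care is the passage from weak boundedness of $\partial_x\bar\rho^m$ to genuine continuity of $\bar\rho$: the $m$-th root is only Hölder (not Lipschitz) at zero when $m>1$, so one cannot propagate the Lipschitz regularity of $\bar\rho^m$ to $\bar\rho$ itself near vanishing points; however continuity is preserved by $s\mapsto s^{1/m}$ and this is all that is claimed. No further compactness or approximation argument is needed, so the proof is essentially a one-line bootstrap once Lemma \ref{lem:regS} is invoked.
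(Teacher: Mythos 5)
Your proof is correct. Note that the paper itself states this lemma without proof (like Lemma \ref{lem:regS} and Corollary \ref{cor:EL} it is imported from \cite{CCH1}), and your bootstrap is exactly the natural argument: the assumed $k$th moment bound makes Lemma \ref{lem:regS}(ii) applicable, giving $\partial_x \bar S_k \in L^\infty(\RR)$, so the distributional identities $\partial_x \bar\rho^m = -2\chi\,\bar\rho\,\partial_x \bar S_k$ and $\partial_x \bar\rho_{\resc}^m = -2\chi\,\bar\rho_{\resc}\,\partial_x \bar S_k - x\,\bar\rho_{\resc}$ place $\partial_x \bar\rho^m$ in $L^\infty_{loc}(\RR)$, whence $\bar\rho^m$ has a locally Lipschitz representative and $\bar\rho = (\bar\rho^m)^{1/m}$ is continuous. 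You also correctly dispose of the range $-1<k<0$ via the H\"older requirement built into Definitions \ref{def:sstates} and \ref{def:sstates resc}, and correctly flag that only continuity, not Lipschitz regularity, survives the $m$-th root at zeros of $\bar\rho$ when $m>1$ — which is all the lemma claims.
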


In the case $k<0$, we furthermore have a non-linear algebraic equation for stationary states \cite{CCH1}:

\begin{corollary}[Necessary Condition for Stationary States] \label{cor:EL}
Let $k \in (-1,0)$ and $\chi>0$.
\begin{enumerate}[(i)]
 \item If $\bar \rho$ is a stationary state of equation \eqref{eq:KS}, then $\bar \rho \in \mathcal{W}^{1,\infty}\left(\RR\right)$ and it satisfies 
  \begin{equation*}
 \bar \rho(x)^{m-1} = \frac{(m-1)}{m} \left( C_k[\bar\rho](x)-2 \chi \, \bar S_k(x)\right)_+\, , \qquad
 \forall \, x \in \RR\, ,
\end{equation*}
 where $C_k[\bar\rho](x)$ is constant on each connected component of $\supp(\bar \rho)$. 
\item If $\bar \rho_{\resc}$ is a stationary state of equation \eqref{eq:KSresc}, then $\bar \rho_\resc \in \mathcal{W}^{1,\infty}_{loc}\left(\RR\right)$ and it satisfies 
 \begin{equation*}
 \bar \rho_\resc(x)^{m-1} = \frac{(m-1)}{m} \left( C_{k,\resc}[\bar\rho](x)-2 \chi \, \bar S_k(x)-\frac{|x|^2}{2}\right)_+\, , \qquad
 \forall \, x \in \RR\, ,
\end{equation*}
where $C_{k,\resc}[\bar\rho](x)$ is constant on each connected component of $\supp(\bar \rho_\resc)$. 
\end{enumerate}
\end{corollary}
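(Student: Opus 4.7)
The plan is to start from the distributional stationary equation and reduce it, on the positivity set of $\bar\rho$, to a pointwise ODE for $\bar\rho^{m-1}$ which can be integrated componentwise. Throughout, since $k\in(-1,0)$ forces $m=1-k\in(1,2)$, division by $(m-1)$ is harmless.

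First, I would upgrade the regularity: by Definition \ref{def:sstates}, $\bar\rho \in C^{0,\alpha}(\RR)$ with $\alpha>-k$, so Lemma \ref{lem:regS} gives $\partial_x \bar S_k \in L^\infty(\RR)$, whence the right hand side $-2\chi \bar\rho\, \partial_x \bar S_k$ of the stationary equation is bounded and $\bar\rho^m \in \mathcal{W}^{1,\infty}_{loc}(\RR)$. Set $\Omega := \{x\in\RR : \bar\rho(x)>0\}$, which is open by continuity of $\bar\rho$. On any relatively compact subset of $\Omega$, $\bar\rho$ is bounded away from zero, so $\bar\rho^{m-1}\in \mathcal{W}^{1,\infty}_{loc}(\Omega)$ and the chain rule yields the identity
\begin{equation*}
\partial_x \bar\rho^m = \frac{m}{m-1}\, \bar\rho\, \partial_x \bar\rho^{m-1}\qquad \text{a.e. in } \Omega.
\end{equation*}
Inserting this into the stationary equation and dividing by $\bar\rho>0$ gives $\partial_x \bar\rho^{m-1} = -\tfrac{2\chi(m-1)}{m}\, \partial_x \bar S_k$ a.e. on $\Omega$.

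Next I would integrate this relation on each connected component $I$ of $\Omega$, producing a constant $C_k[\bar\rho]\big|_I$ with
\begin{equation*}
\bar\rho(x)^{m-1} = \frac{m-1}{m}\bigl(C_k[\bar\rho](x) - 2\chi\, \bar S_k(x)\bigr),\qquad x\in I.
\end{equation*}
Off $\Omega$ we have $\bar\rho=0$; extending $C_k[\bar\rho]$ in any measurable way satisfying $C_k[\bar\rho](x)\le 2\chi \bar S_k(x)$ on $\RR\setminus\Omega$ (possible since $\bar S_k\in L^\infty$) lets us write the identity globally with the positive part as stated. For the Lipschitz claim, $\bar\rho^{m-1}$ is Lipschitz on $\RR$ since $\bar S_k \in \mathcal{W}^{1,\infty}(\RR)$ by Lemma \ref{lem:regS}(ii); composing with $u\mapsto u^{1/(m-1)}$, whose derivative $\tfrac{1}{m-1}u^{(2-m)/(m-1)}$ vanishes at $u=0$ because $(2-m)/(m-1)>0$, gives $\bar\rho\in \mathcal{W}^{1,\infty}(\RR)$.

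For the rescaled case (ii), the same argument applies verbatim after including the additional drift $-x\bar\rho$ on the right hand side of the stationary equation, producing
\begin{equation*}
\partial_x \bar\rho_{\resc}^{m-1} = -\frac{2\chi(m-1)}{m}\, \partial_x \bar S_k - \frac{m-1}{m}\, x \qquad \text{a.e. on } \{\bar\rho_\resc>0\}.
\end{equation*}
Integration on each connected component of the support now yields the stated formula with the extra $-|x|^2/2$ inside the positive part. Only local Lipschitz regularity survives because of the affine drift, giving $\bar\rho_{\resc}\in \mathcal{W}^{1,\infty}_{loc}(\RR)$.

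The main obstacle, which is not difficult but must be handled with care, is justifying the chain rule and the division by $\bar\rho$ at the boundary of the positivity set $\Omega$; this is precisely where the Hölder regularity $\alpha>-k$ built into Definition \ref{def:sstates} is used, both to ensure $\partial_x \bar S_k \in L^\infty$ and to make $\Omega$ an open set on which local arguments can be run.
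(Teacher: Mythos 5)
The paper itself contains no proof of this corollary --- it is imported directly from \cite{CCH1} --- but your argument is essentially the proof given there: upgrade regularity through Lemma \ref{lem:regS} and the H\"{o}lder assumption built into Definition \ref{def:sstates}, apply the Sobolev chain rule and divide by $\bar\rho$ on the open positivity set, integrate $\partial_x\bigl(\tfrac{m}{m-1}\bar\rho^{m-1}+2\chi\bar S_k\bigr)=0$ (with the extra $\tfrac{m-1}{m}x$ drift in the rescaled case) on each connected component, recover the global identity via the positive part, and obtain the Lipschitz bound on $\bar\rho$ by composing the Lipschitz function $\bar\rho^{m-1}$ with $u\mapsto u^{1/(m-1)}$, whose derivative vanishes at $u=0$ since $(2-m)/(m-1)>0$. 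Your proof is correct; the only step worth making explicit is that you integrate over connected components of $\{\bar\rho>0\}$ while the statement asserts constancy of $C_k[\bar\rho]$ on components of $\supp(\bar\rho)$, and this gap closes because at any zero of $\bar\rho$ where two positivity components touch, continuity of $\bar\rho^{m-1}$ and of $\bar S_k$ forces both one-sided constants to equal $2\chi\bar S_k$ (respectively $2\chi\bar S_k+|x|^2/2$ in the rescaled case) at that point, so the constants agree.
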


\subsection{Definition of the different regimes}
\label{sec:definitions}

It is worth noting that the functional $\mathcal F_{m,k}[\rho]$
possesses remarkable homogeneity properties. Indeed, the
mass-preserving dilation $\rho_\lambda(x) =
\lambda\rho(\lambda x)$ transforms the functionals as
follows:
\begin{align*}
\mU_m\left[\rho_\lambda\right] = 
 \begin{cases}
  \lambda^{(m-1)}\mU_m[\rho]\, ,  &\text{if} \quad m\neq 1\, ,\\
  \mU_m[\rho] +\log\lambda\, , &\text{if}\quad  m=1\, ,
 \end{cases}
\end{align*} 
and,
\begin{align*}
\mW_k \left[\rho_\lambda\right] =
 \begin{cases}
  \lambda^{-k} \mW_k[\rho] \, ,  &\text{if} \quad k\neq 0\, ,\\
   \mW_k[\rho] - \log \lambda\, , &\text{if}\quad  k=0\, .
 \end{cases}
\end{align*}
This motivates the following classification:
\begin{definition}[Three different regimes]
\
\begin{description}
\item[$\mathbf{m+k=1}$] This is the {\bf fair-competition}
regime, where homogeneities of the two competing contributions
exactly balance. If $k<0$, or equivalently $m>1$, then we will have a dichotomy according to $\chi$ (see Definition \ref{def:k} below). Some variants of the HLS inequalities are very related to this dichotomy. This was already proven in \cite{DoPe04,BlaDoPe06,CaDo14,EM16} for the Keller-Segel case in $N=2$, and in \cite{BCL} for the Keller-Segel case in $N\geq 3$. If $k>0$, that is $m<1$, no critical $\chi$ exists as we will prove in Section \ref{sec:Fast Diffusion Case k pos}. 

\item[$\mathbf{m+k>1}$] This is the {\bf
diffusion-dominated} regime. Diffusion is strong, and is expected to overcome aggregation, whatever $\chi>0$ is. This domination effect means that solutions exist globally in time and are bounded uniformly in time \cite{CaCa06,Sugi1,Sugi2}. Stationary states were found by minimisation of the free energy functional in two and three dimensions \cite{Strohmer2008,CCV, CS16} in the case of attractive Newtonian potentials. Stationary states are radially symmetric if $2-N\leq k<0$ as proven in \cite{CHVY}. Moreover, in the particular case of $N=2$, $k=0$, and $m>1$ it has been proved in \cite{CHVY} that the asymptotic behaviour is given by compactly supported stationary solutions independently of $\chi$.

\item[$\mathbf{m+k<1}$] This is the {\bf
attraction-dominated} regime. This regime is less understood.
Self-attraction is strong, and can overcome the regularising
effect of diffusion whatever $\chi>0$ is, but there also exist
global in time regular solutions under some smallness assumptions, see \cite{CPZ04, Sugi3, SugiKu06, CLW,BL,CW,LW, CaCo14}. However, there is no complete criteria in the literature up to date distinguishing between the two behaviours. 
\end{description}
\label{def:classification regimes}
\end{definition}
We will here only concentrate on the fair-competition regime, and denote the corresponding energy functional by $\mathcal F_k[\rho] = \mathcal F_{1-k,k}[\rho]$. From now on, we assume $m+k=1$. Notice that the functional $\mF_k$ is homogeneous in this regime, i.e.,
\begin{equation*}
\mF_k[\rho_\lambda] = \lambda^{-k} \mF_k[\rho]\, .
\end{equation*}
In this work, we wil first do a review of the main results known in one dimension about the stationary states and minimisers of the aggregation-diffusion equation in the fair-competition case. The novelties will be showing the functional inequalities independently of the flow and studying the long-time asymptotics of the equations \eqref{eq:KS} and \eqref{eq:KSresc} by exploiting the one dimensional setting. The analysis in the fair-competition regime depends on the sign of $k$:

\begin{definition}[Three different cases in the fair-competition regime]
\
\begin{description}
\item[$\mathbf{k<0}$] This is the {\bf porous medium case} with $m\in (1,2)$, where diffusion is small in regions of small densities. The classical porous medium equation, i.e.    $\chi=0$, is very well studied, see \cite{VazquezPME} and the references therein. For $\chi>0$, we have a dichotomy for existence of stationary states and global minimisers of the energy functional $\mF_k$ depending on a critical parameter $\chi_c$ which will be defined in \eqref{defchic}, and hence separate the sub-critical, the critical and the super-critical case, according to $\chi \lessgtr \chi_c$. These are the one dimensional counterparts to the case studied in \cite{BCL} where minimisers for the free energy functional were clarified. The case $k<0$ is discussed in Section \ref{sec: Porous Medium Case k neg}.

\item[$\mathbf{k=0}$] This is the {\bf logarithmic case}. There exists an explicit extremal density $\bar \rho_0 $ which realises the minimum of the functional $\mathcal F_0$ when
$\chi = 1$. Moreover, the functional $\mathcal F_0$ is bounded below but does not achieve its infimum for $0<\chi<1$ while it is not bounded below for $\chi > 1$. Hence, $\chi_c = 1$ is the critical parameter in the logarithmic case whose asymptotic behaviour was analysed in \cite{CaCa11} in one dimension and radial initial data in two dimensions. We refer to the results in \cite{CaDo14,EM16} for the two dimensional case.

\item[$\mathbf{k>0}$] This is the {\bf fast diffusion case} with $m\in (0,1)$, where diffusion is strong in regions of small densities. For any $\chi>0$, no radially symmetric non-increasing stationary states with bounded $k$th moment exist, and $\mF_k$ has no radially symmetric non-increasing minimisers. However, we have existence of self-similar profiles independently of $\chi>0$. The fast diffusion case is discussed in Section \ref{sec:Fast Diffusion Case k pos}.
\end{description}
\label{def:k}
\end{definition}

When dealing with the energy functional $\mF_k$, we work in the set of non-negative normalised densities,
$$
 \mY := \left\{ \rho \in L_+^1\left(\RR\right) \cap L^m \left(\RR\right) : ||\rho||_1=1\, , \, \int x\rho(x)\, dx=0\right\}.
$$
In rescaled variables, equation \eqref{eq:KSresc} is the formal gradient flow of the rescaled free energy functional $\mFr$, which is complemented with an additional quadratic confinement potential,
\begin{equation*}
\mFr[\rho]= \mathcal F_k [\rho] + \frac{1}{2}\mV[\rho]\, ,
\qquad
\mV[\rho] = \int_{\RR} |x|^2 \rho(x)\, dx \, .
\end{equation*}
Defining the set $\mY_2:=\left\{ \rho \in \mY:\mV[\rho]<\infty \right\}$,
we see that $\mFr$ is well-defined and finite on $\mY_2$. 
Thanks to the formal gradient flow structure in the Euclidean Wasserstein metric {\bf{W}}, we can write the rescaled equation \eqref{eq:KSresc} as
\begin{equation*}
\partial_t\rho
=- \nabla_{\bf W} \mFr[\rho]\, .
\end{equation*}


In what follows, we will make use of a different characterisation of stationary states based on some integral reformulation of the necessary condition stated in Corollary \ref{cor:EL}. This characterisation was also the key idea in \cite{CaCa11} to improve on the knowledge of the asymptotic stability of steady states and the functional inequalities behind. 

\begin{lemma}[Characterisation of stationary states] \ \label{lem:char crit}
Let $k\in (-1,1)\backslash\{0\}$, $m=1-k$ and $\chi>0$. 
\begin{enumerate}[(i)]
 \item  Any stationary state $\bar \rho_k\in \mY$ of system \eqref{eq:KS} can be written in the form
\begin{equation} \label{eq:charac sstates}
\bar \rho_k(p)^m = \chi \int_{\RR}\int_{0}^1 |q|^{1-m}\bar \rho_k(p -sq)\bar \rho_k(p-sq+q)\, dsdq\, .
\end{equation}
Moreover, if such a stationary state exists, it satisfies $\mF_k[\bar \rho_k] = 0$.
\item Any stationary state $\bar \rho_{k,\resc}\in \mY_2$ of system \eqref{eq:KSresc} can be written in the form
\begin{equation} \label{eq:charac sstates resc}
\bar \rho_{k,\resc }(p)^m = \int_{\RR}\int_{0}^1 \left( \chi|q|^{1-m} + \dfrac{|q|^2}2 \right) \bar \rho_{k ,\resc}(p-sq)\bar \rho_{k ,\resc}(p-sq+q)\, dsdq\, .
\end{equation}
Moreover, it satisfies
\begin{equation}\label{eq:FV resc}
\mathcal F_{k,\resc}[\bar \rho_{k,\resc}] = \dfrac{m+1}{2(m-1)}\,  \mV[\bar \rho_{k,\resc}]
= \left(\frac{1}{2}-\frac{1}{k}\right)\,  \mV[\bar \rho_{k,\resc}]\,.
\end{equation}
\end{enumerate}
\end{lemma}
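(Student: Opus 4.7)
The plan is to derive both identities by integrating the pointwise stationary equations once in space and then recasting the resulting double integrals in the symmetric form demanded by \eqref{eq:charac sstates} and \eqref{eq:charac sstates resc}. The unifying device is the change of variables $(s,q) \mapsto (x,y) := (p-sq,\,p-sq+q)$ for $(s,q) \in (0,1) \times \RR$: its Jacobian is $|q|$, and its image, traced once as $q$ varies over $\RR$, is precisely the set of pairs $(x,y)$ with $p$ strictly between them, with $q>0$ giving $\{x<p<y\}$ and $q<0$ giving $\{y<p<x\}$.

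For part (i), I would integrate the distributional identity $\partial_x \bar\rho_k^m = -2\chi \bar\rho_k\, \partial_x \bar S_k$ from $-\infty$ to $p$, using $\bar\rho_k^m \to 0$ at $-\infty$, to obtain $\bar\rho_k^m(p) = -2\chi \int_{-\infty}^p \bar\rho_k(x)\partial_x \bar S_k(x)\, dx$. Expanding $\partial_x \bar S_k(x)$ via \eqref{gradS}, splitting the sign function, and symmetrising in $x \leftrightarrow y$, the contributions over $\{x<y<p\}$ and $\{y<x<p\}$ cancel and leave $\bar\rho_k^m(p) = 2\chi \iint_{\{x<p<y\}} |y-x|^{k-1}\bar\rho_k(x)\bar\rho_k(y)\, dx\, dy$. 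Applying the change of variables above converts this into \eqref{eq:charac sstates}, since $|y-x|^{k-1}|q| = |q|^k = |q|^{1-m}$ and the image covers the region twice (once with $q>0$, once with $q<0$). To obtain $\mF_k[\bar\rho_k]=0$, I would then integrate \eqref{eq:charac sstates} in $p$, swap the order of integration, and perform the substitution $u = p-sq$; the $s$-variable drops out and the result is $\int \bar\rho_k^m = \chi \iint |x-y|^k \bar\rho_k(x)\bar\rho_k(y)\, dx\, dy$, which rearranges to $\mU_m[\bar\rho_k] + \chi \mW_k[\bar\rho_k] = 0$ using $m-1=-k$.

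Part (ii) follows the same pattern starting from $\partial_x \bar\rho^m = -2\chi \bar\rho\, \partial_x \bar S_k - x\bar\rho$: the aggregation term reproduces the $\chi|q|^{1-m}$ kernel exactly as in part (i), while the new contribution is $-\int_{-\infty}^p x\,\bar\rho_{k,\resc}(x)\, dx$. Using $\|\bar\rho_{k,\resc}\|_1 = 1$ together with $\int x\,\bar\rho_{k,\resc}=0$, an elementary decomposition identifies this quantity with $\iint_{\{x<p<y\}}(y-x)\bar\rho_{k,\resc}(x)\bar\rho_{k,\resc}(y)\, dx\, dy$, and the same change of variables turns it into $\int_\RR \int_0^1 \tfrac{|q|^2}{2}\,\bar\rho_{k,\resc}(p-sq)\bar\rho_{k,\resc}(p-sq+q)\, ds\, dq$, producing \eqref{eq:charac sstates resc}. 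Finally, integrating \eqref{eq:charac sstates resc} in $p$ and using the identity $\iint \tfrac{(v-u)^2}{2}\bar\rho(u)\bar\rho(v)\, du\, dv = \mV[\bar\rho_{k,\resc}]$ (another consequence of zero centre of mass) gives $\int \bar\rho_{k,\resc}^m - \chi \iint |x-y|^k \bar\rho_{k,\resc}(x)\bar\rho_{k,\resc}(y)\, dx\, dy = \mV[\bar\rho_{k,\resc}]$; dividing by $-k = m-1$ and adding $\tfrac{1}{2}\mV[\bar\rho_{k,\resc}]$ yields \eqref{eq:FV resc}.

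The main obstacle will be the singular range $k\in(-1,0)$, where $\partial_x W_k$ must be interpreted as the Cauchy principal value in \eqref{gradS}. The antiderivative step and the Fubini swaps on the aggregation term then need justification; this is exactly where the H\"older assumption $\bar\rho \in C^{0,\alpha}$ with $\alpha > -k$ from Definitions \ref{def:sstates} and \ref{def:sstates resc} enters. It ensures that the principal-value convolution is well defined, makes the symmetrisation step legitimate by pairing the singular factor $|y-x|^{k-1}$ with H\"older differences $\bar\rho(y)-\bar\rho(x) = O(|y-x|^\alpha)$ near the diagonal, and guarantees that the corner $(p,p)$ of the region $\{x<p<y\}$ contributes harmlessly thanks to the same bound.
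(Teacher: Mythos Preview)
Your argument is correct and arrives at the same identities, but it differs from the paper's in two places worth noting.

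For the characterisations \eqref{eq:charac sstates}--\eqref{eq:charac sstates resc}, the paper works in weak form: it tests the stationary equation against $\varphi\in C_c^\infty(\RR)$, symmetrises the interaction term to $\tfrac12\iint\tfrac{\varphi(x)-\varphi(y)}{x-y}|x-y|^k\bar\rho_k(x)\bar\rho_k(y)\,dx\,dy$, rewrites the difference quotient as $\int_0^1\varphi'((1-s)x+sy)\,ds$, and then performs the same change of variables you use to reach $\int\varphi'(p)\{\cdots\}\,dp$; equality with $\int\varphi'(p)\bar\rho_k^m(p)\,dp$ for all $\varphi$ gives the identity up to a constant, fixed by mass. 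Your route via the primitive $\int_{-\infty}^p$ is equivalent at the formal level and arguably more transparent geometrically, but the paper's weak version handles the singular range $k<0$ more cleanly: the symmetrisation happens \emph{before} anything diverges, and the passage to the limit in the principal value is a single dominated-convergence step. In your version, the ``cancellation of $\{x<y<p\}$ against $\{y<x<p\}$'' is between two infinite integrals when $k<0$, so it must be done inside a principal-value truncation or after subtracting $\bar\rho_k(x)$ as in \eqref{gradS}; you flag this correctly, but the weak route avoids the issue altogether. The weak route also sidesteps the need for $\bar\rho_k^m\to0$ at $-\infty$.

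For the energy identity \eqref{eq:FV resc}, the paper takes a different path: it differentiates $\lambda\mapsto\mFr[\bar\rho_\lambda]$ at $\lambda=1$ and uses that the first variation vanishes on $\mathrm{supp}\,\bar\rho_{k,\resc}$, reading off the relation from the homogeneities of $\mU_m$, $\mW_k$, $\mV$. Your derivation---integrate \eqref{eq:charac sstates resc} in $p$, use zero centre of mass to get $\tfrac12\iint(v-u)^2\bar\rho\bar\rho=\mV$, then rearrange---is more direct and keeps the argument parallel to $\mF_k[\bar\rho_k]=0$ in part~(i). Both are short; the dilation argument generalises immediately to higher dimensions, while yours stays closer to the one-dimensional structure already exploited.
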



\begin{proof}
We can apply the same methodology as for the logarithmic case (Lemma 2.3, \cite{CaCa11}). We will only prove \eqref{eq:charac sstates}, identity \eqref{eq:charac sstates resc} can be deduced in a similar manner. We can see directly from the equation that all stationary states of \eqref{eq:KS} in $\mY$ satisfy
$$
\partial_x \left(\bar \rho_k^m\right) + 2 \chi \bar \rho_k \partial_x \bar S_k =0\, .
$$
Hence, if $k \in (0,1)$, we can write for any test function $\varphi \in C_c^\infty(\RR)$
\begin{align*}
0 &= - \int_\RR \varphi'(p) \bar \rho_k^m(p)\, dp + 2\chi \iint_{\RR\times\RR} \varphi(x)|x-y|^{k-2}(x-y) \bar \rho_k(x)\bar \rho_k(y)\, dxdy\\
&=- \int_\RR \varphi'(p) \bar \rho_k^m(p)\, dp + \chi\iint_{\RR\times\RR} \left(\dfrac{\varphi(x)-\varphi(y)}{x-y}\right)|x-y|^{k} \bar \rho_k(x)\bar \rho_k(y)\, dxdy\, .
\end{align*}
For $k \in (-1,0)$, the term $\partial_x \bar S_k$ is a singular integral, and thus writes
\begin{align*}
\partial_x \bar S_k(x)
= &\lim_{\eps\to 0} \int_{B^c(x,\eps)} |x-y|^{k-2}(x-y)\bar\rho_k(y)\, dy\\
= &\int_{\RR} |x-y|^{k-2}(x-y)\left(\bar\rho_k(y)-\bar\rho_k(x)\right)\, dy\, .
\end{align*}
The singularity disappears when integrating against a test function $\varphi \in C_c^\infty(\RR)$,
\begin{align}\label{nosingularity}
\int_{\RR} \varphi(x) \partial_x \bar S_k(x)\, dx
= \frac12\iint_{\RR\times\RR} \left(\dfrac{\varphi(x)-\varphi(y)}{x-y}\right)|x-y|^{k} \bar \rho_k(x)\bar \rho_k(y)\, dxdy\, .
\end{align}
In order to prove \eqref{nosingularity}, let us define 
$$
f_\eps(x):= \varphi(x) \int_{B^c(x,\eps)} \partial_x W_k(x-y)\bar \rho_k(y)\, dy.
$$
Then by definition of the Cauchy principle value, $f_\eps(x)$ converges to $\varphi(x) \partial_x \bar S_k(x)$ pointwise for almost every $x \in \RR$ as $\eps \to 0$. Further, we use the fact that $\bar\rho_k \in C^{0,\alpha}(\RR)$ for some $\alpha\in (-k,1)$ to obtain the uniform in $\eps$ estimate 
$$
|f_\eps(x)|\leq \left(\frac{2+k+\alpha}{k+\alpha}\right)|\varphi(x)|\, , \qquad \forall \, \, 0<\eps<1\, ,
$$ 
and therefore by Lebesgue's dominated convergence theorem,
\begin{align*}
\int_{\RR} \varphi(x) \partial_x \bar S_k(x)\, dx
&=\int_{\RR} \lim_{\eps\to 0} f_\eps(x)\bar \rho_k(x)\, dx 
= \lim_{\eps\to 0} \int_{\RR} f_\eps(x)\bar \rho_k(x)\, dx \\
&= \lim_{\eps\to 0} \iint_{|x-y|\geq \eps}\varphi(x)|x-y|^{k-2}(x-y)\bar \rho_k(x)\bar \rho_k(y)\, dxdy\\
&=\frac{1}{2}\lim_{\eps\to 0} \iint_{|x-y|\geq \eps} \left(\dfrac{\varphi(x)-\varphi(y)}{x-y}\right)|x-y|^{k} \bar \rho_k(x)\bar \rho_k(y)\, dxdy\\
&=\frac12\iint_{\RR\times\RR} \left(\dfrac{\varphi(x)-\varphi(y)}{x-y}\right)|x-y|^{k} \bar \rho_k(x)\bar \rho_k(y)\, dxdy\, .
\end{align*}
This concludes the proof of \eqref{nosingularity}.
Hence, we obtain for any $k \in (-1,1)\backslash\{0\}$,
\begin{align*}
0 
 &= - \int_\RR \varphi'(p) \bar \rho_k^m(p)\, dp + \chi \iint_{\RR\times\RR} \left(\dfrac{\varphi(x)-\varphi(y)}{x-y}\right)|x-y|^{k} \bar \rho_k(x)\bar \rho_k(y)\, dxdy\\
 &= - \int_\RR \varphi'(p) \bar \rho_k^m(p)\, dp + \chi \iint_{\RR\times\RR}\int_{0}^1 \varphi'\left((1-s)x+sy\right) |x-y|^{k} \bar \rho_k(x)\bar \rho_k(y)\, dsdxdy\\
 &= - \int_\RR \varphi'(p) \bar \rho_k^m(p)\, dp + \chi \int_{ \RR}\varphi'(p)\left\{\int_{\RR}\int_{0}^1 |q|^{k} \bar \rho_k(p - sq )\bar \rho_k(p-sq+q)\, dsdq\right\}\, dp \,
\end{align*}
and so \eqref{eq:charac sstates} follows up to a constant. Since both sides of \eqref{eq:charac sstates} have mass one, the constant is zero. To see that $\mF_k[\bar \rho_k]=0$, we substitute \eqref{eq:charac sstates} into \eqref{eq:functional} and use the same change of variables as above.\\

Finally, identity \eqref{eq:FV resc} is a consequence of various homogeneities.
 For every stationary state $\bar \rho_{k,\resc }$ of \eqref{eq:KSresc}, the first variation $\tfrac{\delta \mFr}{\delta \rho}[\bar \rho_{k,\resc }]=m/(m-1)\bar \rho_{k,\resc }^{m-1}+2\chi W_k\ast\bar \rho_{k,\resc }+ |x|^2/2$ vanishes on the support of $\bar \rho_{k,\resc }$ and hence it follows that for dilations $\bar \rho_\lambda(x):=\lambda \bar \rho_{k,\resc }(\lambda x)$ of the stationary state $\bar \rho_{k,\resc }$:  
\begin{align*}
 -k\mFr[\bar \rho_{k,\resc }] + \left( \frac{k}{2}-1\right)\mV[\bar \rho_{k,\resc }]
&=\left. \dfrac d{d\lambda} \mFr[\bar \rho_\lambda] \right|_{\lambda=1} \\
&=\int_{\RR} \left. \left(\frac{\delta \mFr}{\delta \rho}[\bar \rho_\lambda](x) \, \frac{d \bar \rho_\lambda}{d \lambda}(x) \right)\, dx \right|_{\lambda=1}
=0.
\end{align*}
In the fair-competition regime, attractive and repulsive forces are in balance $m+k=1$, and so \eqref{eq:FV resc} follows.
\end{proof}

Recall that stationary states in rescaled variables are self-similar solutions in original variables. Tables \ref{k<0}, \ref{k=0} and \ref{k>0} provide an overview of results proved in this paper and in \cite{CCH1} in one dimension.

\begin{table}[h!]
\scriptsize
\begin{center}
  \begin{tabular}{|ccc|ccc|ccc|}
    \hline
    \, & $\chi<\chi_c(k)$ &\,& \, & $\chi=\chi_c(k)$ &\,& \, & $\chi>\chi_c(k)$&\, \\ \hline
     \,& \,&\,& \,&\,& \,&\,&\,&\,\\
    \, &
     \parbox[t]{4.5cm}{
     \textbf{Functional Inequalities:}
     \\
     \,
     \\
     \tabitem There are no stationary states in original variables, there are no minimisers for $\mF_k$ \cite[Theorem 2.9]{CCH1}..
     \\
     \tabitem In rescaled variables, all stationary states are continuous and compactly supported \cite[Theorem 2.9]{CCH1}.
     \\
     \tabitem There exists a minimiser of $\mFr$. Minimisers are symmetric non-increasing and uniformly bounded. Minimisers are stationary states in rescaled variables  \cite[Theorem 2.9]{CCH1}.
     \\
     \tabitem If $\bar \rho_{\resc}$ is a stationary state in rescaled variables, then all solutions of the rescaled equation satisfy 
      $\mFr[\rho] \geq \mFr[\bar \rho_{\resc}]$
    (Theorem \ref{thm:HLSmresc}).
     \\
     \tabitem Stationary states in rescaled variables and minimisers of $\mFr$ are unique (Corollary \ref{cor:!subcrit}).
     }&\,
     &\, &\parbox[t]{4.5cm}{
     \textbf{Functional Inequalities:}\\ \,
     \\
     \tabitem There exists a minimiser of $\mF_k$. Minimisers are symmetric non-increasing, compactly supported and uniformly bounded. Minimisers are stationary states in original variables \cite[Theorem 2.8]{CCH1}.
     \\
     \tabitem There are no stationary states in rescaled variables in $\mY_2$, and there are no minimisers of $\mFr$ in $\mY_2$
     (Corollary \ref{cor:nosstatessupercrit} (ii)).
     \\
     \tabitem If $\bar \rho$ is a stationary state in original variables, then all solutions satisfy 
     $\mF_k[\rho] \geq \mF_k[\bar \rho]=0$,
     which corresponds to a variation of the HLS inequality (Theorem \ref{thm:HLSmequiv}).
     \\
     \tabitem Stationary states in original variables and minimisers of $\mF_k$ are unique up to dilations (Corollary \ref{cor:!crit}), and they coincide with the equality cases of $\mF_k[\rho] \geq 0$.
     }&\,
     &\, &\parbox[t]{4.5cm}{
     \textbf{Functional Inequalities:}\\ \,
     \\
     \tabitem There are no stationary states in original variables in $\mY$, and there are no minimisers of $\mF_k$  in $\mY$
     (Corollary \ref{cor:nosstatessupercrit} (i)).
     \\
     \tabitem There are no stationary states in rescaled variables in $\mY_2$, and there are no minimisers of $\mFr$  in $\mY_2$
     (Corollary \ref{cor:nosstatessupercrit} (ii)).
     }&\,
     \\ & & & & & & & &
     \\ 
     \hline
     \,& \,&\,& \,&\,& \, &\,&\,&\,\\
     \, & \parbox[t]{4.5cm}{
     \textbf{Asymptotics:}\\ \,
     \\
     \tabitem Under a stability condition solutions converge exponentially fast in Wasserstein distance towards the unique stationary state in rescaled variables with rate 1 (Proposition \ref{prop:cvsub-critical}).
     }&\,
     &\, & \parbox[t]{4.5cm}{
     \textbf{Asymptotics:}\\ \,
     \\
     \tabitem Under a stability condition and for solutions with second moment bounded in time, we have convergence in Wasserstein distance (without explicit rate) to a unique (up to dilation) stationary state (Proposition \ref{prop:cvcritical2}).
     }&\,
     &\, & \parbox[t]{4.5cm}{
     \textbf{Asymptotics:}
     \\
     Asymptotics are not well understood yet. \\ \, \\
     \tabitem If there exists a time $t_0\geq0$ such that $\mF_k[\rho(t_0)]<0$, then $\rho$ blows up in finite time \cite{Sugi3, BCL}.
     \\
     \tabitem Numerics suggest that the energy of any solution becomes negative in finite time, but no analytical proof is known.
     }&\,
     \\ & & & & & & & &
     \\ 
     \hline 
       \end{tabular}
\end{center}
\caption{Overview of results in one dimension for $-1<k<0$.}
\label{k<0}
\end{table}

\begin{table}[h!]
\scriptsize
\begin{center}
  \begin{tabular}{|ccc|ccc|ccc|}
    \hline
     \,& $\chi<1$ &\,&\,& $\chi=1$ &\,& \,&$\chi>1$ &\,\\ 
     \hline
     \,& \,&\,& \,&\,& \,&\,&\,&\,\\
     \,&\parbox[t]{4.5cm}{
     \textbf{Functional Inequalities:}\\ \,
     \\
     \tabitem There are no stationary states in original variables, but self-similar profiles \cite{DoPe04, BlaDoPe06, CaDo12, CaDo14, EM16}.
     }&\,
     &\,&\parbox[t]{4.5cm}{
     \textbf{Functional Inequalities:}\\ \,
     \\
     \tabitem If $\bar \rho$ is a stationary state in original variables, then all solutions satisfy $\mF_k[\rho] \geq \mF_k[\bar \rho]$, which corresponds to the logarithmic HLS inequality \cite{DoPe04, BlaDoPe06,CaCa11}.
     \\
     \tabitem Stationary states are given by dilations of Cauchy's density, $\bar \rho (x)=1/(\pi(1+|x|^2))$, wich coincide with the equality cases of the logarithmic HLS inequality. They all have infinite second moment \cite{DoPe04, BlaDoPe06,CaCa11}.
     }&\,
     &\,&\parbox[t]{4.5cm}{
     \textbf{Functional Inequalities:}\\ \,
     \\ 
     \tabitem Smooth fast-decaying solutions do not exist globally in time \cite{Nagai95,BiNa94,BlaDoPe06,CaPeSh07}.
     \\
     \tabitem There are no stationary states in original variables and there are no minimisers of $\mF_0$ in $\mY$ (Remark \ref{rmk:logsstates}).
     }&\,
     \\ & & & & & & & &
     \\
     \hline
     \,& \,&\,& \,&\,& \,&\,&\,&\,\\
      \,&\parbox[t]{4.5cm}{
     \textbf{Asymptotics:}\\ \,
     \\
     \tabitem Solutions converge exponentially fast in Wasserstein distance towards the unique stationary state in rescaled variables \cite{CaCa11}.
     }&\,
     &\,& \parbox[t]{4.5cm}{
     \textbf{Asymptotics:}\\ \,
     \\
     \tabitem Solutions converge in Wasserstein distance to a dilation of Cauchy's density (without explicit rate) if the initial second moment is infinite, and to a Dirac mass otherwise \cite{BKLN06, BlaCaMa07, CaCa11, BCC12, CF}. 
     }&\,
     &\,& \parbox[t]{4.5cm}{
     \textbf{Asymptotics:}\\ \,
     \\
     \tabitem All solutions blow up in finite time provided the second moment is initially finite \cite{Herrero-Velazquez97, Raphael-Schweyer14}.
     }&\,
     \\ & & & & & & & &
     \\
     \hline 
            \end{tabular}
\end{center}
\caption{Overview of results in one dimension for $k=0$.}
\label{k=0}
\end{table}

\begin{table}[h!]
\scriptsize
\begin{center}
  \begin{tabular}{|ccc|}
    \hline
    \,& \,&\,\\
    \,& No criticality for $\chi$ &\,
    \\
    \,&\,&\,\\
    \hline
     \,& \,&\,\\
    \,&\parbox[t]{13.5cm}{
     \textbf{Functional Inequalities:} \\ \,
     \\
     \tabitem There are no stationary states in original variables (Remark \ref{rmk:noE}). In rescaled variables, there exists a continuous symmetric non-increasing stationary state \cite[Theorem 2.11]{CCH1}.
     \\
     \tabitem There are no symmetric non-increasing global minimisers of $\mF_k$. Global minimisers of $\mFr$ can only exist in the range $0<k<\tfrac{2}{3}$ \cite[Theorem 2.11]{CCH1}.
     \\
     \tabitem If $\bar \rho_{\resc}$ is a stationary state in rescaled variables, then all solutions of the rescaled equation satisfy $\mFr[\rho] \geq \mFr[\bar \rho_{\resc}]$
     (Theorem \ref{thm:HLSmresc kpos}). Hence, for $0<k<\tfrac{2}{3}$, there exists a global minimiser for $\mFr$. 
     \\
     \tabitem For $0<k<\tfrac{2}{3}$, stationary states in rescaled variables and global minimisers of $\mFr$ are unique (Corollary \ref{cor:!kpos}).
     }&\,
    \\
    \,&\,&\,\\
    \hline
    \,&\,&\,\\
    \,&\parbox[t]{13.5cm}{
      \textbf{Asymptotics:}\\ \,
     \\
     \tabitem Solutions converge exponentially fast in Wasserstein distance to the unique stationary state in rescaled variables with rate 1 (Proposition \ref{prop:cvinW2}).
     }&\,
    \\
    \,&\,&\,\\
  \hline
  \end{tabular}
\end{center}
\caption{Overview of results in one dimension for $0<k<1$.}
\label{k>0}
\end{table}

\subsection{Optimal Transport Tools}
\label{sec:Optimal Transport Tools}

This sub-section summarises the main results of optimal
transportation we will need. They were already used for the case of logarithmic HLS inequalities and the classical Keller-Segel model in 1D and radial 2D, see \cite{CaCa11}, where we refer for detailed proofs.

Let $\tilde \rho$ and $\rho$ be two density probabilities.
According to \cite{Brenier91,McCann95}, there exists a convex
function $\psi$ whose gradient pushes forward the measure
$\tilde\rho(a) da$ onto $\rho(x) dx$: $\psi'\#
\left(\tilde\rho(a)  da\right) = \rho(x)  dx$. This convex
function satisfies the Monge-Amp\`ere equation in the weak sense:
for any test function $\varphi\in C_b(\RR)$, the following
identity holds true
\begin{equation}
\int_{\RR} \varphi(\psi'(a)) \tilde\rho(a)\, da =
\int_{\RR} \varphi(x) \rho(x)\, dx\, . \label{eq:MongeAmpere2}
\end{equation}
The convex map is unique a.e. with respect to $\rho$ and it gives
a way of interpolating measures. In fact, the interpolating curve
$\rho_s$, $s\in [0,1]$, with $\rho_0=\rho$ and $\rho_1=\tilde
\rho $ can be defined as $\rho_s (x)\, dx= (s \psi' +(1-s)\id)(x)\#
\rho(x)\, dx$ where $\id$ stands for the identity map in $\RR$. This
interpolating curve is actually the minimal geodesic joining the
measures $\rho(x) dx$ and $\tilde\rho (x) dx$. The notion of convexity
associated to these interpolating curves is nothing else than
convexity along geodesics, introduced and called displacement
convexity in \cite{McCann97}. In one dimension the displacement
convexity/concavity of functionals is easier to check as seen in
\cite{CarrilloFerreiraPrecioso12, Carrillo-Slepcev09}. The convexity of the
functionals involved can be summarised as follows \cite{McCann97,CarrilloFerreiraPrecioso12}:
\begin{theorem}\label{convex-concave}
The functional $\mathcal U_m[\rho]$ is displacement-convex
provided that $m\geq 0$. The functional $\mathcal W_k[\rho]$ is displacement-concave if $k \in (-1,1)$.
\end{theorem}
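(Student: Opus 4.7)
The plan follows McCann's 1997 paper for the internal-energy assertion and Carrillo-Ferreira-Precioso for the interaction-energy assertion, exploiting the fact that in one dimension the Brenier optimal-transport map is precisely the monotone rearrangement. Fix two densities $\rho, \tilde\rho$ with convex Brenier potential $\psi$ satisfying $\psi' \# \rho = \tilde\rho$, and let $T_s := (1-s)\id + s \psi'$, so that $\rho_s := T_s \# \rho$ is the displacement geodesic joining $\rho$ to $\tilde\rho$. Since $\psi$ is convex, $\psi'$ is non-decreasing and $T_s'(x) = (1-s) + s\psi''(x) \geq 0$.

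For displacement convexity of $\mU_m$, the Monge-Amp\`ere identity $\rho_s(T_s(x))\, T_s'(x) = \rho(x)$ together with a change of variables gives, for smooth densities,
\[
\mU_m[\rho_s] \;=\; \int_{\RR} \rho(x)\, h\!\left(\frac{T_s'(x)}{\rho(x)}\right) dx, \qquad h(r) := r\, U_m(1/r).
\]
Since $s \mapsto T_s'(x)/\rho(x)$ is affine, the problem reduces to checking convexity of $h$ on $(0,\infty)$. For $m \neq 1$ one computes $h(r) = r^{1-m}/(m-1)$, so $h'(r) = -r^{-m}$ and $h''(r) = m\, r^{-m-1}$; these are respectively non-positive and non-negative as soon as $m \geq 0$. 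For $m=1$ one has $h(r) = -\log r$, which is trivially convex. Hence $s \mapsto \mU_m[\rho_s]$ is convex on $[0,1]$.

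For displacement concavity of $\mW_k$, I would write
\[
\mW_k[\rho_s] \;=\; \iint_{\RR \times \RR} W_k\bigl(T_s(x)-T_s(y)\bigr)\, \rho(x)\rho(y)\, dx\, dy,
\]
and exploit the one-dimensional key fact that $\psi'$ is non-decreasing: therefore the sign of $\psi'(x)-\psi'(y)$ matches that of $x-y$, so the convex combination $T_s(x)-T_s(y) = (1-s)(x-y) + s(\psi'(x)-\psi'(y))$ stays on a single half-line as $s$ runs over $[0,1]$. A direct calculation gives $W_k''(r) = (k-1)r^{k-2}$ on $(0,\infty)$ for $k \neq 0$ and $W_0''(r) = -r^{-2}$, both non-positive precisely when $k \in (-1,1)$. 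By evenness $W_k$ is therefore concave on each half-line, so $s \mapsto W_k(T_s(x)-T_s(y))$ is concave on $[0,1]$ for $\rho\otimes\rho$-almost every pair $(x,y)$, and integrating against the non-negative measure $\rho(x)\rho(y)\,dx\,dy$ preserves concavity in $s$.

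The delicate point, and the only real obstacle, is the singularity of $W_k$ at the origin for $k \in (-1,0)$. This is handled by noting that $\rho$ has no atoms so the diagonal carries no $\rho\otimes\rho$-mass; moreover the interpolant $T_s(x)-T_s(y)$ never crosses zero off the diagonal thanks to the sign-preserving monotonicity, so the pointwise concavity holds $\rho\otimes\rho$-a.e. The inequality is then transferred to $\mW_k[\rho_s]$ by a standard approximation, replacing $W_k$ with $W_k^\varepsilon := W_k \, \mathbf{1}_{\{|\cdot| \geq \varepsilon\}}$ and sending $\varepsilon \to 0$ by monotone/dominated convergence, using the integrability of $\rho \in L^1 \cap L^m$ to control the limiting energies $\mW_k[\rho]$ and $\mW_k[\tilde\rho]$.
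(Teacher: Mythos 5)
Your proposal is correct and takes the same route the paper itself relies on: the paper states Theorem \ref{convex-concave} without proof, citing \cite{McCann97} and \cite{CarrilloFerreiraPrecioso12}, and your argument --- McCann's criterion via convexity (and monotonicity) of $h(r)=r\,U_m(1/r)$ combined with the fact that in one dimension the Jacobian $(1-s)+s\psi''(x)$ is affine in $s$, plus the sign-preservation of $T_s(x)-T_s(y)$ under the monotone rearrangement together with concavity of $W_k$ on each half-line --- is precisely the standard proof from those references. The only loose end is your restriction to smooth densities in the internal-energy step: in general $\psi''$ has a singular part, but the identical computation goes through with the Aleksandrov (absolutely continuous) derivative $\psi''_{\mathrm{ac}}$ via the change-of-variables formula \eqref{eq:change variables} recorded in Section \ref{sec:Optimal Transport Tools}, and for the interaction term no truncation is really needed since for $k\in(-1,0)$ (resp.\ $k\in(0,1)$) all integrands have a fixed sign, so the pointwise concavity inequality integrates directly against $\rho\otimes\rho$ with values in $[-\infty,\infty)$.
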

This means we have to deal with convex-concave compensations.
On the other hand, regularity of the transport map is a
complicated matter. Here, as it was already done in \cite{CaCa11},
we will only use the fact that the Hessian measure $\det{\!}_H
D^2\psi(a) da$ can be decomposed in an absolute continuous part
$\det{\!}_A D^2\psi(a) da$ and a positive singular measure
(Chapter 4, \cite{Villani03}). Moreover, it is known that a convex
function $\psi$ has Aleksandrov second derivative $D^2_A \psi(a)$
almost everywhere and that $\det{\!}_A D^2\psi(a) = \det D^2_A
\psi(a)$. In particular we have $\det{\!}_H D^2\psi(a) \geq
\det{\!}_A D^2\psi(a)$. The formula for the change of variables
will be important when dealing with the internal energy
contribution. For any measurable function $U$, bounded below such
that $U(0) = 0$ we have \cite{McCann97}
\begin{equation}\label{eq:change variables}
\int_{\RR} U(\tilde\rho(x)) \, dx = \int_{\RR}
U\left(\dfrac{\rho(a)}{\det{\!}_A D^2\psi(a)}\right)\det{\!}_A
D^2\psi(a)\, da\, .
\end{equation}
Luckily, the complexity of Brenier's transport problem dramatically reduces in one dimension. More precisely, the
transport map $\psi'$ is a non-decreasing function, therefore it is
differentiable a.e. and it has a countable number of jump
singularities. The singular part of the positive measure $\psi''(x) \,dx$
corresponds to having holes in the support of the density $\rho$.
Also, the Aleksandrov second derivative of $\psi$ coincides with
the absolutely continuous part of the positive measure $\psi''(x)\, dx$
that will be denoted by $\psi_{\AC}''(x)\, dx$. Moreover, the a.e.
representative $\psi'$ can be chosen to be the distribution
function of the measure $\psi''(x) \, dx$ and it is of bounded variation
locally, with lateral derivatives existing at all points and
therefore, we can always write for all $a<b$
\begin{equation*}\label{tech1}
\psi'(b)-\psi'(a)= \int_{(a,b]}  \psi''(x)\, dx \geq \int_a^b
\psi_{\AC}''(x)\,dx
\end{equation*}
for a well chosen representative of $\psi'$.\\

The following Lemma proved in \cite{CaCa11} will be used to
estimate the interaction contribution in the free energy, and in
the evolution of the Wasserstein distance. 

\begin{lemma}\label{lem:interaction}
Let $\mathcal K:(0,\infty)\to \RR$ be an increasing and strictly
concave function. Then, for any $(a,b)$
\begin{equation} \label{eq:jensen 1D}
\mathcal K\left(  \frac{\psi'(b)-\psi'(a)}{b-a} \right) \geq
\int_{0}^1 \mathcal K \left( \psi_{\AC}''([a,b]_s) \right)\, ds\,
,
\end{equation}
where the convex combination of $a$ and $b$ is given by
$[a,b]_s=(1-s)a+sb$.
Equality is achieved in \eqref{eq:jensen 1D} if and only if the
distributional derivative of the transport map $\psi''$ is a
constant function.
\end{lemma}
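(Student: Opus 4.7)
The plan is to combine two classical inputs, which together account for exactly the two ingredients of the displayed inequality: the comparison $\psi''(x)\,dx \geq \psi''_{\AC}(x)\,dx$ between the full distributional derivative of the transport map and its absolutely continuous part (used earlier in the excerpt), and Jensen's inequality for the concave function $\mathcal K$. The monotonicity of $\mathcal K$ is what allows us to chain the two estimates.

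First I would reduce the left-hand side to an average of $\psi''_{\AC}$. Using the bound recalled just before the statement,
\begin{equation*}
\frac{\psi'(b)-\psi'(a)}{b-a} \;=\; \frac{1}{b-a}\int_{(a,b]} \psi''(x)\,dx \;\geq\; \frac{1}{b-a}\int_a^b \psi''_{\AC}(x)\,dx\, .
\end{equation*}
Since $\mathcal K$ is increasing, applying it to both sides preserves the inequality:
\begin{equation*}
\mathcal K\!\left(\frac{\psi'(b)-\psi'(a)}{b-a}\right) \;\geq\; \mathcal K\!\left(\frac{1}{b-a}\int_a^b \psi''_{\AC}(x)\,dx\right).
\end{equation*}

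Next I would apply Jensen's inequality to the concave function $\mathcal K$ on the probability space $(a,b)$ with uniform measure $\tfrac{dx}{b-a}$, obtaining
\begin{equation*}
\mathcal K\!\left(\frac{1}{b-a}\int_a^b \psi''_{\AC}(x)\,dx\right) \;\geq\; \frac{1}{b-a}\int_a^b \mathcal K\!\left(\psi''_{\AC}(x)\right)\,dx\, .
\end{equation*}
The linear change of variables $x = (1-s)a + sb = [a,b]_s$, $dx = (b-a)\,ds$, rewrites the right-hand side as $\int_0^1 \mathcal K(\psi''_{\AC}([a,b]_s))\,ds$, and concatenating the two estimates yields \eqref{eq:jensen 1D}.

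For the equality case, each of the two steps must be an equality. The first is an equality precisely when the singular part of the positive measure $\psi''\,dx$ vanishes on $(a,b]$, so that $\psi''$ agrees with its absolutely continuous part there. The Jensen step, since $\mathcal K$ is \emph{strictly} concave, forces $\psi''_{\AC}$ to be constant a.e.\ on $(a,b)$. Combining the two, the distributional derivative $\psi''$ must be a constant on $(a,b)$, as claimed. The only mildly delicate point in the argument is the first step: handling the possibly singular part of $\psi''$ correctly, which is why we work with $\psi''_{\AC}$ on the right-hand side in the first place—once that is in place, the rest is a direct application of monotonicity and Jensen.
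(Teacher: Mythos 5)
Your proposal is correct and is essentially the same argument the paper relies on: the paper defers the proof to \cite{CaCa11}, where precisely these two steps appear — the comparison $\psi'(b)-\psi'(a)\geq\int_a^b\psi_{\AC}''(x)\,dx$ combined with the monotonicity of $\mathcal K$, followed by Jensen's inequality for the concave function on $(a,b)$ with the affine reparametrisation $x=[a,b]_s$. One small point worth making explicit in your equality analysis: deducing that the singular part vanishes requires $\mathcal K$ to be \emph{strictly} increasing, which does hold here because a non-decreasing strictly concave function cannot be constant on any interval, so your conclusion stands.
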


Optimal transport is a powerful tool for reducing functional
inequalities onto pointwise inequalities ({\em e.g.} matrix
inequalities). In other words, to pass from microscopic
inequalities between particle locations to macroscopic
inequalities involving densities. We highlight for example the
seminal paper by McCann \cite{McCann97} where the displacement
convexity issue for some energy functional is reduced to the
concavity of the determinant. We also refer to the works of Barthe
\cite{Barthe97,Barthe98} and Cordero-Erausquin {\em et al.}
\cite{CoNaVi08}. The previous lemma will allow us to
connect microscopic to macroscopic inequalities by simple
variations of the classical Jensen inequality.

\section{Functional inequalities}
\label{sec: Functional inequalities}

The first part of analysing the aggregation-diffusion equations \eqref{eq:KS} and \eqref{eq:KSresc} is devoted to the derivation of functional inequalities which are all variants of the Hardy-Littlewood-Sobolev (HLS) inequality also known as the weak Young's inequality \cite[Theorem 4.3]{LieLo01}:
\begin{align}
&\iint_{\RR\times \RR} f(x) {|x-y|^{k}} f(y)\, dxdy \leq C_{HLS}(p,q,\lambda) \|f\|_{L^p} \|f\|_{L^q}\, ,  \label{eq:HLS}\\ & \dfrac1p + \dfrac1q  = 2 +  k \,  , \quad p,q>1\, ,\quad   k\in(-1,0) \, . \nonumber
\end{align}

\begin{theorem}[Variation of HLS]\label{thm:HLSm}
Let $k \in (-1,0)$ and $m=1-k$. For $f \in L^1(\RR) \cap L^m(\RR)$, we have
\begin{equation}\label{eq:HLSm}
\left|\iint_{\RR\times \RR} f(x){|x-y|^{k}}f(y) dxdy\right| \leq C_* ||f||^{1+k}_1 ||f||^m_m,
\end{equation}
where $C_*=C_*(k)$ is the best constant.
\end{theorem}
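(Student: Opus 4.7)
The plan is to derive \eqref{eq:HLSm} as a direct consequence of the classical HLS inequality \eqref{eq:HLS} combined with H\"older interpolation between $L^1(\RR)$ and $L^m(\RR)$. Since the kernel $|x-y|^k$ is non-negative, the triangle inequality reduces matters to the case $f\geq 0$ at the outset.

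First, I would invoke \eqref{eq:HLS} with the symmetric choice $p=q:=2/(2+k)$. For $k\in(-1,0)$ this value sits in $(1,2)$, so the hypothesis $p,q>1$ is verified, and the exponent constraint $1/p+1/q = 2+k$ holds by construction. The HLS inequality then yields
\[
\iint_{\RR\times\RR} f(x)\,|x-y|^{k}\,f(y)\, dxdy \;\leq\; C_{HLS}\, ||f||_p^{\,2}.
\]

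Second, I would interpolate $||f||_p$ between $L^1$ and $L^m=L^{1-k}$. Writing $1/p = \theta+(1-\theta)/m$ and solving gives $\theta=(1+k)/2$ and $1-\theta=m/2$, both in $[0,1]$ thanks to $k\in(-1,0)$. H\"older's inequality therefore delivers
\[
||f||_p \;\leq\; ||f||_1^{(1+k)/2}\,||f||_m^{m/2},
\]
and squaring this bound and combining it with the previous display produces \eqref{eq:HLSm} with an explicit admissible constant. Defining $C_*=C_*(k)$ as the infimum over all admissible constants completes the argument.

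Honestly no serious obstacle arises here: the entire content is the bookkeeping that $p=q=2/(2+k)$ lands inside the admissible HLS window precisely when $k\in(-1,0)$, and that the interpolation exponents $(1+k)/2$ and $m/2$ are forced by the homogeneity relation $k+m=1$ characterising the fair-competition regime. The borderline case $k=0$ corresponds to $p\to 1$ and is exactly the logarithmic-HLS regime, which is not accessible by this elementary route and requires the separate extremal analysis of \cite{CaLo92,Beckner93}.
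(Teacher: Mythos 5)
Your proposal is correct and follows essentially the same route as the paper's own proof: apply the classical HLS inequality \eqref{eq:HLS} with the symmetric choice $p=q=2/(2+k)$ and then bound $\|f\|_p^2$ by $\|f\|_1^{1+k}\|f\|_m^m$ via H\"older interpolation between $L^1$ and $L^m$, concluding $C_*\leq C_{HLS}$. Your extra bookkeeping (checking $1<p<m$, the exponents $\theta=(1+k)/2$ and $1-\theta=m/2$, and the reduction to $f\geq 0$) simply makes explicit what the paper leaves implicit.
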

\begin{proof} The inequality is a direct consequence of the standard HLS inequality \eqref{eq:HLS} by choosing $p = q = \tfrac {2}{2+k}$, and of H\"{o}lder's inequality. For $k \in (-1,0)$ and for any $f \in L^1(\RR) \cap L^m(\RR)$, we have
$$
\left| \iint_{\RR \times \RR} f(x) |x-y|^k f(y) dx dy \right|
\leq C_{HLS} ||f||_p^2
\leq C_{HLS} ||f||_1^{1+k} ||f||_m^m.
$$
Consequently, $C_*$ is finite and bounded from above by $C_{HLS}$.
\end{proof}

For instance inequality \eqref{eq:HLSm} is a consequence of interpolation between $L^1$ and $L^m$. We develop in this section another strategy which enables to recover inequality \eqref{eq:HLSm}, as well as further variations which contain an additional quadratic confinement potential. This method involves two main ingredients:
\begin{itemize}
\item First it is required to know {\em a priori} that the inequality possesses some extremal function denoted {\em e.g.} by $\bar \rho(x)$ (characterised as a critical point of the energy functional). This is not an obvious task due to the intricacy of the equation satisfied by $\bar \rho(x)$. Without this {\em a priori} knowledge, the proof of the inequality remains incomplete. The situation is in fact similar to the case of convex functionals, where the existence of a critical point ensures that it is a global minimiser  of the functional. The existence of optimisers was shown in \cite{CCH1}.
\item Second we invoke some simple lemma at the microscopic level. It is nothing but the Jensen's inequality for the case of inequality \eqref{eq:HLSm} (which is somehow degenerated). It is a variation of Jensen's inequality in the rescaled case.
\end{itemize}

\subsection{Porous Medium Case $k<0$}
\label{sec: Porous Medium Case k neg}
In the porous medium case, we have $k \in (-1,0)$ and hence $m\in(1,2)$. For $\chi=0$, this corresponds to the well-studied porous medium equation (see \cite{VazquezPME} and references therein). 
It follows directly from Theorem \ref{thm:HLSm}, that for all $\rho \in \mY$ and for any $\chi>0$,
\begin{equation*}
    \mF_k[\rho]\geq \frac{1-\chi C_*}{m-1} ||\rho||_m^m\, ,
\end{equation*}
where $C_*=C_*(k)$ is the optimal constant defined in \eqref{eq:HLSm}. Since global minimisers have always smaller or equal energy than stationary states, and stationary states have zero energy by Lemma \ref{lem:char crit}, it follows that $\chi \geq 1/C_*$. We define the \emph{critical interaction strength} by 
\begin{equation}\label{defchic}
 \chi_c(k):=\frac{1}{C_*(k)}\, ,
\end{equation}
and so for $\chi=\chi_c(k)$, all stationary states of equation \eqref{eq:KS} are global minimisers of $\mF_k$. From \cite[Theorem 2.8]{CCH1}, we further know that there exist global minimisers of $\mF_k$ only for critical interaction strength $\chi=\chi_c(k)$ and they are radially symmetric non-increasing, compactly supported and uniformly bounded. Further, all minimisers of $\mF_k$ are stationary states of equation \eqref{eq:KS}.\\
From the above, we can also directly see that for $0<\chi<\chi_c(k)$, no stationary states exist for equation \eqref{eq:KS}. Further, there are no minimisers of $\mF_k$. However, there exist global minimisers of the rescaled free energy $\mFr$ and they are radially symmetric non-increasing and uniformly bounded stationary states of the rescaled equation \eqref{eq:KSresc} \cite[Theorem 2.9]{CCH1}.

\begin{theorem}\label{thm:HLSmequiv}
 Let $k \in (-1,0)$ and $m=1-k$. If \eqref{eq:KS} admits a stationary density $\bar \rho_k$ in $\mY$, then for any $\chi>0$
 \begin{equation*} \label{fineqcritical}
 \mF_k[\rho]\geq 0, \quad \forall \rho \in \mY
 \end{equation*}
 with the equality cases given dilations of $\bar \rho_k$. In other words, for critical interaction strength $\chi=\chi_c(k)$, inequality \eqref{eq:HLSm} holds true for all $f \in L^1(\RR) \cap L^m(\RR)$.
\end{theorem}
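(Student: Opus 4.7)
The strategy is to compare $\rho$ with the given stationary state $\bar\rho_k$ through the one-dimensional optimal transport map, exploiting the convex--concave compensation between $\mU_m$ and $\mW_k$ in the fair-competition regime. Let $\psi$ be the convex Brenier potential satisfying $\psi'\#\bar\rho_k = \rho$ as in Section \ref{sec:Optimal Transport Tools}. Using the change of variables formula \eqref{eq:change variables} for the internal energy and the relation $m-1=-k$, I would rewrite
\begin{equation*}
\mU_m[\rho] \;=\; \frac{1}{m-1}\int_{\RR}\Bigl(\frac{\bar\rho_k(a)}{\psi_{\AC}''(a)}\Bigr)^{m}\psi_{\AC}''(a)\, da \;=\; -\frac{1}{k}\int_{\RR} \bar\rho_k(a)^m\bigl(\psi_{\AC}''(a)\bigr)^{k}\, da.
\end{equation*}

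For the interaction term, since $k\in(-1,0)$ the function $\mathcal K(r)=-r^k$ is increasing and strictly concave on $(0,\infty)$, so Lemma \ref{lem:interaction} applied with this $\mathcal K$ gives the pointwise bound
\begin{equation*}
\left(\frac{\psi'(b)-\psi'(a)}{b-a}\right)^{k} \;\leq\; \int_0^1 \bigl(\psi_{\AC}''([a,b]_s)\bigr)^{k}\, ds.
\end{equation*}
Multiplying by the non-negative quantity $|a-b|^{k}\bar\rho_k(a)\bar\rho_k(b)$, integrating against $da\,db$, and multiplying through by $1/k<0$ (which flips the inequality) produces the lower bound
\begin{equation*}
\mW_k[\rho] \;\geq\; \frac{1}{k}\iint_{\RR\times\RR}\int_0^1 \bigl(\psi_{\AC}''([a,b]_s)\bigr)^{k}\,|a-b|^{k}\,\bar\rho_k(a)\bar\rho_k(b)\, ds\, da\, db.
\end{equation*}

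The decisive step is then the change of variables $(a,b)\mapsto(p,q)=([a,b]_s,\,b-a)$ at fixed $s\in[0,1]$, which has unit Jacobian and for which $a=p-sq$, $b=p-sq+q$. The inner integrals collapse into exactly the non-linear quantity that characterises the stationary state in \eqref{eq:charac sstates} of Lemma \ref{lem:char crit}, namely $\chi^{-1}\bar\rho_k(p)^m$. Multiplying by $\chi$ yields
\begin{equation*}
\chi\,\mW_k[\rho] \;\geq\; \frac{1}{k}\int_{\RR}\bar\rho_k(p)^m\bigl(\psi_{\AC}''(p)\bigr)^{k}\, dp,
\end{equation*}
and adding this to the expression for $\mU_m[\rho]$ gives $\mF_k[\rho]\geq 0=\mF_k[\bar\rho_k]$ by exact cancellation. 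Equality forces equality in Lemma \ref{lem:interaction}, hence $\psi''$ is constant, i.e.\ $\psi'$ is affine; the zero centre of mass constraint built into $\mY$ eliminates the translation and forces $\rho$ to be a mass-preserving dilation of $\bar\rho_k$. Finally, the HLS-type inequality \eqref{eq:HLSm} at $\chi=\chi_c(k)$ for arbitrary $f\in L^1\cap L^m$ follows from $\mF_k[\rho]\geq 0$ after replacing $f$ by $|f|$, translating to zero centre of mass, rescaling to unit mass, and invoking the homogeneities of both sides.

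\textbf{Main obstacles.} The delicate point is the sign bookkeeping: both $k$ and $1/k$ are negative, and one has to flip inequalities precisely twice, which relies on choosing $\mathcal K(r)=-r^k$ so that Lemma \ref{lem:interaction} applies. A secondary subtlety is that the change-of-variables formula \eqref{eq:change variables} involves only the Aleksandrov Hessian, while the Brenier map may carry a singular part of $\psi''$; since our ultimate aim is a lower bound for $\mF_k[\rho]$ this reinforces rather than obstructs the argument, but it does require care in identifying the equality cases. Finally, one must check that the integrals are well defined, which follows from $\rho\in L^1\cap L^m$ and the regularity of $\bar\rho_k$ and $\bar S_k$ provided by Lemma \ref{lem:regS}.
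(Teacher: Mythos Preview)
Your proposal is correct and follows essentially the same approach as the paper: both proofs use the Brenier map $\psi'\#\bar\rho_k=\rho$, the change-of-variables formula for $\mU_m$, Lemma \ref{lem:interaction} applied to $\mathcal K(r)=-r^{k}$ for the interaction term, and the characterisation \eqref{eq:charac sstates} of the stationary state to close the argument. The only cosmetic difference is the order of the last step: the paper uses \eqref{eq:charac sstates} to convert the single integral $\int(\psi_{\AC}'')^{1-m}\bar\rho_k^m$ into a double integral and then subtracts, while you use the same identity in reverse---via the $(a,b)\mapsto(p,q)$ substitution---to collapse the Jensen-bounded double integral back to a single integral before cancelling against $\mU_m$.
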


\begin{proof}
For a given stationary state $\bar \rho_k \in \mY$ and solution $\rho \in \mY$ of \eqref{eq:KS}, we denote by $\psi$ the convex function whose gradient pushes forward the measure $\bar \rho_k(a) da$ onto $\rho(x) dx$: $\psi' \# \left(\bar\rho_k(a)  da\right) = \rho(x)  dx$. Using \eqref{eq:change variables}, the functional $\mathcal F_{k}[\rho]$ rewrites as follows:
\begin{align*}
\mF_{k}[\rho]& = \dfrac1{m-1}\int_\RR \left(\dfrac{\bar \rho_k(a)}{\psi_{ac}''(a)}\right)^{m-1} \bar \rho_k(a)\, da\\
&\quad+ \dfrac{\chi}{k} \iint_{\RR\times\RR}\left(\dfrac{\psi'(a)-\psi'(b)}{a-b}\right)^{k} |a-b|^{k} \bar \rho_k(a) \bar \rho_k(b) \, da db
\\
&= \dfrac1{m-1} \int_\RR \left(\psi_{ac}''(a)\right)^{1-m} \bar \rho_k(a)^m\, da\\
&\quad+ \dfrac{\chi}{1-m}  \iint_{\RR\times\RR} \left(\dfrac{\psi'(a)-\psi'(b)}{a-b}\right)^{1-m} |a-b|^{1-m} \bar \rho_k(a) \bar \rho_k(b) \, da db   \, ,
\end{align*}
where $\psi'$ non-decreasing. By Lemma \ref{lem:char crit} (i), we can write for any $\gamma \in \RR$,
$$
\int_\RR(\psi_{ac}''(a))^{-\gamma} \bar\rho_k(a)^m \, da=  \chi \iint_{\RR\times\RR} \bla \psi_{ac}''([a,b])^{-\gamma} \bra |a-b|^{1-m}\bar \rho_k(a) \bar \rho_k(b) \,  dadb\, ,
$$
where 
$$
\bla u([a,b]) \bra = \int_0^1 u([a,b]_s)\, ds
$$ 
and $[a,b]_s=(1-s)a+sb$ for any $a,b \in \RR$ and $u:\RR \to \RR_+$. Hence, choosing $\gamma=m-1$,
$$
\mF_{k}[\rho] =  \dfrac{\chi}{m-1} \iint_{\RR\times\RR} \left\{ \bla \psi_{ac}''([a,b])^{1-m} \bra - \left(\dfrac{\psi'(a)-\psi'(b)}{a-b}\right)^{1-m}\right\}   |a-b|^{1-m}\bar \rho_k(a) \bar \rho_k(b) \, da db\, .
$$
Using the strict concavity and increasing character of the power function $-(\cdot)^{1-m}$ and Lemma \ref{lem:interaction}, we deduce $ \mF_{k}[\rho]\geq 0$.
Equality arises if and only if the derivative of the transport map $\psi''$ is a constant function, i.e. when $\rho$ is a dilation of $\bar \rho_k$.\\
We conclude that if \eqref{eq:KS} admits a stationary state $\bar \rho_k \in \mY$, then $\mathcal{F}_k(\rho) \geq 0$ for any $\rho \in \mY$. This functional inequality is equivalent to \eqref{eq:HLSm} if we choose $\chi=\chi_c(k)$.
\end{proof}

\begin{remark}[Comments on the Inequality Proof]
In the case of critical interaction strength $\chi=\chi_c(k)$, Theorem \ref{thm:HLSmequiv} provides an alternative proof for the variant of the HLS inequality Theorem \ref{thm:HLSm} assuming the existence of a stationary density for \eqref{eq:KS}. More precisely, the inequalities $\mF_k[\rho]\geq 0$ and \eqref{eq:HLSm} are equivalent if $\chi=\chi_c(k)$. However, the existence proof \cite[Proposition 3.4]{CCH1} crucially uses the HLS type inequality \eqref{eq:HLSm}. If we were able to show the existence of a stationary density by alternative methods, e.g. fixed point arguments, we would obtain a full alternative proof of inequality \eqref{eq:HLSm}.
\end{remark}

\begin{remark}[Logarithmic Case]\label{rmk:logsstates} There are no global minimisers of $\mF_0$ in the logarithmic case $k=0$, $m=1$ except for critical interaction strength $\chi=1$. To see this, note that the characterisation of stationary states  \cite[Lemma 2.3]{CaCa11} which corresponds to Lemma \ref{lem:char crit}(i) for the case $k \neq 0$, holds true for any $\chi>0$. Similarly, the result that the existence of a stationary state $\bar \rho$ implies the inequality $\mF_0[\rho]>\mF_0[\bar \rho]$ \cite[Theorem 1.1]{CaCa11} holds true for any $\chi>0$, and corresponds to Theorem \ref{thm:HLSmequiv} in the case $k \neq 0$.  
 Taking dilations of Cauchy's density \eqref{eq:StSt log}, $\rho_\lambda(x)=\lambda \bar\rho_0\left(\lambda x\right)$, we have $\mF_0[\rho_\lambda]=(1-\chi)\log\lambda+\mF_0[\bar\rho_0]$, and letting $\lambda \to \infty$ for super-critical interaction strengths $\chi>1$, we see that $\mF_0$ is not bounded below. Similarly, for sub-critical interaction strengths $0<\chi<1$, we take the limit $\lambda \to 0$ to see that $\mF_0$ is not bounded below. Hence, there are no global minimisers of $\mF_0$ and also no stationary states (by equivalence of the two) except if $\chi=1$.
\end{remark}

Further, we obtain the following uniqueness result:

\begin{corollary}[Uniqueness in the Critical Case]\label{cor:!crit}
 Let $k \in (-1,0)$ and $m=1-k$. If $\chi=\chi_c(k)$, then there exists a unique stationary state (up to dilations) to equation \eqref{eq:KS}, with second moment bounded, and a unique minimiser (up to dilations) for $\mF_k$ in $\mY$.
\end{corollary}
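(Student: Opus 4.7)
The argument is a direct consequence of the functional inequality already established in Theorem \ref{thm:HLSmequiv}: the heavy lifting is done, and no genuinely new obstacle remains. The plan is to combine Theorem \ref{thm:HLSmequiv} with the vanishing identity $\mF_k[\bar\rho_k]=0$ from Lemma \ref{lem:char crit}(i), and with the existence and characterisation of critical stationary states and minimisers from \cite[Theorem 2.8]{CCH1}.

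Specifically, \cite[Theorem 2.8]{CCH1} guarantees that for $\chi=\chi_c(k)$ there exists at least one stationary state $\bar\rho_k\in\mY$ of \eqref{eq:KS}, that it is radially symmetric non-increasing, compactly supported (hence of finite second moment) and uniformly bounded, and that every global minimiser of $\mF_k$ on $\mY$ is such a stationary state. It therefore suffices to prove uniqueness of stationary states up to mass-preserving dilations. Let $\bar\rho_k^{(1)},\bar\rho_k^{(2)}\in\mY$ be two such stationary states. Apply Theorem \ref{thm:HLSmequiv} with reference state $\bar\rho_k^{(1)}$: for every $\rho\in\mY$ we have $\mF_k[\rho]\geq 0$, with equality if and only if $\rho$ is a dilation of $\bar\rho_k^{(1)}$. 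Since Lemma \ref{lem:char crit}(i) yields $\mF_k[\bar\rho_k^{(2)}]=0$, it follows that $\bar\rho_k^{(2)}$ is a dilation of $\bar\rho_k^{(1)}$. Combining this with the fact that every minimiser is a stationary state gives uniqueness of minimisers up to dilations as well.

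The only subtle point to verify is that the rigidity (equality) case in Theorem \ref{thm:HLSmequiv} indeed forces a dilation rather than a broader equivalence class. This is already fully incorporated in the optimal transport proof of that theorem via Lemma \ref{lem:interaction}: equality holds precisely when the distributional derivative $\psi''$ of the Brenier map pushing $\bar\rho_k^{(1)}$ onto $\bar\rho_k^{(2)}$ is a.e.\ constant, so $\psi'$ is affine and $\bar\rho_k^{(2)}$ is a mass-preserving dilation of $\bar\rho_k^{(1)}$. Hence no additional optimal transport argument is needed, and the corollary follows.
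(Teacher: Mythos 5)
Your proposal is correct and follows essentially the same route as the paper's proof: existence of a minimiser (which is a stationary state) from \cite[Theorem 2.8]{CCH1}, the vanishing energy identity $\mF_k[\bar\rho_k]=0$ from Lemma \ref{lem:char crit}(i), and the equality cases of Theorem \ref{thm:HLSmequiv} to conclude that any two stationary states are dilations of each other. Your additional remarks — that compact support yields the bounded second moment, and that the rigidity case reduces via Lemma \ref{lem:interaction} to $\psi''$ being a.e.\ constant — are accurate elaborations of steps the paper leaves implicit.
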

\begin{proof}
 By \cite[Theorem 2.8]{CCH1}, there exists a minimiser of $\mF_k$ in $\mY$, which is a stationary state of equation \eqref{eq:KS}. Assume \eqref{eq:KS} admits two stationary states $\bar \rho_1$ and $\bar \rho_2$. By Lemma \ref{lem:char crit}, $\mF_k[\bar \rho_1]=\mF_k[\bar \rho_2]=0$. It follows from Theorem \ref{thm:HLSmequiv} that $\bar \rho_1$ and $\bar \rho_2$ are dilations of each other.
\end{proof}

A functional inequality similar to \eqref{eq:HLSm} holds true for sub-critical interaction strengths in rescaled variables:
\begin{theorem}[Rescaled Variation of HLS]\label{thm:HLSmresc}
  For any $\chi>0$, let $k \in (-1,0)$ and $m=1-k$. If $\bar \rho_{k,\resc} \in \mY_2$ is a stationary state of \eqref{eq:KSresc}, then we have for any solution $\rho \in \mY_2$,
 $$
 \mFr[\rho] \geq \mFr[\bar \rho_{k,\resc}]
 $$
 with the equality cases given by $ \rho=\bar \rho_{k,\resc}$.
\end{theorem}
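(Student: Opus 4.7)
The plan is to adapt the optimal transport argument of Theorem \ref{thm:HLSmequiv} by carefully tracking the extra quadratic contribution from $\tfrac12\mV$. Let $\psi$ denote the convex function whose derivative $\psi'$ pushes $\bar\rho_{k,\resc}(a)\,da$ forward onto $\rho(x)\,dx$. Using \eqref{eq:change variables} on the internal energy and \eqref{eq:MongeAmpere2} on the interaction and confinement contributions, I write $\mFr[\rho]=T_1+T_2+T_3$ with
\begin{align*}
T_1 &= \frac{1}{m-1}\int_{\RR}(\psi_{\AC}''(a))^{1-m}\bar\rho_{k,\resc}(a)^m\,da,\\
T_2 &= \frac{\chi}{1-m}\iint_{\RR\times\RR}\left(\frac{\psi'(a)-\psi'(b)}{a-b}\right)^{1-m}|a-b|^{1-m}\bar\rho_{k,\resc}(a)\bar\rho_{k,\resc}(b)\,da\,db,\\
T_3 &= \frac{1}{2}\int_{\RR}|\psi'(a)|^{2}\bar\rho_{k,\resc}(a)\,da.
\end{align*}

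Next I substitute the rescaled characterisation \eqref{eq:charac sstates resc} for $\bar\rho_{k,\resc}^m$ into $T_1$ and perform the unit-Jacobian change of variables $(a,q)\mapsto (x,y)=(a-sq,a-sq+q)$, under which $a=[x,y]_s$, to obtain
\[
T_1 = \frac{1}{m-1}\iint_{\RR\times\RR}\bigl\langle(\psi_{\AC}'')^{1-m}\bigr\rangle_{[a,b]}\!\left(\chi|a-b|^{1-m}+\frac{|a-b|^{2}}{2}\right)\bar\rho_{k,\resc}(a)\bar\rho_{k,\resc}(b)\,da\,db,
\]
where $\langle g\rangle_{[a,b]}:=\int_{0}^{1}g([a,b]_s)\,ds$. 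The $\chi$-part of $T_1$ recombines with $T_2$ exactly as in Theorem \ref{thm:HLSmequiv}, producing a non-negative Jensen-type defect by Lemma \ref{lem:interaction} applied to the increasing and strictly concave map $\mathcal K(u)=-u^{1-m}$. Simultaneously, the zero-mean identity $\int\psi'(a)\bar\rho_{k,\resc}(a)\,da=\int x\rho(x)\,dx=0$ symmetrises $T_3=\tfrac14\iint(\psi'(a)-\psi'(b))^{2}\bar\rho_{k,\resc}(a)\bar\rho_{k,\resc}(b)\,da\,db$, and \eqref{eq:FV resc} combined with $\mV[\bar\rho_{k,\resc}]=\tfrac12\iint(a-b)^{2}\bar\rho_{k,\resc}\bar\rho_{k,\resc}\,da\,db$ rewrites $\mFr[\bar\rho_{k,\resc}]$ in matching double-integral form.

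Setting $\beta(a,b):=(\psi'(a)-\psi'(b))/(a-b)\geq 0$, collecting the pieces, and applying Lemma \ref{lem:interaction} a second time to estimate $\langle(\psi_{\AC}'')^{1-m}\rangle_{[a,b]}\geq\beta^{1-m}$ inside the quadratic term, I arrive at
\[
\mFr[\rho]-\mFr[\bar\rho_{k,\resc}] \geq \iint_{\RR\times\RR}\bigl[h(\beta)-h(1)\bigr](a-b)^{2}\bar\rho_{k,\resc}(a)\bar\rho_{k,\resc}(b)\,da\,db,\qquad h(\beta):=\frac{\beta^{1-m}}{2(m-1)}+\frac{\beta^{2}}{4}.
\]
The proof then closes with the one-variable calculus estimate $h(\beta)\geq h(1)=\tfrac{m+1}{4(m-1)}$ for $\beta>0$: indeed $h'(\beta)=\tfrac12(\beta-\beta^{-m})$ vanishes only at $\beta=1$, which is a strict global minimum since $h''(\beta)>0$.

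The main technical obstacle is the bookkeeping in the substitution step: the rescaled characterisation \eqref{eq:charac sstates resc} conveniently delivers the mixed weight $\chi|q|^{1-m}+|q|^{2}/2$, and it is precisely this combination that allows the $\chi$-piece to recombine with $T_2$ into the non-negative defect already controlled in Theorem \ref{thm:HLSmequiv}, while the $|q|^{2}/2$-piece assembles together with the symmetrised $T_3$ and the value $\mFr[\bar\rho_{k,\resc}]$ from \eqref{eq:FV resc} into the clean pointwise inequality for $h$. For the equality case, both applications of Lemma \ref{lem:interaction} force $\psi_{\AC}''$ to be constant on intervals meeting $\supp\bar\rho_{k,\resc}$, and strict minimality of $h$ at $\beta=1$ fixes this constant to be $1$; the vanishing first moments then yield $\psi'=\id$, hence $\rho=\bar\rho_{k,\resc}$.
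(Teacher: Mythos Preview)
Your argument is correct and is essentially the paper's proof, just repackaged: the paper bundles your two Jensen applications together with the scalar step into a single micro-inequality (Lemma~\ref{lem:micro ineq resc}), whereas you discard the non-negative $\chi$-Jensen defect, apply Lemma~\ref{lem:interaction} again on the quadratic piece, and then invoke $h(\beta)\geq h(1)$. Your inequality $h(\beta)\geq h(1)$, after multiplying by $4(m-1)$, is exactly the paper's $2X^{1-m}+(m-1)X^2\geq m+1$, so the two proofs use identical ingredients in a slightly different order.
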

The proof is based on two lemmatas: the characterisation of steady states Lemma \ref{lem:char crit} and a microscopic inequality. The difference with the critical case lies in the nature of this microscopic inequality: Jensen's inequality needs to be replaced here as homogeneity has been broken.
To simplify the notation, we denote by $u_{ac}(s):=\psi_{ac}''\left([a,b]_s\right)$ as above  with $[a,b]_s:=(1-s)a+sb$ for any $a,b \in \RR$. We also introduce the notation
$$
\bla u \bra:=\dfrac{\psi'(a)-\psi'(b)}{a-b}= \int_0^1 \psi''([a,b]_s)\, ds
$$
with $u(s):=\psi''\left([a,b]_s\right)$. Both notations coincide when $\psi''$ has no singular part. Note there is a little abuse of notation since $\psi''$ is a measure and not a function, but this notation allows us for simpler computations below.
\begin{lemma} \label{lem:micro ineq resc}
Let $\alpha, \beta>0$ and $m>1$. For any $a, b \in \RR$ and any convex function $\psi:\RR \to \RR$:
\begin{equation}
 \alpha  \bla \psi''([a,b]) \bra^{1-m}   +
\beta(1-m) \bla \psi''([a,b]) \bra^2 \leq
(\alpha+2\beta)  \bla \left(\psi_{ac}''([a,b])\right)^{1-m} \bra   -
\beta(m+1)\, , \label{eq:Jensen modified m}
\end{equation}
where equality arises if and only if $\psi''\equiv  1$ a.e.
\end{lemma}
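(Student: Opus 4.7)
The inequality is pointwise in the pair $(a,b)$, so fix $a,b\in\RR$ and abbreviate $U:=\bla\psi''_{ac}([a,b])\bra$ and $V:=\bla\psi''([a,b])\bra=(\psi'(b)-\psi'(a))/(b-a)$. Since $\psi''$ is a nonnegative measure whose singular part contributes nonnegatively to $V$, one has $V\geq U\geq 0$. My plan is to split \eqref{eq:Jensen modified m} into three steps: (i) a monotonicity argument that absorbs the singular part and reduces the problem to the case $V=U$; (ii) Jensen's inequality applied to $x\mapsto x^{1-m}$ inside $\bla\,\cdot\,\bra$; (iii) a one-variable scalar inequality that closes the estimate.

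For step (i), set $h(V):=\alpha V^{1-m}+\beta(1-m)V^2$. Since $m>1$ and $\alpha,\beta>0$, the derivative $h'(V)=(1-m)(\alpha V^{-m}+2\beta V)$ is strictly negative on $(0,\infty)$, so $h$ is strictly decreasing and $h(V)\leq h(U)$. It thus suffices to prove \eqref{eq:Jensen modified m} with $V$ replaced by $U$. For step (ii), the map $x\mapsto x^{1-m}$ is strictly convex on $(0,\infty)$ (its second derivative $m(m-1)x^{-m-1}$ is positive), so Jensen's inequality in the variable $s\in[0,1]$ yields $\bla\psi''_{ac}([a,b])^{1-m}\bra\geq U^{1-m}$. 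Splitting $(\alpha+2\beta)U^{1-m}=\alpha U^{1-m}+2\beta U^{1-m}$ then reduces the estimate to the scalar inequality
\begin{equation*}
 g(U):=2U^{1-m}+(m-1)U^2-(m+1)\geq 0 \quad \text{for all } U>0.
\end{equation*}
Finally, in step (iii), a direct computation shows $g(1)=0$, $g'(U)=2(m-1)(U-U^{-m})$ vanishes only at $U=1$ with $g''(1)=2(m-1)(m+1)>0$, and $g(U)\to+\infty$ both as $U\to 0^+$ and as $U\to+\infty$; hence $U=1$ is the global minimum and $g\geq 0$.

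Tracking equality through the three steps requires simultaneously $V=U$ (strict monotonicity of $h$ excludes a nontrivial singular part), $\psi''_{ac}$ constant on $[a,b]$ (strict convexity in Jensen), and that constant equal to $1$ (strict minimum of $g$), so equality holds in \eqref{eq:Jensen modified m} if and only if $\psi''\equiv 1$ a.e. on $[a,b]$. The main conceptual point, and the one place where ideas beyond those of Lemma \ref{lem:interaction} are needed, is step (i): the quadratic term $V^2$ breaks the homogeneity exploited in the critical case, and the key observation is that combining it with $\alpha V^{1-m}$ yields a strictly decreasing function of $V$, which allows the singular part of $\psi''$ to be discarded cleanly before Jensen is applied.
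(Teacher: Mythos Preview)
Your argument is correct and uses the same three ingredients as the paper's proof: monotonicity in the singular part, Jensen's inequality for the convex map $x\mapsto x^{1-m}$, and the scalar inequality $2X^{1-m}+(m-1)X^2\geq m+1$. The organisation differs slightly. The paper applies Lemma~\ref{lem:interaction} directly with $\mathcal K(x)=-x^{1-m}$ (increasing and concave) to obtain $\bla\psi''([a,b])\bra^{1-m}\leq\bla\psi''_{ac}([a,b])^{1-m}\bra$ in one stroke, and then invokes the scalar inequality at $X=V=\bla\psi''\bra$. You instead unbundle Lemma~\ref{lem:interaction} into separate monotonicity and Jensen steps, packaging the monotonicity with the quadratic term via the decreasing function $h(V)=\alpha V^{1-m}+\beta(1-m)V^2$, and apply the scalar inequality at $X=U=\bla\psi''_{ac}\bra$. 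Both routes are valid and the equality analysis is the same. One small correction to your closing remark: your step~(i) does not actually go beyond Lemma~\ref{lem:interaction}; the ``increasing'' hypothesis on $\mathcal K$ in that lemma is precisely what absorbs the singular part, so your monotonicity argument is a reformulation of that half of Lemma~\ref{lem:interaction} rather than an additional idea.
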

\begin{proof} 
We have again by Lemma \ref{lem:interaction},
\begin{align*}
(\alpha+2\beta)\bla u \bra^{1-m} \leq (\alpha+2\beta ) \bla u_{ac}^{1-m} \bra\, ,
\end{align*}
thus
\begin{equation*}
 \alpha  \bla u \bra^{1-m}   +
\beta (1-m) \bla u \bra^2 \leq \,
(\alpha+2\beta ) \bla u_{ac}^{1-m}  \bra\\
-\beta\left[2 \bla u \bra^{1-m} + (m-1) \bla u \bra^2 \right] \, .
\end{equation*}
We conclude since the quantity in square brackets verifies
$$
\forall X>0:\; 2  X^{1-m} + (m-1) X^2\geq  m+1 \, .
$$
Equality arises if and only if $u$ is almost everywhere
constant and $\bla u \bra = 1$.
\end{proof}

\begin{proof}[Proof of Theorem \ref{thm:HLSmresc}.]
We denote by $\bar \rho = \bar \rho_{k,\resc} \in \mY_2$ a stationary state of \eqref{eq:KSresc} for the sake of clarity. Then for any solution $\rho \in \mY_2$ of \eqref{eq:KSresc}, there exists a convex function $\psi$ whose gradient pushes forward the measure $\bar \rho(a) da$ onto $\rho(x) dx$, 
$$\psi' \# \left(\bar\rho(a)  da\right) = \rho(x)  dx.$$ 
Similarly to the proof of Theorem \ref{thm:HLSmequiv}, the functional $\mFr[\rho]$ rewrites as follows:
\begin{align*}
\mathcal F_{k,\resc}[\rho]& = \dfrac1{m-1}\int_\RR (\psi_{ac}''(a))^{1-m} \bar \rho(a)^m\, da \\
&\quad+ \dfrac{\chi}{k} \iint_{\RR\times\RR} \left(\dfrac{\psi'(a)-\psi'(b)}{a-b}\right)^{k}|a-b|^{k} \bar \rho(a) \bar \rho(b) \, da db 
\\& \quad  + \dfrac14 \iint_{\RR\times\RR} \left(\dfrac{\psi'(a) - \psi'(b)}{a-b}\right)^2 |a-b|^2 \bar \rho(a) \bar \rho(b) \, da db \, .
\end{align*}
From the characterisation of steady states Lemma \ref{lem:char crit} (ii), we know that for all $\gamma \in \RR$:
$$
 \int_\RR(\psi_{ac}''(a))^{-\gamma}\bar \rho(a)^m\, da= \iint_{\RR\times\RR} \bla \psi_{ac}''([a,b]) ^{-\gamma}\bra  \left( \chi|a-b|^{1-m} + \dfrac{|a-b|^2}2 \right)\bar \rho(a) \bar \rho(b) \, dadb \, .
$$
Choosing $\gamma=m-1$, we can rewrite the energy functional as
\begin{align*}
(m-1)\mathcal F_{k,\resc}[\rho] =  &\iint_{\RR\times\RR} \bla \psi_{ac}''([a,b]) ^{1-m}\bra  \left( \chi|a-b|^{1-m} + \dfrac{|a-b|^2}2 \right)\bar \rho(a) \bar \rho(b) \, da db 
\\  &- \iint_{\RR\times\RR} \left( \bla \psi''([a,b]) \bra^{1-m} \chi|a-b|^{1-m}\right.\\
&\left.\qquad \qquad \quad + \bla \psi''([a,b]) \bra^{2} (1-m) \dfrac{|a-b|^2}{4 }\right) \bar \rho(a) \bar \rho(b) \, da db 
\\
& \geq  (m+1)\iint_{\RR\times\RR}    \dfrac{|a-b|^2}4 \bar \rho(a) \bar \rho(b) \, da db \\
%
  &=   \dfrac{m+1}{2}   \int_\RR |a|^2 \bar \rho(a)\, da  = (m-1) \mathcal F_{k,\resc}[\bar \rho]\, .
\end{align*}
Here, we use the variant of Jensen's inequality \eqref{eq:Jensen modified m} and for the final step, identity \eqref{eq:FV resc}. Again equality holds true if and only if $\psi''$ is identically one.
\end{proof}

\begin{remark}[New Inequality] Up to our knowledge, the functional inequality in Theorem \ref{thm:HLSmequiv} is not known in the literature. 
 Theorem \ref{thm:HLSmresc} makes a connection between equation \eqref{eq:KSresc} and this new general functional inequality by showing that stationary states of the rescaled equation \eqref{eq:KSresc} correspond to global minimisers of the free energy functional $\mFr$. The converse was shown in \cite[Theorem 2.9]{CCH1}.
\end{remark}
As a direct consequence of Theorem \ref{thm:HLSmresc} and the scaling given by \eqref{scaling}, we obtain the following corollaries:

\begin{corollary}[Uniqueness in the Sub-Critical Case]\label{cor:!subcrit}
 Let $k \in (-1,0)$ and $m=1-k$. If $0<\chi<\chi_c(k)$, then there exists a unique stationary state with second moment bounded to the rescaled equation \eqref{eq:KSresc}, and a unique minimiser for $\mFr$ in $\mY_2$.
\end{corollary}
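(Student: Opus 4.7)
The plan is to combine existence, which is already established, with the strict equality clause of Theorem \ref{thm:HLSmresc}. By \cite[Theorem 2.9]{CCH1}, in the sub-critical regime $0<\chi<\chi_c(k)$ there exists at least one (radially symmetric non-increasing, uniformly bounded) minimiser of $\mathcal{F}_{k,\resc}$ in $\mathcal{Y}_2$, and every such minimiser is a stationary state of \eqref{eq:KSresc}. Existence is therefore immediate; only uniqueness requires argument.

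For uniqueness of stationary states, I would argue as in the proof of Corollary \ref{cor:!crit}, but using Theorem \ref{thm:HLSmresc} in place of Theorem \ref{thm:HLSmequiv}. Assume $\bar\rho_1,\bar\rho_2\in\mathcal{Y}_2$ are two stationary states of \eqref{eq:KSresc}. Apply Theorem \ref{thm:HLSmresc} twice: first with $\bar\rho_{k,\resc}=\bar\rho_1$ and competitor $\bar\rho_2$, obtaining $\mathcal{F}_{k,\resc}[\bar\rho_2]\geq\mathcal{F}_{k,\resc}[\bar\rho_1]$; then with the roles reversed, obtaining the opposite inequality. Combining them forces $\mathcal{F}_{k,\resc}[\bar\rho_1]=\mathcal{F}_{k,\resc}[\bar\rho_2]$, so both applications occur in the equality case. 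The rigidity statement (inherited from Lemma \ref{lem:micro ineq resc}, where equality in \eqref{eq:Jensen modified m} requires $\psi''\equiv 1$ a.e.) implies that the Brenier map $\psi'$ pushing $\bar\rho_1$ onto $\bar\rho_2$ satisfies $\psi'(a)=a+c$ for some constant $c\in\mathbb{R}$. The zero-centre-of-mass constraint built into $\mathcal{Y}_2$ then forces $c=0$, so that $\bar\rho_1=\bar\rho_2$.

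Uniqueness of minimisers follows at once: any minimiser of $\mathcal{F}_{k,\resc}$ in $\mathcal{Y}_2$ is a stationary state by \cite[Theorem 2.9]{CCH1}, so two distinct minimisers would contradict uniqueness of stationary states.

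The only delicate point is the rigidity in the equality case. Unlike the critical regime treated in Corollary \ref{cor:!crit}, where uniqueness is only up to dilations because $\mathcal{F}_k$ is homogeneous under mass-preserving dilations, here the quadratic confinement $\tfrac{1}{2}\mathcal{V}[\rho]$ breaks this scaling invariance — this is precisely why Lemma \ref{lem:micro ineq resc} selects the \emph{specific} value $\psi''\equiv 1$ rather than an arbitrary constant, and in turn why the normalisation in $\mathcal{Y}_2$ suffices to eliminate any residual translation freedom and deliver genuine (not merely up-to-dilation) uniqueness.
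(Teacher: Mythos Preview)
Your proof is correct and follows essentially the same approach as the paper's: existence from \cite[Theorem 2.9]{CCH1}, then apply Theorem \ref{thm:HLSmresc} symmetrically to two hypothetical stationary states to force equality in the inequality, and invoke the rigidity of the equality case. Your version is in fact more careful than the paper's, which concludes somewhat loosely that $\bar\rho_1$ and $\bar\rho_2$ are ``dilations of each other''; you correctly trace through Lemma \ref{lem:micro ineq resc} to get $\psi''\equiv 1$ (not merely constant), yielding genuine equality after the centre-of-mass normalisation.
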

\begin{proof}
 By \cite[Theorem 2.9]{CCH1}, there exists a minimiser of $\mFr$ in $\mY_2$ for sub-critical interaction strengths $0<\chi<\chi_c(k)$, which is a stationary state of equation \eqref{eq:KSresc}. Assume \eqref{eq:KSresc} admits two stationary states $\bar \rho_1$ and $\bar \rho_2$. By Theorem \ref{thm:HLSmresc}, $\mFr[\bar \rho_1]=\mFr[\bar \rho_2]$ and it follows that $\bar \rho_1$ and $\bar \rho_2$ are dilations of each other.
\end{proof}

\begin{corollary}[Self-Similar Profiles]
For $0<\chi<\chi_c(k)$, let $k \in (-1,0)$ and $m=1-k$. There exists a unique (up to dilations) self-similar solution $\rho$ to \eqref{eq:KS} given by 
 $$
 \rho(t,x)=\left((2-k)t+1\right)^{\frac{1}{k-2}} \, u\left( \left((2-k)t+1\right)^{\frac{1}{k-2}} x\right)\, ,
 $$
 where $u$ is the unique minimiser of $\mFr$ in $\mY_2$.
\end{corollary}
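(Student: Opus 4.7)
The plan is to obtain the self-similar profiles as a direct consequence of Corollary \ref{cor:!subcrit} by inverting the self-similar change of variables introduced in \eqref{scaling}. The key observation, already made right after \eqref{scaling}, is that self-similar solutions of \eqref{eq:KS} correspond precisely to stationary solutions of the rescaled equation \eqref{eq:KSresc}.

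First I would recall the rescaling $u(t,x) := \alpha(t)\rho(\beta(t),\alpha(t)x)$ with $\alpha(t) = e^t$ and $\beta(t) = \frac{1}{2-k}(e^{(2-k)t}-1)$ from \eqref{scaling}, which maps solutions of \eqref{eq:KS} to solutions of \eqref{eq:KSresc}. Inverting this change of variables, any solution $\rho$ of \eqref{eq:KS} is given in terms of the corresponding rescaled solution $u$ by
\begin{equation*}
\rho(\tau,y) = \alpha(t)^{-1}\, u\bigl(t,\alpha(t)^{-1} y\bigr)\, , \qquad \tau = \beta(t),\; y = \alpha(t)x\, ,
\end{equation*}
where from $e^{(2-k)t} = (2-k)\tau + 1$ we read off $\alpha(t)^{-1} = ((2-k)\tau+1)^{1/(k-2)}$.

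Next I would specialise this to the case where $u(t,x) = \bar u(x)$ is time-independent, i.e.\ $\bar u$ is a stationary state of \eqref{eq:KSresc} in $\mY_2$. By Corollary \ref{cor:!subcrit}, for $0<\chi<\chi_c(k)$ there exists a unique such $\bar u$, and it coincides with the unique minimiser of $\mFr$ in $\mY_2$. Substituting $\bar u$ into the inversion formula above (and relabelling $\tau \mapsto t$, $y \mapsto x$) yields exactly
\begin{equation*}
\rho(t,x) = \bigl((2-k)t+1\bigr)^{\frac{1}{k-2}}\, \bar u\Bigl(\bigl((2-k)t+1\bigr)^{\frac{1}{k-2}} x\Bigr)\, ,
\end{equation*}
which is the claimed self-similar solution of \eqref{eq:KS}. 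Conversely, any self-similar solution of \eqref{eq:KS} produces, via the forward change of variables, a stationary solution of \eqref{eq:KSresc} in $\mY_2$.

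The uniqueness up to dilations then follows immediately: if $\rho_1, \rho_2$ are two self-similar solutions of \eqref{eq:KS}, the associated rescaled profiles are stationary states of \eqref{eq:KSresc} in $\mY_2$ and hence, by Corollary \ref{cor:!subcrit}, must be dilations of each other. Since the inversion formula commutes with mass-preserving dilations, $\rho_1$ and $\rho_2$ are dilations of each other as well. No genuine obstacle is expected here: the argument is purely a bookkeeping exercise on the self-similar change of variables, once the uniqueness of stationary states for \eqref{eq:KSresc} provided by Corollary \ref{cor:!subcrit} is in hand.
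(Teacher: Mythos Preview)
Your proposal is correct and follows precisely the route the paper intends: the corollary is stated without proof as ``a direct consequence of Theorem \ref{thm:HLSmresc} and the scaling given by \eqref{scaling}'', and your argument is exactly this---invert the self-similar change of variables to identify self-similar solutions of \eqref{eq:KS} with stationary states of \eqref{eq:KSresc}, then invoke Corollary \ref{cor:!subcrit}. One small imprecision: Corollary \ref{cor:!subcrit} gives genuine uniqueness (not just up to dilations) of the stationary state in $\mY_2$, since the confinement breaks dilation invariance; the ``up to dilations'' in the statement refers to the scaling freedom of \eqref{eq:KS} itself (e.g.\ time-shifts of the self-similar ansatz), not to non-uniqueness of the profile $u$.
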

\begin{corollary}[Non-Existence Super-Critical and Critical Case]\label{cor:nosstatessupercrit}
\begin{enumerate}[(i)]
 \item If $\chi>\chi_c(k)$, there are no stationary states of equation \eqref{eq:KS} in $\mY$, and the free energy functional $\mF_k$ does not admit minimisers in $\mY$. 
 \item If $\chi \geq \chi_c(k)$, there are no stationary states of the rescaled equation \eqref{eq:KSresc} in $\mY_2$, and the rescaled free energy functional $\mFr$ does not admit minimisers in $\mY_2$.
\end{enumerate}
\end{corollary}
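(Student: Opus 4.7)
My plan for both parts rests on three ingredients: (a) for $\chi>\chi_c(k)$, the attainment of the HLS best constant at the critical strength \cite[Theorem 2.8]{CCH1} produces a density $\rho_*\in\mY$ for which the equality case of \eqref{eq:HLSm} yields
\[
\mF_k[\rho_*]=\frac{1}{m-1}\|\rho_*\|_m^m\bigl(1-\chi C_*\bigr)<0;
\]
(b) the dilation homogeneity $\mF_k[\rho_\lambda]=\lambda^{-k}\mF_k[\rho]$ with $-k>0$ then drives $\mF_k$ along $(\rho_*)_\lambda$ to $-\infty$ as $\lambda\to\infty$; (c) Theorems \ref{thm:HLSmequiv} and \ref{thm:HLSmresc} force $\mF_k$ (resp.\ $\mFr$) to be bounded below by $0$ (resp.\ by $\mFr[\bar\rho_\resc]$) whenever a stationary state exists. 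Combining these three items will produce the contradictions.

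For part (i), ingredients (a)--(b) immediately imply $\inf_{\mY}\mF_k=-\infty$ when $\chi>\chi_c(k)$, ruling out minimisers in $\mY$. If a stationary state $\bar\rho\in\mY$ of \eqref{eq:KS} existed, then Theorem \ref{thm:HLSmequiv} would force $\mF_k\geq 0$ on all of $\mY$, contradicting $\mF_k[\rho_*]<0$.

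For part (ii), I will split into two sub-cases. When $\chi>\chi_c(k)$, the same $\rho_*$ works (the HLS optimiser of \cite[Theorem 2.8]{CCH1} is compactly supported, hence in $\mY_2$), and
\[
\mFr[(\rho_*)_\lambda]=\lambda^{-k}\mF_k[\rho_*]+\tfrac{1}{2}\lambda^{-2}\mV[\rho_*]\longrightarrow -\infty
\]
as $\lambda\to\infty$, killing both minimisers of $\mFr$ and, via Theorem \ref{thm:HLSmresc}, stationary states of \eqref{eq:KSresc} in $\mY_2$. The more delicate case is $\chi=\chi_c(k)$: here I take the critical stationary state $\bar\rho_c\in\mY_2$ of \eqref{eq:KS} from \cite[Theorem 2.8]{CCH1}, which satisfies $\mF_k[\bar\rho_c]=0$, and observe that its dilations give $\mFr[(\bar\rho_c)_\lambda]=\tfrac12\lambda^{-2}\mV[\bar\rho_c]\to 0$ as $\lambda\to\infty$. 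Since $\mFr\geq 0$ on $\mY_2$ by Theorem \ref{thm:HLSmequiv}, this shows $\inf_{\mY_2}\mFr=0$; the infimum is not attained in $\mY_2$ because a minimiser would require $\mV=0$, impossible for a bounded probability density. Finally, any stationary state $\bar\rho_\resc\in\mY_2$ of \eqref{eq:KSresc} would have $\mFr[\bar\rho_\resc]=(1/2-1/k)\mV[\bar\rho_\resc]>0$ by the virial identity \eqref{eq:FV resc} (since $k<0$), and Theorem \ref{thm:HLSmresc} would then force $\mFr\geq\mFr[\bar\rho_\resc]>0$ on $\mY_2$, contradicting $\inf_{\mY_2}\mFr=0$.

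The main subtlety will be the critical rescaled case $\chi=\chi_c(k)$: unlike the super-critical sub-case it is not a pure ``energy $\to-\infty$'' argument but rests on the balance between the vanishing of $\mF_k$ along dilations of $\bar\rho_c$ and the strict positivity $(1/2-1/k)\mV[\bar\rho_\resc]>0$ provided by the virial identity. Once this balance is in hand, no functional inequality beyond Theorems \ref{thm:HLSmequiv} and \ref{thm:HLSmresc} is needed.
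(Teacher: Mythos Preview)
Your proof is correct and follows essentially the same strategy as the paper: use the critical HLS optimiser $\rho_*$ from \cite[Theorem~2.8]{CCH1} to exhibit negative (or vanishing) energy, exploit dilation homogeneity to drive the infimum down, and then invoke Theorems~\ref{thm:HLSmequiv} and~\ref{thm:HLSmresc} to derive contradictions with the existence of stationary states. The only minor deviation is in the critical rescaled case $\chi=\chi_c(k)$: where the paper simply observes that a stationary state of \eqref{eq:KSresc} would be a minimiser of $\mFr$ by Theorem~\ref{thm:HLSmresc} (contradicting the non-existence of minimisers just established), you instead route through the virial identity \eqref{eq:FV resc} to get the strict positivity $\mFr[\bar\rho_\resc]>0$ before reaching the same contradiction. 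Your detour is perfectly valid but unnecessary --- once you know no minimiser exists, Theorem~\ref{thm:HLSmresc} alone rules out stationary states directly.
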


\begin{proof}
 For critical $\chi_c(k)$, there exists a minimiser $\bar \rho \in \mY$ of $\mF_k$ by \cite[Theorem 2.8]{CCH1}, which is a stationary state of equation \eqref{eq:KS} by \cite[Theorem 3.14]{CCH1}. For $\chi>\chi_c(k)$, we have 
 $$
 \mF_k[\bar \rho]=\mU_m[\bar \rho]+\chi\mW_k[\bar\rho] < \mU_m[\bar \rho]+\chi_c(k)\mW_k[\bar\rho]=0
 $$
 since stationary states have zero energy by Lemma \ref{lem:char crit} (i). However, by Theorem \ref{thm:HLSmequiv}, if there exists a stationary state for $\chi>\chi_c(k)$, then all $\rho \in \mY$ satisfy $\mF_k[\rho]\geq 0$, which contradicts the above. Therefore, the assumptions of the theorem cannot hold and so there are no stationary states in original variables.
 Further, taking dilations $\rho_\lambda(x)=\lambda \bar \rho\left(\lambda x\right)$, we have $\mF_k[\rho_\lambda]=\lambda^{-k} \mF_k[\bar\rho]<0$, and letting $\lambda \to \infty$, we see that $\inf_{\rho \in \mY} \mF_k[\rho] = -\infty$, and so (i) follows.\\
 
 In order to prove (ii), observe that the minimiser $\bar \rho$ for critical $\chi=\chi_c(k)$ is in $\mY_2$ as it is compactly supported \cite[Corollary 3.9]{CCH1}. We obtain for the rescaled free energy of its dilations
 $$
 \mFr[\rho_\lambda]=\lambda^{-k}\mF_k[\bar \rho]+\frac{\lambda^{-2}}{2}\mV[\bar \rho] \to -\infty\, ,\qquad \text{as}\, \, \lambda \to \infty\, .
 $$
 Hence, $\mFr$ is not bounded below in $\mY_2$. Similarly, for $\chi=\chi_c(k)$, 
 $$
 \mFr[\rho_\lambda]=\frac{\lambda^{-2}}{2}\mV[\bar \rho] \to 0\, ,\qquad \text{as}\, \, \lambda \to \infty\, ,
 $$
 and so for a minimiser $\tilde \rho \in \mY_2$ to exist, it should satisfy $\mFr[\tilde \rho]\leq 0$. However, it follows from Theorem \ref{thm:HLSm} that $\mFr[\rho]\geq \frac{1}{2}\mV[\rho]>0$ for any $\rho \in \mY_2$, and therefore, $\mFr$ does not admit minimisers in $\mY_2$ for $\chi=\chi_c(k)$.\\
 Further, if equation \eqref{eq:KSresc} admitted stationary states in $\mY_2$ for any $\chi\geq \chi_c(k)$, then they would be minimisers of $\mFr$ by Theorem \ref{thm:HLSmresc}, which contradicts the non-existence of minimisers.
\end{proof}

\begin{remark}[Linearisation around the stationary density]
\label{rem:linearization fair}
We linearise the functional $\mF_k$ around the stationary distribution $\bar \rho_k$ of equation \eqref{eq:KS}.
For the perturbed measure $\mu_\eps = (\id + \eps\eta')\#\bar\mu_k$, with $d \bar\mu_k(x) = \bar \rho_k(x) \, dx$ and $d \mu_\eps (x) = \rho_\eps (x) \, dx$, we have
\begin{align*}
\mF_k[\rho_\eps]
& = \dfrac{\eps^2}2 m \left[ \int_\RR   \eta''(a) ^2 \bar \rho_k(a)^m\, da - \chi_c(k) \iint_{\RR\times\RR} \left(\dfrac{\eta'(a)-\eta'(b)}{a-b}\right)^{2} |a-b|^{1-m} \bar \rho_k(a) \bar \rho_k(b) \, dadb    \right]\\
&\quad+ o(\eps^2) \\
& = \dfrac{\eps^2}2 m \chi_c(k)\iint_{\RR\times\RR}
 \left\{ \bla \eta''([a,b])^{2} \bra  - \bla \eta''([a,b]) \bra^{2}
 \right\} |a-b|^{1-m} \bar \rho_k(a) \bar \rho_k(b) \, dadb + o(\eps^2)\, .
\end{align*}
We define the local oscillations (in $L^2$) of functions over intervals as
\[ \osc_{(a,b)}(v): = \int_{t = 0}^1  \left\{ v([a,b]_t) - \big\langle v([a,b]) \big\rangle\right\} ^2\, dt \geq 0 \, . \]
The Hessian of the functional $\mF_k$ evaluated at the stationary density $\bar \rho_k$ then reads
\[ D^2 \mF_k[\bar \rho_k](\eta,\eta) =  m\chi_c(k)  \iint_{\RR\times\RR}
 \osc_{(a,b)} ( \eta'') |a-b|^{1-m} \bar \rho_k(a) \bar \rho_k(b) \, dadb\, \geq0. \]
 Similarly, we obtain for the rescaled free energy
 \begin{align*}
\mathcal F_{k,\resc}[\rho_\eps]
& = \mathcal F_{k,\resc}[\bar \rho_k] 
+  \dfrac{\eps^2}2 m  \int_\RR   \eta''(a) ^2 \bar \rho_k(a)^m\, da \\
&\quad - \dfrac{\eps^2}2 m \chi  \iint_{\RR\times\RR} \left(\dfrac{\eta'(a)-\eta'(b)}{a-b}\right)^{2} |a-b|^{1-m} \bar \rho_k(a) \bar \rho_k(b)\, dadb   \\
& \quad + \dfrac{\eps^2}4  \iint_{\RR\times\RR} \left(\dfrac{\eta'(a) - \eta'(b)}{a-b}\right)^2 |a-b|^2  \bar \rho_k(a) \bar \rho_k(b)\, dadb  + o(\eps^2) \\
& = \mathcal F_{k,\resc}[\bar \rho_k]\\
&\quad +  \dfrac{\eps^2}2
\left[m \chi  \iint_{\RR\times\RR}
\left\{ \bla \eta''([a,b])^2 \bra   -  \bla \eta''([a,b]) \bra^2 
 \right\} |a-b|^{1-m}  \bar \rho_k(a) \bar \rho_k(b)\, dadb 
\right.\\
& \quad + \left.   \iint_{\RR\times\RR}
\left\{ \frac m2 \bla \eta''([a,b])^2 \bra  + \frac12 \bla \eta''([a,b]) \bra^2   \right\} |a-b|^2  \bar \rho_k(a) \bar \rho_k(b)\, dadb 
\right] + o(\eps^2) 
\end{align*}
to finally conclude
\begin{align*}
\mathcal F_{k,\resc}[\rho_\eps] & = \mathcal F_{k,\resc}[\bar \rho_k] \\
&\quad + \dfrac{\eps^2}2
\left[  \iint_{\RR\times\RR} \osc_{(a,b)} ( \eta'') \left( m \chi |a-b|^{1-m} + \frac m2 |a-b|^2 \right) \bar \rho_k(a) \bar \rho_k(b)\, dadb  \right. \\
& \quad +  \left. \frac{m+1}2 \iint_{\RR\times\RR} \left( \eta'(a) - \eta'(b) \right)^2  \bar \rho_k(a) \bar \rho_k(b)\, dadb   \right] + o(\eps^2)\, ,
\end{align*}
and hence, the Hessian evaluated at the stationary state $\bar \rho_k$ of \eqref{eq:KSresc} is given by the expression
\begin{align*}
D^2 \mF_{k,\resc}[\bar \rho_k ](\eta,\eta) =   &\iint_{\RR\times\RR} \osc_{(a,b)} ( \eta'') \left( m \chi |a-b|^{1-m} + \frac m2 |a-b|^2 \right) \bar \rho_k(a) \bar \rho_k(b)\, dadb  \\
&+  (m+1) \int_{\RR}  \eta'(a)^2 \bar \rho_k (a) \, da   \, \geq0 \, .
\end{align*}
We have naturally that the functional $\mF_{k,\resc}$ is locally uniformly convex, with the coercivity constant $m+1$. However, the local variations of $\mF_{k,\resc}$ can be large in the directions where the Brenier's map $\eta$ is large in the $\mathcal C^3$ norm. Interestingly enough the coercivity constant does not depend on $\chi$, even in the limit $\chi\nearrow \chi_c(k)$.
\end{remark}

\subsection{Fast Diffusion Case $k>0$}
\label{sec:Fast Diffusion Case k pos}

Not very much is known about the fast diffusion case where $k \in (0,1)$ and hence $m=1-k\in (0,1)$, that is diffusion is fast in regions where the density of particles is low. In \cite{CCH1}, we showed that equation \eqref{eq:KS} has no radially symmetric non-increasing stationary states with $k$th moment bounded, and there are no radially symmetric non-increasing global minimisers for the energy functional $\mF_k$ for any $\chi>0$. By \cite[Theorem 2.11]{CCH1}, there exists a continuous radially symmetric non-increasing stationary state of the rescaled equation \eqref{eq:KSresc} for all $\chi>0$. In this sense, there is no criticality for the parameter $\chi$. We provide here a full proof of non-criticality by optimal transport techniques involving the analysis of the minimisation problem in rescaled variables, showing that global minimisers exist 
in the right functional spaces for all values of the critical parameter and that they are indeed stationary states - as long as diffusion is not too fast. 
More precisely, global minimisers with finite energy $\mFr$ can only exist in the range $0<k<\tfrac{2}{3}$, that is $\tfrac{1}{3}<m<1$ \cite{CCH1}. This restriction is exactly what we would expect looking at the behaviour of the fast diffusion equation ($\chi=0$) \cite{VazquezFDE}. In particular, for $k \in (0,1)$ and $m=1-k\in(0,1)$, radially symmetric non-increasing stationary states, if they exist, are integrable and have bounded $k$th moment \cite[Remarks 4.6 and 4.9]{CCH1}. By \cite[Remark 4.11]{CCH1} however, their second moment is bounded and $\rho^m\in L^1\left(\RR\right)$ if and only if  $k < 2/3$, in which case they belong to $\mY_{2}$ and their rescaled free energy is finite. This restriction corresponds to $\tfrac{1}{3}<m<1$ and coincides with the regime of the one-dimensional fast diffusion equation ($\chi=0$) where the Barenblatt profile has second moment bounded and its $m$th power is integrable \cite{BDGV}. 
Intuitively, adding attractive interaction to the dynamics helps to counteract the escape of mass to infinity. However, the quadratic confinement due to the rescaling of the fast-diffusion equation is already stronger than the additional attractive force since $k<2$ and hence, we expect that the behaviour of the tails is dominated by the non-linear diffusion effects even for $\chi>0$ as for the classical fast-diffusion equation.\\

Using completely different methods, the non-criticality of $\chi$ has also been observed in \cite{CL,CieslakLaurent10} for the limiting case in one dimension taking $m=0$, corresponding to logarithmic diffusion, and $k=1$.
The authors showed that solutions to \eqref{eq:KS} with $(m = 0,k = 1)$ are globally defined in time for all values of the parameter $\chi>0$.
\\

In order to establish equivalence between global minimisers and stationary states in one dimension, we prove a type of reversed HLS inequality providing a bound on $\int \rho^m$ in terms of the interaction term $\int (W_k \ast \rho)\rho$. The inequality gives a lower bound on the rescaled energy $\mFr$:

\begin{theorem}\label{thm:HLSmresc kpos}
  Let $k \in (0,1)$, $m=1-k$ and $\chi>0$. Then $\bar \rho \in \mY_{2,k}$
   is a stationary state of \eqref{eq:KSresc} if and only if for any solution $\rho \in \mY_{2,k}$ we have the inequality
   $$
   \mFr[\rho]\geq \mFr[\bar \rho]
   $$
   with the equality cases given by $\rho=\bar \rho$. 
\end{theorem}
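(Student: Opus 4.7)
The plan is to adapt the optimal-transport argument of Theorem \ref{thm:HLSmresc}, with signs reversed to reflect $m-1<0$. For the direct implication, let $\bar\rho:=\bar\rho_{k,\resc}$ be a stationary state of \eqref{eq:KSresc}, take $\rho\in\mY_{2,k}$, and let $\psi$ be the convex function whose gradient pushes $\bar\rho(a)\,da$ forward to $\rho(x)\,dx$. Combining the change-of-variables formula \eqref{eq:change variables}, the monotonicity of $\psi'$, and the vanishing centre of mass (to rewrite $\mV[\rho]=\tfrac{1}{2}\iint(\psi'(a)-\psi'(b))^2\bar\rho(a)\bar\rho(b)\,dadb$), together with the identity $k=1-m$ so that $(m-1)/k=-1$, I obtain
\begin{align*}
(m-1)\mFr[\rho] &= \int_\RR (\psi_{ac}''(a))^{1-m}\bar\rho(a)^m\,da - \chi\iint_{\RR^2}\bla\psi''([a,b])\bra^{1-m}|a-b|^{1-m}\bar\rho(a)\bar\rho(b)\,dadb\\
&\quad + \tfrac{m-1}{4}\iint_{\RR^2}\bla\psi''([a,b])\bra^{2}|a-b|^2\bar\rho(a)\bar\rho(b)\,dadb.
\end{align*}

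I then use Lemma \ref{lem:char crit}(ii) with exponent $\gamma=m-1$ (now negative) to replace the internal-energy integral by a double integral of $\bla\psi_{ac}''([a,b])^{1-m}\bra$ weighted by $\chi|a-b|^{1-m}+|a-b|^2/2$; together with identity \eqref{eq:FV resc}, which gives $(m-1)\mFr[\bar\rho]=\tfrac{m+1}{4}\iint(a-b)^2\bar\rho(a)\bar\rho(b)\,dadb$, the required inequality $(m-1)\mFr[\rho]\leq(m-1)\mFr[\bar\rho]$ reduces to the pointwise microscopic bound
\[
(\alpha+2\beta)\bla\psi_{ac}''([a,b])^{1-m}\bra - \alpha\bla\psi''([a,b])\bra^{1-m} + (m-1)\beta\bla\psi''([a,b])\bra^{2} \leq (m+1)\beta,
\]
where $\alpha=\chi|a-b|^{1-m}$ and $\beta=|a-b|^2/4$. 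This is the fast-diffusion counterpart of Lemma \ref{lem:micro ineq resc}, and the main obstacle is that it points in the opposite direction because $m-1<0$. To prove it, set $X=\bla\psi''([a,b])\bra$ and apply Lemma \ref{lem:interaction} to the increasing concave function $x\mapsto x^{1-m}$ (valid since $0<1-m<1$) to get $\bla\psi_{ac}''([a,b])^{1-m}\bra\leq X^{1-m}$; since $\alpha+2\beta>0$, the left-hand side is bounded above by $\beta[2X^{1-m}-(1-m)X^2]$, and it remains to verify the elementary one-variable bound $f(X):=2X^{1-m}-(1-m)X^2\leq m+1$ for all $X>0$. This follows from $f'(X)=2(1-m)(X^{-m}-X)$ having its only zero at $X=1$, where $f(1)=m+1$ is the unique maximum. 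Equality throughout forces $\psi''\equiv 1$ almost everywhere on $\supp(\bar\rho)$, so $\rho=\bar\rho$.

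For the converse, if $\mFr[\rho]\geq\mFr[\bar\rho]$ for every $\rho\in\mY_{2,k}$, then $\bar\rho$ is a global minimiser of $\mFr$. Standard first-variation computations with outward variations $\rho_\eps=(1-\eps)\bar\rho+\eps\eta$ show that $\tfrac{m}{m-1}\bar\rho^{m-1}+2\chi\bar S_k+|x|^2/2$ is constant on the support of $\bar\rho$ and no smaller elsewhere; multiplying the $x$-derivative of this identity by $\bar\rho$ yields the distributional stationary equation $\partial_x\bar\rho^m=-2\chi\bar\rho\,\partial_x\bar S_k-x\,\bar\rho$ of Definition \ref{def:sstates resc}, with the regularity of $\bar S_k$ supplied by Lemma \ref{lem:regS}.
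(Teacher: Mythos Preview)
Your forward implication is correct and follows the paper's argument essentially verbatim: the same optimal-transport reformulation via the characterisation \eqref{eq:charac sstates resc}, the same reduction to a microscopic inequality, and the same proof of that inequality (Jensen for the concave increasing $x\mapsto x^{1-m}$, then maximising $2X^{1-m}-(1-m)X^2$). Your pointwise bound with $\beta=|a-b|^2/4$ is exactly Lemma~\ref{lem:micro ineq resc kpos} after the relabelling $\beta_{\text{paper}}=2\beta_{\text{student}}$.

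For the converse, the paper does not argue directly but cites \cite[Theorem~2.11]{CCH1}. Your first-variation sketch is the right idea, but the phrasing ``constant on the support of $\bar\rho$ and no smaller elsewhere'' is imported from the porous-medium setting and does not fit here: since $m<1$, the term $\tfrac{m}{m-1}\bar\rho^{m-1}$ tends to $-\infty$ wherever $\bar\rho\to 0$, so any minimiser must in fact be everywhere positive and the Euler--Lagrange identity holds on all of~$\RR$. You would also need to supply the regularity required by Definition~\ref{def:sstates resc} (in particular $\bar\rho^m\in\mathcal W^{1,2}_{loc}$), which the cited external result handles.
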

The above theorem implies that stationary states in $\mY_{2,k}$ of the rescaled equation \eqref{eq:KSresc} are mimimisers of the rescaled free energy $\mFr$. Since the converse is true by \cite[Theorem 2.11]{CCH1}, it allows us to establish equivalence between stationary states of \eqref{eq:KSresc} and minimisers of $\mFr$.
To prove Theorem \ref{thm:HLSmresc kpos}, we need a result similar to Lemma \ref{lem:micro ineq resc}:

\begin{lemma} \label{lem:micro ineq resc kpos}
Let $\alpha, \beta>0$ and $m \in (0,1)$. For any $a, b \in \RR$ and any convex function $\psi:\RR \to \RR$:
\begin{equation}
\left(\alpha+\beta\right)  \bla \left(\psi_{ac}''([a,b])\right)^{1-m} \bra 
\, \leq \, \alpha \bla \psi''([a,b]) \bra^{1-m} + \frac{\beta(1-m)}{2}\, \bla \psi''([a,b]) \bra^2 + \frac{\beta(m+1)}{2}\, , \label{eq:Jensen k pos}
\end{equation}
where equality arises if and only if $\psi''\equiv  1$ a.e.
\end{lemma}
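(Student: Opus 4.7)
The strategy mirrors the proof of Lemma \ref{lem:micro ineq resc}, but the sign of $1-m$ has flipped. Since now $1-m\in(0,1)$, the power function $x\mapsto x^{1-m}$ is increasing and strictly concave on $(0,\infty)$, so Jensen's inequality (via Lemma \ref{lem:interaction}) now points in the opposite direction and must be combined with a different elementary pointwise estimate.

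\emph{Step 1: Jensen's inequality.} Apply Lemma \ref{lem:interaction} with the increasing, strictly concave function $\mathcal{K}(x)=x^{1-m}$ to obtain
\begin{equation*}
\bla \left(\psi_{ac}''([a,b])\right)^{1-m}\bra \leq \bla \psi''([a,b])\bra^{1-m},
\end{equation*}
with equality precisely when $\psi''$ is a.e.\ constant on $[a,b]$. Multiplying by $(\alpha+\beta)>0$ and splitting the right-hand side gives
\begin{equation*}
(\alpha+\beta)\bla \left(\psi_{ac}''([a,b])\right)^{1-m}\bra \leq \alpha \bla \psi''([a,b])\bra^{1-m} + \beta \bla \psi''([a,b])\bra^{1-m}.
\end{equation*}

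\emph{Step 2: A one-variable inequality.} I would next show that for all $X>0$ and $m\in(0,1)$,
\begin{equation*}
2X^{1-m} \leq (1-m) X^2 + (m+1),
\end{equation*}
with equality iff $X=1$. This is verified by considering $g(X):=(1-m)X^2+(m+1)-2X^{1-m}$: one checks $g(1)=0$, $g'(X)=2(1-m)(X-X^{-m})$ so $g'(1)=0$, and $g''(X)=2(1-m)(1+mX^{-m-1})>0$ on $(0,\infty)$. Hence $g$ is strictly convex with unique minimiser at $X=1$ and minimum value $0$. Applying this with $X=\bla \psi''([a,b])\bra>0$ and multiplying by $\beta/2>0$ yields
\begin{equation*}
\beta \bla \psi''([a,b])\bra^{1-m} \leq \frac{\beta(1-m)}{2}\bla \psi''([a,b])\bra^{2} + \frac{\beta(m+1)}{2}.
\end{equation*}

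\emph{Step 3: Combine and track equality.} Substituting the estimate from Step 2 into the right-hand side from Step 1 produces exactly \eqref{eq:Jensen k pos}. Equality in Step 1 forces $\psi''$ to be constant a.e.\ on $[a,b]$, while equality in Step 2 forces $\bla \psi''([a,b])\bra = 1$; together these give $\psi''\equiv 1$ a.e., as desired.

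The only non-routine ingredient is identifying the correct pointwise inequality in Step 2; once one observes that the concavity of $x\mapsto x^{1-m}$ reverses when $m$ crosses $1$, the rest is a direct adaptation of the $m>1$ argument, with the convexity of $g$ playing the role analogous to the bound $2X^{1-m}+(m-1)X^2\geq m+1$ used previously.
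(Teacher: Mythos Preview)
Your proof is correct and follows essentially the same approach as the paper: apply Lemma \ref{lem:interaction} with the increasing strictly concave function $\mathcal K(x)=x^{1-m}$, then use the elementary pointwise inequality $2X^{1-m}\le (1-m)X^2+(m+1)$ (which the paper writes equivalently as $\tfrac{1}{m-1}X^{1-m}+\tfrac12 X^2\ge \tfrac{m+1}{2(m-1)}$). Your verification of the pointwise estimate via strict convexity of $g$ is more detailed than the paper's ``by direct inspection,'' but the argument is the same.
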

\begin{proof}
Denote $u(s):=\psi''\left([a,b]_s\right)$ with $[a,b]_s:=(1-s)a+sb$ and we write $u_{ac}$ for the absolutely continuous part of $u$.
We have by Lemma \ref{lem:interaction},
\begin{align*}
(\alpha+\beta)\bla u_{ac}^{1-m} \bra \leq (\alpha+\beta ) \bla u \bra^{1-m}\, .
\end{align*}
Further by direct inspection, 
$$
\forall X>0:\;   \frac{1}{m-1}\, X^{1-m} +\frac{1}{2} \, X^2\geq  \frac{m+1}{2(m-1)} \, ,
$$
thus
$$
(\alpha+\beta)\bla u_{ac}\bra^{1-m} \leq \alpha\bla u \bra^{1-m} +\frac{\beta(1-m)}{2}\, \bla u\bra^2+ \frac{\beta(m+1)}{2}
$$
and equality arises if and only if $u$ is almost everywhere
constant and $\bla u \bra = 1$.
\end{proof}

\begin{proof}[Proof of Theorem \ref{thm:HLSmresc kpos}.] For a stationary state $\bar \rho \in \mY_{2,k}$ and any solution $\rho \in \mY_{2,k}$ of \eqref{eq:KSresc}, there exists a convex function $\psi$ whose gradient pushes forward the measure $\bar \rho(a) da$ onto $\rho(x) dx$ 
$$\psi' \# \left(\bar\rho(a)  da\right) = \rho(x)  dx.$$ 
From characterisation \eqref{eq:charac sstates resc} we have for any $\gamma \in \RR$,
\begin{align*}
 &\int_\RR \left(\psi_{ac}''(t,a)\right)^{-\gamma} \bar \rho_k(a)^m\, da = 
 \iint_{\RR\times\RR}\left(\chi|a-b|^{1-m} +\frac{|a-b|^2}{2}\right) \bla \psi_{ac}''(t,(a,b))^{-\gamma}\bra \bar \rho_k(a) \bar \rho_k(b) \, da db \, .
\end{align*}
Choosing $\gamma=m-1$, the functional $\mFr[\rho]$ rewrites similarly to the proof of Theorem \ref{thm:HLSmresc}:
\begin{align*}
\mathcal F_{k,\resc}[\rho] 
= &\, \dfrac1{m-1}\int_\RR (\psi_{ac}''(a))^{1-m} \bar \rho(a)^m\, da \\
&\quad+ \dfrac{\chi}{1-m} \iint_{\RR\times\RR} \left(\dfrac{\psi'(a)-\psi'(b)}{a-b}\right)^{1-m}|a-b|^{1-m} \bar \rho(a) \bar \rho(b) \, da db 
\\ & \quad + \dfrac14 \iint_{\RR\times\RR} \left(\dfrac{\psi'(a) - \psi'(b)}{a-b}\right)^2 |a-b|^2 \bar \rho(a) \bar \rho(b) \, da db 
\\ =  &\, \dfrac1{m-1}\iint_{\RR\times\RR} \bla \psi_{ac}''([a,b]) ^{1-m}\bra  \left( \chi|a-b|^{1-m} + \dfrac{|a-b|^2}2 \right)\bar \rho(a) \bar \rho(b) \, da db 
\\ & \quad- \dfrac1{m-1}   \iint_{\RR\times\RR} \left( \bla \psi''([a,b]) \bra^{1-m} \chi|a-b|^{1-m}   \right.\\
&\qquad \qquad \qquad \qquad \quad \left.+ \bla \psi''([a,b]) \bra^{2} (1-m) \dfrac{|a-b|^2}{4 }\right) \bar \rho(a) \bar \rho(b) \, da db 
\end{align*}
Now, using the variant of Jensen's inequality \eqref{eq:Jensen k pos} of Lemma \ref{lem:micro ineq resc kpos}, this simplifies to
\begin{align*}
\mFr[\rho] & \geq  \dfrac{m+1}{m-1}\iint_{\RR\times\RR}    \dfrac{|a-b|^2}4 \bar \rho(a) \bar \rho(b) \, da db 
  =   \dfrac{m+1}{2(m-1)}   \int_\RR |a|^2 \bar \rho(a)\, da  =  \mathcal F_{k,\resc}[\bar \rho]\, .
\end{align*}
Here, we used identity \eqref{eq:FV resc} for the final step. Again equality holds true if and only if $\psi''$ is identically one.
\end{proof}

\begin{remark}[Sign of the Rescaled Free Energy]
 In fact, $\mFr[\bar \rho]\leq 0$. Choosing $\rho_\lambda(x)=\lambda \bar \rho(\lambda x)$ a dilation of the stationary state, we obtain thanks to the homogeneity properties of the energy functional,
 $$
 \lambda^{-k} \mU_m[\bar \rho] + \lambda^{-k} \mW_k[\bar \rho] + \lambda^{-2} \mV[\bar \rho]= \mFr[\rho_\lambda] \geq \mFr[\bar \rho],
 $$
 and so we conclude that $\mFr[\bar \rho]$ must be non-positive for any stationary state $\bar \rho \in  \mY_{2}$ by taking the limit $\lambda \to \infty$.
\end{remark}

\begin{corollary}[Uniqueness]\label{cor:!kpos}
 Let $k \in \left(0,\tfrac{2}{3}\right)$ and $m=1-k$. For any $\chi>0$, there exists a unique stationary state with second and $k$th moment bounded to equation \eqref{eq:KSresc}, and a unique minimiser for $\mFr$ in $\mY_{2,k}$.
\end{corollary}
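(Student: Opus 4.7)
My plan is to mirror the structure of Corollary \ref{cor:!subcrit}, taking advantage of the fact that the equality case in Theorem \ref{thm:HLSmresc kpos} pins the minimiser down completely rather than only up to dilations. First, I would obtain existence of a stationary state by invoking \cite[Theorem 2.11]{CCH1}, which produces a continuous, radially symmetric non-increasing stationary state $\bar\rho$ of \eqref{eq:KSresc} for every $\chi>0$. The restriction $k<2/3$ is precisely what guarantees, via \cite[Remarks 4.6, 4.9, 4.11]{CCH1} recalled in the discussion preceding Theorem \ref{thm:HLSmresc kpos}, that this $\bar\rho$ has finite second moment and $\bar\rho^m\in L^1(\RR)$, hence $\bar\rho\in\mY_{2,k}$ with $\mFr[\bar\rho]$ finite.

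For uniqueness, I would assume $\bar\rho_1,\bar\rho_2\in\mY_{2,k}$ are two stationary states and apply Theorem \ref{thm:HLSmresc kpos} twice — once with $\bar\rho_1$ as reference and $\bar\rho_2$ as the arbitrary element of $\mY_{2,k}$, then with the roles swapped — to conclude $\mFr[\bar\rho_1]=\mFr[\bar\rho_2]$. The equality statement in that theorem then forces the Brenier map $\psi'$ pushing $\bar\rho_1$ forward onto $\bar\rho_2$ to satisfy $\psi''\equiv 1$ almost everywhere, so $\psi'(a)=a+c$ for some constant $c$. Since elements of $\mY_{2,k}\subset\mY$ have vanishing centre of mass, necessarily $c=0$, hence $\bar\rho_1=\bar\rho_2$. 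The same argument applied to any minimiser of $\mFr$ in $\mY_{2,k}$ — which is a stationary state by Corollary \ref{cor:EL}(ii) — yields uniqueness of minimisers as well.

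No substantive obstacle is expected here: the analytic work has already been absorbed into Theorem \ref{thm:HLSmresc kpos} together with the existence result in \cite[Theorem 2.11]{CCH1}. The only conceptual point worth emphasising is that, unlike in the sub-critical porous-medium regime of Corollary \ref{cor:!subcrit}, the confining quadratic potential $\mV$ here breaks the scale invariance of the bare free energy $\mF_k$; thus the microscopic equality case $\psi''\equiv 1$ (not merely $\psi''$ constant) locks in the density without leaving a one-parameter dilation ambiguity, which is why the statement of the corollary asserts genuine uniqueness.
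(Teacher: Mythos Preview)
Your approach is essentially the same as the paper's: existence from \cite[Theorem 2.11]{CCH1}, then uniqueness by applying Theorem~\ref{thm:HLSmresc kpos} symmetrically to two putative stationary states and invoking the equality case. Your additional remark spelling out why $\psi''\equiv 1$ forces $\bar\rho_1=\bar\rho_2$ (via the zero centre-of-mass constraint) is correct and in fact more explicit than the paper's one-line conclusion.

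One citation error: Corollary~\ref{cor:EL} is stated only for $k\in(-1,0)$, so you cannot invoke it here to say that minimisers of $\mFr$ are stationary states. The correct route is \cite[Theorem 2.11]{CCH1}, which (as noted in the discussion immediately following Theorem~\ref{thm:HLSmresc kpos}) provides that direction in the fast-diffusion regime. Alternatively, once you have established a unique stationary state $\bar\rho$, any minimiser $\tilde\rho$ attains equality in $\mFr[\tilde\rho]\geq\mFr[\bar\rho]$, and the equality case of Theorem~\ref{thm:HLSmresc kpos} gives $\tilde\rho=\bar\rho$ directly, bypassing the need for the ``minimisers are stationary states'' implication altogether.
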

\begin{proof}
 By \cite[Theorem 2.11]{CCH1}, there exists a minimiser of $\mFr$ in $\mY_{2,k}$, which is a stationary state of equation \eqref{eq:KSresc}. Assume \eqref{eq:KSresc} admits two stationary states $\bar \rho_1$ and $\bar \rho_2$ in $\mY_{2,k}$. By Theorem \ref{thm:HLSmresc kpos}, $\mFr[\bar \rho_1]=\mFr[\bar \rho_2]$ and so $\bar \rho_1=\bar \rho_2$.
\end{proof}

\begin{corollary}[Self-Similar Profiles]
 Let $k \in (0,1)$ and $m =1-k$. For any $\chi>0$, if $u$ is a symmetric stationary state of the rescaled equation \eqref{eq:KSresc}, then there exists a self-similar solution to \eqref{eq:KS} given by 
 $$
 \rho(t,x)=\left((2-k)t+1\right)^{\frac{1}{k-2}} \, u\left( \left((2-k)t+1\right)^{\frac{1}{k-2}} x\right).
 $$
\end{corollary}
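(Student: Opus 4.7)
The plan is to invert the time-dependent rescaling \eqref{scaling} that produces \eqref{eq:KSresc} from \eqref{eq:KS}: since that rescaling was designed precisely so that stationary states of the rescaled equation correspond to self-similar solutions of the original one, the claim should reduce to a direct computation. First I would invert the scaling $u(t,x) = \alpha(t)\rho(\beta(t),\alpha(t)x)$ by solving $e^t = ((2-k)\beta(t)+1)^{1/(2-k)}$ for $\alpha(t)$ in terms of $\beta(t)$, then relabelling $\beta(t) \mapsto t$. Using $-1/(2-k) = 1/(k-2)$, this recovers the formula stated in the corollary, with the scaling factor $\lambda(t) := ((2-k)t+1)^{1/(k-2)}$.

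Second, I would verify directly that $\rho(t,x) := \lambda(t) u(\lambda(t)x)$ solves \eqref{eq:KS}. A short calculation gives $\dot\lambda(t) = -\lambda(t)^{3-k}$, and hence, writing $y = \lambda(t) x$,
\begin{equation*}
\partial_t \rho(t,x) = \dot\lambda(t)\bigl(u(y) + y\, u'(y)\bigr) = -\lambda(t)^{3-k}\,\partial_y\bigl(y\, u(y)\bigr).
\end{equation*}
For the diffusion term, using $m+2 = 3-k$, one finds $\partial_{xx}\rho^m = \lambda(t)^{3-k}\,(\partial_{yy}u^m)(y)$. For the aggregation term, the homogeneity $W_k(x/\lambda)=\lambda^{-k}W_k(x)$ for $k\in(0,1)$ together with a change of variables yields $S_k[\rho(t,\cdot)](x) = \lambda(t)^{-k}\, S_k[u](y)$, so that $\partial_x\bigl(\rho\, \partial_x S_k[\rho]\bigr)(x) = \lambda(t)^{3-k}\,\partial_y\bigl(u\,\partial_y S_k[u]\bigr)(y)$. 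Substituting these three expressions into \eqref{eq:KS} and dividing out the common factor $\lambda(t)^{3-k}$ produces exactly the rescaled stationary equation
\begin{equation*}
\partial_{yy}u^m + 2\chi\,\partial_y\bigl(u\,\partial_y S_k[u]\bigr) + \partial_y(y\,u) = 0,
\end{equation*}
which $u$ satisfies by hypothesis in the sense of Definition \ref{def:sstates resc}.

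The argument is entirely a routine scaling verification; I do not expect any conceptual obstruction. The one thing that has to balance is the common scaling exponent of the three contributions, and the uniform appearance of $\lambda(t)^{3-k}$ is a direct consequence of the fair-competition constraint $m+k=1$ coupled with the specific choice of $\beta(t)$ in \eqref{scaling}. The symmetry assumption on $u$ is not used in the verification itself; it merely ensures that $\rho(t,\cdot)$ has zero centre of mass for all admissible $t$, while mass preservation is immediate from the dilation structure of the ansatz.
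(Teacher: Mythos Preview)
Your proposal is correct and follows exactly the approach the paper intends: the corollary is stated in the paper without proof, as an immediate consequence of inverting the rescaling \eqref{scaling} from Section~\ref{sec:sstates}. Your explicit verification of the scaling exponents and the role of the fair-competition relation $m+k=1$ simply spells out what the paper leaves implicit.
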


\section{Long-Time Asymptotics}
\label{sec:LTA}

This part is devoted to the asymptotic behaviour of solutions, adapting the above computations, ensuring {\em e.g.} uniqueness of the functional ground state, at the level of the gradient flow dynamics. We will demonstrate convergence towards these ground states in Wasserstein distance under certain conditions, in some cases with an explicit rate. Our results rely on the fact that there is a simple expression for the Wasserstein distance in one dimension. Therefore, our methodology cannot be extended to dimension two or more so far except possibly under radial symmetry assumptions, which we would like to explore in future work.\\

We assume here that solutions are smooth enough so that the operations in this section are well-defined. Firstly, we require the mean-field potential gradient $\partial_x S_k (t,x)$ to be well-defined for all $t>0$ which is guaranteed if $\rho(t,x)$ has at least the same regularity at each time $t>0$ as provided by Definition \ref{def:sstates} for stationary states. From now on, we assume that solutions of \eqref{eq:KS} satisfy $\rho(t,x) \in C\left([0,T],C_{loc}^{0,\alpha}\left(\RR\right)\cap \mY \cap L^\infty\left(\RR\right)\right)$ with $\alpha \in (-k,1)$.\\
Secondly, certain computations in this section remain formal unless the convex Brenier map $\psi$ satisfying $\rho(t,x) dx = \partial_x\psi(t,x) \# \bar \rho_{k}(x) dx$ is regular enough. In the fast diffusion regime $k>0$, stationary states are everywhere positive \cite{CCH1}, and thus $\psi''$ is absolutely continuous. However, in the porous medium regime $k<0$, stationary states are compactly supported \cite{CCH1}, and therefore, the following computations remain formal depending on the regularity and properties of the solutions of the evolution problem. From now on, we assume that $\psi''$ is absolutely continuous whenever we talk about solutions of the evolution problems \eqref{eq:KS} or \eqref{eq:KSresc}.

\subsection{Porous Medium Asymptotics}

\subsubsection{The Critical Case $\chi = \chi_c(k)$}
\label{sec: LTA critical k neg}

In the critical case, the set of global minimisers coincides with the set of stationary states of equation \eqref{eq:KS} \cite[Theorem 2.8]{CCH1}, but as we will see, it is not clear whether this set is a global attractor in the Wasserstein sense or not. We will prove here a convergence result under some conditions, which provides a dynamical proof of uniqueness up to dilations. 
Recall that in the fair-competition regime, we have $\mF_k[\rho_\lambda]=\lambda^{-k} \mF_k[\rho]$ for any dilation $\rho_\lambda(x)=\lambda \rho(\lambda x)$, $\lambda \in \RR$ of a density $\rho \in \mY$, and so every stationary state provides in fact a family of stationary states by scale invariance.
Given a density $\rho \in \mY$, $|x|^2\rho(x) \in L^1_+(\RR)$, we define the rescaling $\rho_1$ by
\begin{equation}  \label{moment2norm}
\rho_{1}(x) := \sigma \rho (\sigma x)\, , \quad \sigma^2 = \mV[\rho] =  \int_{\RR} |x|^2 \rho (x)\, dx\, , 
\end{equation}
and so any stationary state $\bar \rho_k$ with finite second moment has a dilation $\bar \rho_{k,1}$ with normalised second moment $\mV[\bar \rho_{k,1}]=1$.
In particular, $\bar \rho_{k,1}$ provides a convenient representative for the family of stationary states formed by dilations of $\bar \rho_k$.
Our aim here is to show that although uniqueness is degenerate due to homogeneity, we have a unique representative $\bar \rho_{k,1}$ with second moment equal to one. We will present here a discussion of partial results and open questions around the long-time behaviour of solutions in the critical case.\\

We first recall the logarithmic case $(m = 1,k = 0)$, where the ground state is explicitly given by Cauchy's density $\bar \rho_0$ \eqref{eq:StSt log}. The second momentum is thus infinite, and the Wasserstein distance to some ground state cannot be finite if the initial datum has finite second momentum. For a $\rho(t)$ satisfying \eqref{eq:KS}, we have the estimate \cite{CaCa11}
\begin{equation*} \label{eq:convergence crit case log}
\dfrac d{dt} \bW(\rho(t),\bar \rho_0)^2 \leq 0\, ,
\end{equation*}
where equality holds if and only if $\rho(t)$ is a dilation of $\bar \rho_0$. This makes sense only if $\rho(0)$ has infinite second momentum, and is at finite distance from one of the equilibrium configurations. Notice that possible ground states (dilations of Cauchy's density) are all infinitely far from each other with respect to the Wasserstein distance,
$$
\bW\left(\rho_{\lambda_1}, \rho_{\lambda_2}\right)^2
= \frac{\left(\lambda_1-\lambda_2\right)^2}{\lambda_1\lambda_2} \mV[\bar \rho_0] =\infty.
$$
Dynamics have been described in \cite{BlaCaMa07} when the initial datum has finite second momentum: the solution converges to a Dirac mass as time goes to $+\infty$. However, this does not hold true in the porous medium case $k \in (-1,0)$, $m=1-k$, since stationary states are compactly supported by \cite[Corollary 3.9]{CCH1}. The case where the initial data is at a finite distance from some dilation of a thick-tail stationary state has been investigated in \cite{BCC12} in two dimensions. 

\begin{proposition}\label{prop:cvcritical1}
For $\chi=\chi_c(k)$, let $\rho(t)$ satisfy \eqref{eq:KS} in the porous medium case $k \in (-1,0)$ and $m=1-k$. If $\bar \rho_k$ is a stationary state of \eqref{eq:KS}, then the  evolution of the Wasserstein distance to equilibrium can be estimated by
\begin{equation} \label{eq:cvcritical}
\dfrac d{dt} \bW(\rho(t),\bar \rho_{k})^2 \leq (m-1) \mF_k[\rho(t)]\, ,
\end{equation}
where equality holds if and only if $\rho(t)$ is a dilation of $\bar \rho_k$.
\end{proposition}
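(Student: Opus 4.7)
The approach will mirror the strategy behind Theorem \ref{thm:HLSmequiv} for the static functional inequality, transposed to the dynamical setting: differentiate $\bW^2$ via the Brenier map, invoke the characterisation of $\bar\rho_k$ from Lemma \ref{lem:char crit}(i), and close with a microscopic Jensen-type inequality.

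First I will let $\psi_t$ be the convex Brenier potential with $\partial_x\psi_t \# \bar\rho_k = \rho(t)$, so that $\bW(\rho(t),\bar\rho_k)^2 = \int(\partial_x\psi_t(a)-a)^2\bar\rho_k(a)\,da$. Rewriting \eqref{eq:KS} as a continuity equation $\partial_t\rho+\partial_x(\rho v)=0$ with $v=-\partial_x\bigl[\tfrac{m}{m-1}\rho^{m-1}+2\chi S_k\bigr]$, the 1D Lagrangian identity $\partial_t(\partial_x\psi_t)(a)=v(t,\partial_x\psi_t(a))$ (obtained from the evolution of the pseudo-inverse CDF under \eqref{eq:KS}) gives
\[
\frac{d}{dt}\bW^2 = 2\int (\partial_x\psi_t(a)-a)\,v(t,\partial_x\psi_t(a))\,\bar\rho_k(a)\,da.
\]

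Next I will split into the diffusion and interaction contributions. For the diffusion part, changing variables $x=\partial_x\psi_t(a)$, using $\rho\,\partial_x[\tfrac{m}{m-1}\rho^{m-1}]=\partial_x\rho^m$, integrating by parts, and returning to $a$-coordinates via $\rho(\partial_x\psi_t(a))=\bar\rho_k(a)/\psi_t''(a)$ yields $2\int\bar\rho_k^m\bigl[(\psi_{ac}'')^{1-m}-(\psi_{ac}'')^{-m}\bigr]\,da$. For the interaction part, symmetrising $(a,b)\leftrightarrow(b,a)$ and using $|\partial_x\psi_t(a)-\partial_x\psi_t(b)|=|a-b|\bla\psi_t''\bra$ produces $2\chi\iint|a-b|^{1-m}\bar\rho_k(a)\bar\rho_k(b)\bigl[\bla\psi_t''\bra^{-m}-\bla\psi_t''\bra^{1-m}\bigr]\,da\,db$. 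Applying Lemma \ref{lem:char crit}(i) in the form $\int\bar\rho_k^m g\,da=\chi\iint|a-b|^{1-m}\bar\rho_k(a)\bar\rho_k(b)\bla g([a,b])\bra\,da\,db$ with $g=(\psi_{ac}'')^{1-m}-(\psi_{ac}'')^{-m}$ converts the diffusion contribution into the same double-integral format, while the computation from the proof of Theorem \ref{thm:HLSmequiv} gives $(m-1)\mF_k[\rho]=\chi\iint|a-b|^{1-m}\bar\rho_k(a)\bar\rho_k(b)\bigl[\bla(\psi_{ac}'')^{1-m}\bra-\bla\psi_t''\bra^{1-m}\bigr]\,da\,db$. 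Subtracting, the claimed bound reduces to a single double integral whose integrand depends only on local averages of $\psi_{ac}''$ and the mean slope $\bla\psi_t''\bra$ on $[a,b]$.

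Finally I will invoke a microscopic Jensen-type inequality, analogous to Lemma \ref{lem:micro ineq resc} but adapted to the critical regime (without the quadratic-confinement term $\tfrac{|a-b|^2}{2}$), to conclude that the integrand is non-positive, hence $\tfrac{d}{dt}\bW^2\le (m-1)\mF_k[\rho]$. The equality case is tracked via Lemma \ref{lem:interaction}: tightness forces $\psi_t''$ to be constant almost everywhere on the convex hull of $\supp(\bar\rho_k)$, so $\partial_x\psi_t$ is affine and the vanishing centre-of-mass constraint then identifies $\rho(t)$ as a dilation of $\bar\rho_k$. The main obstacle will be pinning down the correct microscopic inequality: unlike the sub-critical rescaled setting of Theorem \ref{thm:HLSmresc}, there is no quadratic-confinement term available to absorb positive Jensen defects, so the cancellation between the diffusive and attractive contributions has to be extracted from the fair-competition scaling $m+k=1$ and the characterisation \eqref{eq:charac sstates} of $\bar\rho_k$.
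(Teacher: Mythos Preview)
Your approach is correct and will work, but you have overcomplicated the closing step. After converting diffusion and interaction to $a$-coordinates and applying the characterisation \eqref{eq:charac sstates}, your microscopic integrand becomes
\[
2\bigl[\bla (\psi'')^{1-m}\bra - \bla (\psi'')^{-m}\bra + \bla \psi''\bra^{-m} - \bla \psi''\bra^{1-m}\bigr],
\]
and subtracting the expression $(m-1)\mF_k[\rho]=\chi\iint|a-b|^{1-m}\bar\rho_k\bar\rho_k\bigl[\bla(\psi'')^{1-m}\bra-\bla\psi''\bra^{1-m}\bigr]$ leaves
\[
\bigl[\bla u^{1-m}\bra - \bla u\bra^{1-m}\bigr] + 2\bigl[\bla u\bra^{-m} - \bla u^{-m}\bra\bigr]
\]
with $u=\psi''$. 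Both brackets are $\le 0$ by plain Jensen (concavity of $t^{1-m}$, convexity of $t^{-m}$ for $m>1$), so the ``obstacle'' you anticipate is just two standard Jensen inequalities summed; no new lemma in the spirit of Lemma~\ref{lem:micro ineq resc} is needed.

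The paper's proof is shorter because it extracts $(m-1)\mF_k[\rho]$ \emph{before} applying the characterisation, directly in $x$-coordinates. Writing the dissipation as $\int_\RR(\phi'(x)-x)\bigl[\partial_x\rho^m+2\chi\rho\,\partial_xS_k\bigr]dx$ with $\phi'=(\psi')^{-1}$, the ``$-x$'' contribution from the integration by parts and the symmetrisation yields exactly $\int\rho^m-\chi\iint|x-y|^k\rho\rho=(m-1)\mF_k[\rho]$. The remaining two terms, $-\int\phi''\rho^m$ and $\chi\iint\frac{\phi'(x)-\phi'(y)}{x-y}|x-y|^k\rho\rho$, transform to $-\int(\psi'')^{-m}\bar\rho_k^m$ and $\chi\iint\bla\psi''\bra^{-m}|a-b|^{1-m}\bar\rho_k\bar\rho_k$; only the power $-m$ survives, so a single application of Jensen to $t^{-m}$ plus the characterisation closes the argument. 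Your route computes two extra terms and then cancels them with an extra Jensen, which works but is less direct.
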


\begin{proof}
Let $\phi$ be the convex Brenier map such that $\bar \rho_{k}(x) dx = \partial_x\phi(t,x) \# \rho(t,x) dx$ and denote by $\partial_x \psi(t,x)$ the reverse transport map, $\partial_x\phi(t, \partial_x \psi (t,a))=a$. Following \cite{CaCa11, Villani03} and using the regularity of $\rho(t,x)$ together with the argument as in the proof of Lemma \ref{lem:char crit} that allows for the singularity of the mean-field potential gradient to disappear, we have 
\begin{align*}
\dfrac12 \dfrac d{dt}\bW(\rho(t),\bar \rho_{k})^2 
& \leq\int_\RR (\phi'(t,x) - x)\left( \dfrac{\partial}{\partial x} \left( \dfrac m{m-1} \rho(t,x)^{m-1}\right) + 2\chi_c(k) \partial_x S_k(t,x) \right) \rho(t,x)\, dx \\
& = - \int_\RR \phi''(t,x) \rho(t,x)^m\, dx \\
&\quad + \chi_c(k) \iint_{\RR\times\RR} \left(\dfrac{\phi'(t,x) - \phi'(t,y)}{x-y}\right)|x-y|^{k}\rho(t,x)\rho(t,y)\, dxdy \\ 
& \quad + (m-1) \mF_k[\rho(t)] \\
& = - \int_\RR \left(\psi''(t,a)\right)^{-1} \left(\psi''(t,a)\right)^{1-m}   \bar \rho_{k} (a)^m\, da 
\\ & \quad + \chi_c(k) \iint_{\RR\times\RR} \left(\dfrac{\psi'(t,a) - \psi'(t,b)}{a-b}\right)^{k-1}|a-b|^{k}\bar \rho_{k} (a)\bar \rho_{k} (b)\, dadb\\
&\quad+ (m-1) \mF_k[\rho(t)]  
\end{align*}
to finally conclude that
\begin{align*}
\dfrac12 \dfrac d{dt}\bW(\rho(t),\bar \rho_{k})^2 & \leq  - \int_\RR \left(\psi''(t,a)\right)^{-m} \bar \rho_{k} (a)^m\, da 
\\ & \quad +  \chi_c(k) \iint_{\RR\times\RR}   \int_{s=0}^1 \left(\psi''(t,[a,b]_s)\right)^{-m} |a-b|^k \bar \rho_{k} (a)\bar \rho_{k} (b)\, ds da db \\
&\quad+ (m-1) \mF_k[\rho(t)]\, ,
\end{align*}
where we have crucially used the convexity of $(\cdot)^{-m}$ in the last step. We conclude as for the proof of Theorem \ref{thm:HLSmequiv} thanks to the characterisation \eqref{eq:charac sstates}.
\end{proof}

By definition of the critical value $\chi_c(k)$, the functional $\mF_k$ is everywhere non-negative. It vanishes if and only if $\rho$ is a dilation of some critical density. Therefore we cannot deduce from \eqref{eq:cvcritical} that the density $\rho(t)$ converges to some dilation of $\bar \rho_{k}$. However, we can show convergence in Wasserstein distance if we assume a rather restrictive uniform $W^{2,\infty}(\RR)$-stability estimate on the Brenier map $\psi$ connecting the solution density to the stationary state:
\begin{equation}
\psi''(t,x) \in L^\infty\left(\RR_+,L^\infty(\RR)\right)\quad \text{such that} \quad
||\psi''||_{L^\infty\left(\RR_+,L^\infty(\RR)\right)} \leq 1 + \dfrac1 m\, .
\label{eq:stabestimate1}
\end{equation}
This condition is equivalent to
\begin{equation}\forall t>0\quad \bla \psi''(t,(x,y)) \bra := \int_0^1 \psi''(t,[x,y]_s)\, ds \in \left(0,1 + \dfrac1 m\right], \, \quad \text{for a.e.}\, x,y \in \RR \, , \quad \forall t>0\, .
\label{eq:stabestimate2}
\end{equation}
where $[x,y]_s:=(1-s)x+sy$.
If we want to show convergence of a solution $\rho(t)$ to a stationary state $\bar \rho_k$ in Wasserstein distance, we need to investigate quantities that are comparable. 

\begin{proposition}\label{prop:cvcritical2}
For $\chi=\chi_c(k)$, let $\bar \rho_{k}$ be a stationary state of \eqref{eq:KS} in the porous medium case $k \in (-1,0)$, $m=1-k$. Let $\rho(t)$ be a solution such that $$\mV_\infty:=\lim_{t\to\infty} \mV[\rho(t)]<\infty\, ,$$ and we denote by $\psi$ the transport map from $\bar\rho_{k}$ onto the solution, $$\rho(t,x)dx=\partial_x \psi(t,x) \# \bar \rho_{k}(x)dx\, .$$ If $\psi$ satisfies the uniform stability estimate \eqref{eq:stabestimate1}, then
\begin{equation*} \label{eq:cvsub-critical}
\dfrac d{dt} \bW(\rho(t),\bar \rho_{k})^2 \leq 0 \, ,
\end{equation*}
where equality holds if and only if $\rho(t)$ is a dilation of $\bar \rho_{k}$. 
\end{proposition}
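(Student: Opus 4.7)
The plan is to revisit the Wasserstein-distance computation of Proposition~\ref{prop:cvcritical1}, this time avoiding the Jensen step $\langle \psi''\rangle^{-m} \leq \langle (\psi'')^{-m}\rangle$ applied there; the defect in that inequality is precisely what should be kept and compared against $(m-1)\mF_k[\rho(t)]$. First I would reproduce the Otto-calculus derivation, keeping equalities wherever possible, to reach
\begin{align*}
\tfrac12 \tfrac{d}{dt}\bW(\rho(t),\bar \rho_k)^2 &\leq (m-1)\mF_k[\rho(t)] - \int_\RR (\psi''(a))^{-m}\,\bar \rho_k(a)^m\, da \\
&\quad + \chi_c(k)\iint_{\RR\times\RR} \langle \psi''([a,b])\rangle^{-m}\,|a-b|^{1-m}\, \bar \rho_k(a)\bar \rho_k(b)\, da\, db\, ,
\end{align*}
with the shorthand $\langle \psi''([a,b])\rangle = (\psi'(a)-\psi'(b))/(a-b)$ of the paper.

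I would then use the characterisation \eqref{eq:charac sstates} transported through the Brenier map in two ways: once with exponent $\gamma = m$ (so that $\int (\psi'')^{-m}\bar \rho_k^m = \chi_c(k)\iint \langle (\psi'')^{-m}\rangle |a-b|^{1-m}\bar \rho_k(a)\bar \rho_k(b)\,dadb$), and once with $\gamma = m-1$, as in the proof of Theorem~\ref{thm:HLSmequiv}, to obtain
\begin{equation*}
(m-1)\mF_k[\rho] = \chi_c(k)\iint_{\RR\times\RR}\bigl[\langle (\psi'')^{1-m}\rangle - \langle \psi''\rangle^{1-m}\bigr]\,|a-b|^{1-m}\,\bar \rho_k(a)\bar \rho_k(b)\, da\, db\, .
\end{equation*}
Collecting the three contributions to the right-hand side, the expression compactifies into a single Jensen defect:
\begin{equation*}
\tfrac12 \tfrac{d}{dt}\bW(\rho(t),\bar \rho_k)^2 \leq -\chi_c(k)\iint_{\RR\times\RR}\bigl[\langle g(\psi''([a,b]))\rangle - g(\langle \psi''([a,b])\rangle)\bigr]\,|a-b|^{1-m}\,\bar \rho_k(a)\bar \rho_k(b)\, da\, db\, ,
\end{equation*}
with the auxiliary one-variable function $g(x) := x^{-m} - x^{1-m}$.

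The main obstacle then reduces to a purely one-variable convexity question: one needs $g$ to be convex on the range of $\psi''$. A direct computation yields $g''(x) = m\, x^{-m-2}\bigl[(m+1) - (m-1)x\bigr]$, so $g$ is strictly convex on $\bigl(0, (m+1)/(m-1)\bigr)$. Since $m > 1$ we have $1 + 1/m = (m+1)/m < (m+1)/(m-1)$, hence the uniform stability assumption \eqref{eq:stabestimate1} places the whole range of $\psi''(t,\cdot)$ strictly inside this convexity region, and Jensen gives $\langle g(\psi'')\rangle \geq g(\langle \psi''\rangle)$ pointwise in $(a,b)$; therefore $\tfrac{d}{dt}\bW^2 \leq 0$. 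Equality forces $\psi''([a,b]_s)$ to be constant in $s$ for $\bar \rho_k\otimes \bar \rho_k$-a.e. $(a,b)$, hence (by connectedness of $\supp(\bar \rho_k)$) constant on $\supp(\bar \rho_k)$; together with $\|\rho(t)\|_1=1$ and the centring condition $\int_\RR x\,\rho(t,x)\,dx = 0$, this makes the Brenier map a pure dilation, i.e.\ $\rho(t)$ a dilation of $\bar \rho_k$. The hypothesis $\mV_\infty < \infty$ enters at the outset only to guarantee that $\bW(\rho(t),\bar \rho_k)$ stays finite along the trajectory, which is non-trivial since $\bar \rho_k$ is compactly supported in the critical porous-medium case.
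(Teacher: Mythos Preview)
Your proof is correct and arrives at exactly the same microscopic functional as the paper: writing $u=\psi''(t,[a,b]_\cdot)$, both proofs reduce to showing that
\[
J_m[u]:= -\bla u^{-m}\bra + \bla u^{1-m}\bra + \bla u\bra^{-m} - \bla u\bra^{1-m}\leq 0
\]
under the stability assumption. The Otto-calculus derivation and the two applications of the characterisation \eqref{eq:charac sstates} are identical in substance to the paper's.

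The genuine difference is in how $J_m[u]\leq 0$ is established. The paper introduces the auxiliary parameters $\alpha=\bla u\bra^{m}\bla u^{-m}\bra\geq 1$ and $\beta=\bla u\bra^{m-1}\bla u^{1-m}\bra\geq 1$, invokes the interpolation bound $\beta\leq \alpha^{m/(m+1)}$, and then maximises the resulting expression over $\alpha\geq 1$; the maximiser sits at $\alpha_{\max}=\bigl(\tfrac{m}{m+1}\bla u\bra\bigr)^{m+1}$, which is $\leq 1$ precisely when $\bla u\bra\leq 1+1/m$, yielding $J_m[u]\leq 0$. Your route is considerably more direct: you recognise $J_m[u]=-\bigl(\bla g(u)\bra - g(\bla u\bra)\bigr)$ for $g(x)=x^{-m}-x^{1-m}$, compute $g''(x)=m\,x^{-m-2}\bigl[(m+1)-(m-1)x\bigr]$, and apply Jensen's inequality on the convexity region $(0,(m+1)/(m-1))$. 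Since $(m+1)/m<(m+1)/(m-1)$, the pointwise stability bound \eqref{eq:stabestimate1} suffices. This is both shorter and more transparent; it also shows that the differential inequality in fact holds under the slightly weaker pointwise bound $\psi''<(m+1)/(m-1)$, whereas the paper's optimisation argument uses the full $(m+1)/m$ threshold. The equality discussion via strict convexity of $g$ (forcing $\psi''$ constant, hence a dilation) is likewise cleaner than the paper's, which goes through the equality case of the interpolation $\beta\leq\alpha^{m/(m+1)}$.
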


\begin{proof} Note that $\mV[\bar\rho_k]<\infty$ since $\bar \rho_k$ is compactly supported \cite[Corollary 3.9]{CCH1}. We compute the evolution of the Wasserstein distance along the gradient flow, denoting by $\phi$ the inverse transport map, $\partial_x\phi(t,x)=\partial_x\psi(t,x)^{-1}$, we proceed as in Proposition \ref{prop:cvcritical1}:
\begin{align*}
&\dfrac12 \dfrac d{dt}\bW(\rho(t),\bar \rho_{k})^2 \\
& \quad \quad \leq -  \int_\RR \phi''(t,x) \rho(t,x)^m\, dx +  {\chi_c(k)} \iint_{\RR\times\RR} \left(\dfrac{\phi'(t,x) - \phi'(t,y)}{x-y}\right) |x-y|^{k}\rho(t,x)\rho(t,y)\, dxdy \\ 
& \qquad \quad + \int_\RR \rho(t,x)^m\, dx -  \chi_c(k) \iint_{\RR\times\RR} |x-y|^{k}\rho(t,x)\rho(t,y)\, dxdy\, ,
\end{align*}
which we can rewrite in terms of the transport map $\psi'$ as
\begin{align*}
&\dfrac12 \dfrac d{dt}\bW(\rho(t),\bar \rho_{k})^2 \\
& \quad \quad \leq - \int_\RR \left(\psi''(t,a)\right)^{-m} \bar \rho_{k}(a)^m\, da 
+ \chi_c(k) \iint_{\RR\times\RR} \bla \psi''(t,(a,b))\bra^{-m}|a-b|^{1-m} \bar \rho_{k}(a) \bar \rho_{k}(b) \, da db \\ 
& \qquad \quad + \int_\RR \left(\psi''(t,a)\right)^{1-m} \bar \rho_{k}(a)^m\, da - \chi_c(k) \iint_{\RR\times\RR} \bla \psi''(t,(a,b))\bra^{1-m}|a-b|^{1-m} \bar \rho_{k}(a) \bar \rho_{k}(b) \, da db \, .
\end{align*}
Using the characterisation \eqref{eq:charac sstates}, we obtain for any $\gamma \in \RR$,
\begin{align*}
 &\int_\RR \left(\psi''(t,a)\right)^{-\gamma} \bar \rho_{k}(a)^m\, da = 
 \chi_c(k) \iint_{\RR\times\RR} \bla \psi''(t,(a,b))^{-\gamma}\bra |a-b|^{1-m}  \bar \rho_{k}(a) \bar \rho_{k}(b) \, da db \, .
\end{align*}
Hence, the dissipation of the distance to equilibrium can be written as
\begin{align*}
\dfrac12 \dfrac d{dt}\bW(\rho(t),\bar \rho_{k})^2  
\leq
\chi_c(k) \iint_{\RR\times\RR} |a-b|^k &\Big(- \bla \psi''(t,(a,b))^{-m} \bra + \bla \psi''(t,(a,b))^{1-m} \bra \Big.\\
&\Big. + \bla \psi''(t,(a,b)) \bra^{-m} - \bla \psi''(t,(a,b)) \bra^{1-m}  \Big) \bar \rho_{k}(a) \bar \rho_{k}(b) \, da db  \,.
\end{align*}
We now examinate the sign of the microscopic functional $J_m[u]$ defined for non-negative functions $u:(0,1)\to \RR_+$ by
\begin{align*} 
J_m[u] := - \bla u^{-m} \bra + \bla u^{1-m} \bra + \bla u \bra^{-m} - \bla u \bra^{1-m} \, . 
\label{eq:J m}
\end{align*}
The first two terms can be written as
\begin{equation*} 
- \bla u^{-m} \bra + \bla u^{1-m} \bra 
= - \alpha \bla u \bra^{-m} + \beta \bla u \bra^{1-m}  \, , 
\end{equation*}
where $\alpha = \bla u \bra^{m} \bla u^{-m} \bra$ and $\beta = \bla u \bra^{m-1}\bla u^{1-m} \bra$. By Jensen's inequality we have $\alpha\geq 1$, $\beta \geq 1$, and by interpolation we have $\beta \leq \alpha^{m/(m+1)}$. Therefore,
\begin{align*} 
&J_m[u] \leq j_m(\la u\ra ) =  \max_{\alpha\geq 1} \left\{ - \alpha \bla u \bra^{-m} + \alpha^{m/(m+1)} \bla u \bra^{1-m} \right\} + \bla u \bra^{-m} - \bla u \bra^{1-m} \, . 
\end{align*}
We can compute explicitly the maximal value in the above expression. The first order condition gives
\[  \alpha_{max} := \left(\dfrac{m}{m+1} \bla u \bra  \right)^{m+1}\, . \]
Since the function 
$$
g(\alpha):= - \alpha \bla u \bra^{-m} + \alpha^{m/(m+1)} \bla u \bra^{1-m}
$$
achieves its maximum at $\alpha_{max}\leq 1$ for $\bla u \bra \leq 1 + 1/m$ and is strictly decreasing for $\alpha>\alpha_{max}$, we have
$$
\max_{\alpha\geq 1} g(\alpha) = g(1), \qquad \text{for} \, \, \bla u \bra \leq 1 + 1/m
$$
and so we conclude $j_m(\la u\ra )=0$ for $\bla u \bra \leq 1 + 1/m$. Therefore
\begin{align*}
\dfrac12 \dfrac d{dt}\bW(\rho(t),\bar \rho_{k})^2 
& \leq \chi_c(k)
\iint_{\RR\times\RR} |a-b|^k J_m[\psi''(t,(a,b))]\bar \rho_{k}(a) \bar \rho_{k}(b) \, da db \\
&\leq \chi_c(k) \iint_{\RR\times\RR} |a-b|^k j_m[\bla\psi''(t,(a,b))\bra] 
\bar \rho_{k}(a) \bar \rho_{k}(b) \, da db =0
\end{align*}
thanks to the stability estimate \eqref{eq:stabestimate2}. 
To investigate the equality cases, note that $\beta= \alpha^{m/(m+1)}$ if and only if $u\equiv 1$ (looking at the equality cases in H\"{o}lder's inequality). Moreover, $\langle u \rangle \in (0,1+1/m]$ implies
$$
J_m[u] \leq   - \alpha \bla u \bra^{-m} + \alpha^{m/(m+1)} \bla u \bra^{1-m}  + \bla u \bra^{-m} - \bla u \bra^{1-m} \leq 0\, ,
$$
using $\alpha \geq 1$. Hence, if $J_m[u]=0$, then we must have $\beta= \alpha^{m/(m+1)}$, and so $u\equiv 1$. The converse is trivial by substituting into the expression for $J_m[u]$. Taking $u$ to be the Brenier map $\psi''$, we conclude that $\tfrac{d}{dt}\bW(\rho(t),\bar \rho_{k})^2=0$ if and only if $\rho=\bar \rho_{k}$.
\end{proof}

The utility of the previous result for understanding the asymptotic behaviour of solutions depends of course on the set of initial data for which solutions satisfy the stability estimate \eqref{eq:stabestimate1} at all times. This set is rather difficult to characterise, and we do not know its size.\\

Let us now explore what we can say about the long-time behaviour of solutions in the general case. The first insight consists in calculating the evolution of the second moment. It follows from homogeneity that
\begin{equation} \label{eq:2nd mom diss}
\dfrac d{dt} \mV[\rho(t)]  = 2(m-1) \mF_k[\rho(t)]\, . 
\end{equation}
 Identity \eqref{eq:2nd mom diss} implies that the second moment is non-decreasing, and it converges to some value $\mV_\infty \in \RR_+\cup\{+\infty\}$. Following \cite{BCL} we discuss the dichotomy of $\mV_\infty< +\infty$ and $\mV_\infty= +\infty$. Let $\rho(t) \in \mY$ be a solution of \eqref{eq:KS} such that $|x|^2 \rho(t) \in L_+^1(\RR)$ for all $t>0$. Let $\bar \rho_k$ be a stationary state of \eqref{eq:KS} according to Definition \ref{def:sstates}. Note that $\mV[\bar \rho_k]<\infty$ since $\bar \rho_k$ is compactly supported by \cite[Corollary 3.9]{CCH1}.\\

\fbox{Case 1: $\mV_\infty< +\infty$} \quad
If the second moment $\mV[\rho(t)]$ converges to $\mV_\infty< +\infty$, then we deduce from \eqref{eq:2nd mom diss} that the energy functional $\mF_k[\rho(t)]$ converges to $\mF_k[\bar \rho_k] = 0$ since $\mF_k$ is non-increasing along trajectories. This is however not enough to conclude convergence of $\rho(t)$ to $\bar \rho_k$ and the question remains open.
Note further that in order to have convergence, we need to choose a dilation of $\bar \rho_k$ with second moment equal to $\mV_\infty$. For any dilation $\bar \rho_k^\lambda$ of $\bar \rho_k$, we have $\mV[\bar \rho_k^\lambda]=\mV[\bar \rho_k]/\lambda^2$, and so there exists a unique $\lambda_*$ such that $\mV[\bar\rho_k^{\lambda_*}]=\mV_\infty$. This would be the natural candidate for the asymptotic behaviour of the solution $\rho(t)$.\\

\fbox{Case 2: $\mV_\infty= +\infty$} \quad
If the second moment $\mV[\rho(t)]$ diverges to $\mV_\infty= +\infty$ however, the discussion is more subtle and we can give some further intuition. First of all, let us remark that one has to seek a convergence other than in Wasserstein distance since $\infty=\mV_\infty \neq \mV[\bar \rho_k]<\infty$. We can not exclude this case a priori however since a convergence in another sense may be possible in principle.  We use the homogeneity properties of the flow to derive refined inequalities. To do this, we renormalise the density as in \eqref{moment2norm}, but now with a time dependency in $\sigma$:
\begin{equation}  \label{moment2normt}
\hat \rho(t,y) = \sigma(t) \rho (t,\sigma(t) y)\, , \quad \sigma(t)^2 = \mV[\rho(t)] =  \int_{\RR} |x|^2 \rho(t,x)\, dx\, . 
\end{equation}
Then $\hat \rho$ satisfies the equation
\begin{align*}
\partial_t \hat \rho(t,y)  
= &\sigma(t) \partial_t \rho(t,x) + \dot \sigma(t) \left( \rho(t,x) + x \cdot \partial_x \rho(t,x)\right) \\
 = &\sigma(t) \left\{ \sigma(t)^{-2-m} \partial_{yy} \hat\rho(t,y)^m + 2 \chi_c(k) \sigma(t)^{-3+k} \partial_y \left( \hat \rho(t,y)\partial_y (W_k (y)*\hat \rho(t,y))  \right) \right\} \\
&+ \dfrac{\dot \sigma(t)}{\sigma(t)}\left(\hat \rho(t,y) + y \cdot \partial_y \hat \rho(t,y) \right)\, .
\end{align*}
By homogeneity of $\mF_k$, we have
\begin{equation}\label{Fsigma}
 \mF_k[\rho(t)] = \sigma(t)^{1-m} \mF_k[\hat \rho(t)]\, ,
\end{equation}
and so it follows from \eqref{eq:2nd mom diss} that $2 \sigma(t) \dot \sigma(t) = 2 (m-1) \mF_k[\rho(t)] = 2 (m-1) \sigma(t)^{1-m} \mF_k[\hat \rho(t)]$.
We deduce
\begin{align*}
\partial_t \hat \rho(t,y)  = &\sigma(t)^{-1-m} \left\{    \partial_{yy} \hat\rho(t,y)^m + 2 \chi_c(k)  \partial_y \left( \hat \rho(t,y)\partial_y (W_k (y)*\hat \rho(t,y))  \right) \right\}  \\
&+  \sigma(t)^{-1-m} (m-1) \mF_k[\hat \rho(t)] \left( \hat \rho(t,y) + y \cdot \partial_y \hat \rho(t,y) \right) \, .
\end{align*}
Alternatively, we get
\begin{align} 
\dfrac d{dt} \mF_k[\hat \rho(t)] 
& = \dfrac d{dt} \left\{ \sigma(t)^{m-1} \mF_k[\rho(t)]\right\} \notag\\
& = - \sigma(t)^{m-1} \int_\RR \rho(t,x) \left| \partial_x \left( \dfrac m{m-1}\rho(t,x)^{m-1} + 2 \chi_c(k) W_k (x)*\rho(t,x) \right) \right|^2\, dx \notag\\
& \quad  + (m-1)^2 \sigma(t)^{m-2}\sigma(t)^{-m} \mF_k[\hat \rho(t)]\mF_k[\rho(t)]\notag\\
& = \sigma(t)^{-1-m} \mG[\hat \rho]\, ,\label{rhohatdiss}
\end{align}
where 
\begin{equation*}\label{functionalG}
\mG[\hat \rho] := - \int_\RR \left| \partial_y \left( \dfrac m{m-1}\hat\rho(y)^{m-1} + 2 \chi_c(k) W_k (y)*\hat\rho(y) \right) \right|^2\hat\rho(y) \, dy + (m-1)^2 \mF_k[\hat \rho]^2\, . 
\end{equation*}

\begin{proposition} \label{prop:H critical}
The functional $\mH$ defined by $\mH[\rho]:=\mG[\hat \rho]$ on $\mY_2$ is zero-homogeneous, and everywhere non-positive. Moreover, $\mH[\rho] = 0$ if and only if $\rho$ is a stationary state of equation \eqref{eq:KS}.
\end{proposition}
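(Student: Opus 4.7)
\smallskip

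\noindent\emph{Proof sketch.}
The plan is to establish the three claims — zero-homogeneity, non-positivity, and characterisation of the equality set — in order, leveraging the fair-competition homogeneity of $\mF_k$, the functional inequality of Theorem~\ref{thm:HLSmequiv}, and the non-existence result Corollary~\ref{cor:nosstatessupercrit}(ii).

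For zero-homogeneity I would observe that the renormalisation~\eqref{moment2normt} is itself invariant under mass-preserving dilations $\rho_\lambda(x):=\lambda\rho(\lambda x)$: indeed $\mV[\rho_\lambda]=\lambda^{-2}\mV[\rho]$ forces $\sigma_\lambda=\sigma/\lambda$, and a direct substitution yields $\widehat{\rho_\lambda}\equiv\hat\rho$. Since $\mH[\rho]=\mG[\hat\rho]$ depends on $\rho$ only through $\hat\rho$, this gives $\mH[\rho_\lambda]=\mH[\rho]$.

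For non-positivity, let $\hat\xi:=\tfrac{m}{m-1}\hat\rho^{m-1}+2\chi_c(k)\,W_k\ast\hat\rho$ denote the first variation of $\mF_k$ at $\hat\rho$. I would differentiate the homogeneity relation $\mF_k[(\hat\rho)_\mu]=\mu^{-k}\mF_k[\hat\rho]$ at $\mu=1$: since $\tfrac{d}{d\mu}(\hat\rho)_\mu|_{\mu=1}=\partial_y(y\hat\rho)$ and $-k=m-1$, an integration by parts gives
\begin{equation*}
(m-1)\,\mF_k[\hat\rho]\;=\;-\int_{\RR}y\,\hat\rho(y)\,\partial_y\hat\xi(y)\,dy.
\end{equation*}
Applying Cauchy--Schwarz to the factorisation $(y\sqrt{\hat\rho})(\sqrt{\hat\rho}\,\partial_y\hat\xi)$, together with the normalisation $\mV[\hat\rho]=1$, then yields $(m-1)^2\mF_k[\hat\rho]^2\leq\int_{\RR}|\partial_y\hat\xi|^2\hat\rho\,dy$, which is exactly $\mG[\hat\rho]\leq 0$.

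The easy direction of the equality case is immediate: if $\rho$ is a stationary state of~\eqref{eq:KS} then so is $\hat\rho$, and Lemma~\ref{lem:char crit}(i) together with the stationary equation give $\mF_k[\hat\rho]=0$ and $\hat\rho\,\partial_y\hat\xi\equiv 0$, so both terms of $\mG[\hat\rho]$ vanish. Conversely, equality in Cauchy--Schwarz forces $\partial_y\hat\xi(y)=\lambda y$ $\hat\rho$-a.e.\ for some $\lambda\in\RR$; substituting back into the homogeneity identity yields $\lambda=-(m-1)\mF_k[\hat\rho]\leq 0$, where the sign uses Theorem~\ref{thm:HLSmequiv} (which applies at $\chi=\chi_c(k)$ because stationary states exist by \cite[Theorem~2.8]{CCH1}) and $m>1$. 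The main obstacle is then to rule out the case $\lambda<0$. For this I plan to invoke homogeneity once more: the relation $\partial_y\hat\rho^m+2\chi_c(k)\hat\rho\,\partial_yS_k=\lambda y\hat\rho$ on $\supp\hat\rho$ is turned, after the dilation $\tilde\rho(y):=\mu\hat\rho(\mu y)$ with $\mu:=|\lambda|^{1/(2-k)}$, into precisely the stationary equation for the rescaled system~\eqref{eq:KSresc} at critical strength $\chi=\chi_c(k)$; a direct check shows $\tilde\rho\in\mY_2$ since $\hat\rho$ has finite second moment, contradicting Corollary~\ref{cor:nosstatessupercrit}(ii). Hence $\lambda=0$, $\hat\rho\,\partial_y\hat\xi\equiv 0$, and $\hat\rho$ — equivalently $\rho$ — is a stationary state of~\eqref{eq:KS}.
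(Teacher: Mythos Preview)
Your proof is correct and follows the same architecture as the paper's: zero-homogeneity from the invariance of $\hat\rho$ under dilations, non-positivity from Cauchy--Schwarz applied to the homogeneity identity $(m-1)\mF_k[\hat\rho]=-\int y\,\hat\rho\,\partial_y\hat\xi\,dy$, and the equality analysis via the proportionality forced by equality in Cauchy--Schwarz together with the sign coming from Theorem~\ref{thm:HLSmequiv}.

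The only variation is in how you exclude $\lambda<0$ (equivalently $\hat\pi>0$ in the paper's notation). The paper applies Theorem~\ref{thm:HLSmresc} directly, adapted to the functional $\mF_k+\hat\pi\mV$, to conclude that $\hat\rho$ would be its global minimiser, and then exhibits the competitor $\bar\rho_{k,1}$ (a critical stationary state normalised to unit second moment, with $\mF_k[\bar\rho_{k,1}]=0$) achieving a strictly smaller value. You instead rescale $\hat\rho$ to produce a stationary state of~\eqref{eq:KSresc} at $\chi=\chi_c(k)$ and cite Corollary~\ref{cor:nosstatessupercrit}(ii). Since that corollary is itself proved via Theorem~\ref{thm:HLSmresc} plus exactly such a comparison, the two routes are equivalent in content; yours is simply a more packaged invocation. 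One small slip: the scaling computation requires $|\lambda|\,\mu^{2-k}=1$ (using $m+1=2-k$), so the correct dilation factor is $\mu=|\lambda|^{-1/(2-k)}$, not $|\lambda|^{1/(2-k)}$.
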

\begin{proof}
Homogeneity follows from the very definition of $\mH$. Non-positivity is a consequence of the Cauchy-Schwarz inequality:
\begin{align}
\left|(m-1) \mF_k[\hat \rho] \right|^2
& = \left|- \int_\RR y \cdot\partial_y \left( \dfrac m{m-1}\hat\rho(y)^{m-1} + 2 \chi_c(k) W_k (y)*\hat\rho(y) \right) \hat \rho(y)\, dy  \right|^2\notag \\
& \leq \left( \int_\RR |y|^2 \hat \rho(y)\, dy \right) \left(\int_\RR \left| \partial_y \left( \dfrac m{m-1}\hat\rho(y)^{m-1} + 2 \chi_c(k) W_k (y)*\hat\rho(y) \right) \right|^2\hat\rho(y) \, dy\right)\, .\label{ineq:CS}
\end{align} 
If $\rho$ is a stationary state of equation \eqref{eq:KS}, so is $\hat \rho$ and it follows from \eqref{rhohatdiss} that $\mG[\hat \rho] = 0$. Conversely, if $\mG[\hat \rho] = 0$, then we can achieve equality in the Cauchy-Schwarz inequality \eqref{ineq:CS} above, and so the two functions $y$ and 
$$\partial_y \left( \dfrac m{m-1}\hat\rho(y)^{m-1} + 2 \chi_c(k) W_k (y)*\hat\rho(y) \right)$$ are proportional to each other. In other words, there exists a constant $\hat \pi$ such that for all $ y \in \RR$,
\begin{equation}\label{eq:hatEL}
 \partial_y \left( \dfrac m{m-1}\hat\rho(y)^{m-1} + 2 \chi_c(k) W_k (y)*\hat\rho(y) \right)  + \hat \pi y = 0\,.
\end{equation}
This equation is the Euler-Langrange condition of the gradient flow given by the energy functional $\mF_k + \hat \pi \mV$:
\begin{equation}\label{eq:pi}
 \partial_t u = \partial_y \left( u \, \, \partial_y \left(\frac{\delta}{\delta u} \left(\mF_k + \hat \pi \mV\right)[u]\right)\right)\, ,
\end{equation}
and since $\hat \rho$ satisfies \eqref{eq:hatEL}, it is a stationary state of equation \eqref{eq:pi}. Testing this equation against $y \hat \rho(y)$, we obtain
\[  \hat \pi = (m-1) \mF_k[\hat \rho] \geq 0\, . \]
Non-negativity of $\hat \pi$ follows from the variant of the HLS inequality Theorem \ref{thm:HLSm} since $\mF_k[\rho]\geq 0$ for any $\rho \in \mY$ if $\chi=\chi_c(k)$.
We will show $\hat \pi=0$ by contradiction. Assume $\hat \pi > 0$. Applying Theorem \ref{thm:HLSmresc} for $\mF_k[\cdot] + \hat \pi \mV[\cdot]$ instead of $\mF_k[\cdot] + \tfrac12 \mV[\cdot]$, we deduce that $\hat \rho$ is a minimiser of the rescaled energy $\mF_k[\cdot] + \hat \pi \mV[\cdot]$. In particular, this means that we have for any $u\in \mY_2$, 
$$ 
\mF_k[u] + \hat \pi \mV[u] \geq \mF_k[\hat \rho] + \hat \pi \mV[\hat \rho] = \hat \pi/(m-1) + \hat \pi > \hat \pi \, .
$$
However, \cite[Proposition 3.4 (i), Corollary 3.9]{CCH1} and homogeneity of $\mF_k$ provide a stationary state $\bar \rho_{k,1}\in \mY_2$ with unit second moment, which is also a global minimiser by \cite[Theorem 2.8]{CCH1}. Then choosing $u=\bar \rho_{k,1}$ in the above inequality yields $\mF_k[\bar \rho_{k,1}] + \hat \pi \mV[\bar \rho_{k,1}] = 0 + \hat \pi  $, a contradiction. Therefore we necessarily have $\hat \pi = 0$ and so $\mF_k[\hat \rho]=0$. By \eqref{Fsigma}, $\mF_k[\rho]=0$ and this implies that $\rho$ is a global minimiser of $\mF_k$ by Theorem \ref{thm:HLSm}, and consequently it is a stationary state of \eqref{eq:KS} by \cite[Theorem 2.8]{CCH1}.
\end{proof}

It would be desirable to be able to show that $\mH[\rho(t)] \to \mH[\bar \rho_{k,1}]$ as $t \to \infty$ to make appropriate use of the new energy functional $\mH$. But even then, similar to the first case, we are lacking a stability  result for $\mH$ to prove that in fact $\hat \rho(t)$ converges to $\bar \rho_{k,1}$. Here, in addition, we do not know at which rate the second moment goes to $+\infty$. \\

We conjecture that only the first case $\mV_\infty< +\infty$ is admissible. The motivation for this claim is the following: $\mF$ and $\mH$ have both constant signs, and vanish only when $\hat \rho = \bar \rho_{k,1}$. If the stability inequality
\begin{equation}\label{eq:stabest2}
\eta \mF_k[\hat \rho] \leq -\mH[\rho],  \quad \forall \rho
\end{equation}
were satisfied for some $\eta>0$, then we would be able to prove that $\mV_\infty< +\infty$. To see this, we derive a second-order differential inequality for $\omega(t) := \sigma(t)^{m+1}$. We have
\begin{align*}
\dot \omega(t) & = (m+1) \sigma(t)^{m}\dot \sigma(t)= (m+1)(m-1) \mF_k[\hat \rho(t)] \geq 0\, ,
\end{align*}
and so by \eqref{rhohatdiss},
\begin{align*}
\ddot\omega(t) & = (m+1)(m-1)  \omega(t)^{-1} \mH[\rho(t)] \leq 0\, .
\end{align*}
Here, non-positivity of $\ddot\omega(t)$ follows from Proposition \ref{prop:H critical}.
Therefore, the stability estimate \eqref{eq:stabest2}, if true, would imply that $\ddot \omega(t)  \leq - \eta \omega(t)^{-1} \dot \omega(t)$, hence 
$$
\dot \omega(t) \leq C - \eta \log \omega(t).
$$
Consequently, $\omega(t)$ would be bounded, and so we arrive at a contradiction with the assumption $\mV_\infty=+\infty$.

\subsubsection{The sub-critical case $\chi < \chi_c$}
\label{sec: LTA sub-critical k neg}

We know that in the logarithmic case ($m=1, k=0$), solutions to \eqref{eq:KS} converge exponentially fast towards a unique self-similar profile as $t\to \infty$, provided that the parameter $\chi$ is sub-critical ($\chi <1$) \cite{CaCa11}. A similar argument works in the porous medium regime $k \in (-1,0)$ under certain regularity assumptions as we will show below. Surprisingly enough, convergence is uniform as the rate of convergence does not depend on the parameter $\chi$. In particular, it was shown in \cite{CaCa11} for $k=0$ that we have uniform convergence in Wasserstein distance of any solution $\rho(t)$ for the rescaled system \eqref{eq:KSresc} to the equilibrium distribution $\bar \rho_0$ of \eqref{eq:KSresc},
\begin{align*}
\dfrac d{dt}\bW(\rho(t),\bar \rho_0)^2 \leq - 2 \bW(\rho(t),\bar \rho_0)^2\,  .
\end{align*}
A similar result has been obtained in two dimension in \cite{CaDo14}.\\

Studying the long-time behaviour of the system in the porous medium case $k<0$ is more subtle than the logarithmic case and we cannot deduce exponentially fast convergence from our calculations without assuming a uniform stability estimate, which coincides with \eqref{eq:stabestimate2}. But as in the critical case, we do not know how many initial data actually satisfy this condition. Note also that due to the additional confining potential, homogeneity has been broken, and so we cannot renormalise the second moment of minimisers as we did in the critical case. As in the critical case, stationary states of the rescaled equation \eqref{eq:KSresc} are compactly supported by \cite[Corollary 3.9]{CCH1}.

\begin{proposition}\label{prop:cvsub-critical}
For sub-critical interaction strength $0<\chi<\chi_c(k)$, let $\rho(t)$ be a solution to \eqref{eq:KSresc} in the porous medium case $k \in (-1,0)$, $m=1-k$ 
and $\bar \rho_k$ a stationary state of \eqref{eq:KSresc}. If the transport map $\psi$ given by $\rho(t,x)dx=\partial_x \psi(t,x) \# \bar \rho_k(x)dx$ satisfies the uniform stability estimate \eqref{eq:stabestimate1}, then
\begin{equation*} \label{eq:cvsub-critical}
\dfrac d{dt} \bW(\rho(t),\bar \rho_k)^2 \leq -2 \bW(\rho(t),\bar \rho_k)^2 \, ,
\end{equation*}
where equality holds if and only if $\rho(t)$ is a dilation of $\bar \rho_k$. It follows that $$\lim_{t \to \infty} \mV[\rho(t)] = \mV[\bar \rho_k]\, .$$
\end{proposition}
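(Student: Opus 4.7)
The plan is to mimic the Otto calculus argument of Proposition~\ref{prop:cvcritical2} in the rescaled setting, using the rescaled stationary-state characterisation~\eqref{eq:charac sstates resc} (whose kernel is $\chi|q|^{1-m}+|q|^{2}/2$) together with a new microscopic sign inequality that absorbs the confinement term. The starting point is the standard Wasserstein-gradient-flow estimate
$$
\tfrac12\tfrac{d}{dt}\bW(\rho(t),\bar\rho_{k})^{2} \;\leq\; \int_{\RR} (\phi'(t,x)-x)\,\partial_{x}\!\Bigl(\tfrac{m}{m-1}\rho^{m-1}+2\chi S_{k}+\tfrac{|x|^{2}}{2}\Bigr)\rho\, dx,
$$
where $\phi'$ is the inverse of $\psi'$, so that $\phi'\#\rho(t)=\bar\rho_{k}$. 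I would split the integrand into a diffusion piece, an interaction piece and a confinement piece. The first two are handled exactly as in the proof of Proposition~\ref{prop:cvcritical2}: integration by parts, the singular-integral identity \eqref{nosingularity}, and the change of variables $x=\psi'(a)$ yield
$$
-\int_{\RR}\bar\rho_{k}^{m}\,\psi''(a)^{-m}\, da+\int_{\RR}\bar\rho_{k}^{m}\,\psi''(a)^{1-m}\, da+\chi\iint_{\RR\times\RR}\bigl(\bla \psi''([a,b])\bra^{-m}-\bla \psi''([a,b])\bra^{1-m}\bigr)|a-b|^{1-m}\bar\rho_{k}\bar\rho_{k}\, dadb.
$$

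For the confinement term, the crucial rewriting is
$$
\int_{\RR}(\phi'(x)-x)\,x\,\rho(x)\, dx \;=\; -\bW(\rho,\bar\rho_{k})^{2}+\tfrac12\iint_{\RR\times\RR}\bigl(1-\bla \psi''([a,b])\bra\bigr)|a-b|^{2}\bar\rho_{k}\bar\rho_{k}\, dadb,
$$
which follows from $\int x\rho\, dx=0$, from $\int\phi'(x)^{2}\rho\, dx=\mV[\bar\rho_{k}]$, and from the identity $\iint\bla \psi''([a,b])\bra|a-b|^{2}\bar\rho_{k}\bar\rho_{k}\, dadb=\mV[\rho]+\mV[\bar\rho_{k}]-\bW^{2}$. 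The point is that the full $-\bW^{2}$ is now exposed (rather than hiding inside differences of second moments), and this is what supplies the sharp contraction rate $2$. I would then apply the characterisation \eqref{eq:charac sstates resc} with both $\gamma=m$ and $\gamma=m-1$ to substitute for the two $\int\bar\rho_{k}^{m}\psi''(a)^{-\gamma}\, da$ terms, and collect the contributions according to the two weights $|a-b|^{1-m}$ and $|a-b|^{2}$. The bound then reduces to
$$
\tfrac12\tfrac{d}{dt}\bW^{2}\;\leq\;\chi\iint_{\RR\times\RR} J_{m}[\psi''([a,b])]\,|a-b|^{1-m}\bar\rho_{k}\bar\rho_{k}\, dadb+\tfrac12\iint_{\RR\times\RR} G_{m}[\psi''([a,b])]\,|a-b|^{2}\bar\rho_{k}\bar\rho_{k}\, dadb-\bW^{2},
$$
with $J_{m}$ the microscopic functional of Proposition~\ref{prop:cvcritical2} and $G_{m}[u]:=\bla u^{1-m}-u^{-m}+1-u\bra=\bla(u-1)(u^{-m}-1)\bra$.

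Two sign checks then close the argument. The $J_{m}$-integral is non-positive under the stability estimate \eqref{eq:stabestimate1}, exactly as in the proof of Proposition~\ref{prop:cvcritical2}. For the new $G_{m}$-integral, for every $u>0$ the two factors $u-1$ and $u^{-m}-1$ carry opposite signs, so $(u-1)(u^{-m}-1)\leq 0$ holds pointwise with equality only at $u=1$, and no additional assumption is needed; thus $G_{m}\leq 0$. Combining gives $\tfrac{d}{dt}\bW^{2}\leq-2\bW^{2}$, and Gr\"onwall's lemma forces $\bW(\rho(t),\bar\rho_{k})\to 0$. Equality at any time requires $J_{m}=G_{m}=0$ a.e., hence $\psi''\equiv 1$, and combined with the zero centre-of-mass constraint this identifies $\rho(t)$ with $\bar\rho_{k}$. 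Finally, since $\mV[\bar\rho_{k}]<\infty$ (stationary states are compactly supported, \cite[Corollary~3.9]{CCH1}) and the second moment is controlled along Wasserstein convergence, $\mV[\rho(t)]\to\mV[\bar\rho_{k}]$. The main obstacle is the bookkeeping between the $|a-b|^{1-m}$ and $|a-b|^{2}$ weights produced by \eqref{eq:charac sstates resc} and the confinement expansion, together with isolating the clean factorisation $G_{m}[u]=(u-1)(u^{-m}-1)$; pairing this new microscopic inequality with the $-\bW^{2}$ extracted from $\int(\phi'-x)x\rho\, dx$ is exactly what upgrades the critical-case argument to the sharp exponential rate $2$.
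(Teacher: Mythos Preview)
Your argument is correct and reaches the same conclusion as the paper, but you organise the confinement contribution differently and this buys you a cleaner microscopic inequality. The paper keeps the full confinement term $\tfrac12\iint\bigl(\bla\psi''([a,b])\bra-\bla\psi''([a,b])\bra^{2}\bigr)|a-b|^{2}\bar\rho_k\bar\rho_k$, merges it with the diffusion pieces from \eqref{eq:charac sstates resc} to form $J_{m,2}[u]=-\bla u^{-m}\bra+\bla u^{1-m}\bra+\bla u\bra-\bla u\bra^{2}$, and then proves the coercivity $J_{m,2}\le -(\bla u\bra-1)^{2}$ via the $\alpha,\beta$ interpolation trick and a second-derivative computation on $j_{m,2}$ (valid on the range $\bla u\bra\le 1+1/m$ supplied by the stability estimate). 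Only at the very end does the paper unpack $\iint(\bla\psi''\bra-1)^{2}|a-b|^{2}\bar\rho_k\bar\rho_k=2\bW^{2}$. Your rewriting $\int(\phi'-x)x\rho\,dx=-\bW^{2}+\tfrac12\iint(1-\bla\psi''\bra)|a-b|^{2}\bar\rho_k\bar\rho_k$ extracts the $-\bW^{2}$ at the outset, so that the residual $|a-b|^{2}$-weighted functional becomes $G_m[u]=\bla(u-1)(u^{-m}-1)\bra$, whose nonpositivity is a one-line pointwise observation requiring no stability assumption. The two routes are algebraically equivalent since $J_{m,2}=G_m-(\bla u\bra-1)^{2}$, but your decomposition makes transparent that the stability hypothesis \eqref{eq:stabestimate1} is needed only to control the $\chi|a-b|^{1-m}$-weighted $J_m$ piece, while the confinement piece is handled unconditionally.
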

\begin{proof} We compute the evolution of the Wasserstein distance along the gradient flow similar to the proof of Proposition \ref{prop:cvcritical2}, denoting by $\phi$ the inverse transport map, $\partial_x\phi(t,x)=\partial_x\psi(t,x)^{-1}$,
\begin{align*}
&\dfrac12 \dfrac d{dt}\bW(\rho(t),\bar \rho_k)^2 \\
& \leq -  \int_\RR \phi''(t,x) \rho(t,x)^m\, dx +  {\chi} \iint_{\RR\times\RR} \left(\dfrac{\phi'(t,x) - \phi'(t,y)}{x-y}\right) |x-y|^{k}\rho(t,x)\rho(t,y)\, dxdy \\ 
&  \quad + \int_\RR \rho(t,x)^m\, dx -  \chi \iint_{\RR\times\RR} |x-y|^{k}\rho(t,x)\rho(t,y)\, dxdy  \\
& \quad   + \dfrac 1 2 \iint_{\RR\times\RR} (\phi'(t,x) - \phi'(t,y))(x-y)\rho(t,x)\rho(t,y)\, dxdy  - \int_\RR |x|^2\rho(t,x)\, dx \, , 
\end{align*}
where we have used the fact that the centre of mass is zero at all times to double the variables:
\[\int_{\RR} \phi'(t,x) x\rho(t,x) \, dx = \dfrac12 \iint_{\RR\times\RR} (\phi'(t,x) - \phi'(t,y))(x-y)\rho(t,x)\rho(t,y)\, dxdy\, . \]
This rewrites as follows in terms of the transport map $\psi'$:
\begin{align*}
&\dfrac12 \dfrac d{dt}\bW(\rho(t),\bar \rho_k)^2 \\
& \leq - \int_\RR \left(\psi''(t,a)\right)^{-m} \bar \rho_k(a)^m\, da + \chi \iint_{\RR\times\RR} \bla \psi''(t,(a,b))\bra^{-m}|a-b|^{1-m} \bar \rho(a) \bar \rho_k(b) \, da db \\ 
& \quad + \int_\RR \left(\psi''(t,a)\right)^{1-m} \bar \rho_k(a)^m\, da - \chi \iint_{\RR\times\RR} \bla \psi''(t,(a,b))\bra^{1-m}|a-b|^{1-m} \bar \rho_k(a) \bar \rho_k(b) \, da db \\ 
&  \quad + \frac12 \iint_{\RR\times\RR} \bla \psi''(t,(a,b))\bra |a-b|^{2} \bar \rho_k(a) \bar \rho_k(b) \, da db  \\
& \quad - \frac12 \iint_{\RR\times\RR} \bla \psi''(t,(a,b))\bra^{2}|a-b|^{2} \bar \rho_k(a) \bar \rho_k(b) \, da db  \, .
\end{align*}
Using the characterisation \eqref{eq:charac sstates resc}, we obtain for any $\gamma \in \RR$,
\begin{align*}
 \int_\RR &\left(\psi''(t,a)\right)^{-\gamma} \bar \rho_k(a)^m\, da \\
 &= 
 \iint_{\RR\times\RR}\left(\chi|a-b|^{1-m} +\frac{|a-b|^2}{2}\right) \bla \psi''(t,(a,b))^{-\gamma}\bra \bar \rho_k(a) \bar \rho_k(b) \, da db \, .
\end{align*}
Hence, the dissipation of the distance to equilibrium can be written as
\begin{align*}
\dfrac12 \dfrac d{dt}&\bW(\rho(t),\bar \rho_k)^2 \\
& \leq
\chi \iint_{\RR\times\RR} |a-b|^k 
\left\{ - \bla \psi''(t,(a,b))^{-m} \bra + \bla \psi''(t,(a,b))^{1-m} \bra \right.\\
&\hspace{3.15 cm}+ \left. \bla \psi''(t,(a,b)) \bra^{-m} - \bla \psi''(t,(a,b)) \bra^{1-m}  \right\} \bar \rho_k(a) \bar \rho_k(b) \, da db \\ 
& \quad + \dfrac12  \iint_{\RR\times\RR} |a-b|^2 
\left\{ - \bla \psi''(t,(a,b))^{-m} \bra + \bla \psi''(t,(a,b))^{1-m} \bra \right.\\
&\hspace{3.15 cm}+ \left.  \bla \psi''(t,(a,b)) \bra  - \bla \psi''(t,(a,b)) \bra^{2}  \right\} \bar \rho_k(a) \bar \rho_k(b) \, da db \, .
\end{align*}
We now examinate the signs of the microscopic functionals $J_m[u]$ and $J_{m,2}[u]$ defined as follows for non-negative functions $u:(0,1)\to \RR_+$,
\begin{align} 
&J_m[u] := - \bla u^{-m} \bra + \bla u^{1-m} \bra + \bla u \bra^{-m} - \bla u \bra^{1-m} \, , 
\label{eq:J m resc} \\
&J_{m,2}[u] := - \bla u^{-m} \bra + \bla u^{1-m} \bra + \bla u \bra - \bla u \bra^{2} \, . \label{eq:J m 2 resc}
\end{align}
The first two terms in the functionals $J_m$ and $J_{m,2}$ are common. We can rewrite them as 
\begin{equation*} 
- \bla u^{-m} \bra + \bla u^{1-m} \bra 
= - \alpha \bla u \bra^{-m} + \beta \bla u \bra^{1-m}  \, , 
\end{equation*}
where $\alpha = \bla u \bra^{m} \bla u^{-m} \bra$ and $\beta = \bla u \bra^{m-1}\bla u^{1-m} \bra$. By Jensen's inequality we have $\alpha\geq 1$, $\beta \geq 1$, and by interpolation we have $\beta \leq \alpha^{m/(m+1)}$. Therefore,
\begin{align*} 
&J_m[u] \leq j_m(\la u\ra ) :=  \max_{\alpha\geq 1} g(\alpha) + \bla u \bra^{-m} - \bla u \bra^{1-m} \, , 
\\
&J_{m,2}[u] \leq j_{m,2}(\la u\ra ) :=  \max_{\alpha\geq 1} g(\alpha) + \bla u \bra - \bla u \bra^{2} \, , \end{align*}
where 
$$
g(\alpha):= - \alpha \bla u \bra^{-m} + \alpha^{m/(m+1)} \bla u \bra^{1-m}\, .
$$
We can compute explicitly the maximal value of $g$, and as before the first order condition gives
\[  \alpha_{max} = \left(\dfrac{m}{m+1} \bla u \bra  \right)^{m+1}\, . \]
It is straight forward to see that
$$
\max_{\alpha\geq 1} g(\alpha) = g(1) \qquad \text{for} \, \, \bla u \bra \leq 1 + 1/m\, ,
$$
and hence we obtain
\begin{align} 
& j_m(\la u\ra )  =   \begin{cases} \label{jm}
 0, &\quad \mathrm{if}\quad \bla u \bra \leq 1 + \dfrac{1}{m} \medskip \\
 \left(\dfrac{m}{m+1}  \right)^{m} \dfrac1{m+1}  \bla u \bra   + \bla u \bra^{-m} - \bla u \bra^{1-m}, &\quad \mathrm{if}\quad \bla u \bra \geq 1+ \dfrac{1}{m}  
\end{cases} \, , \\
& j_{m,2}(\la u\ra )  =   \begin{cases} \label{jm2}
 -  \bla u \bra^{-m} +   \bla u \bra^{1-m} + \bla u \bra - \bla u \bra^{2},  &\quad \mathrm{if}\quad \bla u \bra \leq 1 + \dfrac{1}{m} \medskip \\
 \left(\dfrac{m}{m+1}  \right)^{m} \dfrac1{m+1}  \bla u \bra   + \bla u \bra - \bla u \bra^{2}, &\quad \mathrm{if}\quad \bla u \bra \geq 1+ \dfrac{1}{m}  
\end{cases} \, .
\end{align}
We have $\lim_{+\infty} j_{m} = +\infty$, and $\lim_{+\infty} j_{m,2} = -\infty$. In addition, the function $j_{2,m}$ is non-positive and uniformly strictly concave: 
\begin{align*}
\forall \la u\ra \in \left(0,1 + \dfrac{1}m\right]\quad   j_{m,2}''(\la u\ra) & = m \la u\ra^{-m-2} \left( -(m+1) + (m-1)\la u\ra \right) -2 \\
& \leq - (m+1) \la u\ra^{-m-2} - 2
\, .
\end{align*}
Thus, $\forall \la u\ra \in \RR_+$,    $j_{m,2}''(\la u\ra) \leq -2$ and so the following coercivity estimate holds true:
\begin{equation}\label{eq:coercivity}
\forall \la u\ra \in \left(0,1 + \dfrac1 m\right]\, ,
\quad j_{m,2}(\la u \ra) \leq -  \left(\bla u \bra - 1\right)^2 \, .
\end{equation}
Furthermore, the function $j_m$ is everywhere non-negative.
The above analysis allows us to rewrite the dissipation in Wasserstein distance as
\begin{align*}
\dfrac12 \dfrac d{dt}\bW(\rho(t),\bar \rho_k)^2 & \leq
\iint_{\RR\times\RR} \chi|a-b|^k J_m[\psi''(t,(a,b))] \bar \rho_k(a) \bar \rho_k(b) \, da db \\
&\quad+ \dfrac12\iint_{\RR\times\RR}  |a-b|^2  J_{m,2}[\psi''(t,(a,b))] \bar \rho_k(a) \bar \rho_k(b) \, da db \\
&\leq \iint_{\RR\times\RR}  \chi|a-b|^k j_m[\bla\psi''(t,(a,b))\bra] \bar \rho_k(a) \bar \rho_k(b) \, da db \\
&\quad + \dfrac12 \iint_{\RR\times\RR} |a-b|^2  j_{m,2}[\bla\psi''(t,(a,b))\bra]\bar \rho_k(a) \bar \rho_k(b) \, da db 
\end{align*}
to finally conclude that
\begin{align*}
\dfrac12 \dfrac d{dt}\bW(\rho(t),\bar \rho_k)^2
& \leq - \dfrac12 \iint_{\RR\times\RR} |a-b|^2 \left( \bla \psi''(t,(a,b)) \bra- 1\right)^2  \, \bar \rho_k(a)\bar \rho_k(b) \, dadb,
\end{align*}
where the last inequality follows from \eqref{jm} and the coercivity property \eqref{eq:coercivity} thanks to the stability estimate \eqref{eq:stabestimate2}. This concludes the proof,
\begin{align*}
\dfrac d{dt}\bW(\rho(t),\bar \rho_k)^2 
&\leq -\iint_{\RR\times\RR} |a-b|^2 \left( \bla \psi''(t,(a,b)) \bra- 1\right)^2  \, \bar \rho_k(a)\bar \rho_k(b) \, dadb\\
&= - \iint_{\RR\times\RR} \left( \psi'(a)-a - \left(\psi'(b)-b\right)\right)^2  \, \bar \rho_k(a)\bar \rho_k(b) \, dadb,\\
&= -2  \int_{\RR} \left( \psi'(a)-a\right)^2  \, \bar \rho_k(a) \, da,
= -2\,  \bW(\rho(t),\bar \rho_k)^2,
\end{align*}
using the fact that $\rho(t)$ and $\bar \rho_k$ both have zero centre of mass.
\end{proof}

 \begin{remark}[Non-Existence of Stationary States]\label{no sstates}
  Proposition \ref{prop:cvsub-critical} motivates the rescaling in the sub-critical case since it means that there are no stationary states in original variables. Indeed, assume $\bar u$ is a stationary states of equation \eqref{eq:KS}, then its rescaling $\rho(t,x)=e^t\bar u(e^t x)$ is a solution to \eqref{eq:KSresc} and converges to $\delta_0$ as $t \to \infty$. \cite[Proposition 3.4 (ii)]{CCH1} on the other hand provides a stationary state $\bar \rho_k$, and the transport map $\partial_x\psi(t,x)$ pushing forward $\bar \rho_k$ onto $\rho(t,x)$ can be written as $\psi(t,x)=e^{-t}\phi(x)$ for some convex function $\phi$. Hence, for large enough $t>0$, $\psi(t,x)$ satisfies the stability estimate \eqref{eq:stabestimate1} and so eventually $\rho(t,x)$ converges to $\bar \rho_k$ by Proposition \ref{prop:cvsub-critical} which is not possible.
 \end{remark}

\subsubsection{The super-critical case $\chi > \chi_c$}
\label{sec: LTA super-critical k neg}

Here, we investigate the possible blow-up dynamics of the solution in the super-critical case. In contrast to the logarithmic case $(m=1,k=0)$, for which all solutions blow-up when $\chi>\chi_c$, provided the second momentum is initially finite, see \cite{BlaDoPe06}, the picture is not so clear in the fair-competition regime with negative homogeneity $k<0$. There, the key identity is \eqref{eq:2nd mom diss}, which states in particular that the second momentum is a concave function.\\

It has been observed in \cite{BCL} that if the free energy is negative for some time $t_0$, $\mF_k[\rho(t_0)]<0$, then the second momentum is a decreasing concave function for $t>t_0$. So, it cannot remain non-negative for all time. Necessarily, the solution blows up in finite time. Whether or not the free energy could remain non-negative for all time was left open.
In \cite{Yao}, the author proved that solutions blow-up without condition on the sign of the free energy at initial time, but for the special case of the Newtonian potential, for which comparison principles are at hand.\\
In \cite{CG}, a continuous time, finite dimensional, Lagrangian numerical scheme of \cite{BCC08} was analysed. This scheme preserves the gradient flow structure of the equation. It was proven that, except for a finite number of values of $\chi$, the free energy necessarily becomes negative after finite time. Thus, blow-up seems to be a generic feature of \eqref{eq:KS} in the super-critical case.
However, we could not extend the proof of \cite{CG} to the continuous case for two reasons: firstly, we lack compactness estimates, secondly, the set of values of $\chi$ to be excluded gets dense as the number of particles in the Lagrangian discretisation goes to $\infty$.\\

Below, we transpose the analysis of \cite{CG} to the continuous level. We highlight the missing pieces.
Let us define the renormalised density $\hat \rho$ as in \eqref{moment2normt}. 
The following statement is the analogue of Proposition \ref{prop:H critical} in the super-critical case.

\begin{proposition}\label{prop:H super-critical}
The functional $\mH$ defined by $\mH[\rho]:=\mG[\hat \rho]$ on $\mY_2$ is zero-homogeneous, and everywhere non-positive. Moreover, it cannot vanish in the cone of non-negative energy:
\begin{equation}\label{eq:sign H}
\left( \mathcal F[\rho] \geq 0 \right) \Longrightarrow  \left( \mathcal H[\rho] < 0 \right)\, .
\end{equation}
\end{proposition}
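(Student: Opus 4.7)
The approach is to follow the proof of Proposition \ref{prop:H critical} step by step, replacing its concluding step (based on the \emph{existence} of a minimiser of the rescaled functional in the critical case) by the \emph{non-existence} information provided by Corollary \ref{cor:nosstatessupercrit} in the super-critical regime. Zero-homogeneity is immediate from \eqref{moment2normt}, since the dilation $\rho_\mu(x)=\mu\rho(\mu x)$ leaves $\hat\rho$ invariant. Non-positivity follows from the same Cauchy--Schwarz argument as in Proposition \ref{prop:H critical}, with $\chi$ in place of $\chi_c(k)$: an integration by parts gives
\[
(m-1)\mF_k[\hat\rho] = -\int_\RR y\,\partial_y\!\left(\tfrac{m}{m-1}\hat\rho^{m-1}+2\chi W_k\ast\hat\rho\right)\hat\rho(y)\,dy,
\]
and combining Cauchy--Schwarz with $\mV[\hat\rho]=1$ yields $\mH[\rho]=\mG[\hat\rho]\leq 0$.

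Assume now $\mF_k[\rho]\geq 0$ and, toward a contradiction, $\mH[\rho]=0$. Equality in the Cauchy--Schwarz step above forces the existence of a constant $\hat\pi$ such that
\[
\partial_y\!\left(\tfrac{m}{m-1}\hat\rho^{m-1}+2\chi W_k\ast\hat\rho\right)+\hat\pi\,y = 0 \qquad \text{on } \supp\hat\rho,
\]
and testing against $y\hat\rho(y)$ identifies $\hat\pi = (m-1)\mF_k[\hat\rho]$. By \eqref{Fsigma}, the sign of $\mF_k[\hat\rho]$ agrees with that of $\mF_k[\rho]$, so the hypothesis together with $m>1$ (porous medium) gives $\hat\pi\geq 0$. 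The displayed relation expresses exactly that $\hat\rho\in\mY_2$ is a stationary state, in the sense of Definition \ref{def:sstates resc}, of the Wasserstein gradient flow associated to the functional $\mF_k+\tfrac{\hat\pi}{2}\mV$.

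The final contradiction comes from scale invariance. A direct calculation using $m+k=1$ and the $k$-homogeneity of $W_k$ shows that if $u\in\mY_2$ is a stationary state of the flow generated by $\mF_k+c\mV$, then the dilation $u_\lambda(y):=\lambda u(\lambda y)\in\mY_2$ is a stationary state of the flow generated by $\mF_k+c\lambda^{2-k}\mV$. When $\hat\pi>0$, the choice $c=\hat\pi/2$ and $\lambda=\hat\pi^{-1/(2-k)}$ turns $\hat\rho$ into a stationary state of \eqref{eq:KSresc} in $\mY_2$, contradicting Corollary \ref{cor:nosstatessupercrit}(ii) since $\chi>\chi_c(k)$. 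When $\hat\pi=0$, the Euler--Lagrange relation reduces to the stationary equation for \eqref{eq:KS}, so $\hat\rho\in\mY$ would be a stationary state of \eqref{eq:KS}, contradicting Corollary \ref{cor:nosstatessupercrit}(i). Hence $\mH[\rho]<0$ whenever $\mF_k[\rho]\geq 0$. The main point of the argument, rather than a genuine obstacle, is the verification of the scaling law $c\mapsto c\lambda^{2-k}$ for the confinement constant under dilations, which is what allows the reduction to the canonical rescaled equation \eqref{eq:KSresc} and depends crucially on the fair-competition relation $m+k=1$.
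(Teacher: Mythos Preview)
Your proof is correct and follows essentially the same strategy as the paper's: zero-homogeneity and non-positivity via Cauchy--Schwarz are identical, and the contradiction argument starts from the same Euler--Lagrange relation with $\hat\pi=(m-1)\mF_k[\hat\rho]\geq 0$.

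The only difference is in how the contradiction is closed. The paper applies a variant of Theorem~\ref{thm:HLSmresc} (with confinement coefficient $\hat\pi$ in place of $\tfrac12$) to conclude that $\hat\rho$ is a \emph{global minimiser} of $\mF_k+\hat\pi\mV$, and then shows this functional is unbounded below in the super-critical regime by dilating the critical stationary state $\bar\rho$ (for which $\mF_k[\bar\rho]<0$ when $\chi>\chi_c(k)$). You instead use the scaling law $c\mapsto c\lambda^{2-k}$ to turn $\hat\rho$ into a stationary state of the canonical rescaled equation~\eqref{eq:KSresc} (or of~\eqref{eq:KS} when $\hat\pi=0$), and invoke Corollary~\ref{cor:nosstatessupercrit}. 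Since Corollary~\ref{cor:nosstatessupercrit} is itself proved via Theorem~\ref{thm:HLSmresc} plus the same unboundedness argument, the underlying mechanism is the same; your route is slightly more modular in that it reduces to an already-established non-existence result rather than re-running the minimiser argument inline, at the cost of the small extra scaling verification.
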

\begin{proof}
We proceed as in the proof of Proposition \ref{prop:H critical}. Zero-homogeneity follows from the definition of $\mH$, and non-positivity is a direct consequence of the Cauchy-Schwarz inequality. It remains to show \eqref{eq:sign H}. Assume that $\rho$ is such that $\mathcal F[\rho] \geq 0$ and $\mathcal H[\rho] = 0 $. The latter condition ensures that there exists a constant $\hat \pi$ such that $\hat\rho$ is a critical point of the energy functional  $\mF + \hat \pi \mV$:
\begin{equation*}
 \partial_y \left( \dfrac m{m-1}\hat\rho(y)^{m-1} + 2 \chi W_k(y)*\hat\rho(y) \right)  + \hat \pi y = 0\,.
\end{equation*}
Testing this equation against $y\hat \rho(y)$, we obtain
\[  \hat \pi = (m-1) \mF_k[\hat \rho]=  (m-1) \sigma(t)^{m-1}\mF_k[ \rho]\geq 0\, . \]
Applying as in the proof of Proposition \ref{prop:H critical} a variant of Theorem \ref{thm:HLSmresc}, we obtain that $\hat \rho$ is a global minimiser of the energy functional  $\mF + \hat \pi \mV$. Here, the amplitude of the confinement potential $\hat \pi$  plays no role, but the sign $\hat \pi\geq 0$ is crucial. By \cite[Theorem 2.8]{CCH1}, there exists a stationary state $\bar \rho \in \mY_2$ for critical interaction strength $\chi=\chi_c(k)$. If $\chi>\chi_c(k)$, we have $\mF_k[\bar \rho]=\mU_m[\bar \rho]+\chi\mW_k[\bar \rho]<\mU_m[\bar \rho]+\chi_c(k)\mW_k[\bar \rho]=0$. Taking mass-preserving dilations of $\bar \rho$, we see immediately that the functional $\mF + \hat \pi \mV$ is not bounded below in the super-critical case. This is a contradiction with $\hat \rho$ being a minimiser. Hence, $\mH[\rho]<0$ and \eqref{eq:sign H} holds true.
\end{proof}

As in Section \ref{sec: LTA critical k neg}, the following non-linear function of the second momentum,
\[ \omega(t) = \sigma(t)^{m+1}=\left( \int_{\RR} |x|^2 \rho(t,x)\, dx \right)^{\frac{m+1}2}\, ,  \]
satisfies the second order differential inequality,
\begin{equation}\label{eq:ddot omega} 
\ddot\omega(t) = (m^2-1) \omega(t)^{-1} \mH[ \rho(t)] \leq 0\, .
\end{equation}
In view of the property \eqref{eq:sign H} of the zero-homogeneous functional $\mathcal H$, it seems natural to ask whether there exists a positive constant $\delta>0$, such that
\begin{equation}\label{eq:conjecture H} \left( \mathcal F[\rho] \geq 0 \right) \Longrightarrow  \left( \mathcal H[\rho] < - \delta\right)\, .
 \end{equation}
If this would be the case, then \eqref{eq:ddot omega} could be processed as follows: assume that $\dot\omega(t)\geq 0$ for all $t$. This is equivalent to say that the free energy remains non-negative for all $t\geq 0$ using \eqref{eq:2nd mom diss}. Hence, assuming \eqref{eq:conjecture H} holds, \eqref{eq:ddot omega} becomes 
\begin{equation}\label{eq:ddot omega estimate}
 \ddot\omega(t) < -\delta(m^2-1) \omega(t)^{-1} <0.
\end{equation}
Multiplying by $\dot\omega(t)\geq 0$, and integrating between $0$ and $T$, we would get
\[ \frac12\dot\omega(T)^2 + \delta (m^2-1) \log \left( \omega(T)\right) \leq \frac12\dot\omega(0)^2 + \delta (m^2-1) \log \left( \omega(0)\right)\, . \]
Hence, for any $t>0$, 
\[ \omega(t) \leq \omega(0)\exp\left( \dfrac{\dot\omega(0)^2}{2\delta (m^2-1) } \right)\, . \]
Back to estimate \eqref{eq:ddot omega estimate}, we would conclude that $\omega$ is uniformly concave,
\[ \ddot\omega(t) \leq - \left(\frac{\delta(m^2-1)}{\omega(0)} \right) \exp\left( - \dfrac{\dot\omega(0)^2}{2\delta (m^2-1)} \right) <0\, . \]
Therefore, $\tfrac{d}{dt}\mV[\rho(t)]$ would become negative in finite time. This would be a contradiction with the everywhere non-negativity of the free energy by \eqref{eq:2nd mom diss}.
As a conclusion, the existence of positive $\delta>0$ as in \eqref{eq:conjecture H} implies unconditional blow-up. In \cite{CG}, existence of such $\delta$ is proven for a finite dimensional Lagrangian discretisation of $\mF_k$, and accordingly $\mathcal H$, except for a finite set of values for $\chi$.
Numerical simulations using the numerical scheme proposed in \cite{BCC08} clearly show that the energy has the tendency to become negative, even for positive initial data. Proving \eqref{eq:conjecture H} remains an open problem.

\subsection{Fast Diffusion Asymptotics}
\label{sec:LTA kpos}

In the fast diffusion case $k>0$, we are able to show a much stronger result: every stationary state of \eqref{eq:KSresc} is in fact a global attractor for any choice of interaction strength $\chi>0$. Investigating the evolution of the Wasserstein distance to equilibrium yields exponential convergence  with an explicit rate which is independent of the interaction strength $\chi>0$. In contrast to the porous medium case, where we required a stability estimate on Brenier's map, we do not need such an estimate here. As a consequence, we obtain an alternative proof of uniqueness of stationary states by a dynamical argument.

\begin{proposition}[Long-time asymptotics]\label{prop:cvinW2}
For $k \in (0,1)$ and $m=1-k$, if $\rho(t)$ has zero centre of mass initially and satisfies \eqref{eq:KSresc}, then the  evolution of the Wasserstein distance to the stationary states $\bar \rho_k$ of \eqref{eq:KSresc} can be estimated by
\begin{equation} \label{eq:cvsub-critical}
\dfrac d{dt} \bW(\rho(t),\bar \rho_k)^2 \leq -2 \bW(\rho(t),\bar \rho_k)^2
\end{equation}
for any interaction strength $\chi>0$. As a consequence, stationary states are unique if they exist.
\end{proposition}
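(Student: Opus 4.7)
The plan is to mimic the argument of Proposition \ref{prop:cvsub-critical} and to exploit the fact that the key Jensen-type inequalities reverse their sign in the fast diffusion range $m = 1-k \in (0,1)$, which removes the need for any uniform stability estimate on the Brenier map. Let $\bar\rho_k$ be a stationary state of \eqref{eq:KSresc} with finite second moment and let $\psi$ be the convex Brenier potential satisfying $\rho(t,x)\,dx = \partial_x\psi(t,x)\,\#\,\bar\rho_k(x)\,dx$. Following the blanket regularity assumptions of Section \ref{sec:LTA}, I take $\psi''$ to be absolutely continuous so that Lemma \ref{lem:interaction} applies.

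First I would differentiate the squared Wasserstein distance along the rescaled gradient flow, producing the same five-term expression as in Proposition \ref{prop:cvsub-critical}: an internal-energy piece, an interaction piece, their evaluations at $\bar\rho_k$, and a confinement piece handled by the zero centre of mass identity. Pushing everything back to the reference measure $\bar\rho_k(a)\,da$ via the change of variables formula, and then using the characterisation \eqref{eq:charac sstates resc} with $\gamma=m$ and $\gamma=m-1$ to rewrite $\int(\psi''(a))^{-\gamma}\bar\rho_k(a)^m\,da$ as a double integral against the kernel $\chi|a-b|^{1-m}+|a-b|^2/2$, I would recover the decomposition
\begin{align*}
\tfrac12\tfrac{d}{dt}\bW(\rho(t),\bar\rho_k)^2 &\leq \chi\iint_{\RR\times\RR}|a-b|^k J_m[\psi''(t,(a,b))]\bar\rho_k(a)\bar\rho_k(b)\,dadb \\
&\quad + \tfrac12\iint_{\RR\times\RR}|a-b|^2 J_{m,2}[\psi''(t,(a,b))]\bar\rho_k(a)\bar\rho_k(b)\,dadb,
\end{align*}
with $J_m$ and $J_{m,2}$ as in \eqref{eq:J m resc} and \eqref{eq:J m 2 resc}.

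The decisive difference with the porous medium analysis lies in the signs of these microscopic functionals. For $m\in(0,1)$ the map $t\mapsto t^{-m}$ is convex while $t\mapsto t^{1-m}$ is concave, so Jensen's inequality simultaneously yields $\bla u^{-m}\bra\geq\bla u\bra^{-m}$ and $\bla u^{1-m}\bra\leq\bla u\bra^{1-m}$, hence $J_m[u]\leq 0$ unconditionally. Applied to $J_{m,2}[u]$ the same two estimates give $J_{m,2}[u] \leq j_{m,2}(\bla u\bra)$ with $j_{m,2}(v) := -v^{-m}+v^{1-m}+v-v^2$. A direct computation shows $j_{m,2}(1)=0$, $j_{m,2}'(1)=0$, and
\begin{equation*}
j_{m,2}''(v) = -m(m+1)v^{-m-2}-m(1-m)v^{-m-1}-2 \leq -2 \qquad\text{for all } v>0,
\end{equation*}
each of the three terms being negative when $m\in(0,1)$. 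Taylor expansion around $v=1$ therefore yields the unconditional coercivity bound $j_{m,2}(v)\leq -(v-1)^2$, which is precisely the ingredient that was missing in the porous medium regime and that forced the stability hypothesis \eqref{eq:stabestimate1} there.

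Substituting these two bounds into the dissipation identity, the interaction contribution is non-positive while the confinement contribution is controlled by
\begin{equation*}
\tfrac{d}{dt}\bW(\rho(t),\bar\rho_k)^2 \leq -\iint_{\RR\times\RR}|a-b|^2\bigl(\bla\psi''(t,(a,b))\bra-1\bigr)^2\bar\rho_k(a)\bar\rho_k(b)\,dadb.
\end{equation*}
Writing $\bla\psi''(a,b)\bra-1 = \bigl((\psi'(a)-a)-(\psi'(b)-b)\bigr)/(a-b)$, expanding the square, and using the fact that both $\rho(t)$ and $\bar\rho_k$ have zero centre of mass (so that $\int(\psi'(a)-a)\bar\rho_k(a)\,da=0$), the right-hand side equals $-2\bW(\rho(t),\bar\rho_k)^2$, which is the announced differential inequality. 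Uniqueness then follows immediately: two stationary states, viewed as constant-in-time solutions, would stay at a fixed Wasserstein distance which would still satisfy $\tfrac{d}{dt}\bW^2\leq-2\bW^2$, and this forces $\bW=0$. The only step beyond a mechanical transposition of the porous medium argument is the unconditional coercivity of $j_{m,2}$, and that is the place where I expect the bookkeeping to need the most care, even though it ultimately reduces to the elementary one-variable convexity estimate displayed above.
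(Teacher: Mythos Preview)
Your proposal is correct and follows essentially the same approach as the paper's proof: the same decomposition into the microscopic functionals $J_m$ and $J_{m,2}$, the same observation that for $m\in(0,1)$ both Jensen inequalities work in the favourable direction to give $J_m[u]\leq 0$ and $J_{m,2}[u]\leq j_{m,2}(\bla u\bra)$ unconditionally, and the same computation $j_{m,2}''\leq -2$ yielding the coercivity bound $j_{m,2}(v)\leq -(v-1)^2$. The concluding steps and the uniqueness argument also match.
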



\begin{proof}
We compute the evolution of the Wasserstein distance along the gradient flow, denoting by $\phi$ the inverse transport map, $\partial_x\phi(t,x)=\partial_x\psi(t,x)^{-1}$. Proceeding as in the proof of Proposition \ref{prop:cvsub-critical}, we can write the dissipation of the distance to equilibrium as 
\begin{align*}
\dfrac12 \dfrac d{dt}\bW(\rho(t),\bar \rho_k)^2 & \leq
\chi \iint_{\RR\times\RR} |a-b|^k \left\{ - \bla \psi''(t,(a,b))^{-m} \bra + \bla \psi''(t,(a,b))^{1-m} \bra \right. \\
&\hspace{3.15 cm}\left. +\bla \psi''(t,(a,b)) \bra^{-m} - \bla \psi''(t,(a,b)) \bra^{1-m}  \right\} \bar \rho_k(a) \bar \rho_k(b) \, da db \\ 
& \quad + \dfrac12  \iint_{\RR\times\RR} |a-b|^2 \left\{ - \bla \psi''(t,(a,b))^{-m} \bra + \bla \psi''(t,(a,b))^{1-m} \bra \right. \\
&\hspace{3.45 cm}\left. +\bla \psi''(t,(a,b)) \bra  - \bla \psi''(t,(a,b)) \bra^{2}  \right\} \bar \rho_k(a) \bar \rho_k(b) \, da db \, .
\end{align*}
We now examine the signs of the microscopic functionals $J_m[u]$ and $J_{m,2}[u]$ defined as in \eqref{eq:J m resc} and \eqref{eq:J m 2 resc} for non-negative functions $u:(0,1)\to \RR_+$ by
\begin{align*} 
&J_m[u]: = - \bla u^{-m} \bra + \bla u^{1-m} \bra + \bla u \bra^{-m} - \bla u \bra^{1-m} \, , \\
&J_{m,2}[u] := - \bla u^{-m} \bra + \bla u^{1-m} \bra + \bla u \bra - \bla u \bra^{2} \, . 
\end{align*}
However, since $m<1$ we now have by convexity $\bla u \bra^{-m}- \bla u^{-m} \bra \leq 0$ and $ \bla u^{1-m} \bra  - \bla u \bra^{1-m} \leq 0$, hence
\begin{equation}\label{Jmbound}
J_m[u]\leq 0, \quad m\in(0,1).
\end{equation}
For the functional $J_{m,2}$, the first two terms can be written as 
\begin{equation*} 
- \bla u^{-m} \bra + \bla u^{1-m} \bra 
= - \alpha \bla u \bra^{-m} + \beta \bla u \bra^{1-m}  \, , 
\end{equation*}
where $\alpha = \bla u \bra^{m} \bla u^{-m} \bra$ and $\beta = \bla u \bra^{m-1}\bla u^{1-m} \bra$. As opposed to the proof of Proposition \ref{prop:cvsub-critical}, we now have $\beta \leq 1 \leq \alpha$ by Jensen's inequality since $m<1$, and therefore,
\begin{align*} 
\forall \la u\ra \in \RR_+, \quad
&J_{m,2}[u] \leq j_{m,2}(\la u\ra ) := - \bla u \bra^{-m} +  \bla u \bra^{1-m} + \bla u \bra - \bla u \bra^{2} \, .
\end{align*}
Note that $\lim_{+\infty} j_{m,2} = -\infty$. In addition, the function $j_{2,m}$ is non-positive and uniformly strictly concave: 
\begin{align*}
\forall \la u\ra \in \RR_+, \quad   j_{m,2}''(\la u\ra) & = -m (1+m)
\la u\ra^{-m-2} -m(1-m)\la u\ra^{-m-1} -2 \leq -2\, ,
\end{align*}
and hence
\begin{align}\label{jm2bound}
\forall \la u\ra \in \RR_+, \quad   j_{m,2}(\la u\ra) & \leq -\left(\la u\ra -1\right)^2\,.
\end{align}
From these estimates, we can deduce the exponential speed of convergence for the stationary state $\bar \rho_k$ by rewriting the dissipation to equilibrium as
\begin{align*}
\dfrac12 \dfrac d{dt}\bW(\rho(t),\bar \rho_k)^2 & \leq
\iint_{\RR\times\RR}  \chi|a-b|^k J_m[\psi''(t,(a,b))] \bar \rho_k(a) \bar \rho_k(b) \, da db \\
& \quad+
\iint_{\RR\times\RR} 
\dfrac12  |a-b|^2  J_{m,2}[\psi''(t,(a,b))]\bar \rho_k(a) \bar \rho_k(b) \, da db \\
&\leq \iint_{\RR\times\RR} \dfrac12  |a-b|^2  j_{m,2}[\bla\psi''(t,(a,b))\bra]\bar \rho_k(a) \bar \rho_k(b) \, da db \\
& \leq - \dfrac12 \iint_{\RR\times\RR} |a-b|^2 \left( \bla \psi''(t,(a,b)) \bra- 1\right)^2  \, \bar \rho_k(a)\bar \rho_k(b) \, dadb,
\end{align*}
where the last inequality follows from \eqref{Jmbound} and \eqref{jm2bound}. This concludes the proof,
\begin{align*}
\dfrac d{dt}\bW(\rho(t),\bar \rho_k)^2 
&\leq -\iint_{\RR\times\RR} |a-b|^2 \left( \bla \psi''(t,(a,b)) \bra- 1\right)^2  \, \bar \rho_k(a)\bar \rho_k(b) \, dadb\\
&= - \iint_{\RR\times\RR} \left( \psi'(a)-a - \left(\psi'(b)-b\right)\right)^2  \, \bar \rho_k(a)\bar \rho_k(b) \, dadb,\\
&= -2  \int_{\RR} \left( \psi'(a)-a\right)^2  \, \bar \rho_k(a) \, da,
= -2\,  \bW(\rho(t),\bar \rho_k)^2,
\end{align*}
using the fact that $\rho(t)$ and $\bar \rho_k$ both have zero centre of mass.
\end{proof}
\begin{remark}[Non-Existence of Stationary States]\label{rmk:noE}
 This result also provides a dynamical proof for the non-existence of stationary states for $k \in (0,2/3)$ in original variables. Indeed, if $\bar u$ were a stationary state of equation \eqref{eq:KS}, then its rescaled density $\rho(t,x)$ would converge to $\delta_0$ for large times. This contradicts the existence of a stationary state in rescaled variables \cite[Theorem 4.10]{CCH1} for $k \in (0,2/3)$ together with exponential convergence to equilibrium Proposition \ref{prop:cvinW2}.
\end{remark}

\section{Numerical Simulations}
\label{sec:numerics}
There exists an illuminating way to rewrite the energy functional
$\mathcal F_k[\rho]$ due to the particular form of the transport map.
We use the Lagrangian transformation $\rho \mapsto  X$, where $X:(0,1)\to \RR$ denotes the pseudo-inverse of the cumulative distribution function (cdf) associated with $\rho$ \cite{Villani03, Gosse-Toscani, BCC08, CaCa11},
$$
X(\eta)=F^{-1}(\eta):=\inf\left\{x\, :\, F(x)\geq \eta \right\},
\quad F(x):=\int_{-\infty}^x \rho(y)\, dy\, .
$$
We introduce the parameter $r \in \left\{0,1\right\}$ as we are interested in both original ($r=0$) and rescaled ($r=1$) variables.
Integrating equations \eqref{eq:KS} and \eqref{eq:KSresc} over $(-\infty,X(t,\eta))$ with respect to the space variable yields
\begin{equation}\label{eq:newKS1}
 \partial_t \int_{-\infty}^{X(t,\eta)}\rho(t,y)\,dy
 =\left.\left[\partial_x \rho^m+2\chi\rho\partial_x \left(W_k\ast \rho\right) +r x \rho\right]\right|_{x=X(t,\eta)}\, .
\end{equation}
Differentiating the identity $F(t,X(t,\eta))=\eta$ with respect to $\eta$ twice yields 
$$
\rho(t,X(t,\eta))=\left(\partial_\eta X(t,\eta)\right)^{-1}\qquad \mbox{and} \qquad
\partial_x\rho(t,X(t,\eta))=-\partial_{\eta \eta}X(t,\eta)/\left(\partial_\eta X(t,\eta)\right)^3\,.
$$ 
Differentiating with respect to time, we obtain $\partial_t F(t,X(t,\eta))=-\partial_t X(t,\eta)/\partial_\eta X(t,\eta)$. This allows us to simplify \eqref{eq:newKS1},
\begin{equation*}
 \partial_tX(t,\eta)=-\partial\eta\left(\left(\partial_\eta X(t,\eta)\right)^{-m}\right)
 -2\chi \int_0^1 \left|X(t,\eta)-X(t,\tilde \eta)\right|^{k-2}\left(X(t,\eta)-X(t,\tilde \eta)\right)\, d\tilde \eta -r X(t,\eta)\,.
\end{equation*}
Similarly, the functionals $\mG_{k,0}:=\mF_k$ and $\mG_{k,1}:=\mFr$ read equivalently
\begin{align*}
\mG_{k,r}[X]= \frac{1}{m-1}\int_0^1(\partial_\eta X(\eta))^{1-m}\, d \eta 
+ \chi \int_0^1\int_0^1 \frac{|X(\eta)-X(\tilde \eta)|^k}{k}\, d \eta d \tilde \eta
+ \frac{r}{2} \int_0^1 |X(\eta)|^2\, d \eta\, .
\end{align*}
for $k\in (-1,1)\backslash\{0\}$, and
\begin{equation*}
\mG_{0,r}[X] = - \int_0^1 \log \left(\frac{dX}{d\eta}(\eta)\right)\,
d\eta + \chi\int_0^1 \!\! \int_0^1 \log |X(\eta) -
X(\tilde\eta)|\, d\eta d\tilde\eta
+ \frac{r}{2} \int_0^1 |X(\eta)|^2\, d \eta\, .
\end{equation*}
in the logarithmic case $k=0$. 
Intuitively, $X$ encodes the position of particles with respect to the partial mass $\eta \in (0,1)$, and the same homogeneity is preserved: $\mG_{k,0}[\lambda X]=\lambda^{k}\mG_{k,0}[X]$.

In Section \ref{sec: Functional inequalities}, we showed uniqueness of minimisers of the rescaled energy functional $\mFr[\rho]$ for $0<k<2/3$ and any $\chi>0$ (Corollary \ref{cor:!kpos}) and also for the sub-critical porous medium case $-1<k<0$, $\chi<\chi_c(k)$ (Corollary \ref{cor:!subcrit}). One may take these results as an indication that $\mFr[\rho]$ could in fact be displacement convex. As discussed in Section \ref{sec:Optimal Transport Tools}, $\mFr[\rho]$ is a sum of displacement convex and concave contributions and we do not know its overall convexity properties. We recall that the functionals related to the classical Keller-Segel models in two dimensions are displacement convex once restricted to bounded densities \cite{CLM14}. We will give some heuristics for the power-law potential case. If $\mG_{k,1}[X]$ were convex, then $\mFr[\rho]$ would be displacement convex \cite{Villani03, Carrillo-Slepcev09} and uniqueness of minimisers directly follows \cite{McCann97}. Taylor explanding $\mG_{k,1}$ around $X$ yields for any test function $\varphi \in C_c^\infty\left([0,1]\right)$,
$$
\mG_{k,1}[X+\epsilon \varphi]=
\mG_{k,1}[X] + \epsilon D_\varphi \mG_{k,1} [X]+\frac{\epsilon^2}{2} D^2_\varphi \mG_{k,1}[X] + O(\epsilon^3),
$$
where $D_\varphi \mG_{k,1}[X]= \int_0^1 \delta \mG_{k,1} [X](\eta)\, \varphi(\eta)\, d \eta$ with the first variation $\frac{\delta \mG_{k,1}}{\delta X}[X](\eta)$ given by
$$
\frac{\delta \mG_{k,1}}{\delta X}[X](\eta) =
 \partial_\eta \left( \left(\partial_\eta X\right)^{-m}\right)
+2\chi \int_0^1 |X(\eta)-X(\tilde \eta)|^{k-2} \left(X(\eta)-X(\tilde \eta)\right)\, d \tilde \eta + X(\eta)
$$
for $k \in (-1,1)/\{0\}$. However, the Hessian
\begin{align*}
D^2_\varphi \mG_{k,1}[X]
= &m\int_0^1 \left(\partial_\eta\varphi(\eta)\right)^2\left(\partial_\eta X(\eta)\right)^{-(m+1)}\, d \eta\\
+ &\chi (k-1) \, \int_0^1 \int_0^1 \left|X(\eta)-X(\tilde \eta)\right|^{k-2} \left(\varphi(\eta)-\varphi(\tilde \eta)\right)^2\,d\eta d \tilde \eta+ \int_0^1 \varphi(\eta)^2\,d \eta
\end{align*}
does not have a sign.
In other words, we cannot use this strategy to conclude overall convexity/concavity properties of the rescaled energy functional $\mFr$. It is an interesting problem to explore convexity properties of $\mG_{k,r}$ in a restricted set of densities such as bounded densities as in \cite{CLM14, Craig16}.\\ 

\subsection{Numerical Scheme} \label{Numerical Scheme}

To simulate the dynamics of $X$ we use a numerical scheme which was proposed in \cite{BCC08, CG} for the logarithmic case, and generalised to the one-dimensional fair-competition regime for the porous medium case $k\in(-1,0)$ in \cite{CGnew}. It can easily be extended to rescaled variables adding a confining potential, and works just in the same way in the fast diffusion case $k\in(0,1)$. We discretise the energy functional via a finite difference approximation of $X(\eta)$ on a regular grid. If $\left(X_i\right)_{1\leq i\leq n}$ are the positions of $n$ ordered particles sharing equal mass $\Delta \eta = 1/n$ such that $X_1<X_2<\cdots<X_n$, then we define the discretised energy functional by 
\begin{equation*}\label{discF}
 \mG_{k,r}^n\left[(X_i)\right]=\frac{\left(\Delta\eta\right)^{m}}{m-1}\sum_{i=1}^{n-1} \left(X_{i+1}-X_i\right)^{1-m}
+\chi\left(\Delta\eta\right)^{2}\sum_{1\leq i \neq j \leq n} \frac{\left|X_j-X_i\right|^k}{k}
+ r\frac{\Delta\eta}{2} \sum_{i=1}^n |X_i|^2
\end{equation*}
for $k \in (-1,1)\backslash\{0\}$, and by
\begin{equation*}\label{discFlog}
 \mG_{0,r}^n\left[(X_i)\right]=-\Delta\eta\sum_{i=1}^{n-1} \log\left(\frac{X_{i+1}-X_i}{\Delta\eta}\right)
+\chi\left(\Delta\eta\right)^2\sum_{1\leq i \neq j \leq n} \log\left|X_j-X_i\right|
+ r\frac{\Delta\eta}{2} \sum_{i=1}^n |X_i|^2
\end{equation*}
in the logarithmic case $k=0$. The Euclidean gradient flow of $\mG_{k,r}^n$ writes for $ 1<i<n$
\begin{align}\label{scheme1}
 \dot{X_i}=&-(\Delta\eta)^{m-1}\left(\left(X_{i+1}-X_i\right)^{-m} - \left(X_i-X_{i-1}\right)^{-m}\right)\notag\\
 &-2\chi\Delta\eta\sum_{1\leq j \neq i \leq n} \sign(i-j)\left|X_i-X_j\right|^{k-1}-rX_i\, ,
\end{align}
complemented with the dynamics of the extremal points
\begin{align}
 \dot{X_1}&=-(\Delta\eta)^{m-1}\left(X_2-X_1\right)^{-m}+2\chi\Delta \eta\sum_{j \neq 1}\left|X_j-X_1\right|^{k-1}-rX_1\label{scheme2}\, ,\\
 \dot{X_n}&=(\Delta\eta)^{m-1}\left(X_n-X_{n-1}\right)^{-m}-2\chi\Delta \eta\sum_{j \neq n}\left|X_j-X_n\right|^{k-1}-rX_n\label{scheme3}\, .
\end{align}
Equations \eqref{scheme2}-\eqref{scheme3} follow from imposing $X_0=-\infty$ and $X_{n+1}=+\infty$ so that the initial centre of mass $\sum_{i=1}^n X_i=0$ is conserved.
Working with the pseudo-inverse of the cummulative distribution function of $\rho$ also has the advantage that we can express the Wasserstein distance between two densities $\rho$ and $\tilde \rho$ in a more tractable way. More precisely, if $\psi'$ is the optimal map which transports $\tilde \rho$ onto $\rho$, then the Monge-Amp\'{e}re equation \eqref{eq:MongeAmpere2} is an increasing rearrangement. Let $F$ and $\tilde F$ be the cummulative distribution function of $\rho$ and $\tilde \rho$ respectively, with pseudo-inverses $X$ and $\tilde X$. Then we have
$$
\tilde F(x)=\int_{-\infty}^x \tilde \rho(y)\, dy
= \int_{-\infty}^{\psi'(x)} \rho(y)\, dy
=F \circ \psi'(x)\, .
$$
Hence the transport map is given explicitly by $\psi'=F^{-1} \circ \tilde F$, and we have for the Wasserstein distance
\begin{equation}\label{WW}
 \bW(\rho,\tilde \rho)^2 =\int_0^1 \left|\tilde F^{-1}(\eta)-F^{-1}(\eta)\right|^2 \, d \eta
 =\int_0^1 \left|\tilde X(\eta)-X(\eta)\right|^2 \, d \eta\, 
 = ||\tilde X-X||_2^2\, .
\end{equation}
This means that this numerical scheme can be viewed formally as the time discretisation of the abstract gradient flow equation \eqref{eq:gradient flow} in the Wasserstein-2 metric space, which corresponds to a gradient flow in $L^2\left( (0,1)\right)$ for the pseudo-inverse $X$,
$$
\dot{X}(t)=-\nabla_{L^2} \mG_{k,r}[X(t)]\, .
$$
Discretising \eqref{scheme1}-\eqref{scheme2}-\eqref{scheme3} by an implicit in time Euler scheme, this numerical scheme then coincides with a Jordan-Kinderlehrer-Otto (JKO) steepest descent scheme (see \cite{Otto, BCC08} and references therein). The solution at each time step of the non-linear system of equations is obtained by an iterative Newton-Raphson procedure.

\subsection{Results}
For the logarithmic case $k=0$, $m=1$, we know that the critical interaction strength is given by $\chi_c=1$ separating the blow-up regime from the regime where self-similar solutions exist \cite{DoPe04, BlaDoPe06, BKLN06}. As shown in \cite{CCH1}, there is no critical interaction strength for the fast diffusion regime $k>0$, however the dichotomy appears in the porous medium regime $k<0$ \cite{BCL,CCH1}. It is not known how to compute the critical parameter $\chi_c(k)$ explicitly for $k<0$, however, we can make use of the numerical scheme described in Section \ref{Numerical Scheme} to compute $\chi_c(k)$ numerically. 

\begin{figure}[h!]
\centering
\includegraphics[width=.8\textwidth]{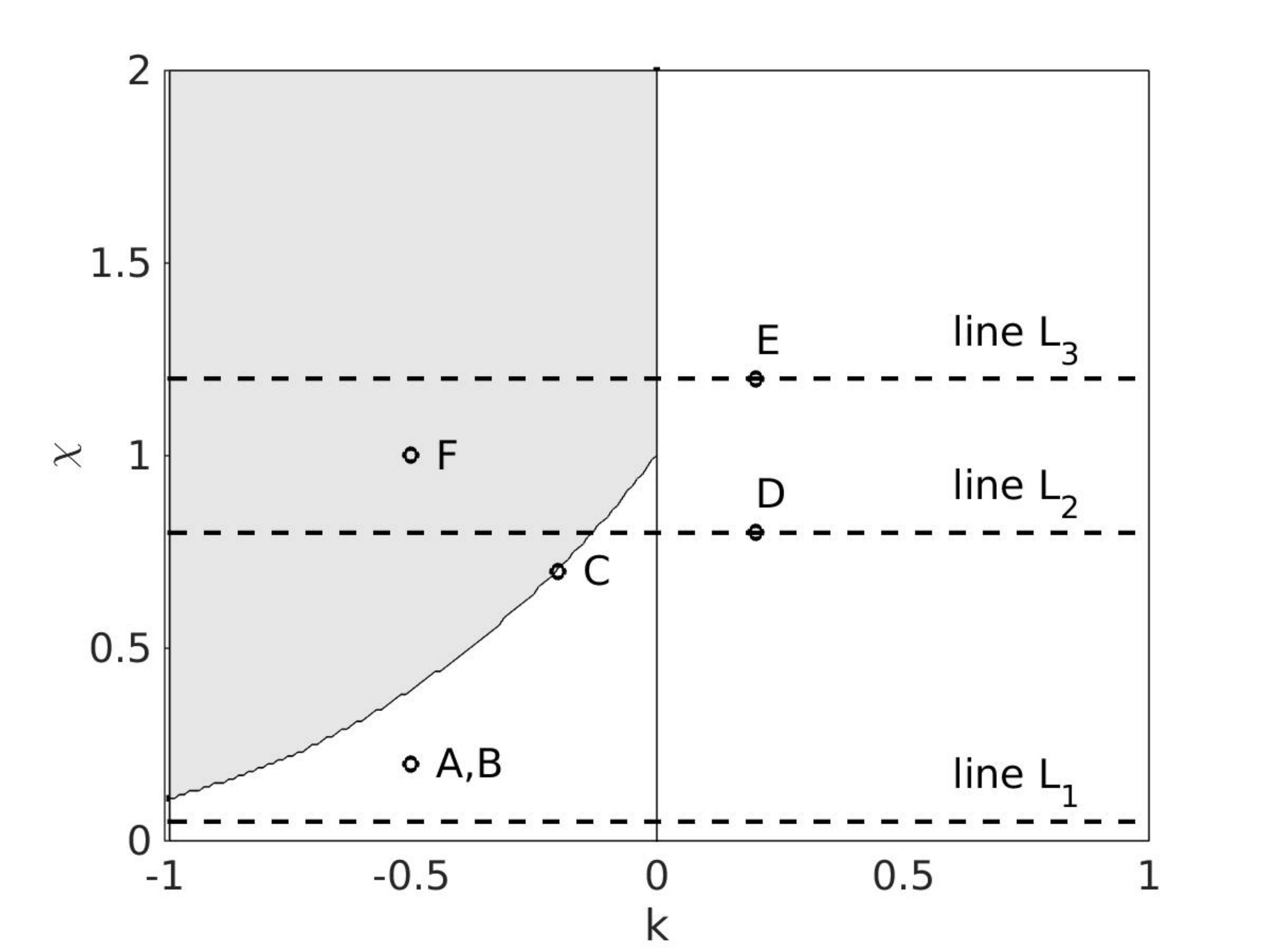}
\caption{\textbf{Regions of blow-up (grey) and convergence to self-similarity (white).} 
The notation refers to subsequent figures as follows: Lines $L_1$, $L_2$ and $L_3$ show the asymptotic profiles over the range $k\in(-1,1)$ for $\chi=0.05$, $\chi=0.8$ and $\chi=1.2$ respectively (Figure \ref{fig:criticalk_chi}). Point $A$ shows the density evolution at $(k,\chi)=(-0.5,0.2)$ in original variables (Figure  \ref{fig:chi=02_k=-05_resc=0}), and Point $B$ for the same choice of parameters $(k,\chi)=(-0.5,0.2)$ in rescaled variables (Figure \ref{fig:chi=02_k=-05_resc=1}). Points $C$, $D$ and $E$ correspond to simulations at $(-0.2,0.7)$ (Figure \ref{fig:chi=07_k=-02_resc=1}), $(0.2,0.8)$ (Figure \ref{fig:chi=08_k=02_resc=1}) and $(0.2,1.2)$ (Figure \ref{fig:chi=12_k=02_resc=1}) respectively in the parameter space $(k,\chi)$, all in rescaled variables. Point $F$ corresponds to simulations at $(k,\chi)=(-0.5,1.0)$ in original variables (Figure \ref{fig:chi=10_k=-05_resc=0}).}
\label{fig:parameterspace}
\end{figure}
Figure \ref{fig:parameterspace} gives an overview of the behaviour of solutions. In the grey region, we observe finite-time blow-up of solutions, whereas for a choice of $(k,\chi)$ in the white region, solutions converge exponentially fast to a unique self-similar profile. The critical regime is characterised by the black line $\chi_c(k)$, $-1<k\leq 0$, separating the grey from the white region. Note that numerically we have $\chi_c(-0.99)=0.11$ and $\chi_c(0)=1$. Figure \ref{fig:parameterspace} has been created by solving the rescaled equation \eqref{eq:KSresc} using the numerical scheme described above with particles equally spaced at a distance $\Delta \eta=10^{-2}$. 
For all choices of $k \in (-1,0)$ and $\chi \in (0,1.5)$, we choose as initial condition a centered normalised Gaussian with variance $\sigma^2=0.32$, from where we let the solution evolve with time steps of size $\Delta t=10^{-3}$.
We terminate the time evolution of the density distribution if one of the following two conditions is fullfilled: either the $L^2$-error between two consecutive solutions is less than a certain tolerance (i.e. we consider that the solution converged to a stationary state), or the Newton-Raphson procedure does not converge for $\rho(t,x)$ at some time $t<t_{max}$ because the mass is too concentrated (i.e. the solution sufficiently approached a Dirac Delta to assume blow-up). We choose $t_{max}$ large enough, and $\Delta \eta$ and $\Delta t$ small enough so that one of the two cases occurs. For Figure \ref{fig:parameterspace}, we set the maximal time to $t_{max}=10$ and the tolerance to $10^{-5}$. For a fixed $k$, we start with $\chi=0.01$ and increase the interaction strength by $0.01$ each run until $\chi=1.5$. This is repeated for each $k$ from $-0.99$ to $0$ in $0.01$ steps. For a given $k$, the numerical critical interaction strength $\chi_c(k)$ is defined to be the largest $\chi$ for which the numerical solution can be computed without blow-up until the $L^2$-error between two consecutive solutions is less than the specified tolerance. In what follows, we investigate the behaviour of solutions in more detail for chosen points in the parameter space Figure \ref{fig:parameterspace}.

\begin{figure}[h!]
\centering
\subfloat[]{\includegraphics[width=.5\textwidth]{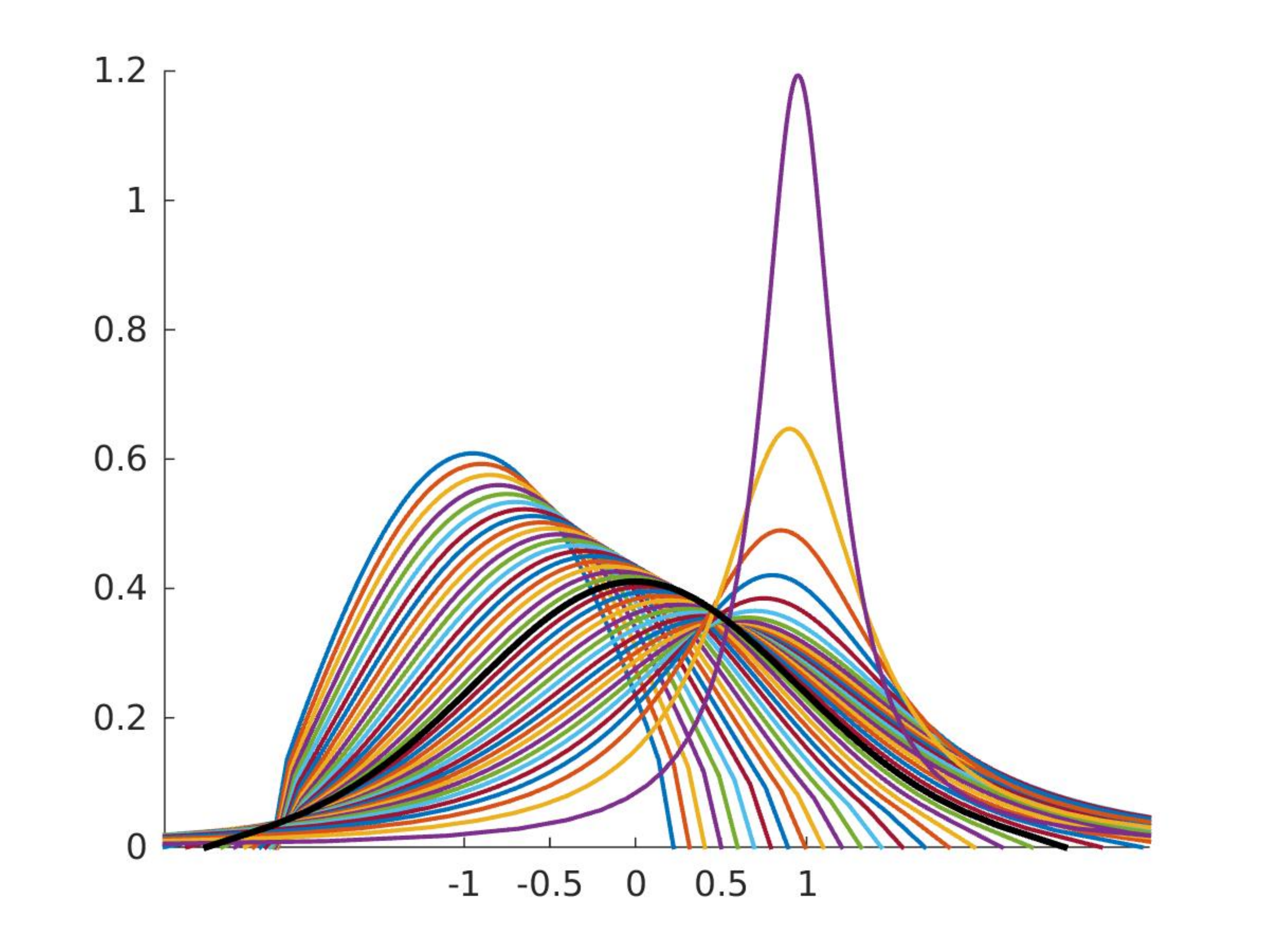}}%
\subfloat[]{\includegraphics[width=.5\textwidth]{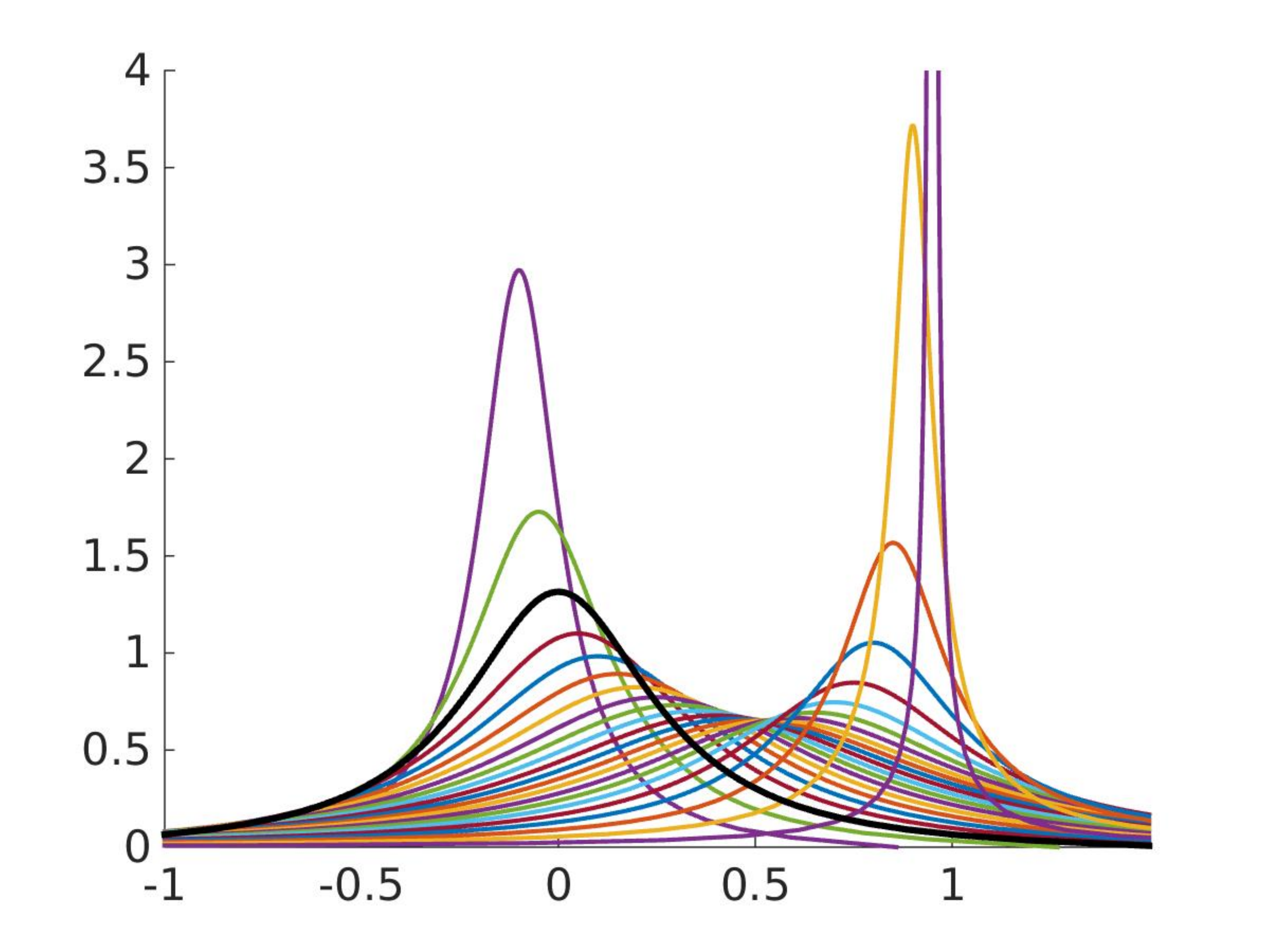}}

\subfloat[]{\includegraphics[width=.5\textwidth]{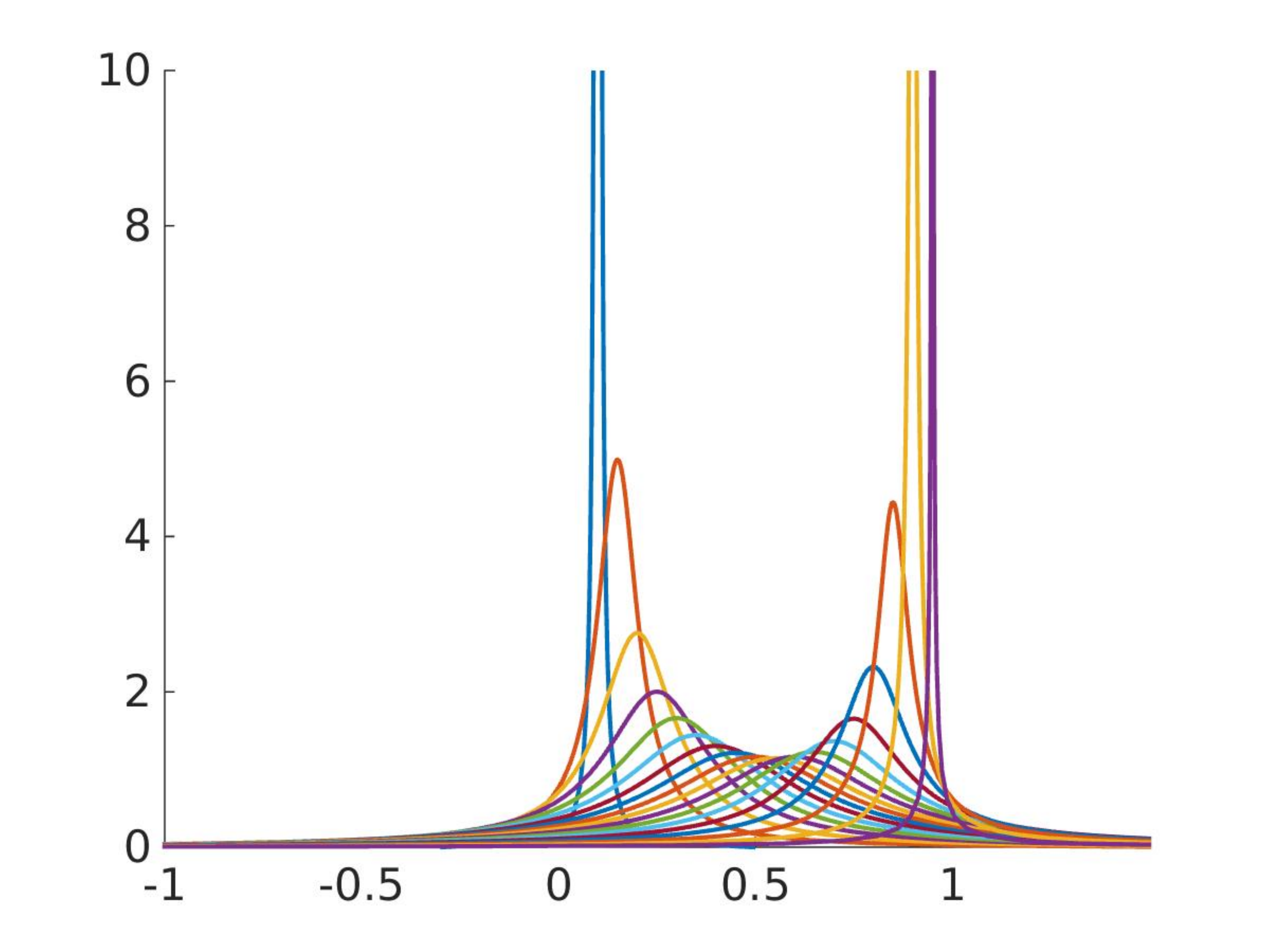}}%
\vspace{-0.2 cm}
\caption{Profiles of stationary states in rescaled variables ($r=1$) corresponding to lines $L_1$, $L_2$ and $L_3$ in Figure \ref{fig:parameterspace} for (a) $\chi=0.05$, (b) $\chi=0.8$ and (c) $\chi=1.2$ with $k$ ranging from $0.95$ to (a) $-0.95$,  (b) $-0.1$ and (c) $0.1$ in $0.05$ steps respectively. All stationary states are centered at zero, but are here displayed shifted so that they are centered at their corresponding value of $k$. The black curve indicates the stationary state for $k=0$.}\label{fig:criticalk_chi}
\end{figure}

\subsubsection{Lines $L_1$, $L_2$ and $L_3$}

Apart from points $A-F$ shown in Figure \ref{fig:parameterspace}, it is also interesting to observe how the asymptotic profile changes more globally as we move through the parameter space. To this purpose, we choose three different values of $\chi$ and investigate how the stationary profile in rescaled variables changes with $k$. Three representative choices of interaction strengths are given by lines $L_1$, $L_2$ and $L_3$ as indicated in Figure \ref{fig:parameterspace}, where $L_1$ corresponds to $\chi=0.05$ and lies entirely in the self-similarity region (white), $L_2$ corresponds to $\chi=0.8$ and captures part of the sub-critical region in the porous medium regime $k<0$ (white), as well as some of the blow-up regime (grey), and finally line $L_3$ which corresponds to $\chi=1.2$ and therefore captures the jump from the self-similarity (white) to the blow-up region (grey) at $k=0$. Note also that points $D$ and $E$ are chosen to lie on lines $L_2$ and $L_3$ respectively as to give a more detailed view of the behaviour on these two lines for the same $k$-value. The asymptotic profiles over the range $k \in (-1,1)$ for lines $L_1$, $L_2$ and $L_3$ are shown in Figure \ref{fig:criticalk_chi}, all with the same choice of parameters using time step size $\Delta t=10^{-3}$ and equally spaced particles at distance $\Delta \eta=10^{-2}$.

For each choice of interaction strength $\chi$, we start with $k=0.95$ and decrease $k$ in $0.05$ steps for each simulation either until $k=-0.95$ is reached, or until blow-up occurs and $(k,\chi)$ lies within the grey region. For each simulation, we choose as initial condition the stationary state of the previous $k$-value (starting with a centered normalised Gaussian distribution with variance $\sigma^2=0.32$ for $k=0.95$). As for Figure \ref{fig:parameterspace}, we terminate the time evolution of the density distribution for a given choice of $k$ and $\chi$ if either the $L^2$-error between two consecutive solutions is less than the tolerance $10^{-5}$, or the Newton-Raphson procedure does not converge. 
All stationary states are centered at zero. To better display how the profile changes for different choices of $k$, we shift each stationary state in Figure \ref{fig:criticalk_chi} so that it is centered at the corresponding value of $k$. The black curve indicates the stationary profile for $k=0$.

In Figure \ref{fig:criticalk_chi}(a), we observe corners close to the edge of the support of the stationary profiles for $k<0$. This could be avoided by taking $\Delta \eta$ and $\Delta t$ smaller, which we chose not to do here, firstly to be consistent with Figure \ref{fig:parameterspace} and secondly to avoid excessive computation times. 
For interaction strength $\chi=0.8$, the smallest $k$ for which the solution converges numerically to a stationary state is $k=-0.1$ (see Figure \ref{fig:criticalk_chi}(b)). This fits with what is predicted by the critical curve $\chi_c(k)$ in Figure \ref{fig:parameterspace} (line $L_2$).

In Figures \ref{fig:criticalk_chi}(b) and \ref{fig:criticalk_chi}(c), we see that the stationary profiles become more and more concentrated for $k$ approaching the critical parameter $k=k^*$ with $\chi=\chi_c(k^*)$, which is to be expected as we know that the stationary state $\bar \rho_k$ converges to a Dirac Delta as $k$ approaches the blow-up region. In fact, for $\chi=1.2$ the numerical scheme stops converging for $k=0.05$ already since the mass is too concentrated, and so we only display profiles up to $k=0.1$ in Figure \ref{fig:criticalk_chi}(c). Further, in all three cases $\chi=0.05$, $\chi=0.8$ and $\chi=1.2$ we observe that the stationary profiles become more and more concentrated as $k \to 1$. This reflects the fact that attractive forces dominate as the diffusivity $m$ converges to zero. Finally, note that we have chosen here to show only a part of the full picture for Figures \ref{fig:criticalk_chi}(b) and \ref{fig:criticalk_chi}(c), cutting the upper part. More precisely, the maximum of the stationary state for $k=0.95$ and $\chi=0.8$ in Figure \ref{fig:criticalk_chi}(b) lies at $75.7474$, whereas it is at $3,216.8$ for parameter choices $k=0.95$ and $\chi=1.2$ shown in Figure \ref{fig:criticalk_chi}(c).

\begin{figure}[h!]
\centering
\subfloat[]{\includegraphics[width=.45\textwidth]{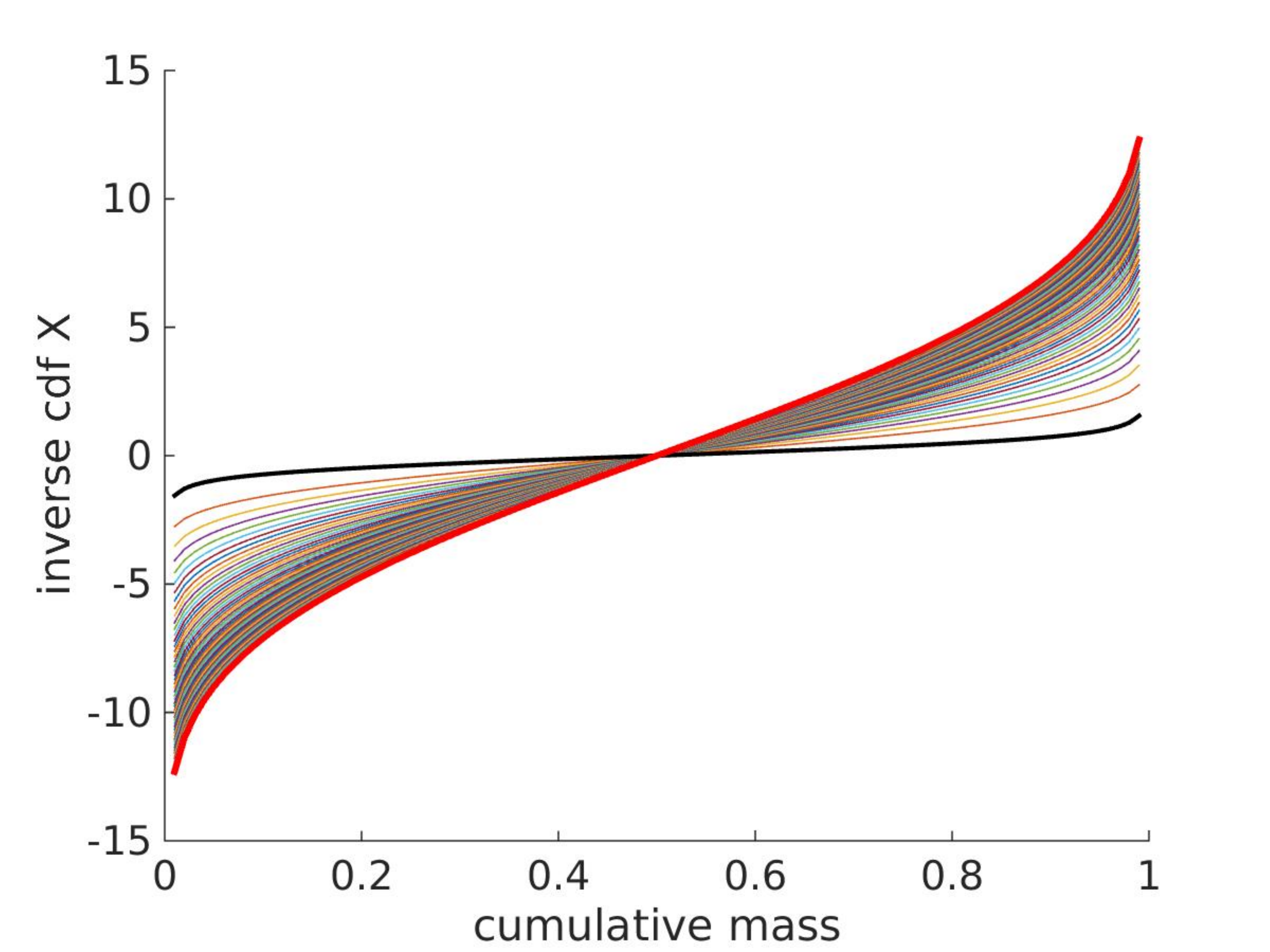}}%
\subfloat[]{\includegraphics[width=.45\textwidth]{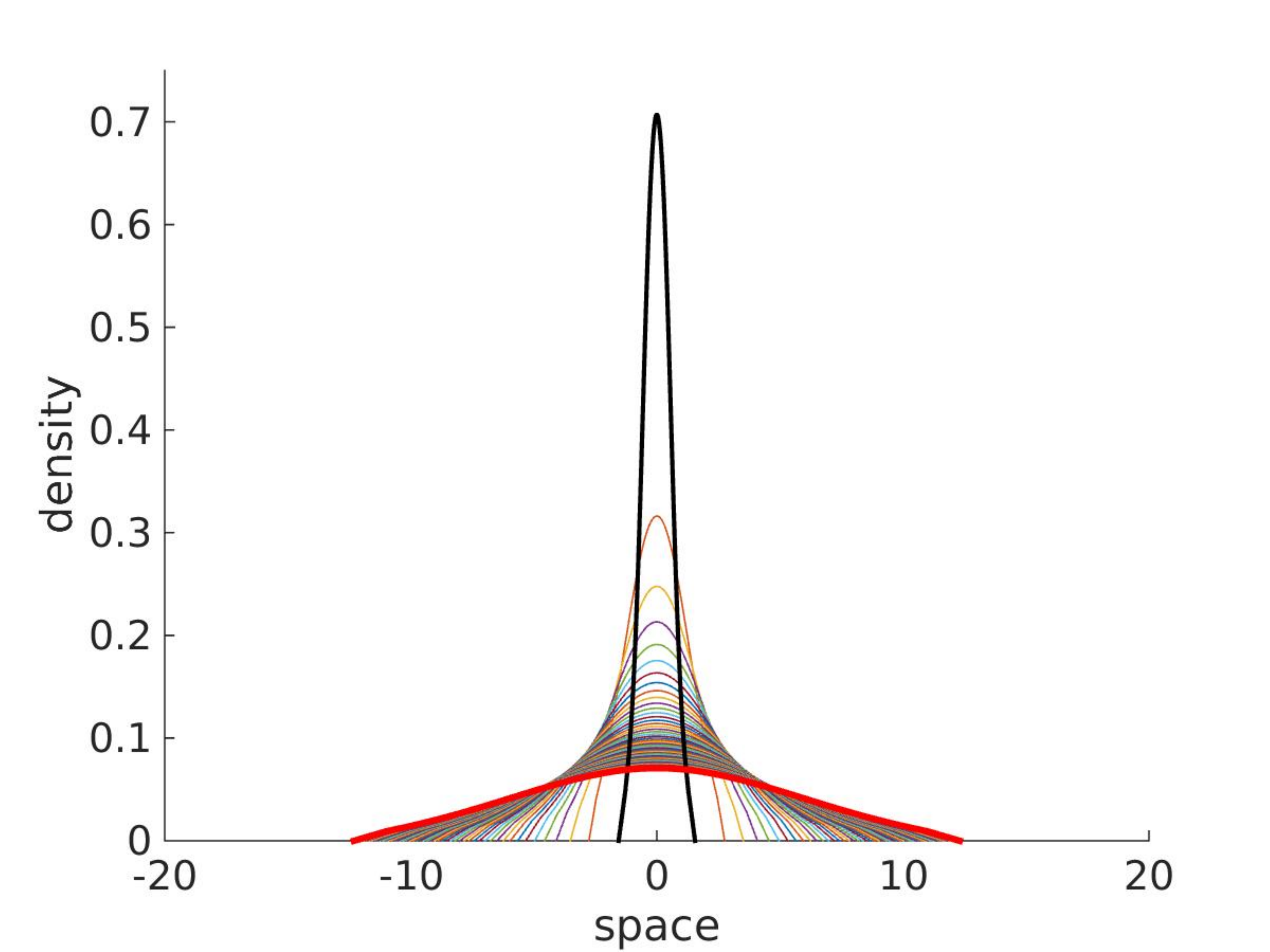}}%

\subfloat[]{\includegraphics[width=.45\textwidth]{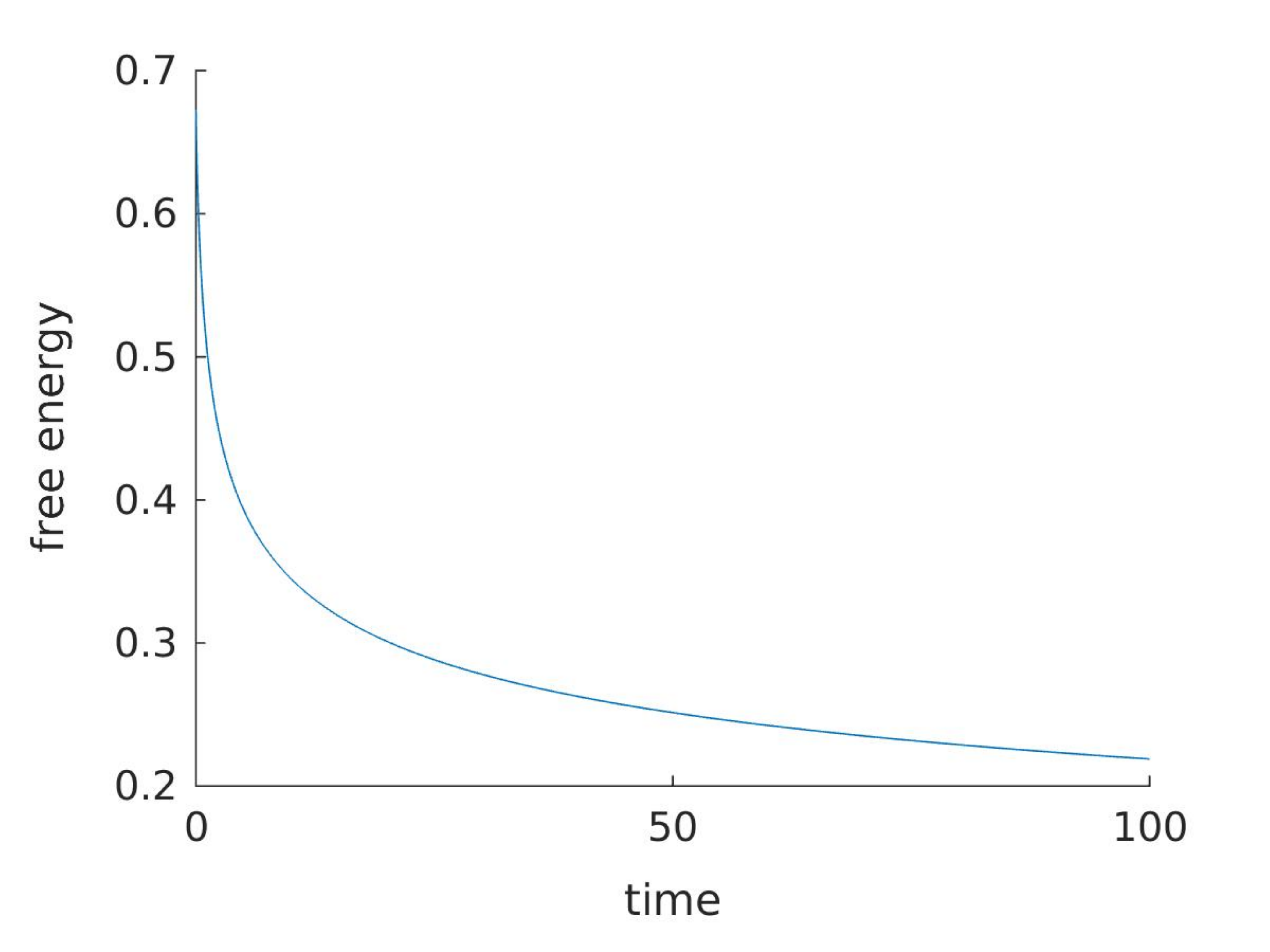}}%
\vspace{-0.2 cm}
\caption{Point $A$: $\chi=0.2$, $k=-0.5$, $r=0$.\\ (a) Inverse cumulative distribution function, (b) solution density, (c) free energy.}
\label{fig:chi=02_k=-05_resc=0}
\end{figure}

\subsubsection{Points $A$-$F$}
Let us now investigate in more detail the time-evolution behaviour at the points $A$--$F$ in Figure \ref{fig:parameterspace}. For $k=-0.5$ in the porous medium regime and sub-critical $\chi=0.2$ (point $A$ in Figure \ref{fig:parameterspace}), the diffusion dominates and the density goes pointwise to zero as $t \to \infty$ in original variables. Figure \ref{fig:chi=02_k=-05_resc=0}(a) and \ref{fig:chi=02_k=-05_resc=0}(b) show the inverse cumulative distribution function and the density profile for $(k,\chi)=(-0.5,0.2)$ respectively, from time $t=0$ (black) to time $t=100$ (red) in time steps of size $\Delta t=10^{-3}$ and with $\Delta \eta=10^{-2}$. We choose a centered normalised Gaussian with variance $\sigma^2=0.32$ as initial condition.
Figure \ref{fig:chi=02_k=-05_resc=0}(c) shows the evolution of the free energy \eqref{eq:functional} over time, which continues to decay as expected.

\newpage

\begin{figure}[ht]
\centering
\subfloat[]{\includegraphics[width=.45\textwidth]{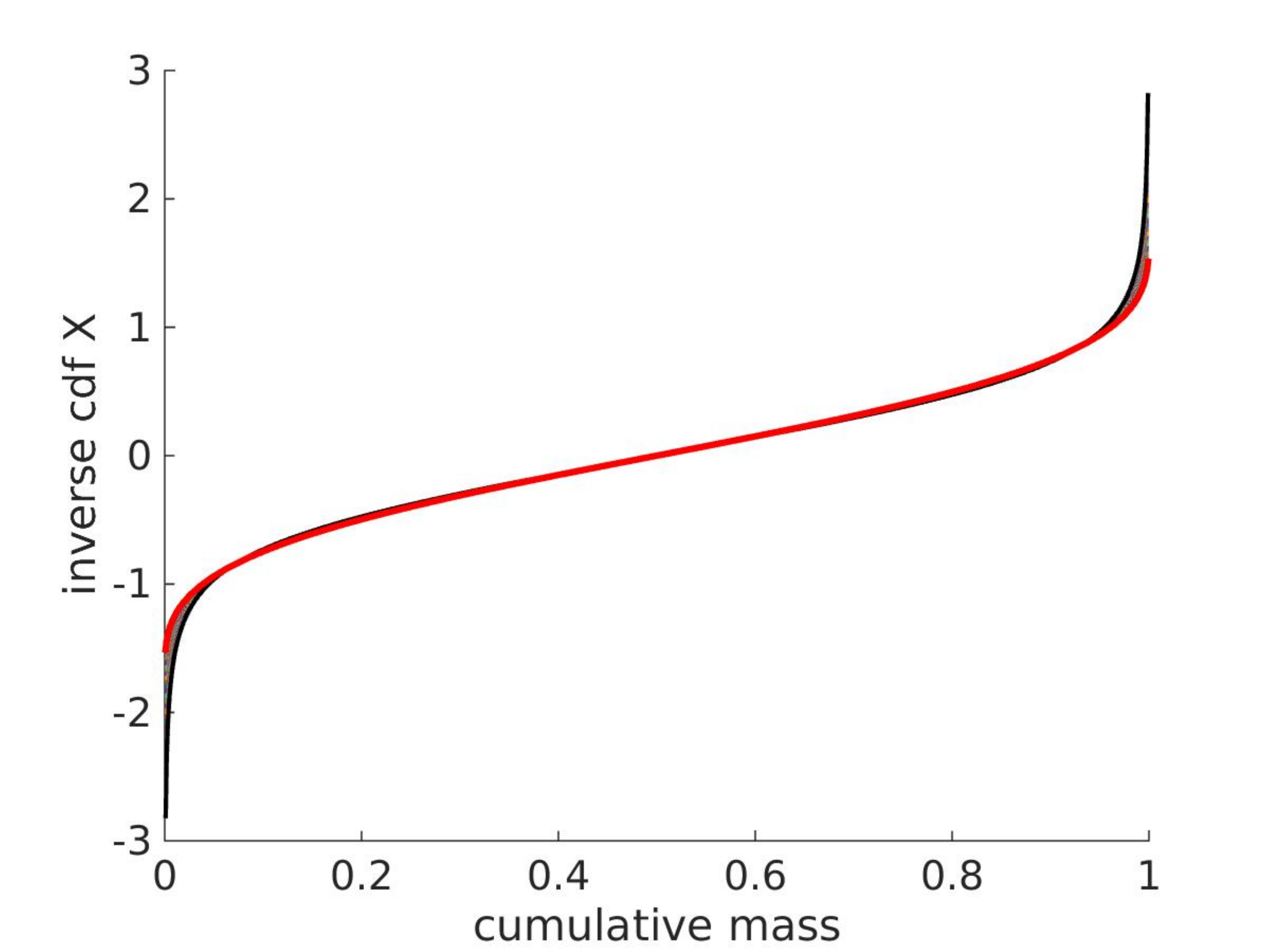}}%
\subfloat[]{\includegraphics[width=.45\textwidth]{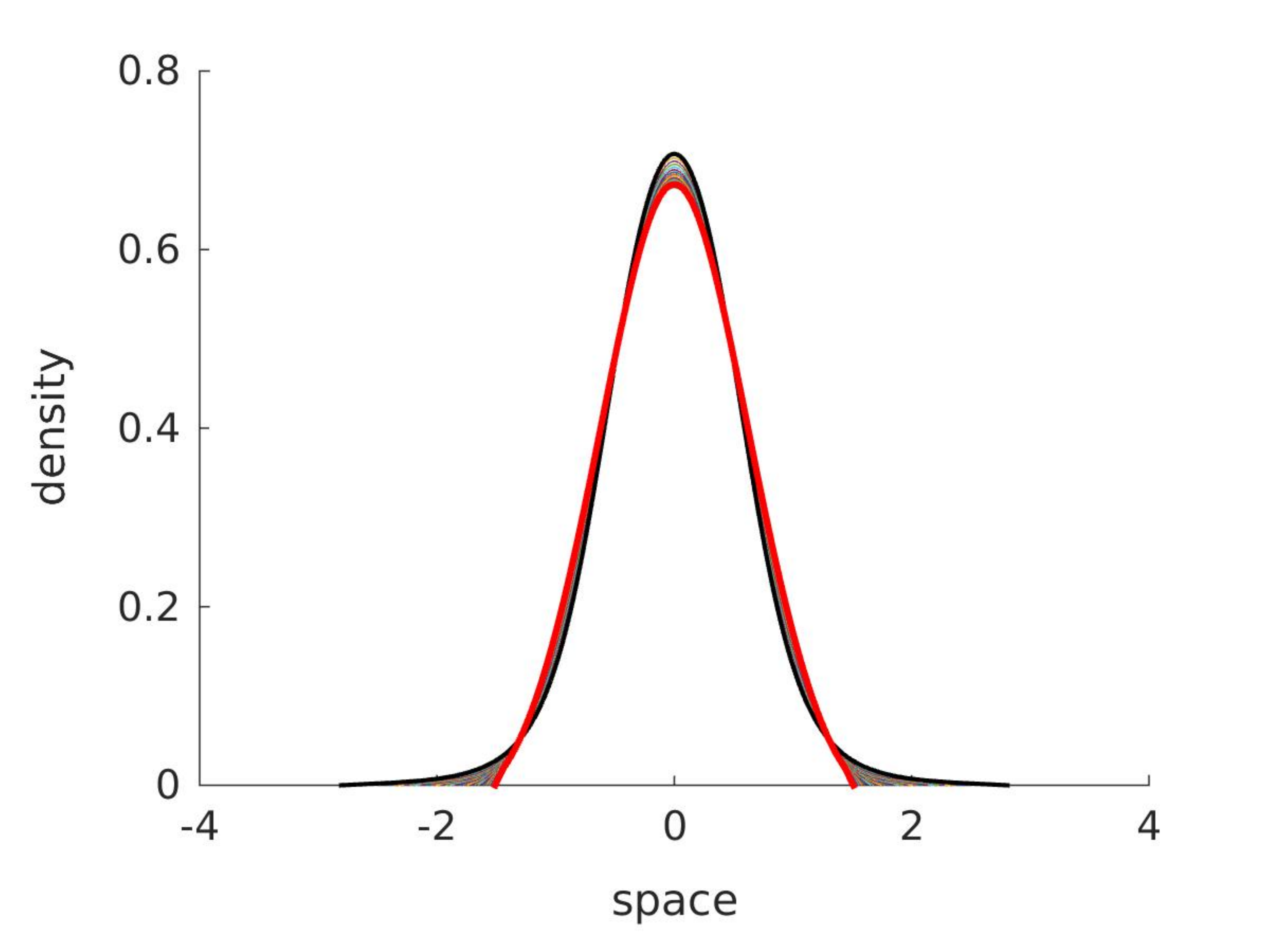}}%

\subfloat[]{\includegraphics[width=.45\textwidth]{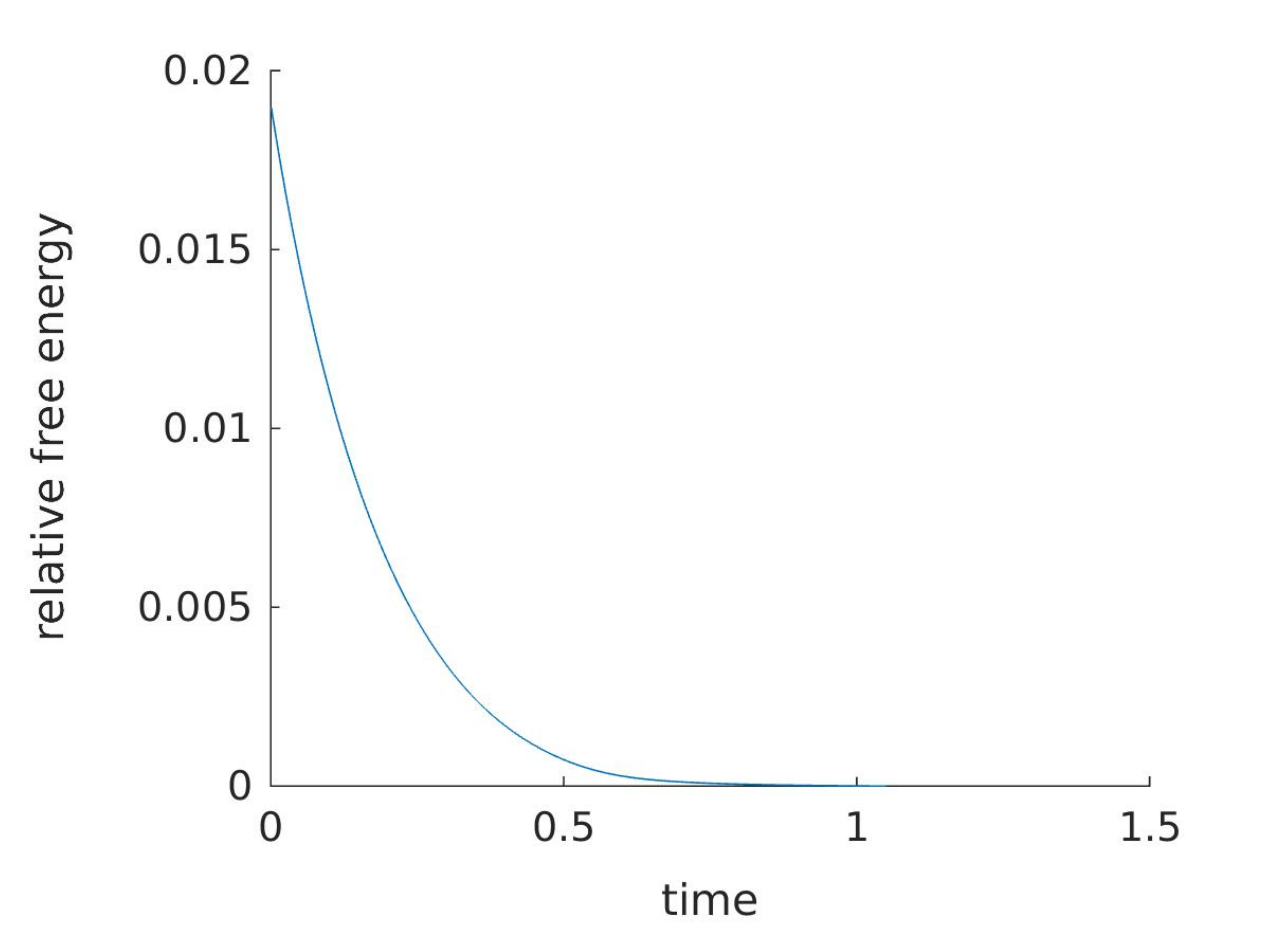}}%
\subfloat[]{\includegraphics[width=.45\textwidth]{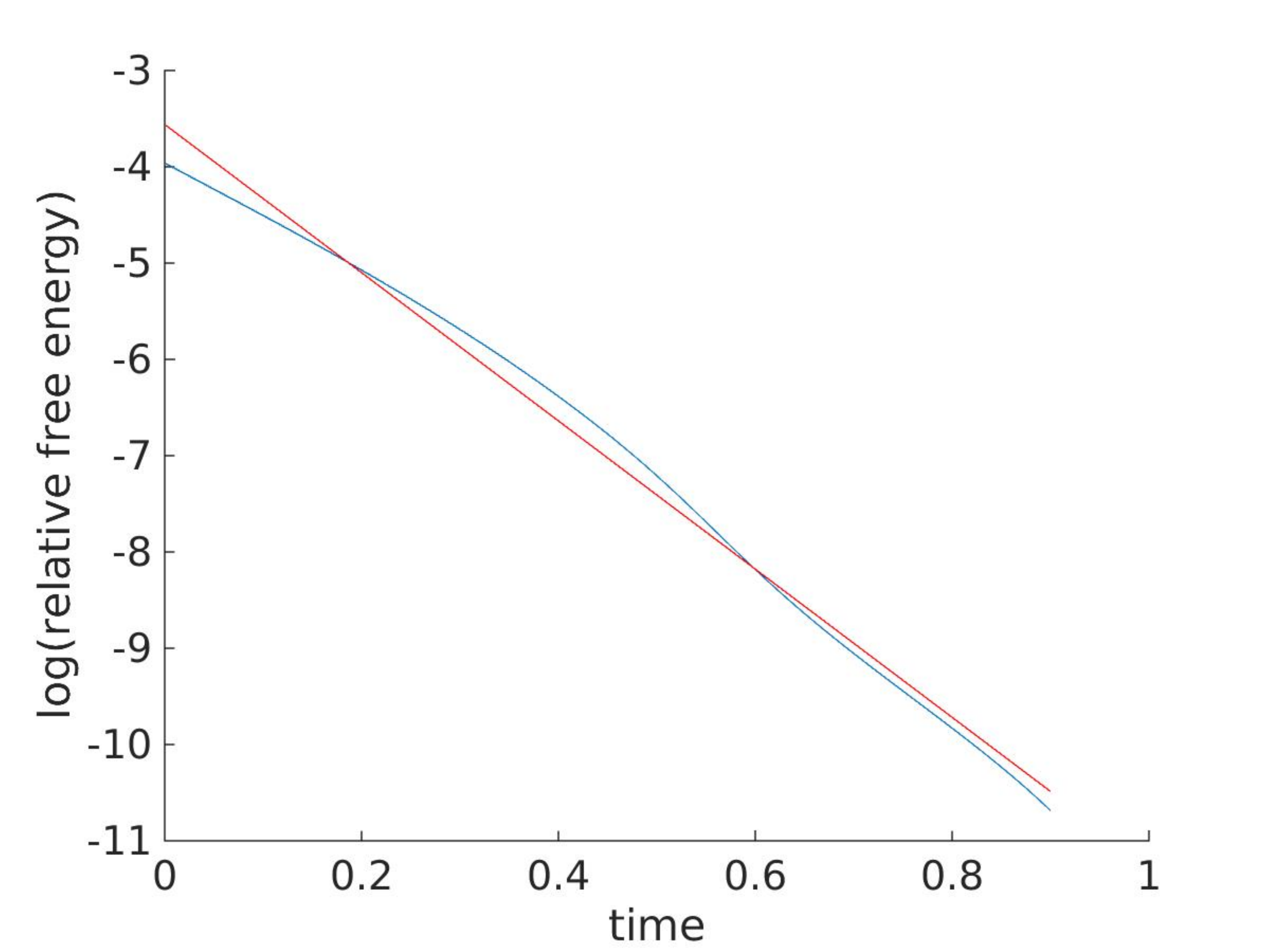}}%

\subfloat[]{\includegraphics[width=.45\textwidth]{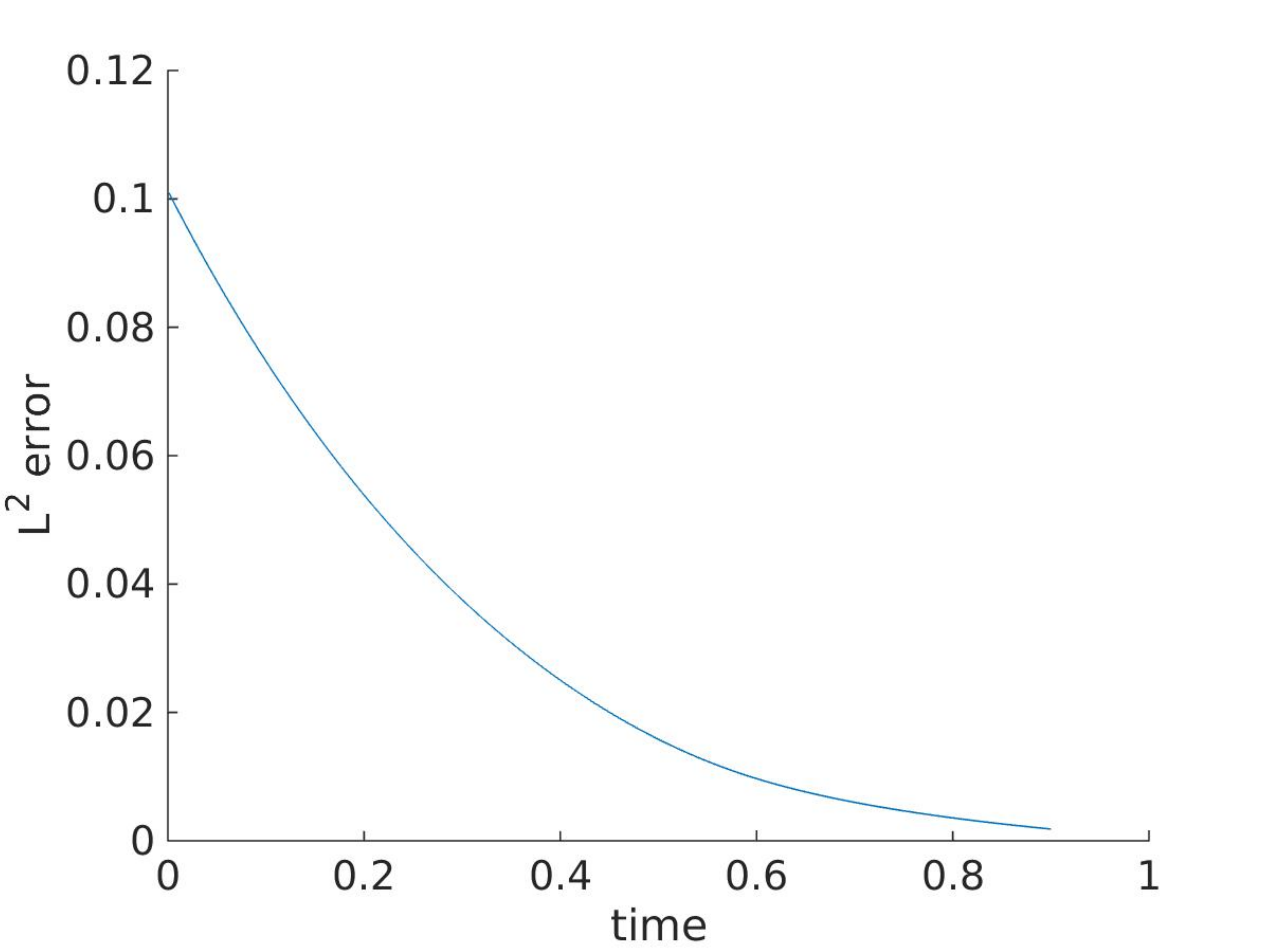}}%
\subfloat[]{\includegraphics[width=.45\textwidth]{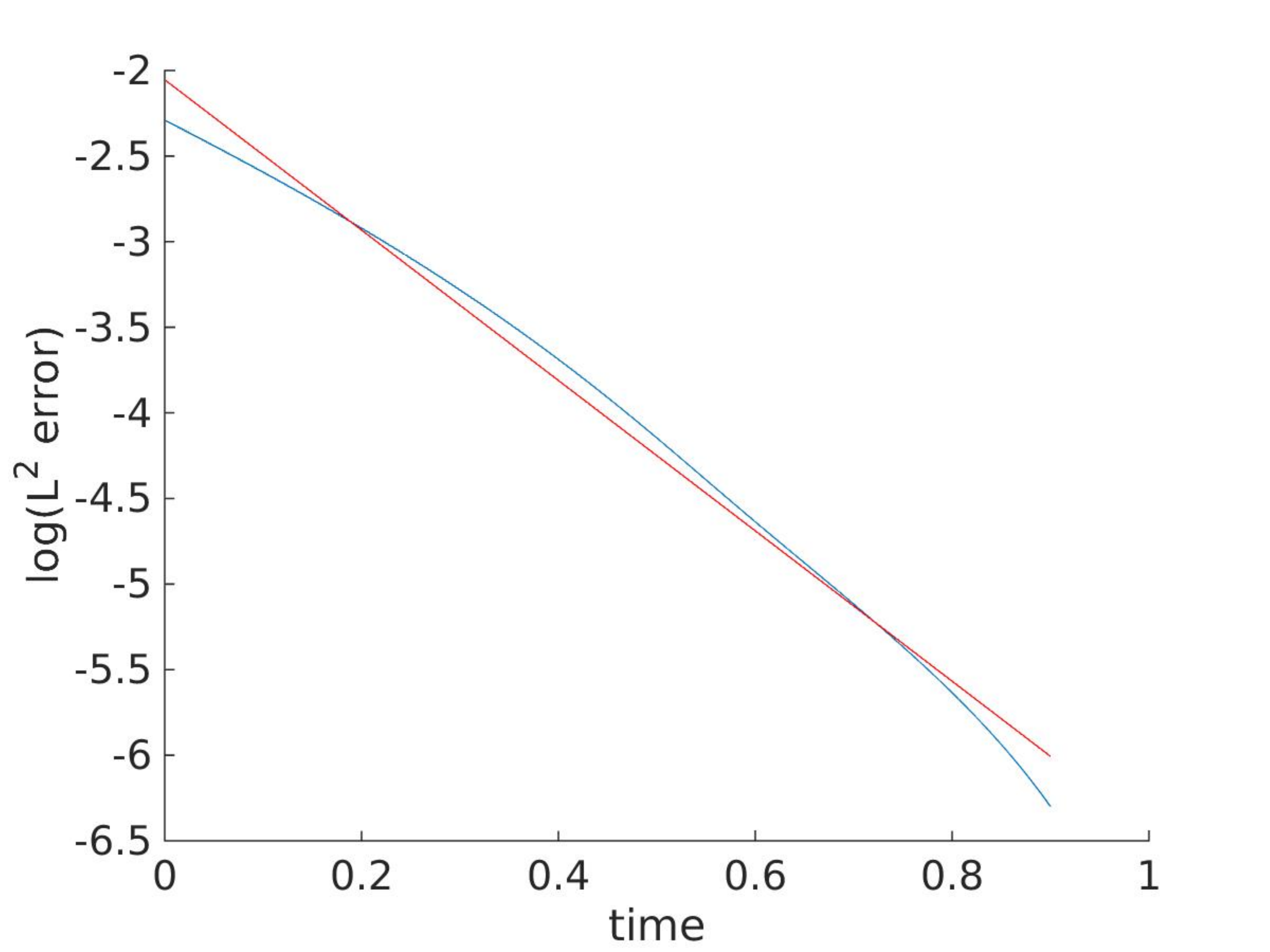}}%
\caption{Point $B$: $\chi=0.2$, $k=-0.5$, $r=1$.\\ (a) Inverse cumulative distribution function from initial condition (black) to the profile at the last time step (red), (b) solution density from initial condition (black) to the profile at the last time step (red), (c) relative free energy, (d) log(relative free energy) and fitted line between times 0 and 0.9 with slope $-7.6965$ (red), (e) $L^2$-error between the solutions at time $t$ and at the last time step, (f) log($L^2$-error) and fitted line with slope $-4.392$ (red).}
\label{fig:chi=02_k=-05_resc=1}
\end{figure}
 \newpage
For exactly the same choice of parameters $(k,\chi)=(-0.5,0.2)$ and the same initial condition we then investigate the evolution in rescaled variables (point $B$ in Figure \ref{fig:parameterspace}), and as predicted by Proposition \ref{prop:cvsub-critical}, the solution converges to a stationary state. See Figures \ref{fig:chi=02_k=-05_resc=1}(a) and \ref{fig:chi=02_k=-05_resc=1}(b) for the evolution of the inverse cumulative distribution function and the density distribution with $\Delta t=10^{-3}$ and $\Delta \eta=10^{-3}$ from $t=0$ (black) to the stationary state $\bar \rho$ (red). Again, we terminate the evolution as soon as the $L^2$-distance between the numerical solution at two consecutive time steps is less than a certain tolerance, chosen at $10^{-5}$. We see that the solution converges very quickly both in relative energy $|\mF_k[\rho(t)]-\mF_k[\bar \rho]|$ (Figure \ref{fig:chi=02_k=-05_resc=1}(c)) and in terms of the Wasserstein distance to the solution at the last time step $\bW\left(\rho(t),\bar \rho\right)$ (Figure \ref{fig:chi=02_k=-05_resc=1}(e)). To check that the convergence is indeed exponential as predicted by Proposition \ref{prop:cvsub-critical}, we fit a line to the logplot of both the relative free energy (between times $t=0$ and $t=0.9$), see Figure \ref{fig:chi=02_k=-05_resc=1}(d), and to the logplot of the Wasserstein distance to equilibrium, see Figure \ref{fig:chi=02_k=-05_resc=1}(f).
In both cases, we obtain a fitted line $y=-a*t+b$ with some constant $b$ and rate $a=7.6965$ for the relative free energy and rate $a=4.392$ for the Wasserstein distance to equilibrium. Recall that the $L^2$-error between two solutions $X(\eta)$ and $\tilde X(\eta)$ is equal to the Wasserstein distance between the corresponding densities $\rho(x)$ and $\tilde \rho(x)$ as described in \eqref{WW}. We observe a rate of convergence that is in agreement with \cite{CaCa11, CaDo14, EM16} for the logarithmic case $k=0$.\\

For parameter choices $k=-0.2$ and $\chi=0.7$ (point $C$ in Figure \ref{fig:parameterspace}), we are again in the sub-critical regime where solutions converge to a stationary state in rescaled variables according to Proposition \ref{prop:cvsub-critical}, see Figures \ref{fig:chi=07_k=-02_resc=1}(a) and \ref{fig:chi=07_k=-02_resc=1}(b). However, point $C$ is closer to the critical interaction strength $\chi_c(k)$ than point $B$ (numerically, we have $\chi_c(-0.2)=0.71$), and as a result we can observe that the stationary density $\bar \rho$ in Figure \ref{fig:chi=07_k=-02_resc=1}(b) (red) is more concentrated than in Figure \ref{fig:chi=02_k=-05_resc=1}(b). Here, we choose as initial condition a characteristic function supported on the ball centered at zero with radius $1/2$ (black, Figure \ref{fig:chi=07_k=-02_resc=1}(b)), and fix $\Delta t=10^{-3}$, $\Delta \eta=5*10^{-3}$ with tolerance $10^{-5}$. We observe that the solution converges very quickly to a stationary state both in relative free energy $|\mF_k[\rho(t)]-\mF_k[\bar \rho]|$ (Figure \ref{fig:chi=07_k=-02_resc=1}(c)) and in terms of the Wasserstein distance to equilibrium $\bW(\rho(t),\bar\rho)$ (Figure \ref{fig:chi=07_k=-02_resc=1}(e)). To investigate the exponential rate of convergence, we fit again a line to the logplot of both the relative free energy (here between times $t=0$ and $t=1.8$) see Figure \ref{fig:chi=07_k=-02_resc=1}(d), and the Wasserstein distance to equilibrium, see Figure \ref{fig:chi=07_k=-02_resc=1}(f). We obtain fitted lines $y=-a*t+b$ with some constant $b$ and rate $a=3.2407$ for the relative free energy, whereas the rate is $a=1.8325$ for the Wasserstein distance to equilibrium. \\
 
Next, we are looking at point $D$ in Figure \ref{fig:parameterspace}, which corresponds to the choice $(k,\chi)=(0.2,0.8)$ and is part of line $L_2$ (see Figure \ref{fig:criticalk_chi}(b)). Since point $D$ lies in the fast diffusion regime $k>0$, no critical interaction strength exists \cite{CCH1}, and so we look at convergence to self-similarity. Figures \ref{fig:chi=08_k=02_resc=1}(a) and \ref{fig:chi=08_k=02_resc=1}(b) display the evolution of the inverse cumulative distribution function and the density distribution from $t=0$ (black) to the stationary state $\bar \rho$ (red) in rescaled variables including the solutions at 50 intermediate time steps. We start with a characteristic function supported on a centered ball of radius $1/2$. Choosing $\Delta t=10^{-3}$ and $\Delta \eta=10^{-2}$ is enough. The density instantaneously becomes supported on the whole space for any $t>0$ as shown in the proof of \cite[Corollary 4.4]{CCH1}, which cannot be fully represented numerically since the tails are cut by numerical approximation, see Figure \ref{fig:chi=08_k=02_resc=1}(a)-(b). Again, we observe very fast convergence both in relative energy (Figure \ref{fig:chi=08_k=02_resc=1}(c)-(d)) and in Wasserstein distance to equilibrium (Figure \ref{fig:chi=08_k=02_resc=1}(e)-(f)) as predicted by Proposition \ref{prop:cvinW2}. A logplot of the relative free energy (Figure \ref{fig:chi=08_k=02_resc=1}(d)) and the Wasserstein distance to equilibrium (Figure \ref{fig:chi=08_k=02_resc=1}(f)) show exponential rates of convergence with rates $a=3.6904$ and $a=1.9148$ respectively for the fitted line $y=-a*t+b$ with some constant $b$ and for times $0.2 \leq t \leq 3.8$. \\



For the same choice of $k=0.2$ in the fast diffusion regime, but with higher interaction strength $\chi=1.2$ (point $E$ in Figure \ref{fig:parameterspace}, which is part of line $L_3$, see Figure \ref{fig:criticalk_chi}(c)), we obtain a similar behaviour. Figures \ref{fig:chi=12_k=02_resc=1}(a) and \ref{fig:chi=12_k=02_resc=1}(b) show the inverse cumulative distribution function and the density distribution, both for the initial data (black), a characteristic supported on the centered ball of radius $1/2$, and for the stationary state $\bar \rho$ (red). Here we choose as before $\Delta t=10^{-3}$ and $\Delta \eta=10^{-2}$. We observe that \\

\newpage

\begin{figure}[h!]
\centering
\subfloat[]{\includegraphics[width=.45\textwidth]{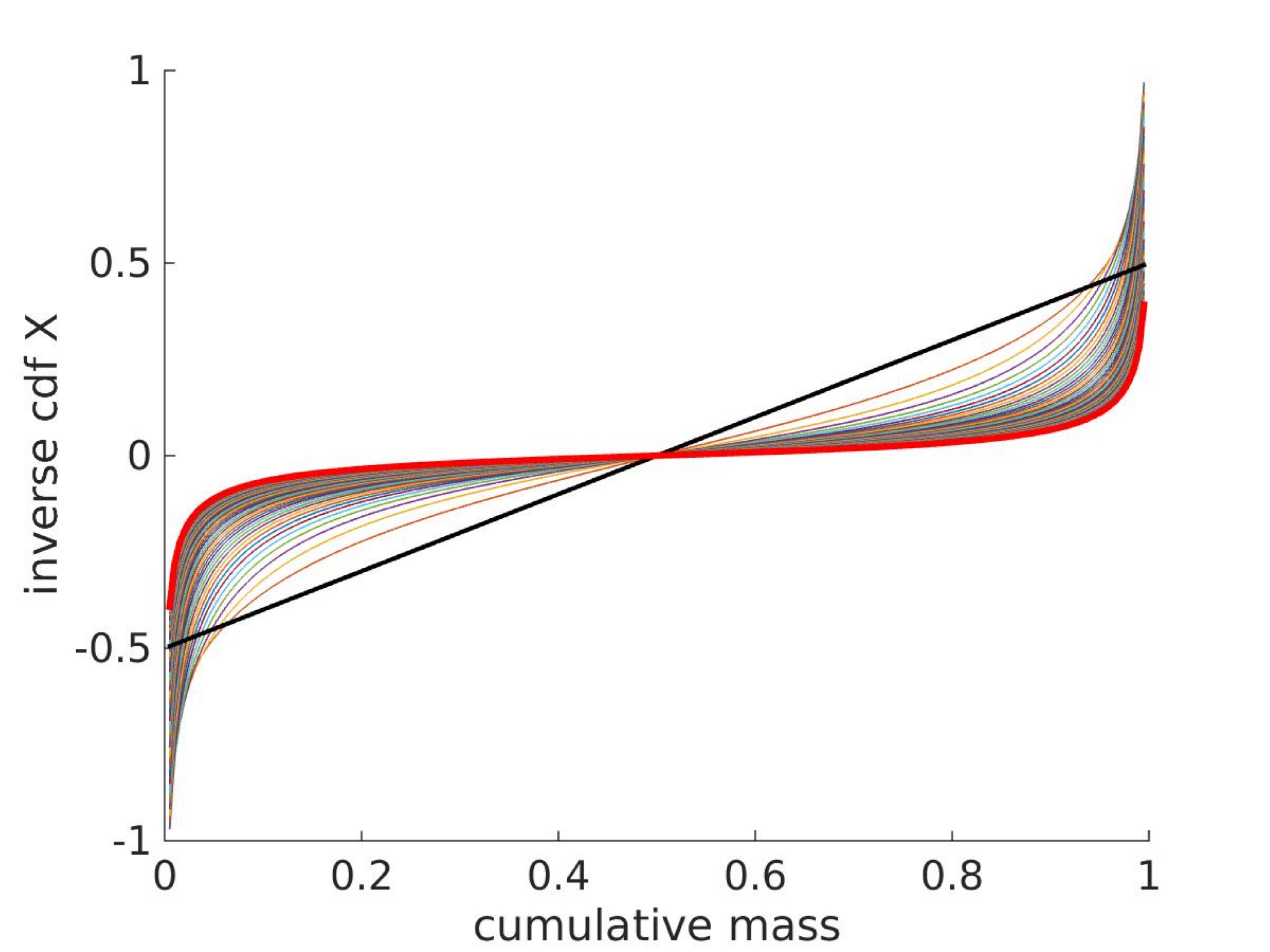}}%
\subfloat[]{\includegraphics[width=.45\textwidth]{chi=07_k=-02_resc=1_run1_fig1a.pdf}}%

\subfloat[]{\includegraphics[width=.45\textwidth]{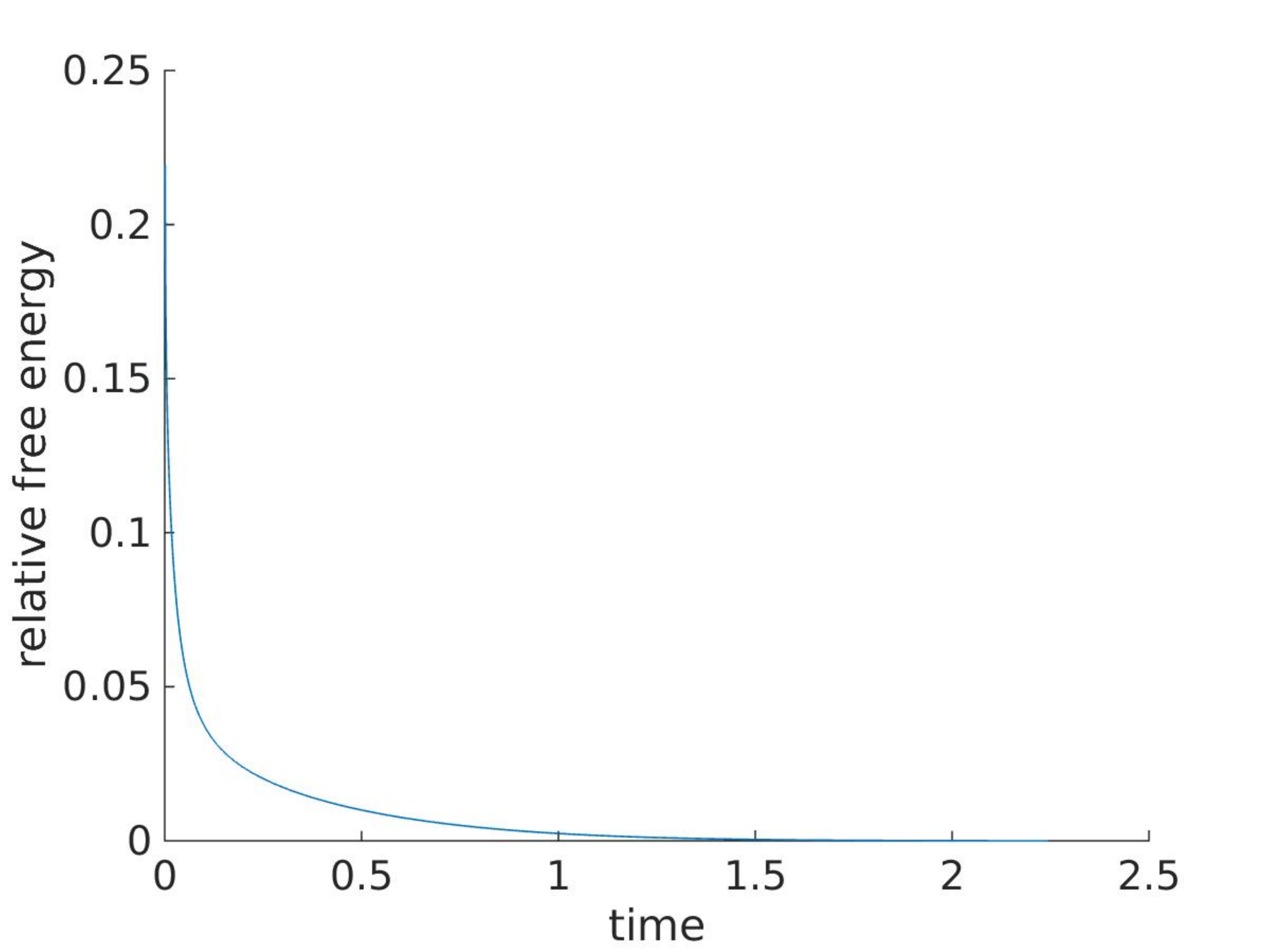}}%
\subfloat[]{\includegraphics[width=.45\textwidth]{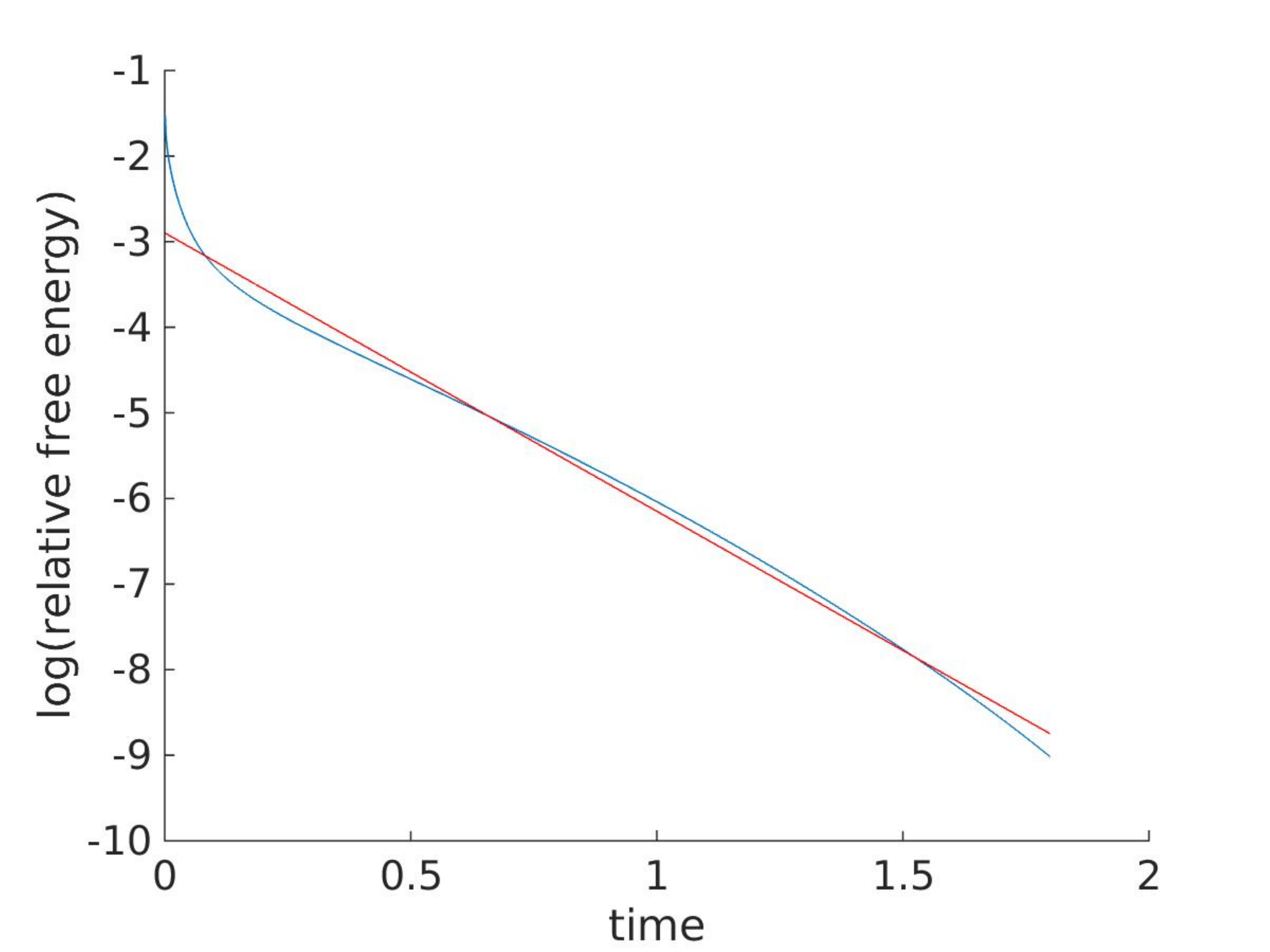}}%

\subfloat[]{\includegraphics[width=.45\textwidth]{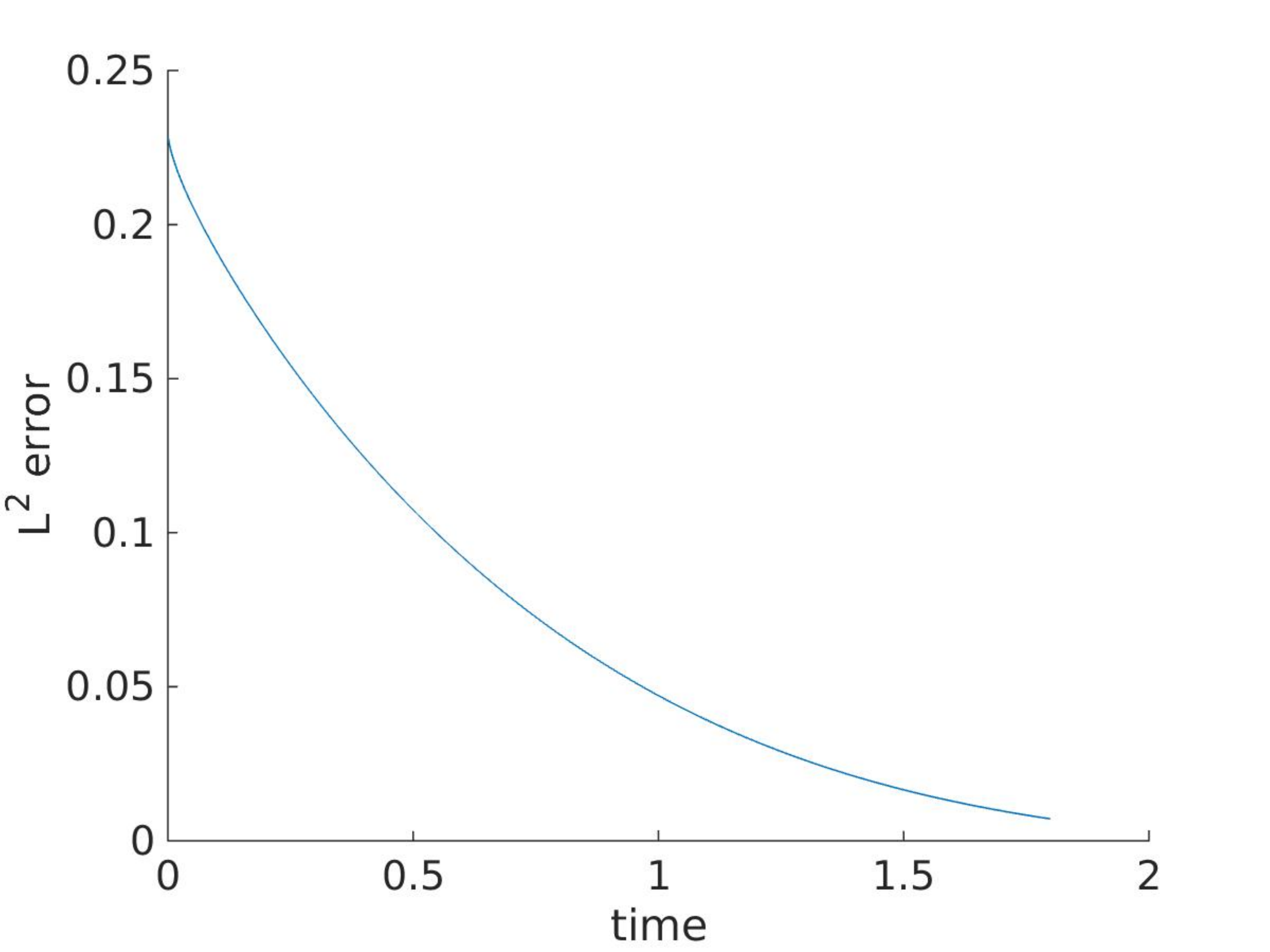}}%
\subfloat[]{\includegraphics[width=.45\textwidth]{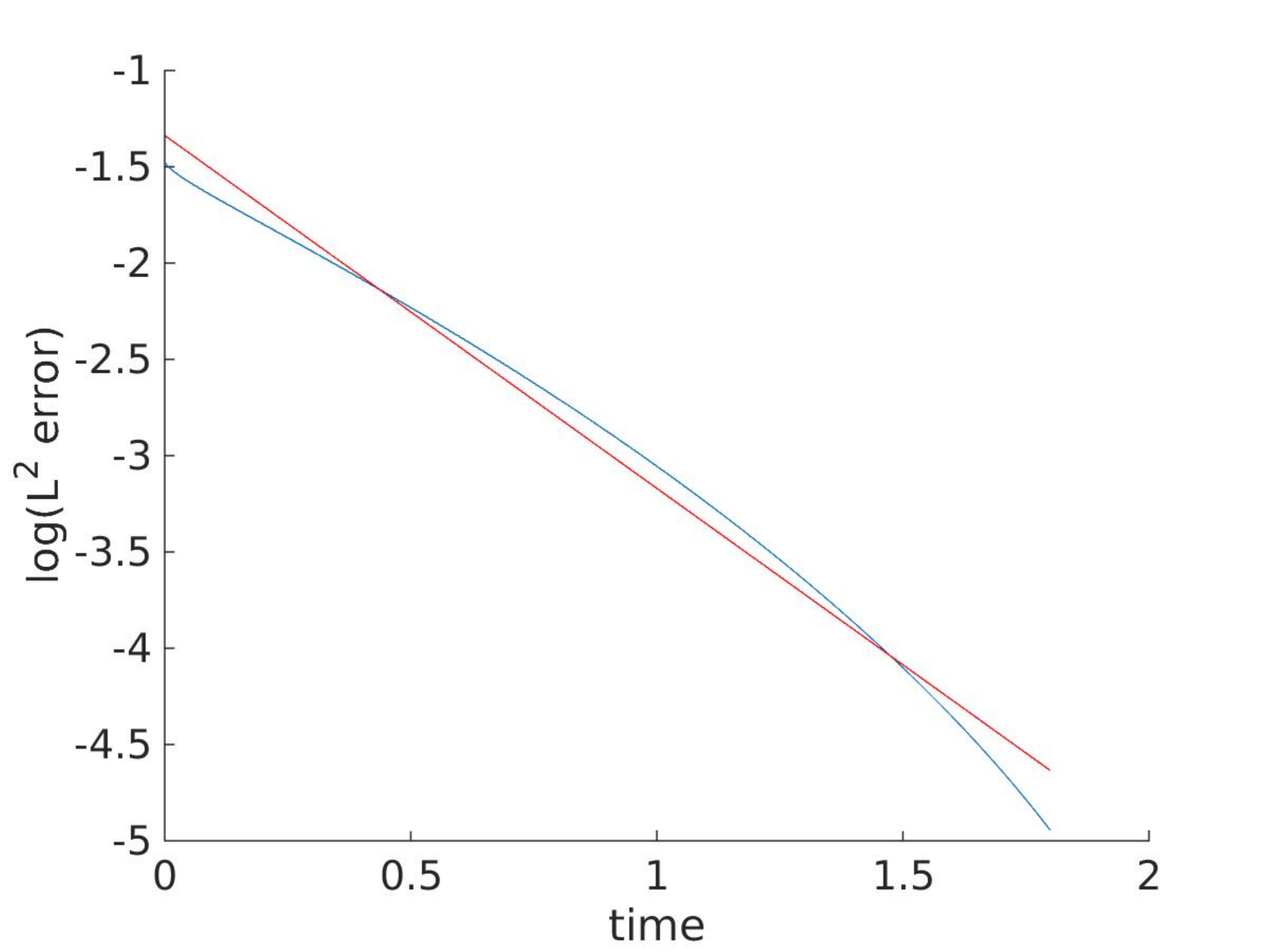}}%
\caption{Point $C$: $\chi=0.7$, $k=-0.2$, $r=1$.\\
(a) Inverse cumulative distribution function from initial condition (black) to the profile at the last time step (red),
(b) solution density from initial condition (black) to the profile at the last time step (red),
(c) relative free energy,
(d) log(relative free energy) and fitted line between times 0 and  1.8 with slope $-3.2522$ (red),
(e) $L^2$-error between the solutions at time $t$ and at the last time step,
(f) log($L^2$-error) and fitted line with slope $-1.8325$ (red).
}
\label{fig:chi=07_k=-02_resc=1}
\end{figure}
 \newpage
\begin{figure}[h!]
\centering
\subfloat[]{\includegraphics[width=.45\textwidth]{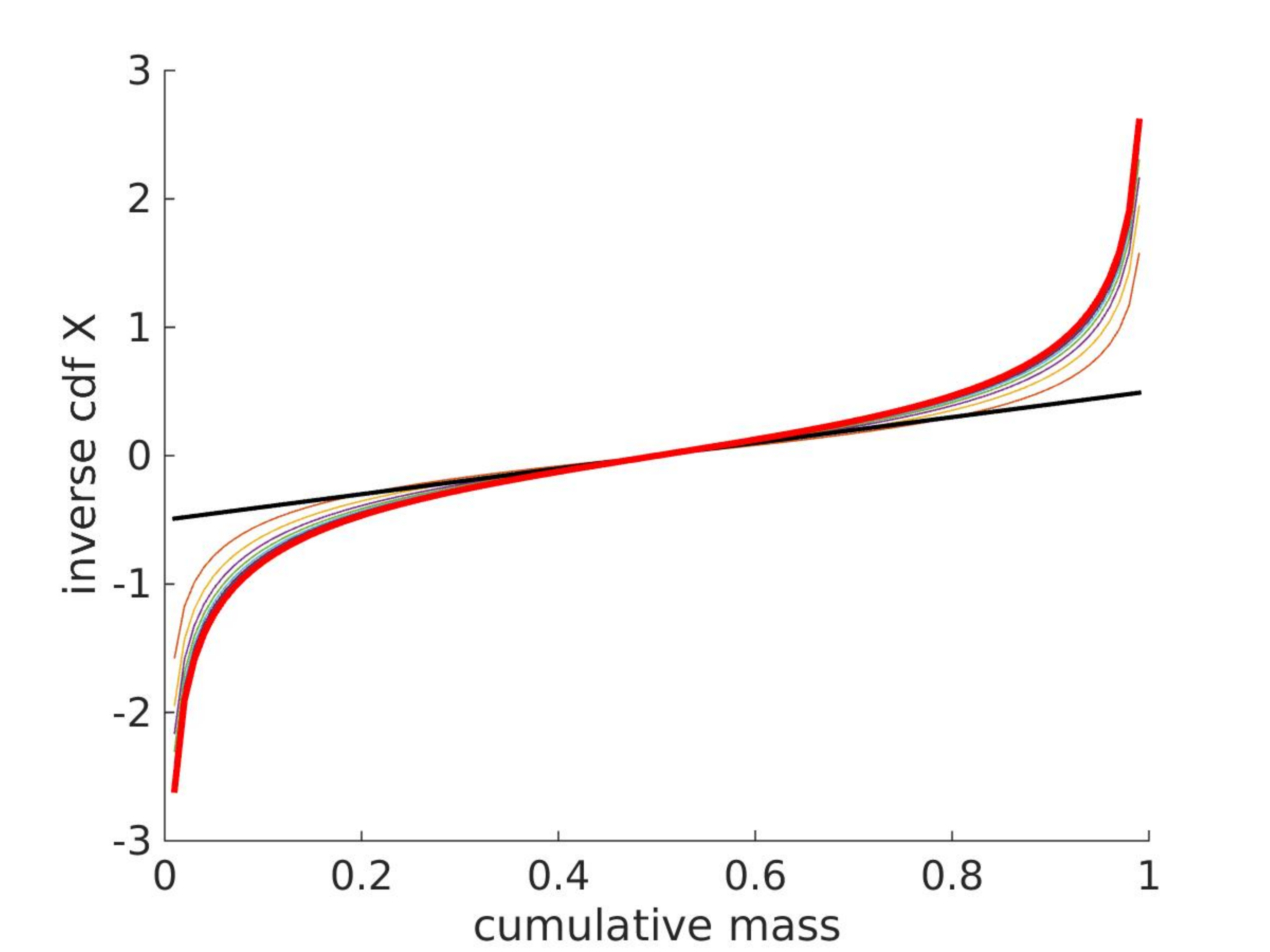}}%
\subfloat[]{\includegraphics[width=.45\textwidth]{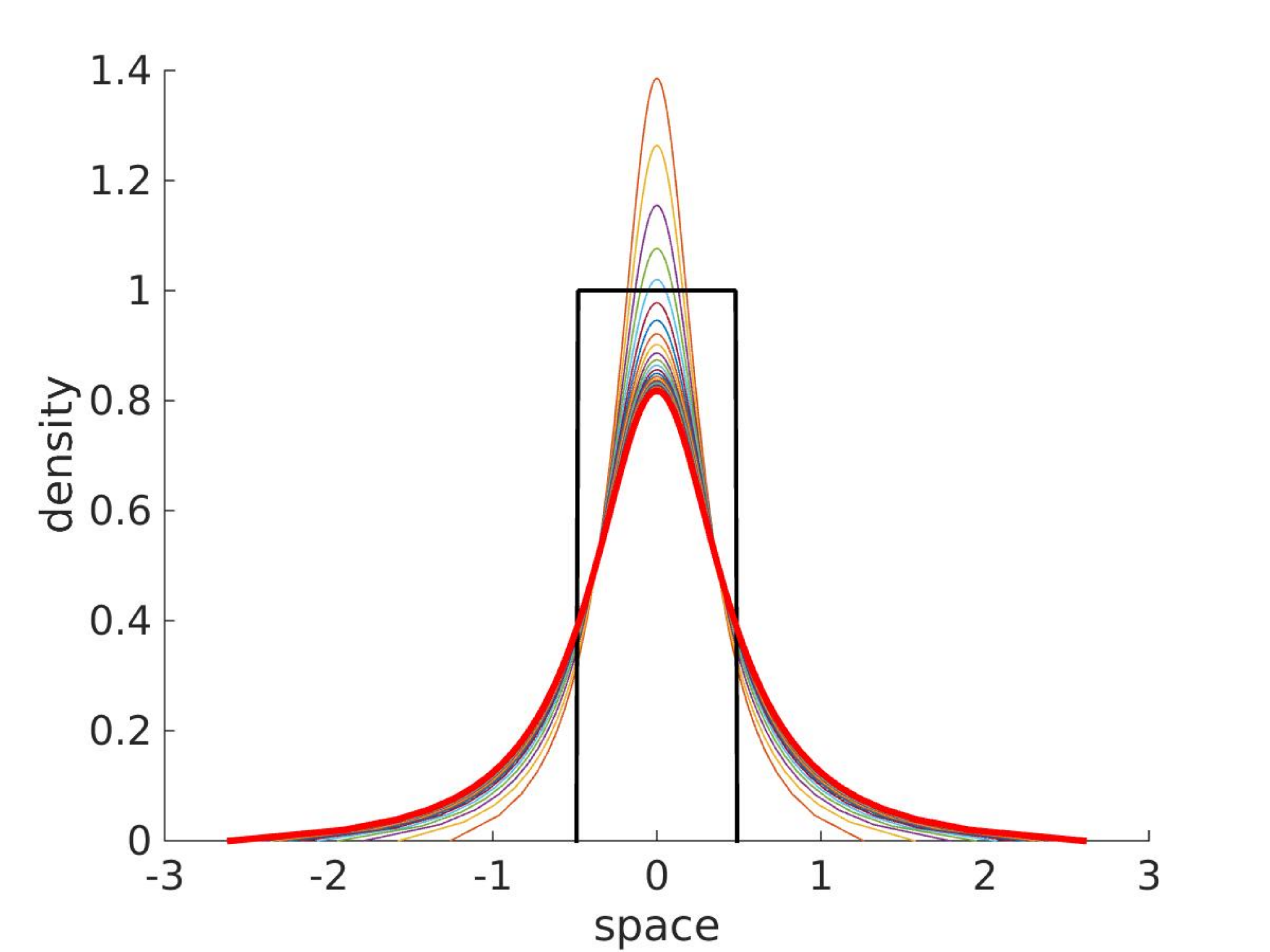}}%

\subfloat[]{\includegraphics[width=.45\textwidth]{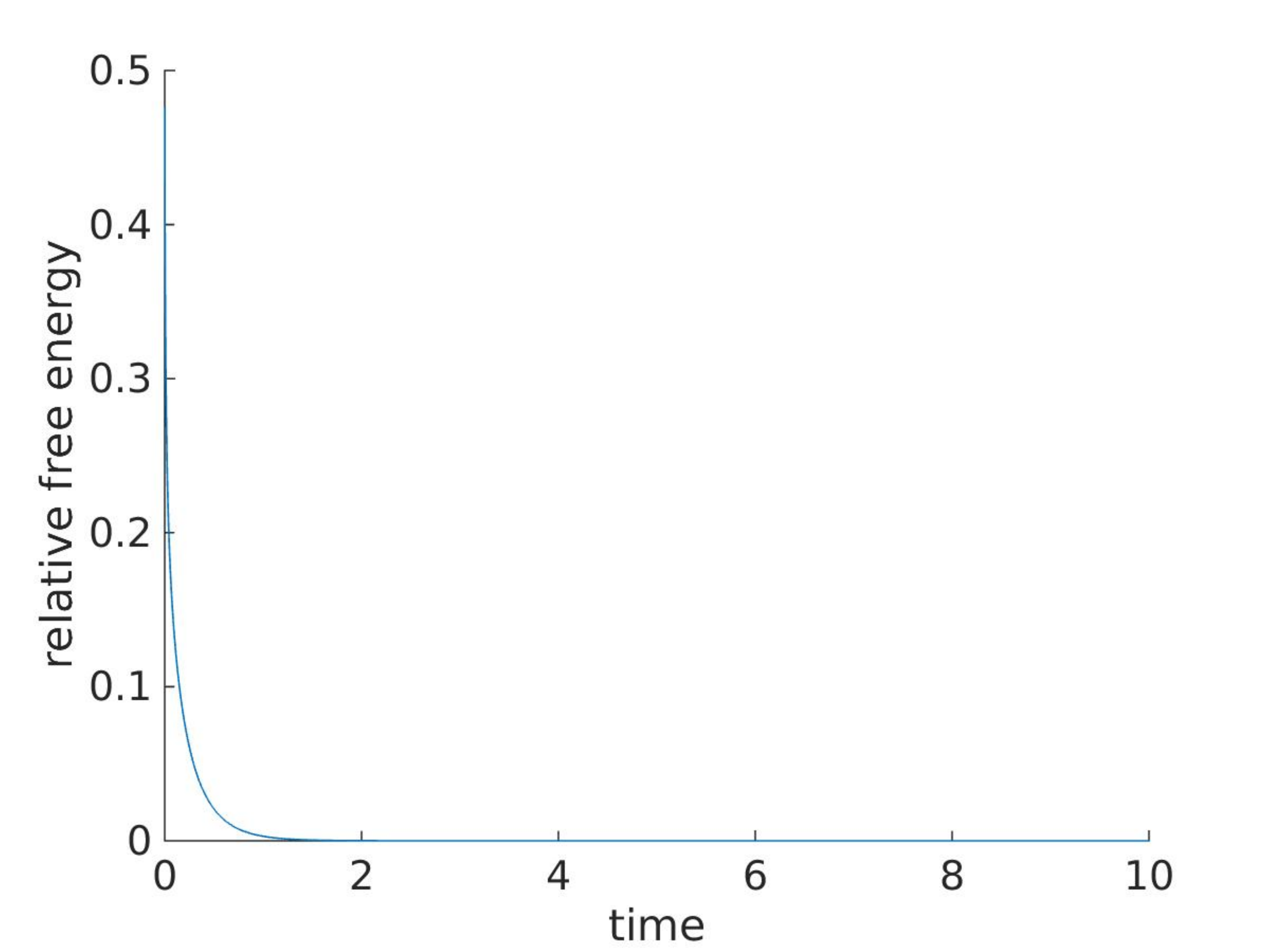}}%
\subfloat[]{\includegraphics[width=.45\textwidth]{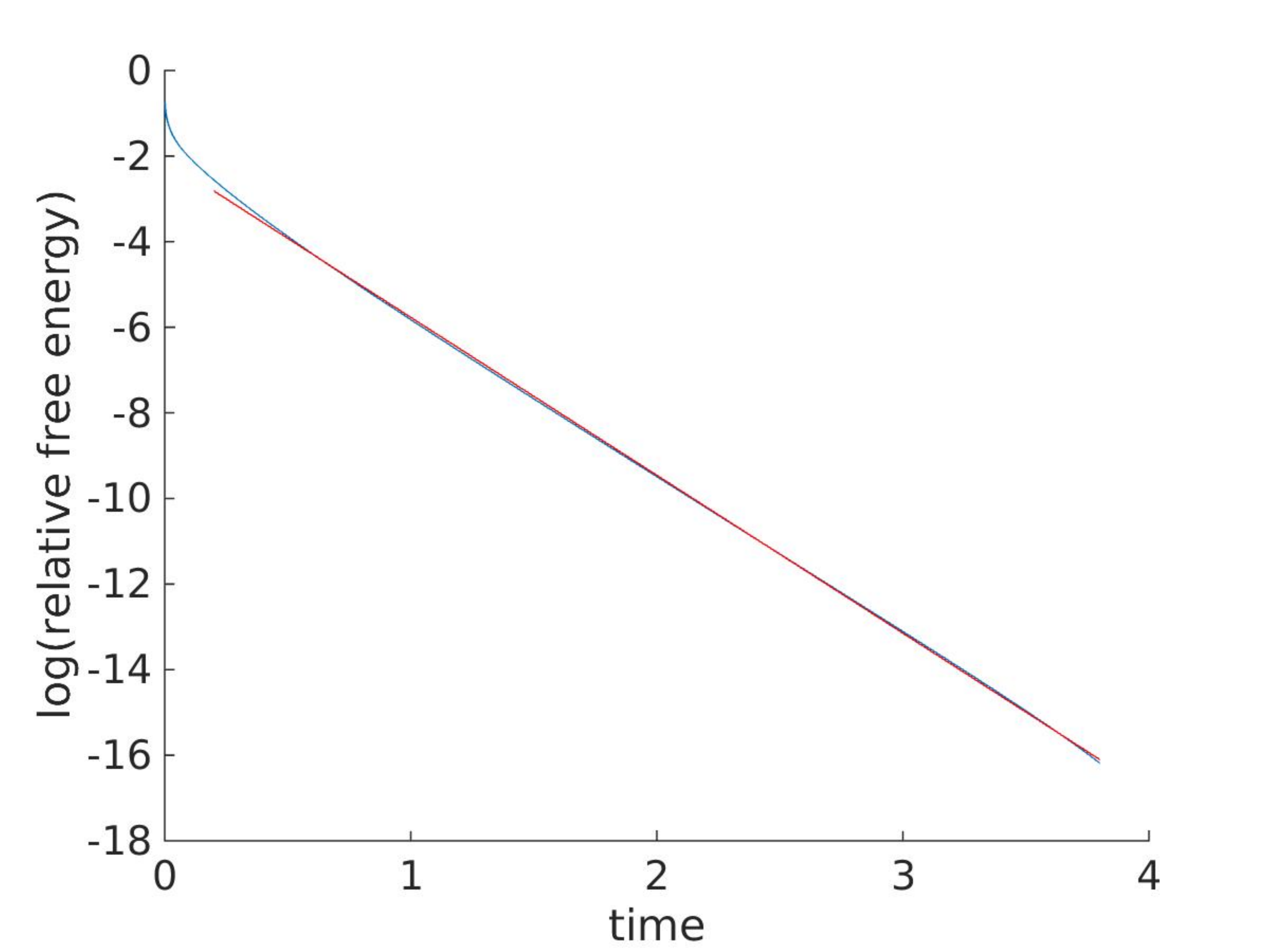}}%

\subfloat[]{\includegraphics[width=.45\textwidth]{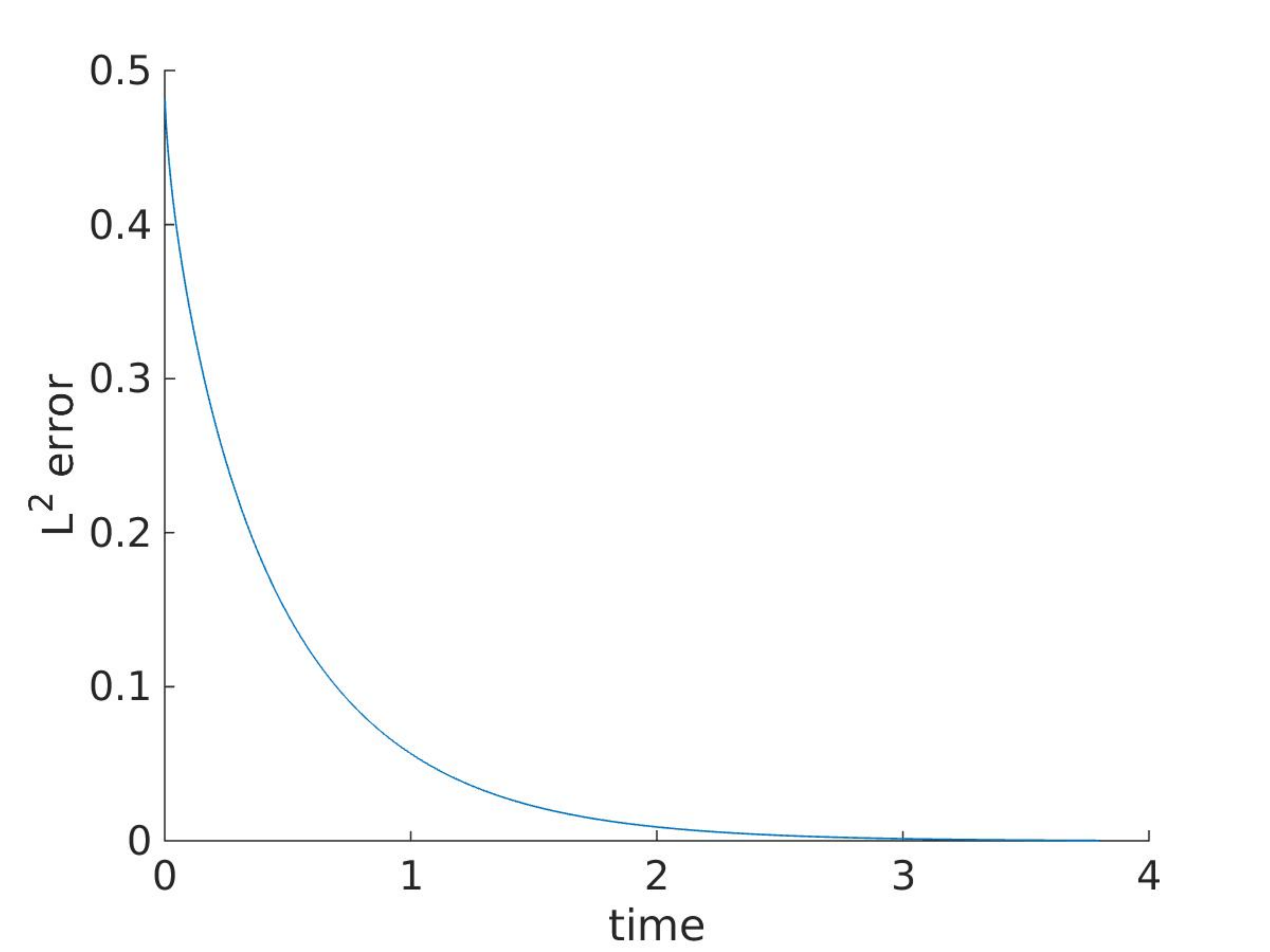}}%
\subfloat[]{\includegraphics[width=.45\textwidth]{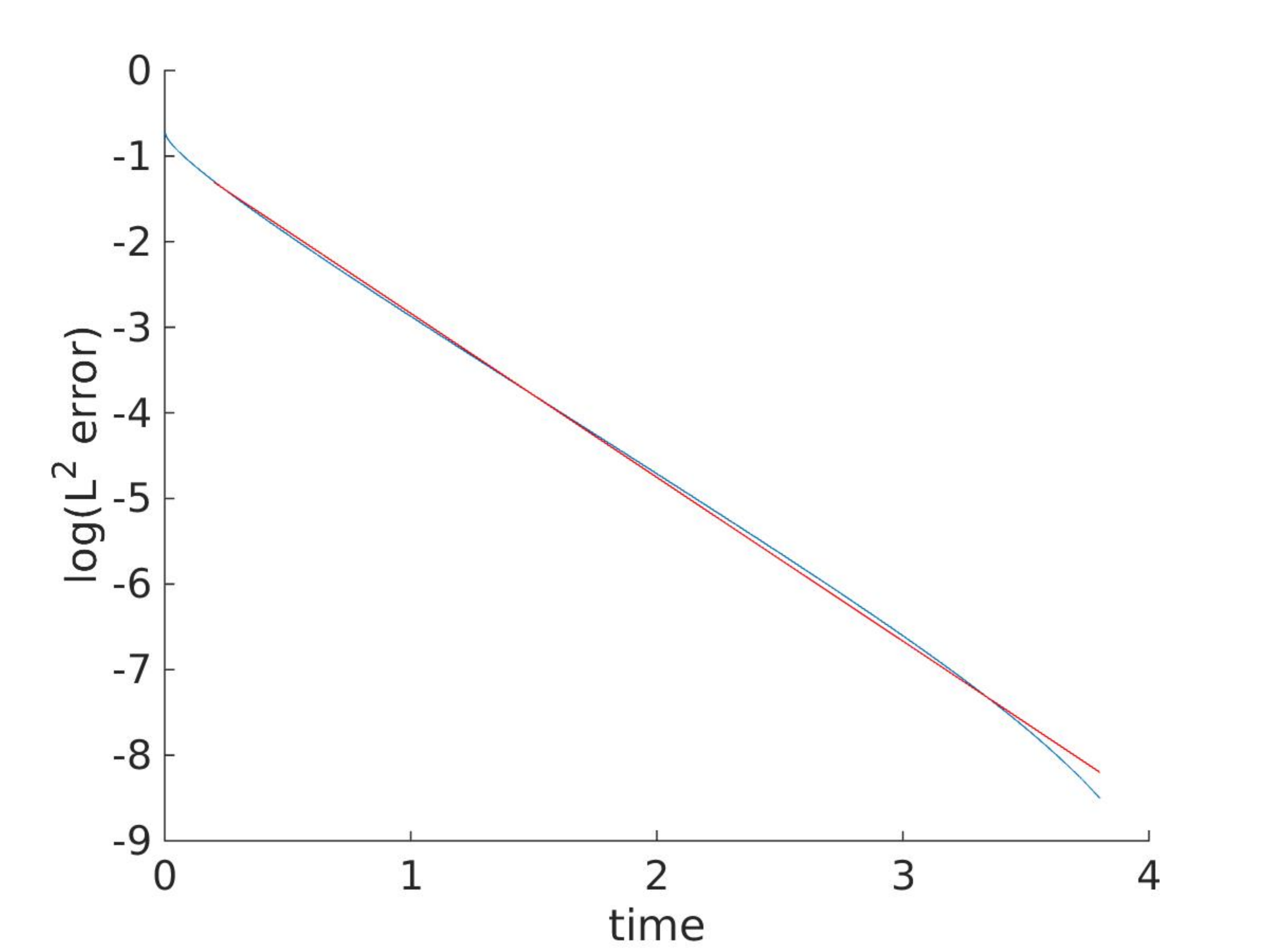}}%
\caption{Point $D$: $\chi=0.8$, $k=0.2$, $r=1$.\\
(a) Inverse cumulative distribution function from initial condition (black) to the profile at the last time step (red),
(b) solution density from initial condition (black) to the profile at the last time step (red),
(c) relative free energy,
(d) log(relative free energy) and fitted line between times 0.2 and 3.8 with slope $-3.6904$ (red),
(e) $L^2$-error between the solutions at time $t$ and at the last time step,
(f) log($L^2$-error) and fitted line between times 0.2 and 3.8 with slope $-1.9148$ (red).}
\label{fig:chi=08_k=02_resc=1}
\end{figure}
\newpage
\begin{figure}[h!]
\centering
\subfloat[]{\includegraphics[width=.45\textwidth]{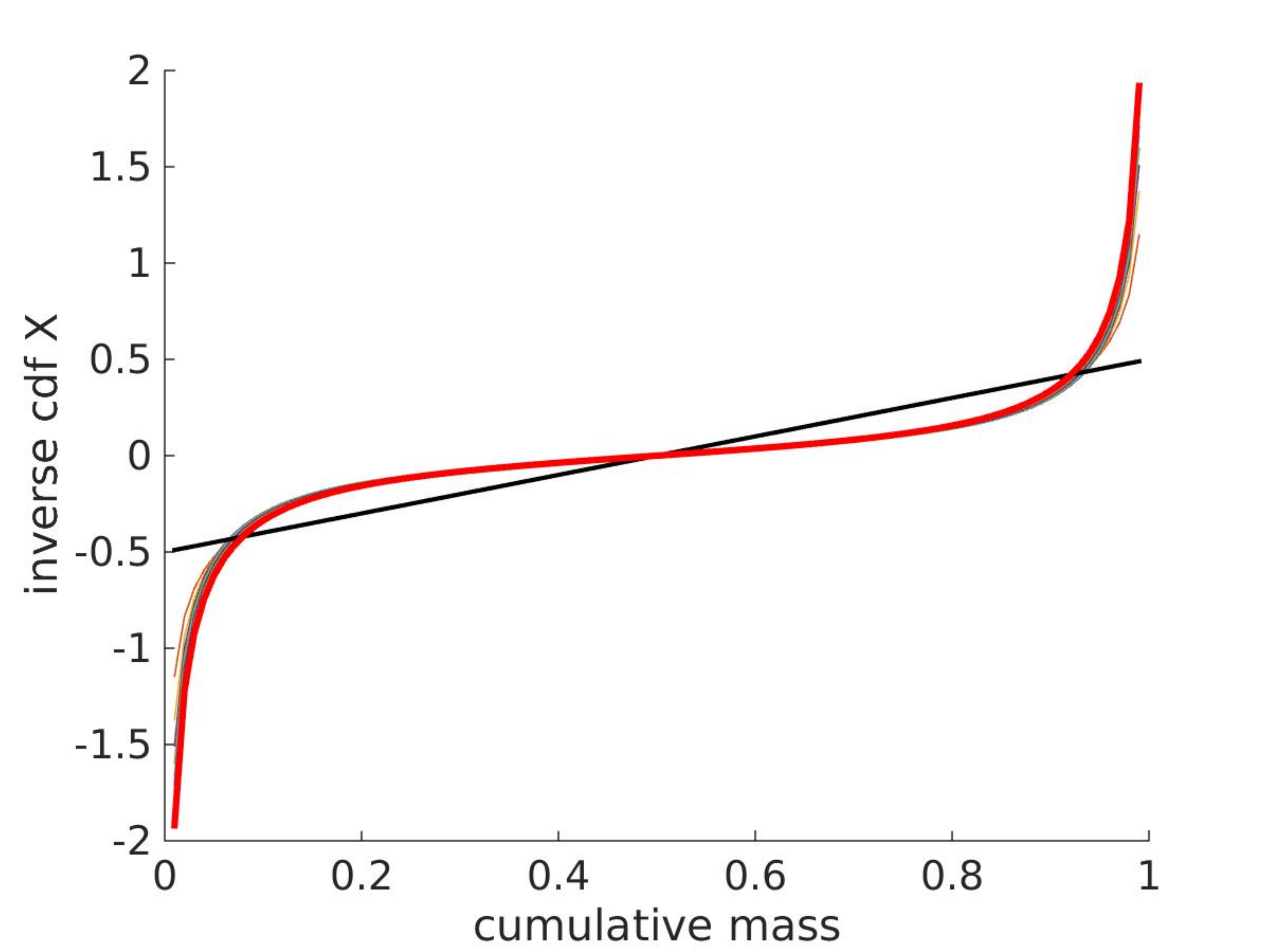}}%
\subfloat[]{\includegraphics[width=.45\textwidth]{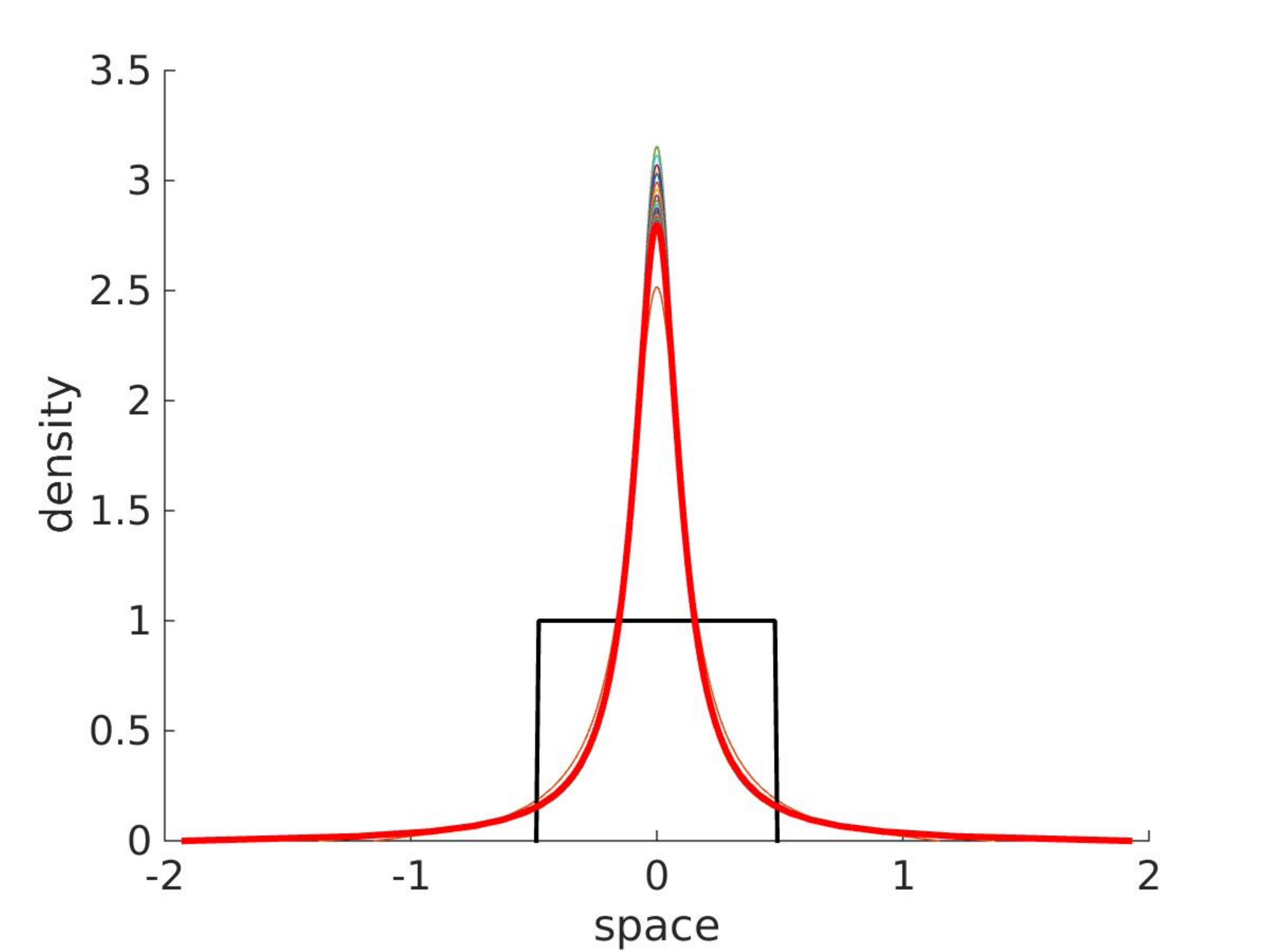}}%

\subfloat[]{\includegraphics[width=.45\textwidth]{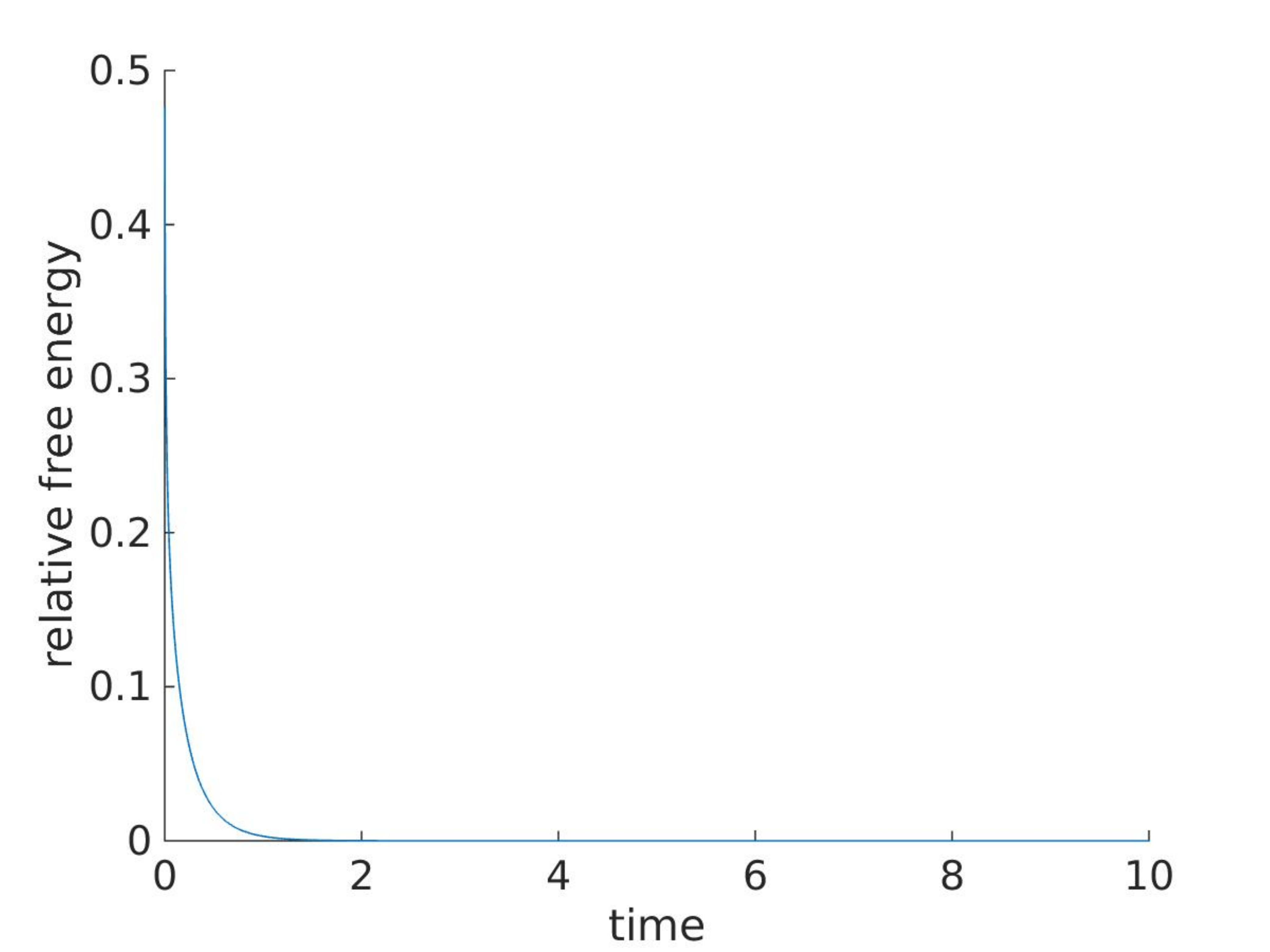}}%
\subfloat[]{\includegraphics[width=.45\textwidth]{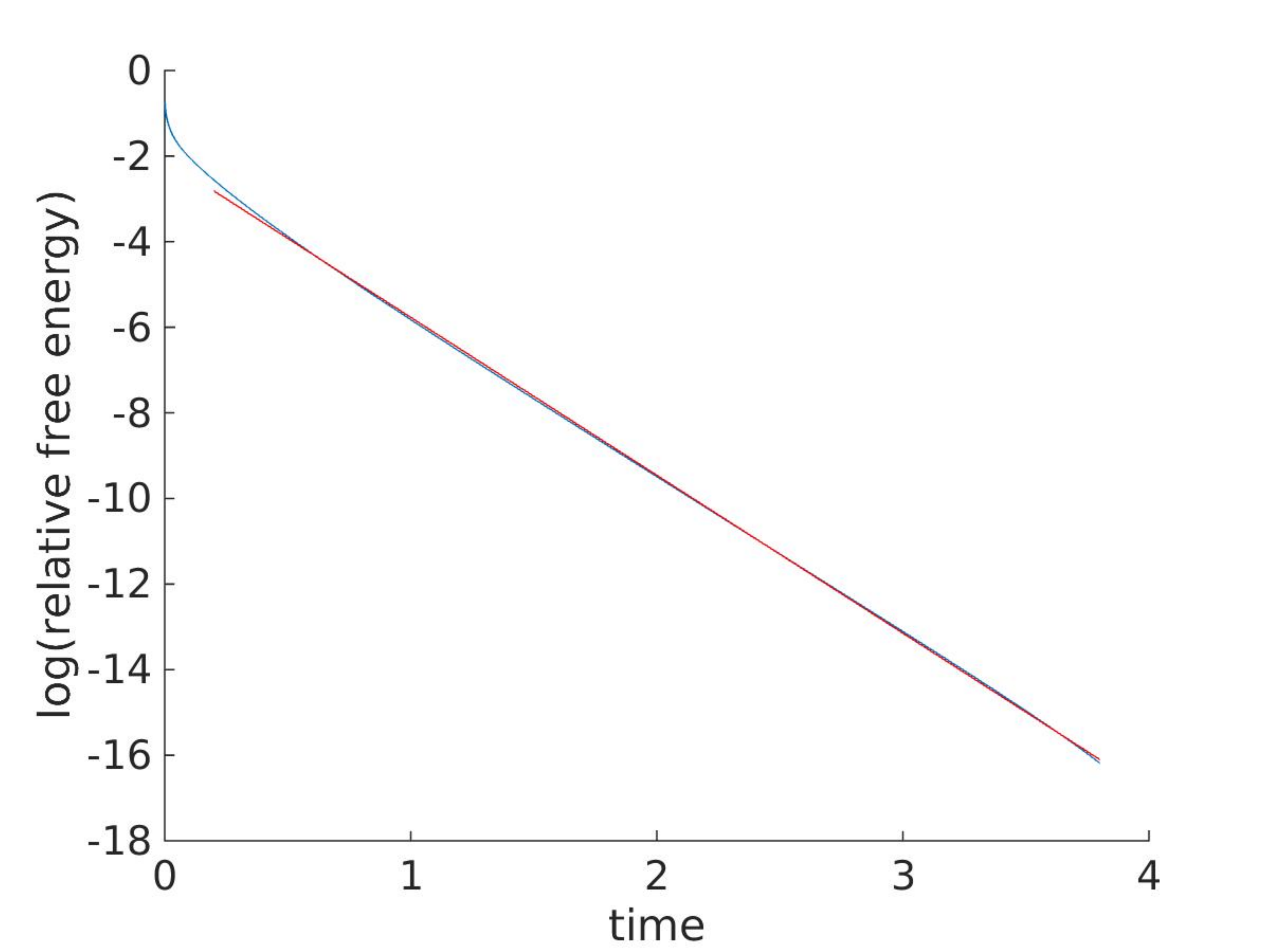}}%

\subfloat[]{\includegraphics[width=.45\textwidth]{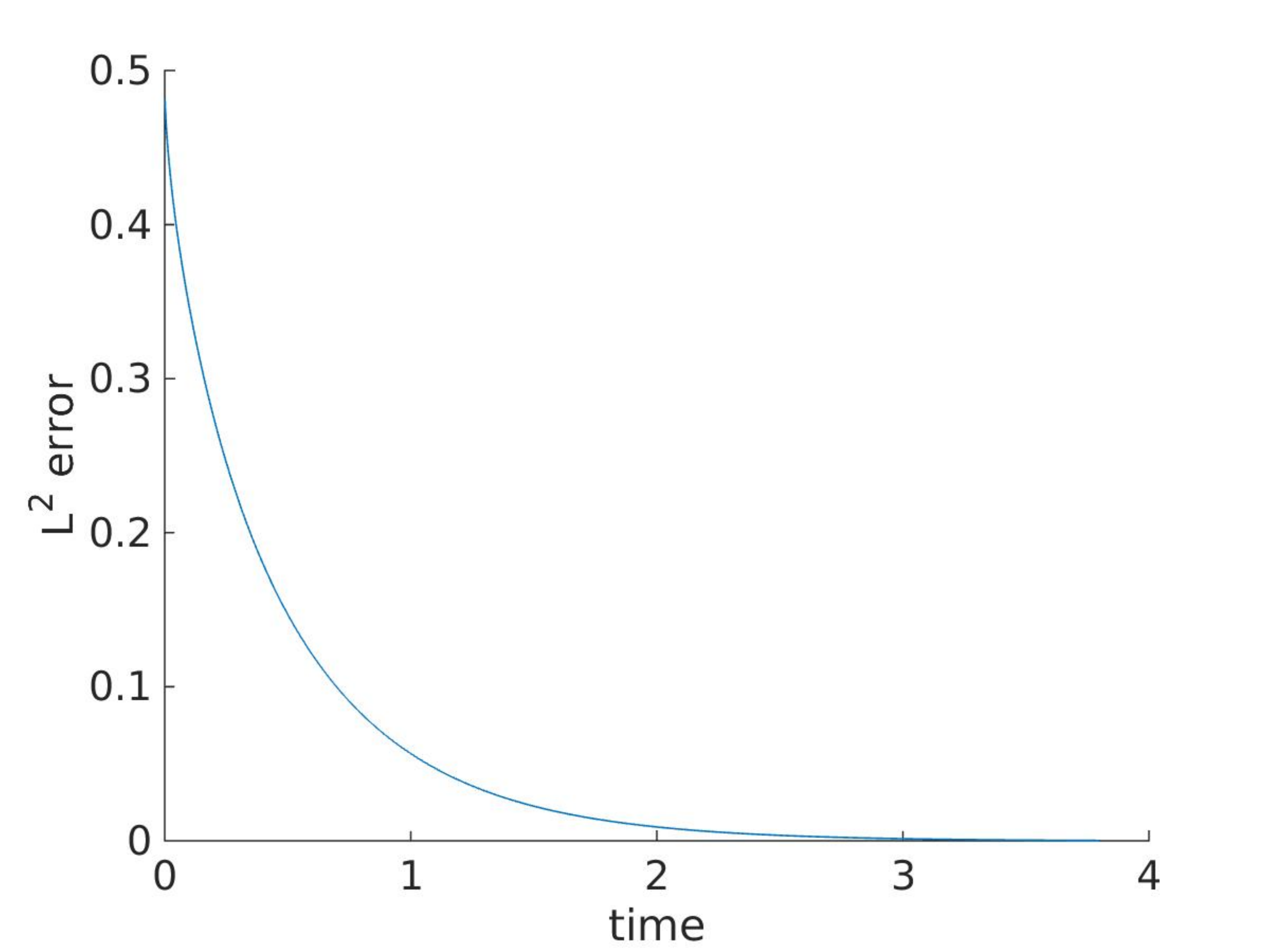}}%
\subfloat[]{\includegraphics[width=.45\textwidth]{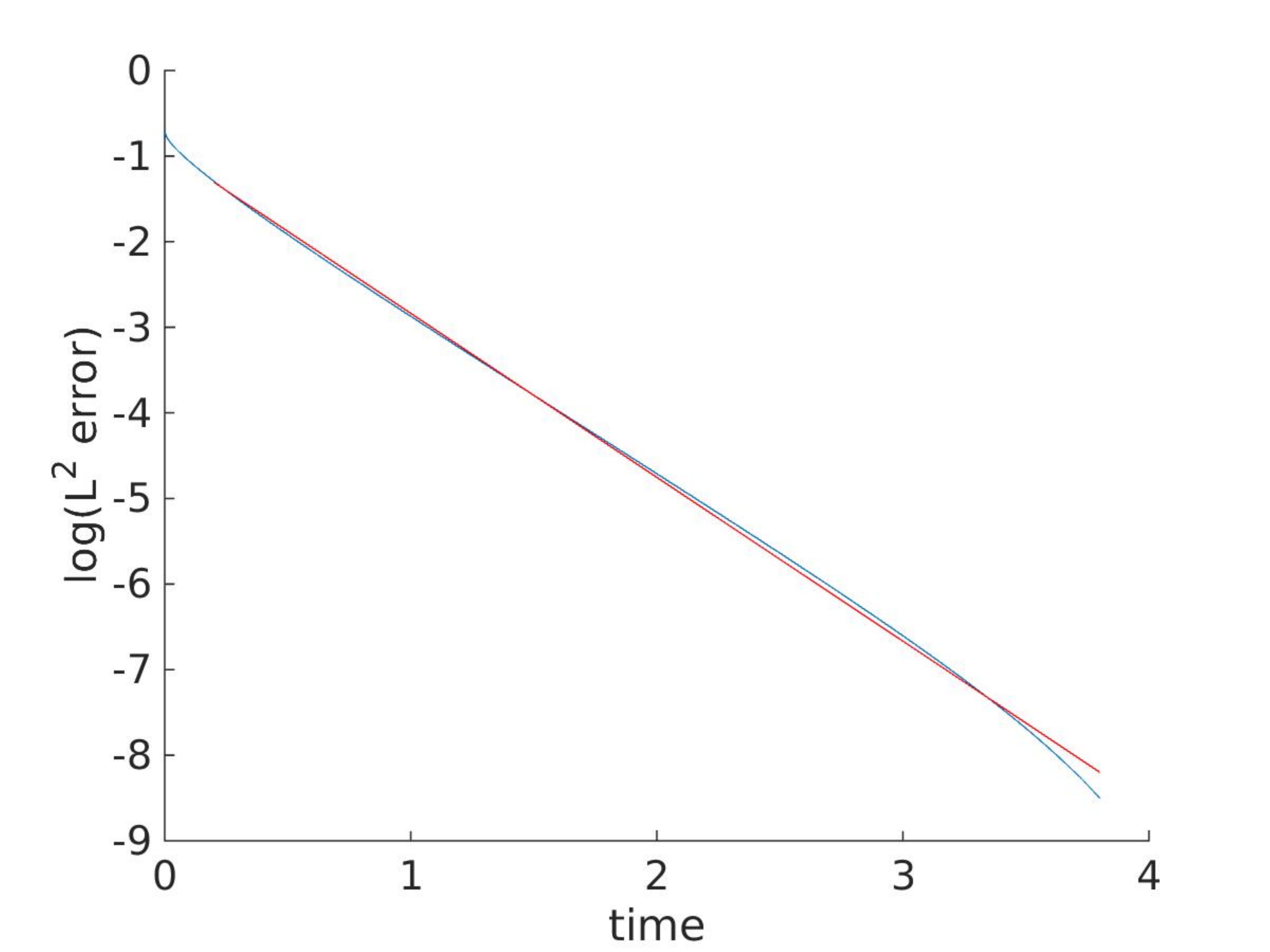}}%
\caption{Point $E$: $\chi=1.2$, $k=0.2$, $r=1$.\\
(a) Inverse cumulative distribution function from initial condition (black) to the profile at the last time step (red),
(b) solution density from initial condition (black) to the profile at the last time step (red),
(c) relative free energy,
(d) log(relative free energy) and fitted line between times $0.3$ and $3.5$ with slope $-3.6898$ (red),
(e) $L^2$-error between the solutions at time $t$ and at the last time step,
(f) log($L^2$-error) and fitted line between times $0.3$ and $3.5$ with slope $-1.9593$ (red).
}
\label{fig:chi=12_k=02_resc=1}
\end{figure}

\noindent
the stationary state for $\chi=1.2$ (Figure \ref{fig:chi=12_k=02_resc=1}(b)) is more concentrated than for $\chi=0.8$ (Figure \ref{fig:chi=08_k=02_resc=1}(b)), which is exactly what we would expect for decreasing $k$ as $\bar \rho$ approaches a Dirac Delta for $k \to 0$ if $\chi=1.2$, whereas it becomes compactly supported if $\chi=0.8$ as $k$ crosses the $\chi$-axis (see \cite[Corollary 3.9]{CCH1}). Again, we observe very fast convergence both in relative energy (Figure \ref{fig:chi=12_k=02_resc=1}(c)-(d)) and in Wasserstein distance to equilibrium (Figure \ref{fig:chi=12_k=02_resc=1}(e)-(f)) as predicted by Proposition \ref{prop:cvinW2}. A logplot of the relative free energy (Figure \ref{fig:chi=12_k=02_resc=1}(d)) and the Wasserstein distance to equilibrium (Figure \ref{fig:chi=12_k=02_resc=1}(f)) show exponential rates of convergence with rates $a=3.6898$ and $a=1.9593$ respectively for the fitted lines $y=-a*t+b$ and some constant $b$ between times $0.3 \leq t \leq 3.5$.

\begin{figure}[h!]
\centering
\subfloat[]{\includegraphics[width=.45\textwidth]{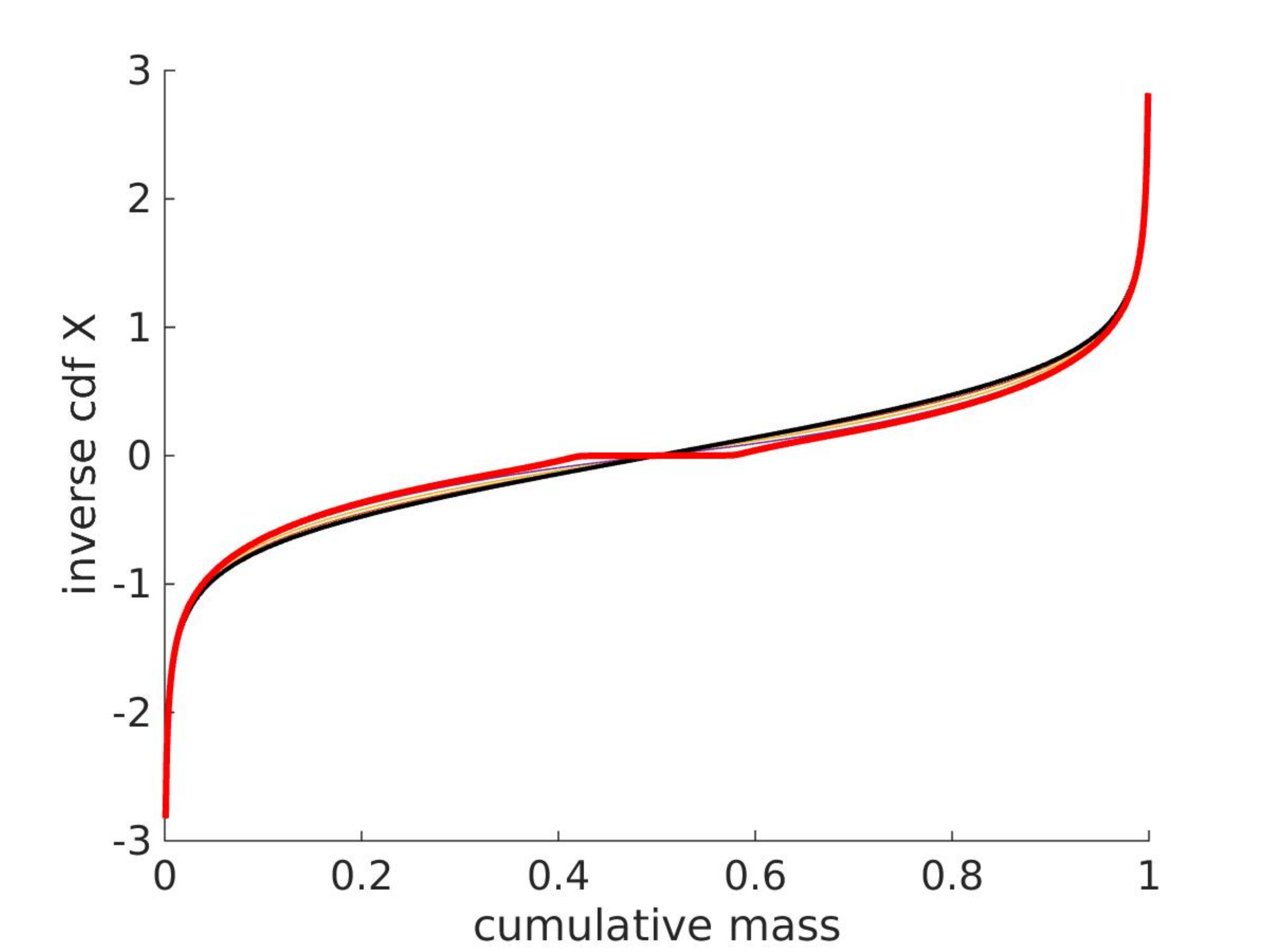}}%
\subfloat[]{\includegraphics[width=.45\textwidth]{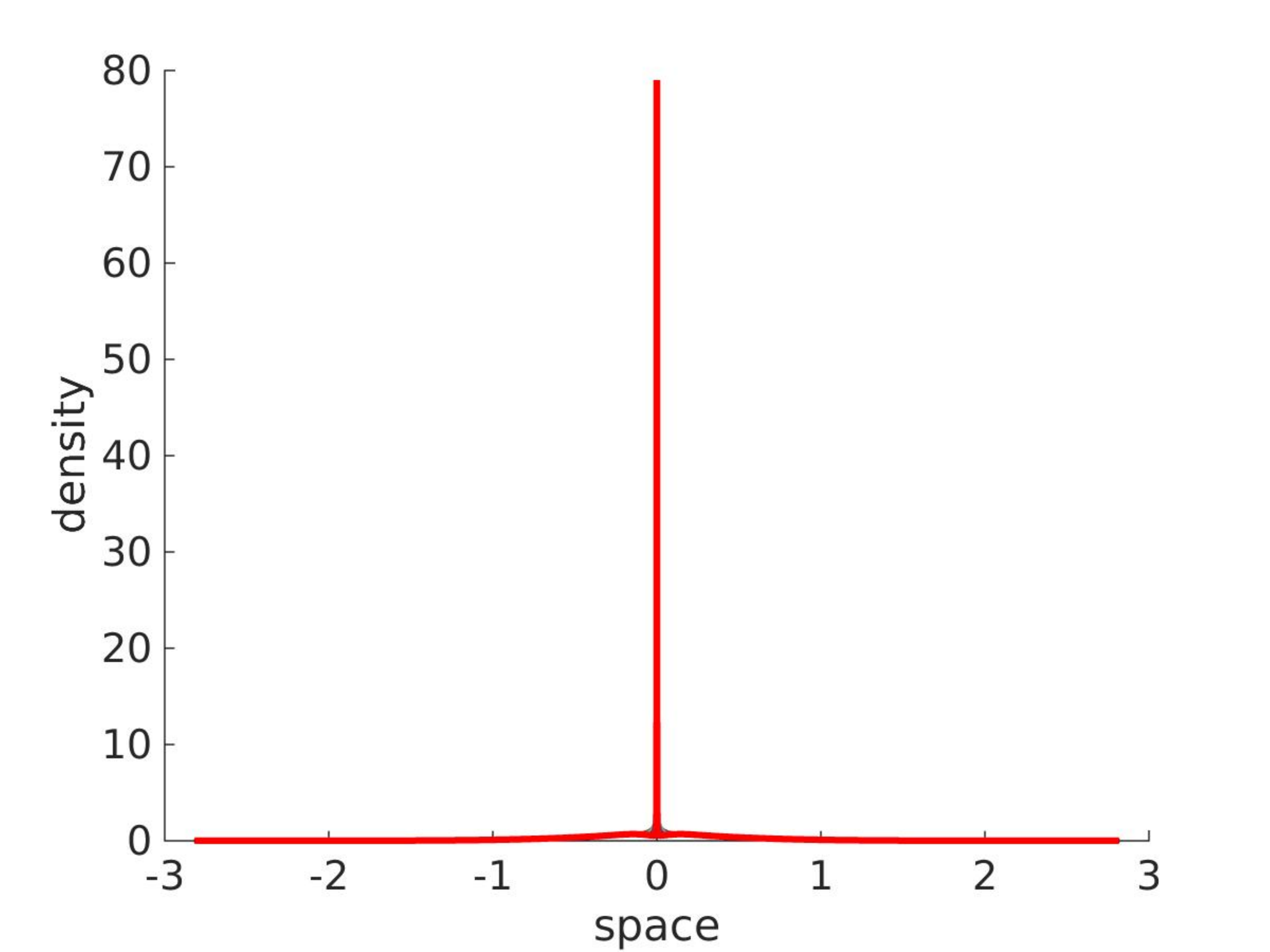}}%

 \vspace{-0.1cm}
\subfloat[]{\includegraphics[width=.45\textwidth]{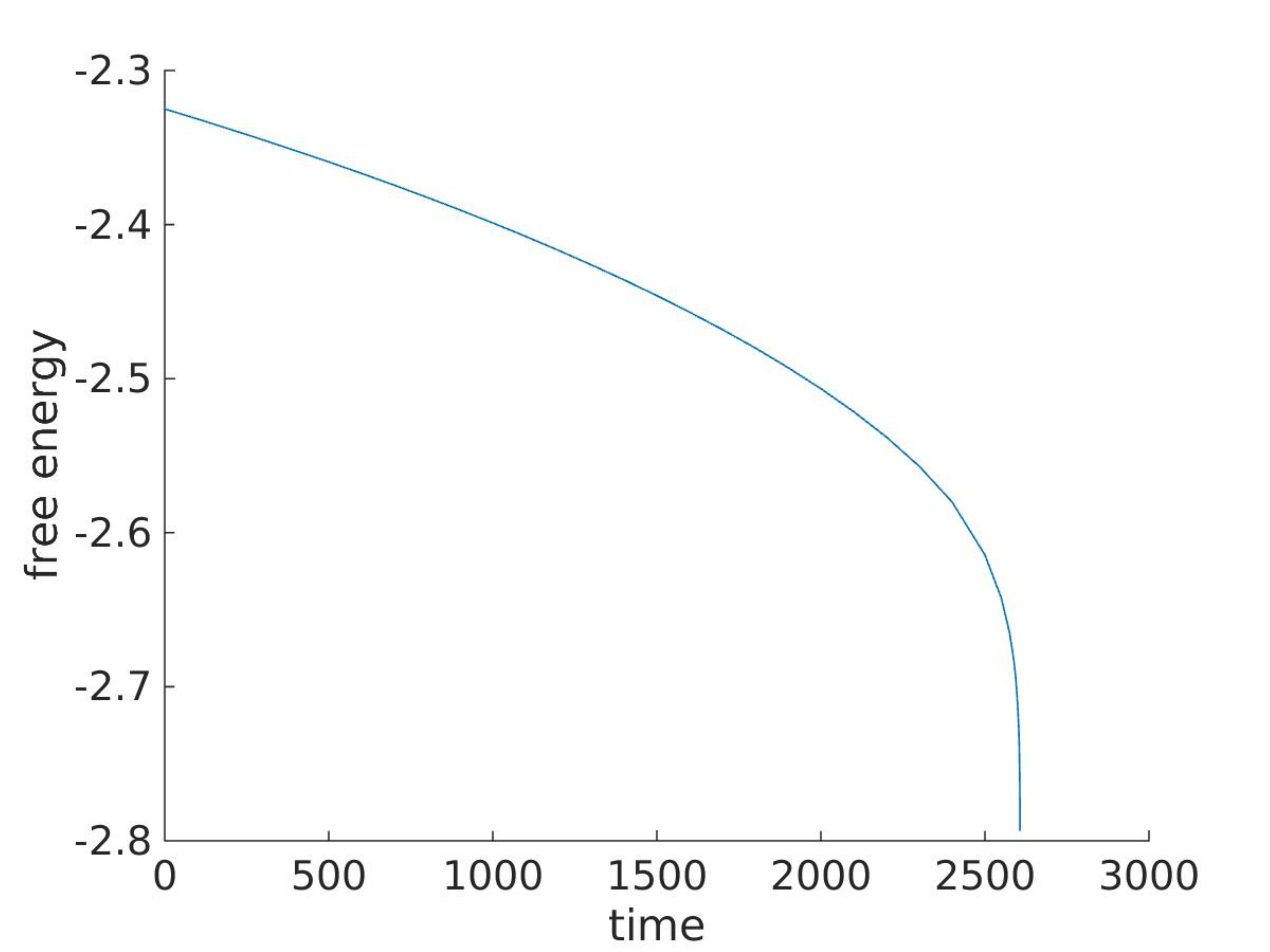}}%
 \vspace{-0.2cm}
\caption{Point $F$: $\chi=1$, $k=-0.5$, $r=0$.\\
(a) Inverse cumulative distribution function from initial condition (black) to the profile at the last time step (red),
(b) solution density from initial condition (black) to the profile at the last time step (red),
(c) free energy.}
\label{fig:chi=10_k=-05_resc=0}
\end{figure}

Finally, let us investigate the behaviour for $(k,\chi)=(-0.5,1)$ in original variables (point $F$ in Figure \ref{fig:parameterspace}). Point $F$ lies in the porous medium regime and we expect blow-up as $\chi_c(-0.5)<1$, see Section \ref{sec: LTA super-critical k neg}. If the mass becomes too concentrated, the Newton-Raphson procedure does not converge and the simulation stops. We have therefore adapted the numerical scheme to better capture the blow-up. We fix $\Delta t=10^{-3}$ and $\Delta \eta=10^{-3}$ and take a centered normalised Gaussian with variance $\sigma^2=0.32$ as initial data. When the simulation stops, we divide the time step size $\Delta t$ by two and repeat the simulation, taking as initial condition the last density profile before blow-up. This process can be repeated any number of times, each time improving the approximation of an emerging Dirac Delta. The formation of a Dirac Delta in Figure \ref{fig:chi=10_k=-05_resc=0}(b) corresponds to the formation of a plateaux in \ref{fig:chi=10_k=-05_resc=0}(a). As expected from the analysis in Section \ref{sec: LTA super-critical k neg}, the free energy diverges to $-\infty$ (Figure \ref{fig:chi=10_k=-05_resc=0}(c)).

\section{Explorations in Other Regimes}\label{sec:otherregimes}

\subsection{Diffusion-Dominated Regime in One Dimension}

The numerical scheme described here gives us a tool to explore the asymptotic behaviour of solutions for parameter choices that are less understood. For example, choosing  $\chi=0.3$, $k=-0.5$ and $m=1.6$ in original variables ($r=0$), we observe convergence to a compactly supported stationary state, see Figure \ref{fig:chi=03_k=-05_m=16_resc=0}. This choice of parameters is within the diffusion-dominated regime since $m+k>1$ (see Definition \ref{def:classification regimes}). We choose as initial condition a normalised characteristic function supported on $B(0,15)$ from where we let the solution evolve with time steps of size $\Delta t=10^{-2}$ and particles spaced at $\Delta \eta=10^{-2}$. We let the density solution evolve until the $L^2$-error between two consecutive solutions is less than $10^{-7}$. Note that here $m+k=1.1$ is close to the fair-competition regime, for which $\chi_c\left(-0.5\right)=0.39$ (see Figure \ref{fig:parameterspace}).
\begin{figure}[h!]
\centering
\subfloat[]{\includegraphics[width=.45\textwidth]{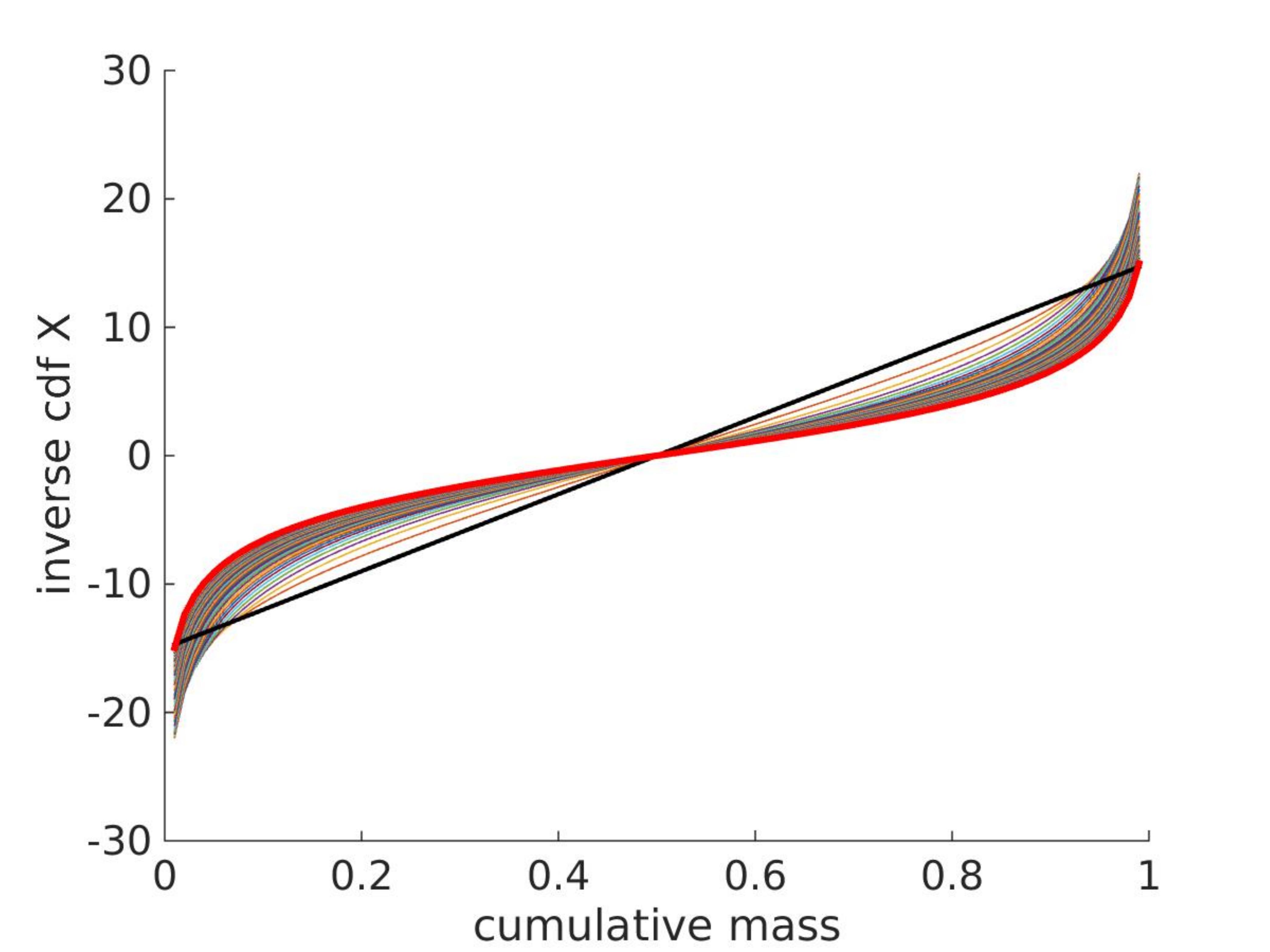}}%
\subfloat[]{\includegraphics[width=.45\textwidth]{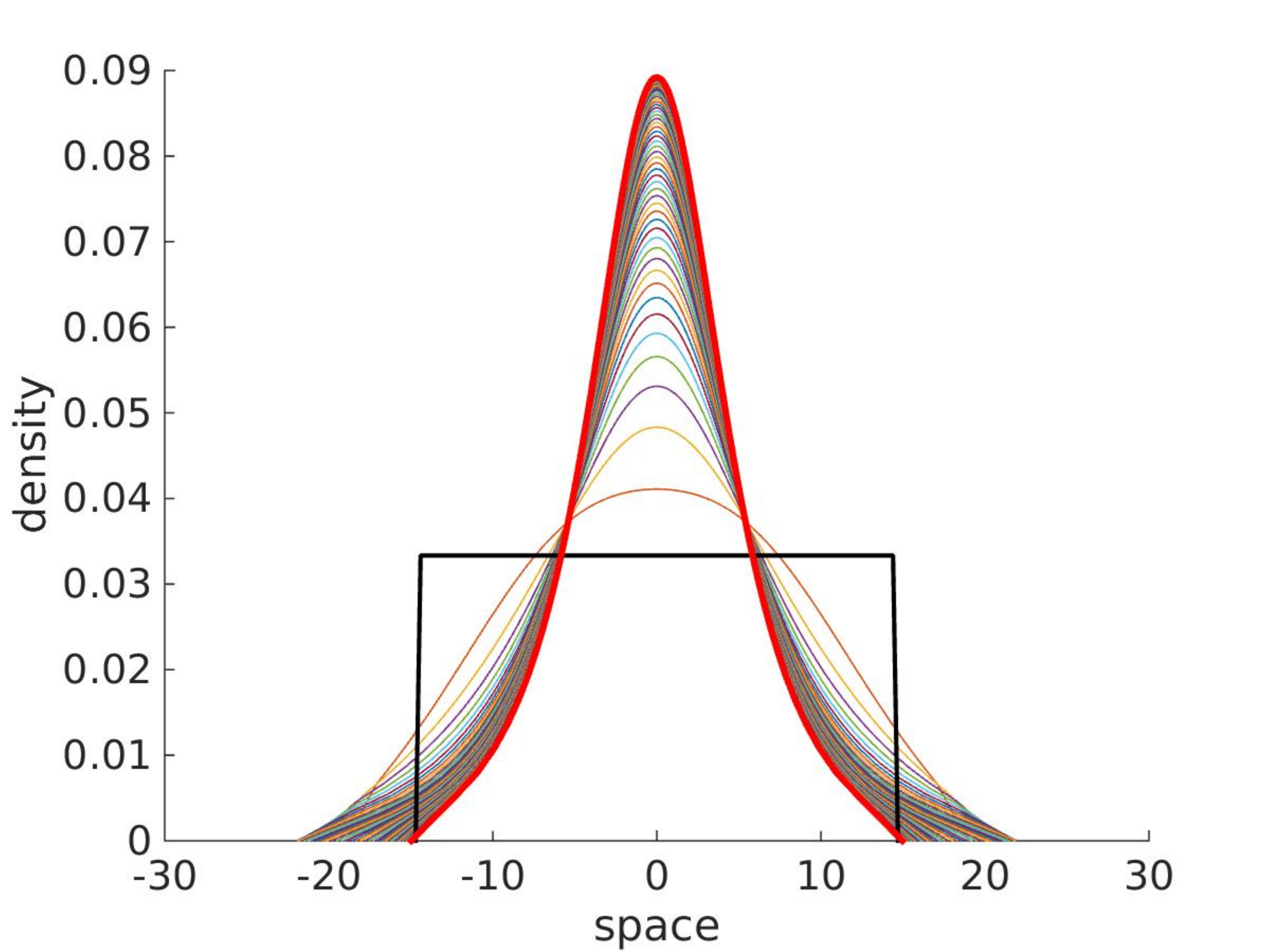}}%
\vspace{-0.4cm}
\subfloat[]{\includegraphics[width=.45\textwidth]{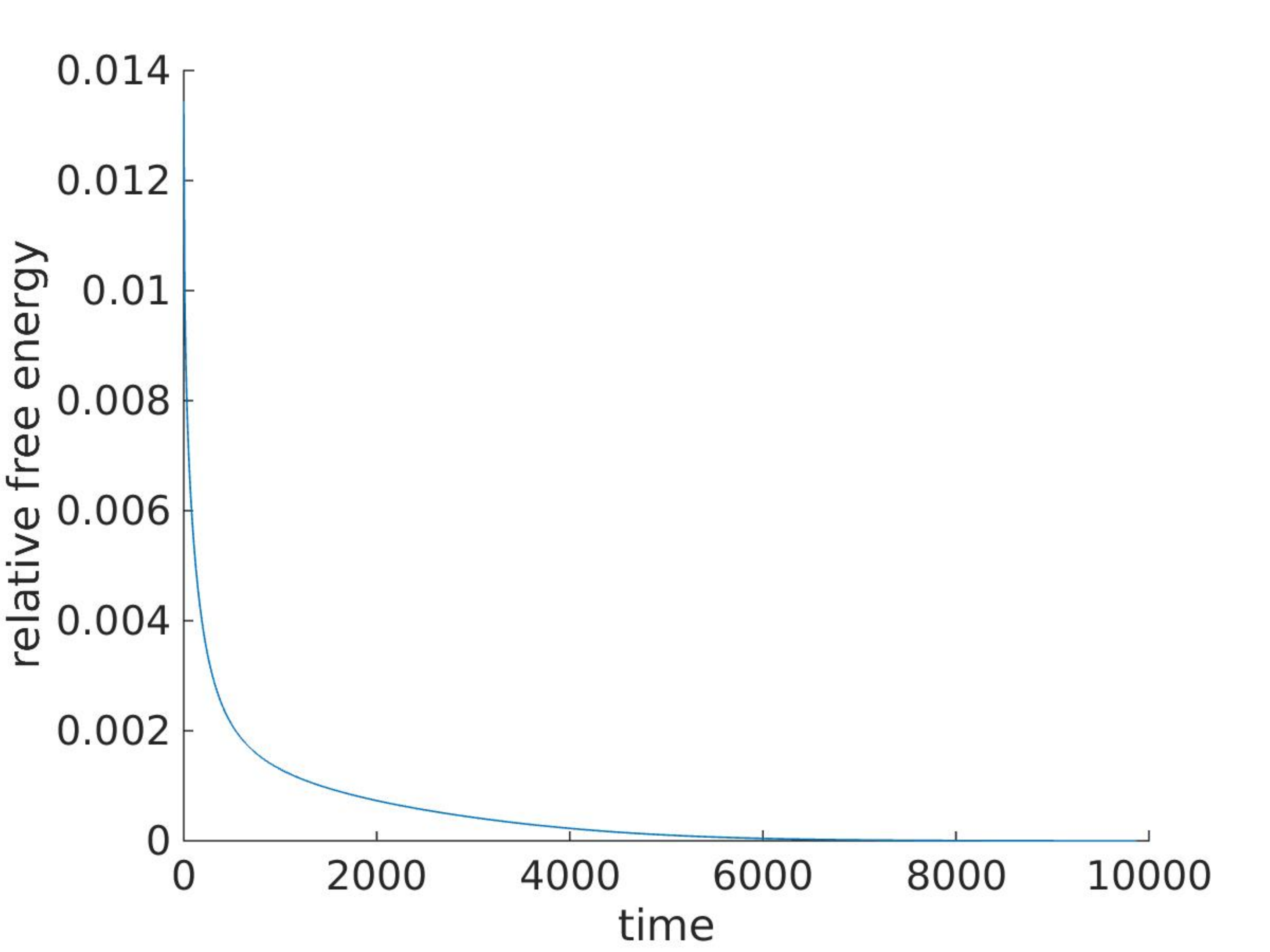}}%
\subfloat[]{\includegraphics[width=.45\textwidth]{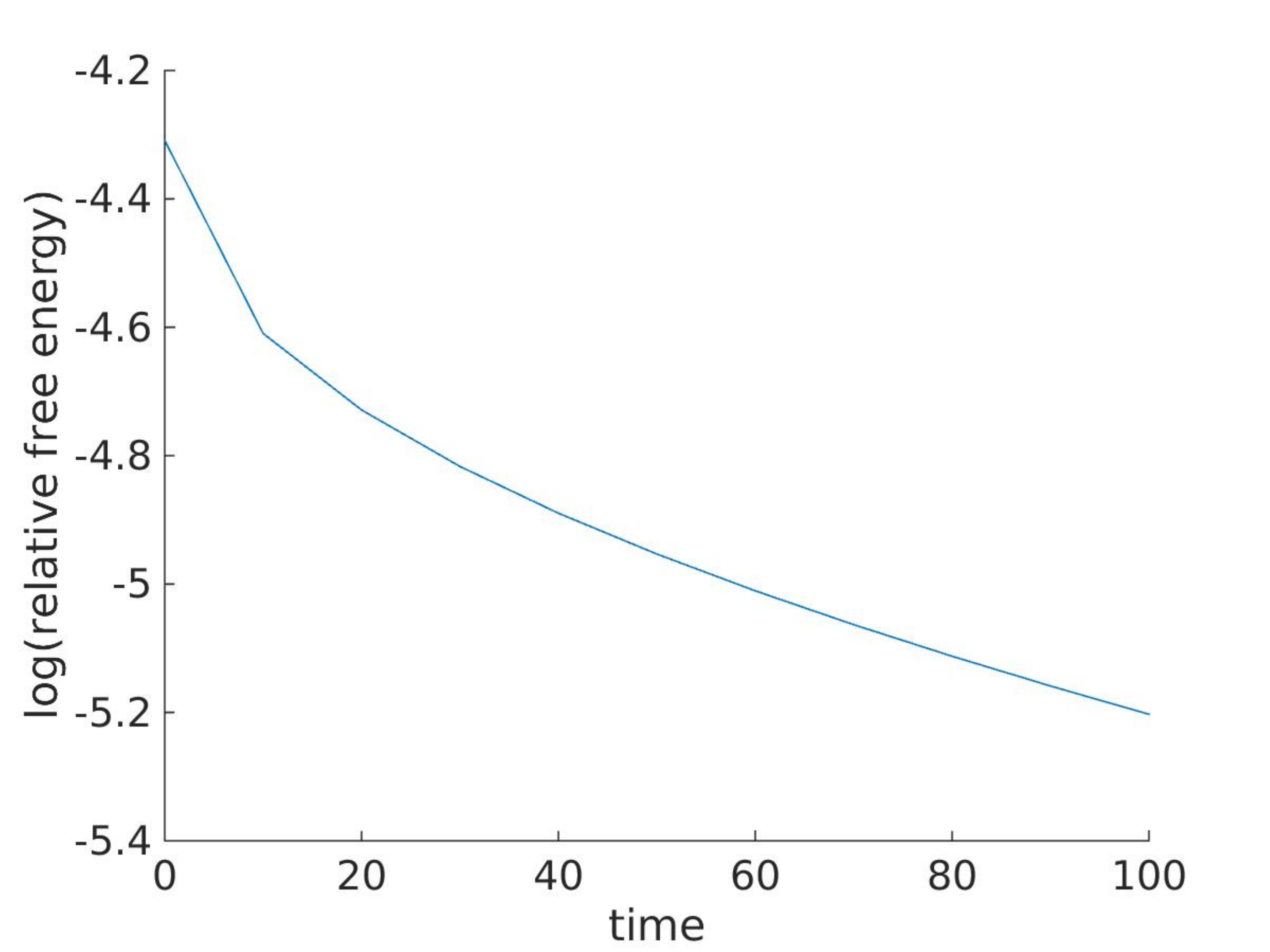}}%
\vspace{-0.2cm}
\caption{Diffusion-dominated regime: $\chi=0.3$, $k=-0.5$, $m=1.6$, $r=0$.\\
(a) Inverse cumulative distribution function from initial condition (black) to the profile at the last time step (red),
(b) solution density from initial condition (black) to the profile at the last time step (red),
(c) relative free energy,
(d) log(relative free energy).}
\label{fig:chi=03_k=-05_m=16_resc=0}
\end{figure}

\subsection{Attraction-Dominated Regime in Any Dimension}

In the attraction-dominated regime $N(m-1)+k<0$ (corresponding to Definition \ref{def:classification regimes}) both global existence of solutions and blow-up can occur in original variables in dimension $N\geq1$ depending on the choice of initial data \cite{CPZ04, Sugi3, SugiKu06, CLW,BL,CW,LW, CaCo14}. Using the numerical scheme introduced in Section \ref{sec:numerics}, we can demonstrate this change of behaviour numerically in one dimension, see Figures \ref{fig:conj_chi=035_k=-05_m=133_run9b} (dispersion) and \ref{fig:conj_chi=035_k=-05_m=133_run8} (blow-up).\\

We will now investigate in more detail a special parameter choice ($m$, $k$) that belongs to the attraction-dominated regime. Instead of fixing $m$ and $k$ such that attractive and repulsive forces are in balance ($N(m-1)+k=0$), one may choose instead to investigate the regime where the free energy functional \eqref{eq:functional} is conformal invariant, corresponding to $m=2N/(2N+k)$. For $k<0$, this corresponds to the case $p=q=m$ in the HLS inequality \eqref{eq:HLS} for which the otimisers $\rho_{HLS}$ and the optimal constant $C_{HLS}$ are known explicitly \cite{Lieb83}. We have the following existence result:
\begin{theorem}
 Let $\chi>0$, $k \in (-N,0)$ and $m=2N/(2N+k)\in(1,2)$. Then the free energy functional $\mF_k$ admits a critical point in $\mY$.
\end{theorem}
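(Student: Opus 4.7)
The strategy is to produce a critical point of $\mF_k$ as a suitably rescaled optimiser of the Hardy--Littlewood--Sobolev inequality, exploiting the conformal-invariance structure of the regime $m=2N/(2N+k)$. Note that in this attraction-dominated regime $\mF_k$ is unbounded below (the quadratic-in-$\|\rho\|_m$ attractive contribution dominates the diffusion term under mass-preserving concentrations since $m<2$), so a minimisation approach is hopeless and one is forced to look for a non-minimising critical point.

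In the conformal case the HLS inequality \eqref{eq:HLS} reduces to
\[ \iint_{\RR^N\times\RR^N} \rho(x)|x-y|^k\rho(y)\,dxdy \, \leq \, C_{HLS}\,\|\rho\|_m^2, \]
and by Lieb's theorem \cite{Lieb83} it is attained on radially symmetric, strictly positive optimisers $\rho^\ast$ which, after translation and normalisation of mass, belong to $\mY$. Writing the Euler--Lagrange equation for the attainability of $C_{HLS}$ yields the pointwise identity
\[ \int_{\RR^N}|x-y|^k\rho^\ast(y)\,dy \, = \, A\,\rho^\ast(x)^{m-1}, \qquad x\in\RR^N, \]
for some explicit constant $A>0$ depending on $k$ and the chosen normalisation.

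I would then test mass-preserving dilations $\rho_\lambda(x):=\lambda^N\rho^\ast(\lambda x)$, which stay in $\mY$ by radial symmetry. A direct change of variables combined with the above identity gives
\[ \int_{\RR^N}|x-y|^k\rho_\lambda(y)\,dy \, = \, \lambda^{-k-N(m-1)}\,A\,\rho_\lambda(x)^{m-1}, \]
so that the Euler--Lagrange equation for a critical point of $\mF_k$ on $\mY$,
\[ \frac{m}{m-1}\rho^{m-1} \, + \, \frac{2\chi}{k}\int|x-y|^k\rho(y)\,dy \, = \, \mu, \]
evaluated at $\rho=\rho_\lambda$ reduces to
\[ \left(\frac{m}{m-1} + \frac{2\chi A}{k}\,\lambda^{-k-N(m-1)}\right)\rho_\lambda^{m-1} \, = \, \mu. \]
Since $\rho_\lambda>0$ everywhere, this forces the Lagrange multiplier $\mu=0$ together with the vanishing of the bracketed coefficient.

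A short algebraic check using $m-1=-k/(2N+k)$ gives $-k-N(m-1)=-k(N+k)/(2N+k)$, which is strictly positive for every $k\in(-N,0)$. Hence the scalar equation
\[ \lambda^{-k(N+k)/(2N+k)} \, = \, \frac{m|k|}{2\chi A(m-1)} \]
has a unique positive root $\lambda_\chi$, and the corresponding $\rho_{\lambda_\chi}\in\mY$ is the desired critical point. The only delicate point is to justify, via the regularity and strict positivity of Lieb's HLS optimisers, that the above Euler--Lagrange identity holds pointwise on all of $\RR^N$, so that the vanishing of the bracketed coefficient is a genuine equality between constants rather than a merely formal one; this is the main (essentially the only) obstacle.
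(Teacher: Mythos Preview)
Your argument is correct and arrives at the same critical point as the paper---the suitably scaled HLS optimiser---via essentially the same mechanism. The only organisational difference is that the paper decomposes $\mF_k=\mF_k^1+\mF_k^2$ with $\mF_k^2$ the (non-negative) HLS deficit and $\mF_k^1$ a function of $\|\rho\|_m$ alone, then tunes the amplitude $c^*$ to kill $\delta\mF_k^1/\delta\rho$ and the scale $\lambda^*$ to normalise mass; you instead fix unit mass from the start, invoke the pointwise Euler--Lagrange identity $W_k\ast\rho^\ast=A(\rho^\ast)^{m-1}$ for the HLS optimiser directly, and tune a single mass-preserving dilation to make the bracket vanish. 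Both routes rely on the same two facts (Lieb's explicit optimisers and the strict positivity of the exponent $-k-N(m-1)$), and your identification of the only delicate point---the pointwise validity of the HLS Euler--Lagrange equation on all of $\RR^N$---is accurate and is indeed handled by the regularity of Lieb's optimisers.
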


\begin{proof}
Following the approach in \cite{CLW}, we rewrite the free energy functional \eqref{eq:functional} as a sum of two functionals
\begin{equation*}
 \mF_k[\rho]=\mF_k^1[\rho]+\mF_k^2[\rho]\, ,
\end{equation*}
where
\begin{align}
 \mF_k^1[\rho]:= &\frac{1}{N(m-1)} ||\rho||_m^m \left(1-\chi C_{HLS}\frac{N(m-1)}{(-k)}||\rho||_m^{2-m}\right)\notag\\
 =&\frac{2N+k}{N(-k)} ||\rho||_m^m \left(1-\chi C_{HLS}\frac{N}{2N+k}||\rho||_m^{2-m}\right)\, ,\label{eq:F1}
 \end{align}
and
 \begin{align}
 \mF_k^2[\rho]:=&\frac{\chi}{(-k)} \left(C_{HLS}||\rho||_m^2 - \iint_{\RR^N\times\RR^N} |x-y|^k\rho(x)\rho(y)\, dxdy\right)\, .\label{eq:F2}
\end{align}
By the HLS inequality \eqref{eq:HLS}, the second functional \eqref{eq:F2} is bounded below for any $\chi>0$,
$$
\mF_k^2[\rho]\geq0\, , \quad \forall \rho \in \mY\, ,
$$
and by \cite[Theorem 3.1]{Lieb83}, there exists a family of optimisers $\rho_{HLS,\lambda,c}$,
\begin{equation}\label{eq:rhoHLS}
 \rho_{HLS,\lambda,c}(x)=c\left(\frac{\lambda}{\lambda^2+|x|^2}\right)^{N/m}\, , \quad \lambda>0, c>0
\end{equation}
satisfying 
$
\mF_k^2\left[\rho_{HLS,\lambda,c}\right]=0
$
with the optimal constant $C_{HLS}$ given by
\begin{equation*}
 C_{HLS}:=\pi^{-k/2} \left(\frac{\Gamma\left(\frac{N+k}{2}\right)}{\Gamma\left(N+\frac{k}{2}\right)}\right)
 \left(\frac{\Gamma\left(\frac{N}{2}\right)}{\Gamma\left(N\right)}\right)^{-(N+k)/N}\, .
\end{equation*}
The parameter $\lambda>0$ in \eqref{eq:rhoHLS} corresponds to the scaling that leaves the $L^m$-norm of $\rho_{HLS,\lambda,c}$ invariant. Since the first variation of the functional $\mF_k^1$ defined in \eqref{eq:F1} is given by
\begin{equation*}
 \frac{\delta \mF_k^1}{\delta \rho}[\rho](x)
 = \frac{2}{(-k)}\left(1-\chi C_{HLS} ||\rho||_m^{2-m}\right)\rho^{m-1}(x)
\end{equation*}
and since the $L^m$-norm of the optimiser can be calculated explicitly,
$$
||\rho_{HLS,\lambda,c}||_m=
c\left(\frac{2^{1-N} \pi^{\frac{N+1}{2}}}{\Gamma\left(\frac{N+1}{2}\right)}\right)^{1/m}\, ,
$$
there exists a unique choice of $(\lambda, c)=(\lambda^*, c^*)$ for each $\chi>0$ such that 
\begin{align*}
 \frac{\delta \mF_k^1}{\delta \rho}[\rho_{HLS,\lambda^*,c^*}](x)=0\, \quad
 \text{and} \quad
 \int_{\RR^N} \rho_{HLS,\lambda^*,c^*}(x)\,dx=1
\end{align*}
given by
\begin{equation}\label{eq:c*lambda*}
c^*(\chi):=\left(\frac{2^{1-N} \pi^{\frac{N+1}{2}}}{\Gamma\left(\frac{N+1}{2}\right)}\right)^{-1/m}\left(\chi C_{HLS}\right)^{1/(m-2)}\, , \quad
\lambda^*(\chi):=\left(\int_{\RR^N} \rho_{HLS,1,c^*(\chi)}(x)\,dx\right)^{2/k}\, .
\end{equation}
Hence $\rho_{HLS,\lambda^*,c^*}$ is a critical point of $\mF_k$ in $\mY$.
\end{proof}

We can choose to leave $\lambda>0$ as a free parameter in \eqref{eq:rhoHLS}, only fixing $c=c^*(\chi)$ so that $\rho_{HLS,\lambda,c^*}$ is a critical point of $\mF_k$ with arbitrary mass. We conjecture that a similar result to \cite[Theorem 2.1]{CLW} holds true for general $k \in (-N,0)$ and $m=2N/(2N+k)$ for radially symmetric initial data:
\begin{figure}[h!]
\centering
\subfloat[]{\includegraphics[width=.45\textwidth]{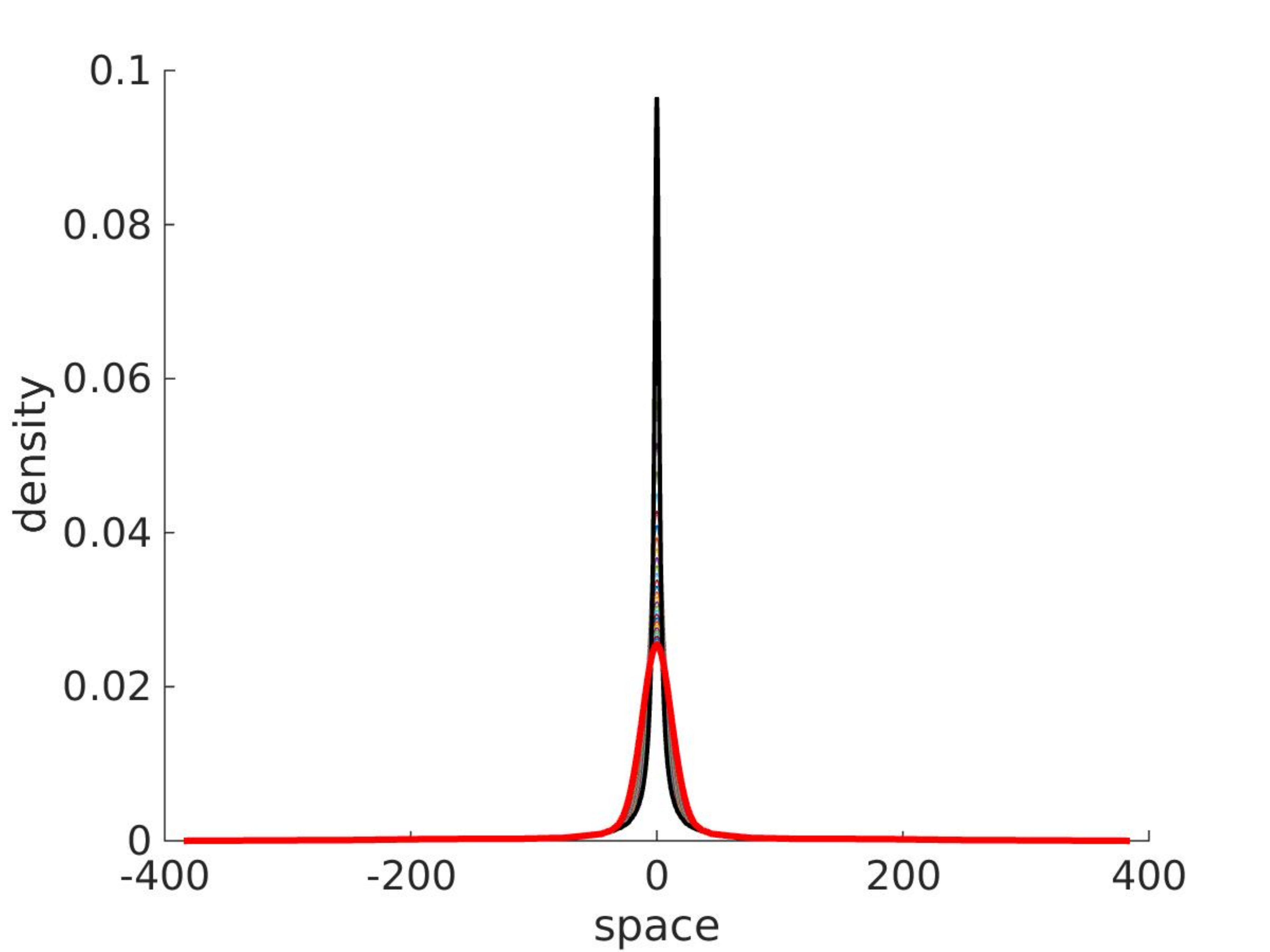}}%
\subfloat[]{\includegraphics[width=.45\textwidth]{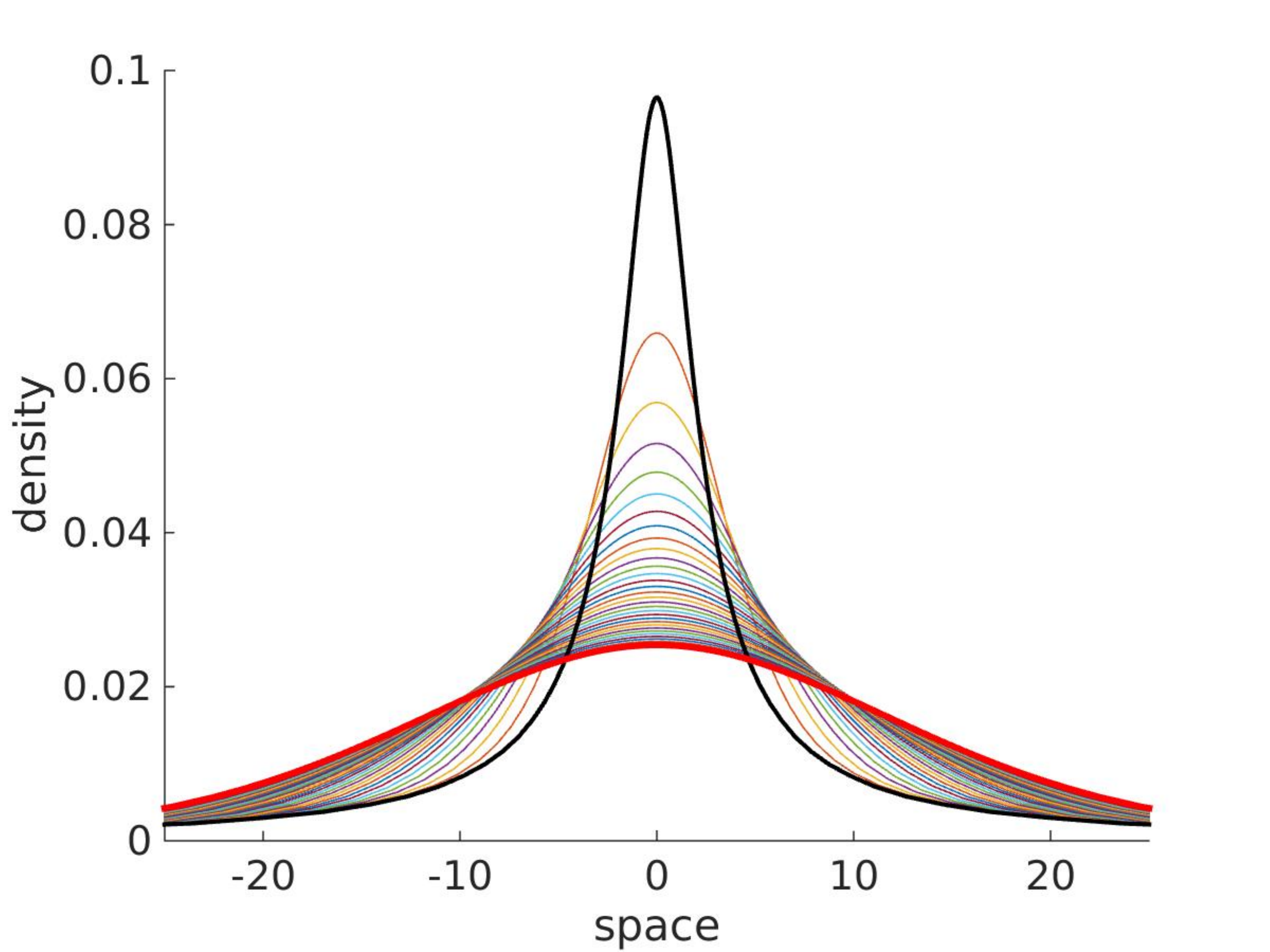}}%
 \vspace{-0.4cm}
\subfloat[]{\includegraphics[width=.45\textwidth]{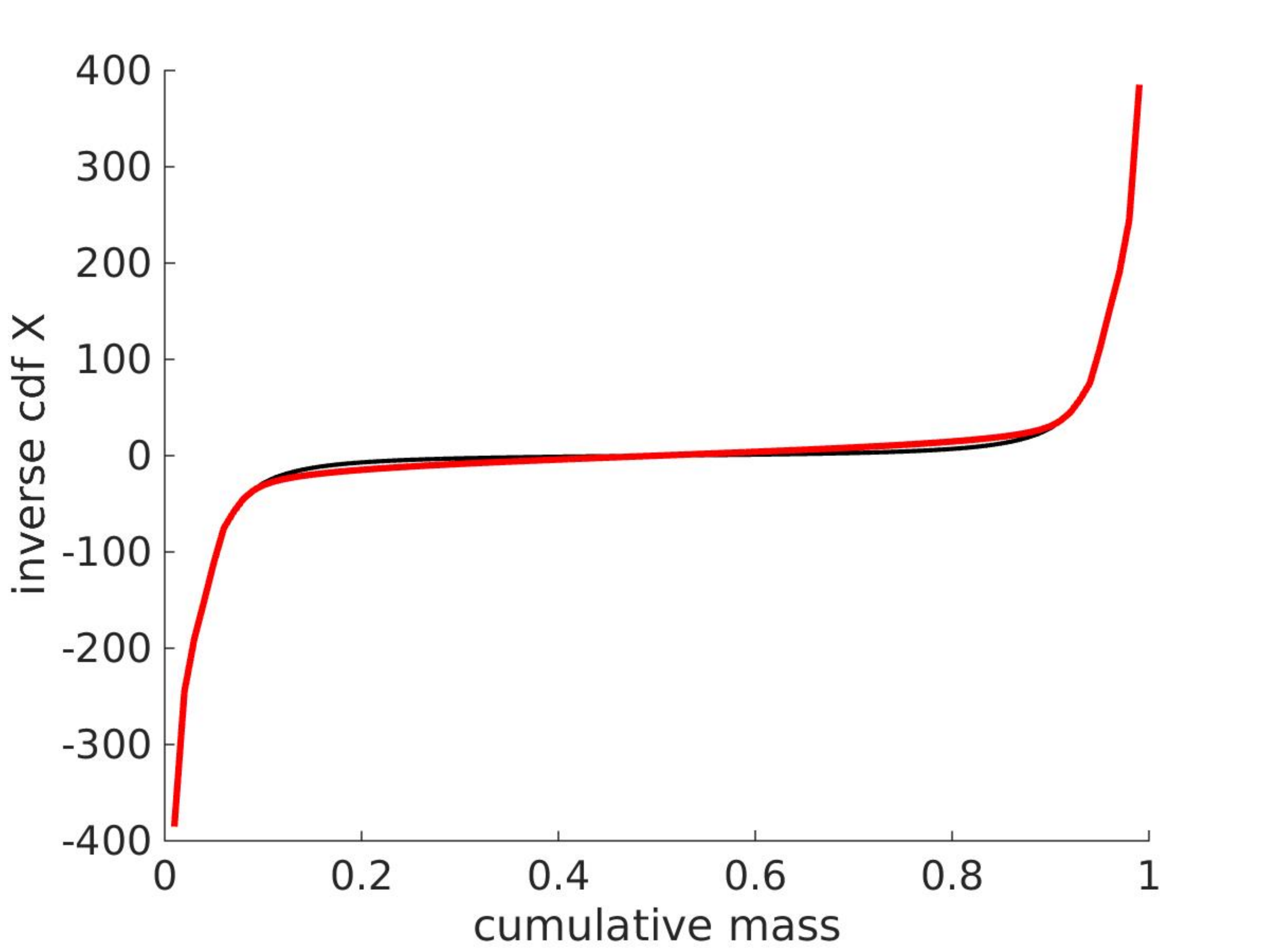}}%
\subfloat[]{\includegraphics[width=.45\textwidth]{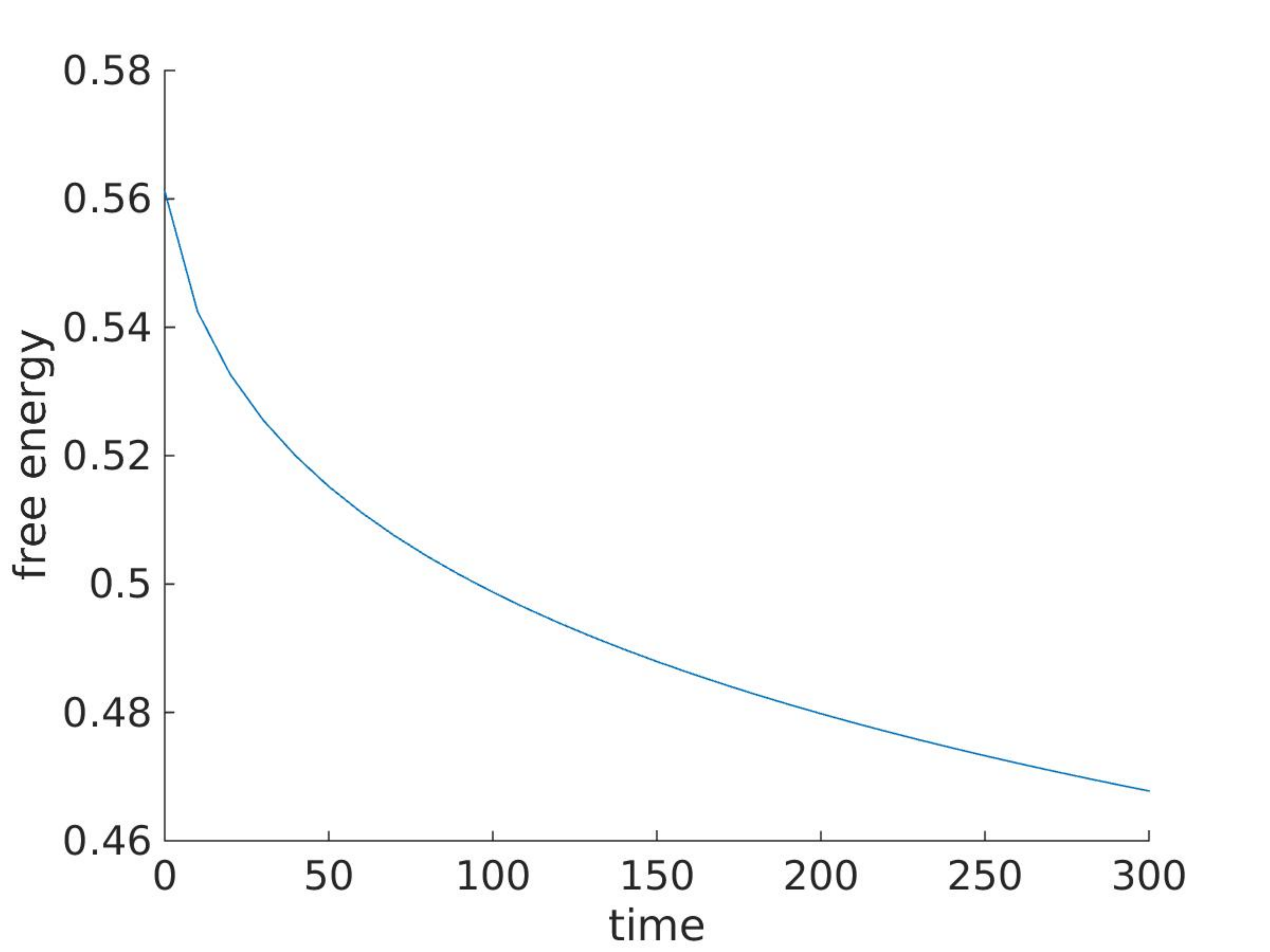}}%
 \vspace{-0.4cm}
\subfloat[]{\includegraphics[width=.45\textwidth]{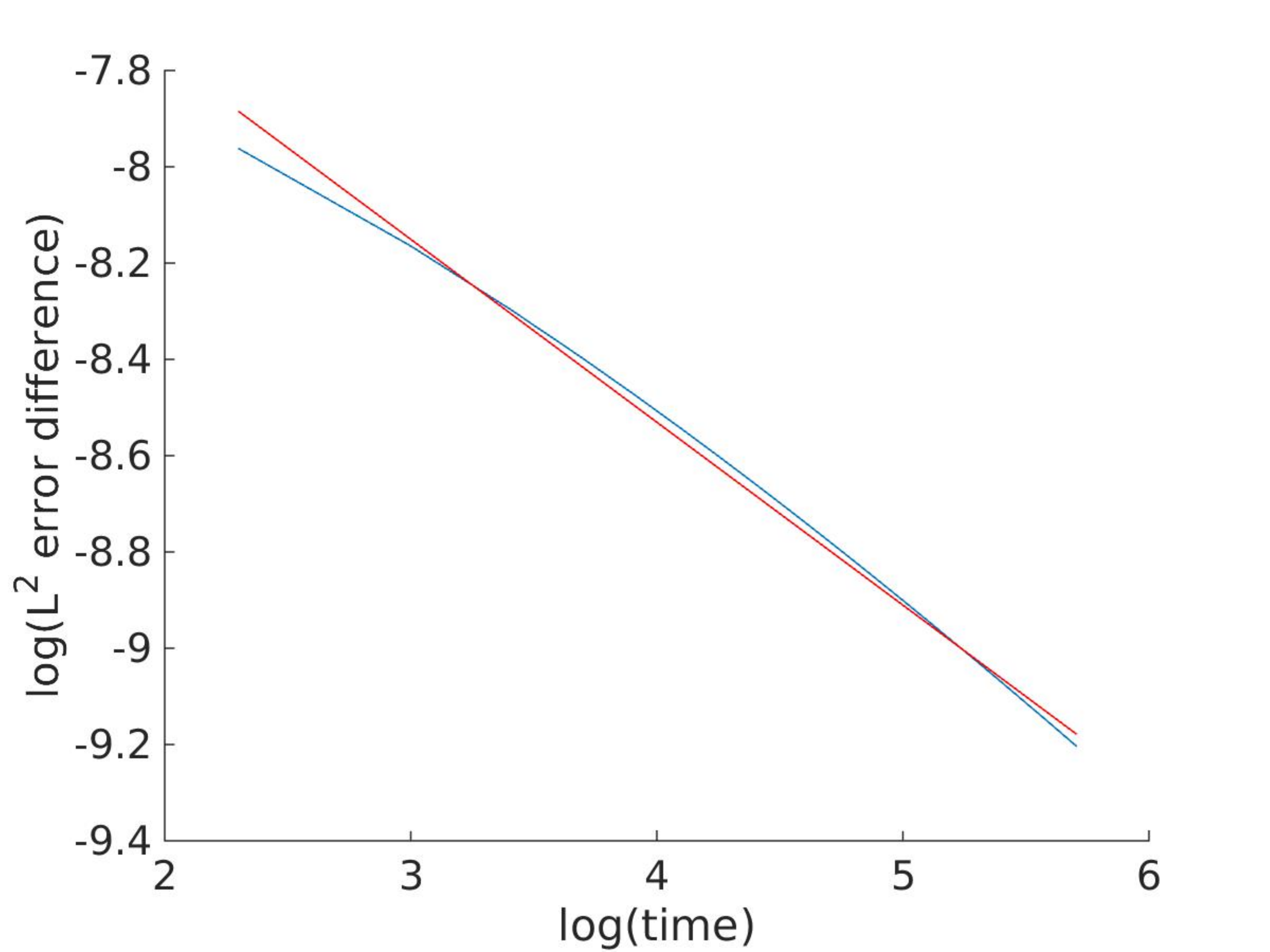}}%
\subfloat[]{\includegraphics[width=.45\textwidth]{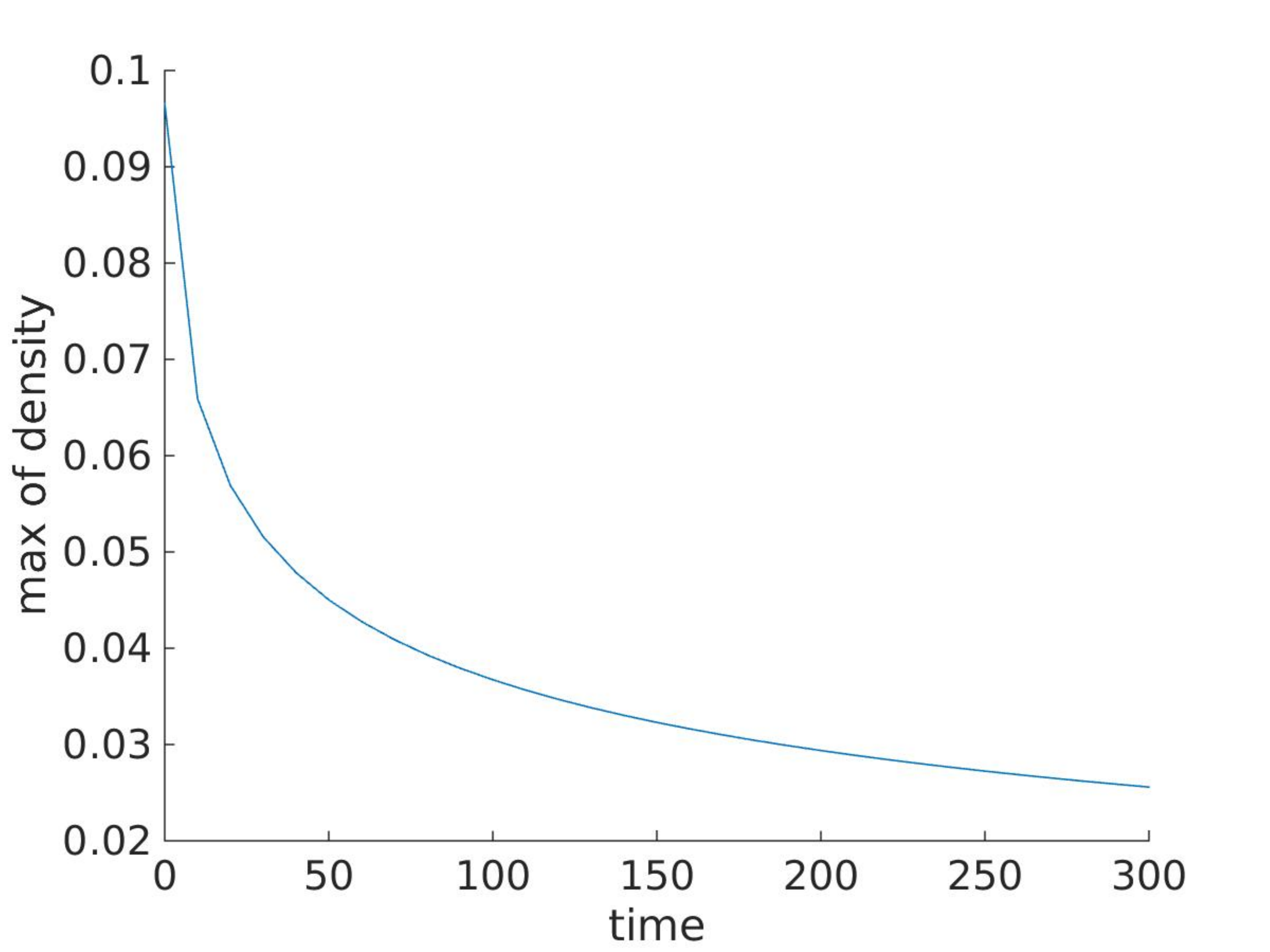}}%
 \vspace{-0.2cm}
\caption{Attraction-dominated regime: $\chi=0.35$, $k=-1/2$, $m=4/3$, $r=0$ with initial data $\rho(t=0,x)=\rho_{HLS,\lambda_0,c_0}(x)<\rho_{HLS,\lambda_0,c^*}(x)$ for all $x \in \RR$ with $c_0=0.4c^*$.\\
(a) Solution density from initial condition (black) to the profile at the last time step (red), 
(b) zoom of Figure (a),
(c) inverse cumulative distribution function from initial condition (black) to the profile at the last time step (red),
(d) free energy,
(e) log-log plot of the $L^2$-error difference between two consecutive solutions and fitted line with slope $-0.37987$,
(f) time evolution of $\max_{x}\rho(t,x)$
.}
\label{fig:conj_chi=035_k=-05_m=133_run9b}
\end{figure}
\begin{conjecture}[Global Existence vs Blow-up]\label{conj1}
 Let $\chi>0$, $k\in(-N,0)$ and $m=2N/(2N+k)$ in dimension $N\geq 1$. Assume the initial datum $\rho_0 \in \mY$ is radially symmetric.
 \begin{enumerate}[(i)]
  \item If there exists $\lambda_0>0$ such that 
  $$
  \rho_0(r)< \rho_{HLS,\lambda_0,c^*}(r)\, , \quad \forall\, r\geq 0, ,
  $$
  then any radially symmetric solution $\rho(t,r)$ of \eqref{eq:KS} with initial datum $\rho(0,r)=\rho_0(r)$ is vanishing in $L^1_{loc}\left(\RR^N\right)$ as $t \to \infty$.
   \item If there exists $\lambda_0>0$ such that 
  $$
  \rho_0(r)> \rho_{HLS,\lambda_0,c^*}(r)\, , \quad \forall\, r\geq 0\, ,
  $$
  then any radially symmetric solution $\rho(t,r)$ of \eqref{eq:KS} with initial datum $\rho(0,r)=\rho_0(r)$ must blow-up at a finite time $T^*$ or has a mass concentration at $r=0$ as time goes to infinity in the sense that there exist $R(t)\to 0$ as $t \to \infty$ and a positive constant $C>0$ such that
  $$
  \int_{B(0,R(t))} \rho(t,x)\, dx\geq C\, .
  $$
 \end{enumerate}
\end{conjecture}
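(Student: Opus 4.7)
The plan is to adapt the approach of \cite{CLW} to the general conformally invariant regime $m = 2N/(2N+k)$, relying on a comparison principle for the cumulative mass function and using the one-parameter family $\{\rho_{HLS,\lambda,c^*}\}_{\lambda>0}$ of critical points as barriers, combined with the decomposition $\mF_k = \mF_k^1 + \mF_k^2$ introduced above and the HLS inequality \eqref{eq:HLS}. First, in radial coordinates, introduce $M(t,r) := \int_{B(0,r)}\rho(t,x)\,dx$, so that $\partial_r M = \omega_{N-1}r^{N-1}\rho$. Integrating \eqref{eq:KS} over $B(0,r)$ and applying the divergence theorem yields a scalar nonlocal equation $\partial_t M = \omega_{N-1}r^{N-1}\bigl[\partial_r\rho^m + 2\chi\rho\,\partial_r S_k\bigr]$, in which the diffusion term is a porous-medium flux rewritten as a functional of $\partial_r M$ and the attraction term is expressed as a radial nonlocal operator acting on $\partial_r M$ through the radial average of $\nabla W_k$. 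Since $\rho_{HLS,\lambda_0,c^*}$ is a critical point of $\mF_k$ and hence a stationary state of \eqref{eq:KS}, the associated $M_{HLS,\lambda_0}(r) := \int_{B(0,r)}\rho_{HLS,\lambda_0,c^*}$ is a stationary solution of the $M$-equation for each $\lambda_0>0$.

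\textbf{Comparison step.} Next, establish a monotone comparison principle for the $M$-equation: if $M_1(0,\cdot)\le M_2(0,\cdot)$ pointwise on $[0,\infty)$, then $M_1(t,\cdot)\le M_2(t,\cdot)$ on the common interval of existence. Integrating the hypotheses of the two cases over balls, in case (i) one has $M_0(r)\le M_{HLS,\lambda_0}(r)$ pointwise in $r$, with a strictly positive gap at $r=\infty$ since $1 = \int\rho_0 < \int\rho_{HLS,\lambda_0,c^*}$; in case (ii) the reversed ordering $M_0(r)\ge M_{HLS,\lambda_0}(r)$ holds, with $\int\rho_{HLS,\lambda_0,c^*}<1$. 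Invoking the comparison principle then propagates these orderings for all times of existence, trapping $\rho(t,\cdot)$ below (resp. above) the HLS barrier in the sense of cumulative mass.

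\textbf{Extracting the asymptotics.} The conclusions are drawn through the free-energy dissipation together with the conformal-invariance virial identity
\[
\frac{d}{dt}\int_{\RR^N}|x|^2\rho\,dx = -k\left[\frac{m}{m-1}\,\mU_m[\rho] + \frac{2\chi}{k}\iint_{\RR^N\times\RR^N}|x-y|^k\rho(x)\rho(y)\,dxdy\right],
\]
obtained from the relation $mk=-2N(m-1)$. In case (i), the cumulative bound $M(t,r)\le M_{HLS,\lambda_0}(r)$ together with mass conservation precludes concentration; combined with the splitting $\mF_k=\mF_k^1+\mF_k^2\ge \mF_k^1$ and the fact that, along the trajectory, $\|\rho(t)\|_m$ stays below the critical HLS threshold, one obtains enough coercivity to rule out accumulation in any bounded set and deduce vanishing in $L^1_{loc}$. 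In case (ii), the reversed cumulative ordering concentrates mass near the origin; estimating the attraction term in the virial identity via HLS with a sign opposite to case (i), one obtains a second-order-like differential inequality for $\int|x|^2\rho$ that either drives the second moment to zero in finite time (yielding finite-time blow-up as in Section \ref{sec: LTA super-critical k neg}) or to zero only as $t\to\infty$ (asymptotic concentration at the origin), producing the stated dichotomy.

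\textbf{Main obstacle.} The central technical difficulty is the comparison principle for the nonlocal $M$-equation when $k\ne 2-N$: outside the Newtonian case, $\partial_r S_k$ is not a simple local function of $M$ (as Newton's theorem would provide), and the classical scalar maximum-principle argument used in \cite{Yao} does not apply directly. One must instead work with the integral representation of $\partial_r S_k$ in terms of $\partial_r M$ and analyse the sign of the resulting nonlocal operator applied to the difference $M_1-M_2$, which amounts to establishing a monotonicity property of a radial Riesz-type transform; a viscosity-type formulation of the $M$-equation may be needed to make this rigorous at the edge of the support where $\rho$ vanishes. A second, subtler difficulty is separating the two alternatives in (ii): ruling out the asymptotic-concentration scenario in favour of genuine finite-time blow-up appears to require simultaneous Lyapunov control of $\mF_k[\rho(t)]$ and $\int|x|^2\rho(t,x)\,dx$, going beyond what the pointwise comparison with $\rho_{HLS}$ provides.
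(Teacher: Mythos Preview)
The statement you are attempting to prove is explicitly labelled a \emph{conjecture} in the paper and is not proved there. The paper offers only numerical evidence (Figures~\ref{fig:conj_chi=035_k=-05_m=133_run9b}--\ref{fig:conj_chi=035_k=-05_m=133_run6}) in one dimension supporting both parts (i) and (ii), together with the remark that the result would be analogous to \cite[Theorem~2.1]{CLW}. There is therefore no proof in the paper to compare against.

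As a proof \emph{strategy}, your outline is sensible and correctly identifies the relevant structure (the CLW-type comparison for the cumulative mass, the role of the family $\rho_{HLS,\lambda,c^*}$ as stationary barriers, and the virial identity). But you have also correctly located the genuine obstruction and should be clear that it is not a technicality: for $k\neq 2-N$ the radial attraction term $\partial_r S_k$ is a nonlocal functional of $M$, so the scalar comparison principle that drives \cite{CLW} and \cite{Yao} does not go through. Your suggestion to study the sign of a radial Riesz-type operator on $M_1-M_2$ is the natural first attempt, but there is no known monotonicity of this kind for general $k$, and without it the comparison step---and hence the entire argument---remains open. In short, what you have written is a plausible plan, consistent with why the paper leaves the statement as a conjecture, rather than a proof.
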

Further, we expect the following to be true analogous to \cite{CLW}:
\begin{conjecture}[Unstable Stationary State]\label{conj2}
 For any $\chi>0$, the density $\rho_{HLS,\lambda^*,c^*} \in \mY$ with $(\lambda^*,c^*)$ given by \eqref{eq:c*lambda*} is an unstable stationary state of equation \eqref{eq:KS}.
\end{conjecture}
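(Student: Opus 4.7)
The plan is to exhibit a one-parameter family of arbitrarily small radial perturbations of $\rho^*:=\rho_{HLS,\lambda^*,c^*}$ whose forward evolution under \eqref{eq:KS} cannot return to $\rho^*$. The natural family is given by the mass-preserving dilations $\rho_\mu(x):=\mu^N\rho^*(\mu x)$, $\mu>0$, which tend to $\rho^*$ uniformly as $\mu\to 1$. By a direct substitution in the explicit form of $\rho_{HLS,\lambda,c}$, dilations act within the HLS family,
\begin{equation*}
\rho_\mu \;=\; \rho_{HLS,\,\lambda^*/\mu,\,\mu^{N(m-1)/m}c^*},
\end{equation*}
so that, setting $\lambda_0:=\lambda^*/\mu$, the ratio $\rho_\mu(x)/\rho_{HLS,\lambda_0,c^*}(x)=\mu^{N(m-1)/m}$ is a spatial constant, strictly above $1$ for $\mu>1$ and strictly below $1$ for $\mu<1$, since $m>1$. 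Invoking Conjecture \ref{conj1}(ii) for $\mu>1$ and Conjecture \ref{conj1}(i) for $\mu<1$, the radial solution issuing from $\rho_\mu$ either concentrates or blows up in the first case and vanishes in $L^1_{loc}$ in the second. Since $\mu$ can be chosen arbitrarily close to $1$, no neighbourhood of $\rho^*$ is dynamically preserved, which is the desired instability.

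A parallel purely energetic computation reinforces this picture and rules out convergence to $\rho^*$ independently of the precise asymptotic regime. Using the homogeneities $\mU_m[\rho_\mu]=\mu^{N(m-1)}\mU_m[\rho^*]$ and $\mW_k[\rho_\mu]=\mu^{-k}\mW_k[\rho^*]$, set $a:=N(m-1)$, $b:=-k$, $A:=\mU_m[\rho^*]>0$ and $B:=\chi\mW_k[\rho^*]<0$ (the sign of $B$ follows from $k<0$ making $\mW_k[\rho]<0$ for positive $\rho$). Then $\mF_k[\rho_\mu]=\mu^a A+\mu^b B$. Since $\rho^*$ is a critical point of $\mF_k$, the first-order condition $aA+bB=0$ holds, and a short computation eliminates $A$ via $A=-bB/a$:
\begin{equation*}
\left.\frac{d^2}{d\mu^2}\mF_k[\rho_\mu]\right|_{\mu=1} \;=\; a(a-1)A+b(b-1)B \;=\; b(b-a)\,B.
\end{equation*}
In the attraction-dominated regime $m=2N/(2N+k)$ one computes $b-a=-k-N(m-1)=-k(N+k)/(2N+k)>0$, so the expression is strictly negative. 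Hence $\rho^*$ is a strict local maximum of $\mF_k$ along dilations, and $\mF_k[\rho_\mu]<\mF_k[\rho^*]=0$ for $\mu\neq 1$ close to $1$. Since $\mF_k$ is non-increasing along the flow, the solution starting from $\rho_\mu$ cannot asymptotically return to the zero-energy profile $\rho^*$.

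The main obstacle is that the sharp identification of the asymptotic regime (dispersion versus concentration) rests on Conjecture \ref{conj1}, which is itself open. Removing this dependence would require either a comparison principle adapted to the non-local interaction $W_k$ with $k\in(-N,0)$, in the spirit of \cite{CLW} for the Newtonian kernel, to propagate the pointwise ordering $\rho_\mu\gtrless\rho_{HLS,\lambda_0,c^*}$ along the flow, or a virial-type estimate turning the strictly negative free energy $\mF_k[\rho_\mu]<0$ into finite-time concentration. Both strategies exploit the rigidity of the conformally invariant exponent $m=2N/(2N+k)$, which is precisely what forces $\mF_k^1[\rho^*]=\mF_k^2[\rho^*]=0$ simultaneously on the HLS family and pins the stationary mass to a single critical value of $\|\rho^*\|_m$.
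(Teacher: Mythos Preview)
The statement you are attempting to prove is explicitly labeled a \emph{conjecture} in the paper; the authors offer no analytic argument, only numerical evidence (the simulation of Figure~\ref{fig:conj_chi=035_k=-05_m=133_run6}, where a discretised initial datum at $\rho^*:=\rho_{HLS,\lambda^*,c^*}$ disperses). There is therefore no paper proof to compare against, and your proposal should be read as a sketch of how one might attack the open problem.

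Your first argument is explicitly conditional on Conjecture~\ref{conj1}, as you acknowledge, so it does not close the gap. Your second, energetic argument is closer to a self-contained proof of a weak form of instability, but it contains a slip: you assert $\mF_k[\rho^*]=0$, whereas in fact $\mF_k[\rho^*]=\mF_k^1[\rho^*]>0$. The criticality condition $\chi C_{HLS}\|\rho^*\|_m^{2-m}=1$ kills the first variation of $\mF_k^1$ but not its value; a direct substitution into \eqref{eq:F1} gives $\mF_k^1[\rho^*]=\tfrac{N+k}{N(-k)}\|\rho^*\|_m^m>0$ (and $\mF_k^2[\rho^*]=0$ since $\rho^*$ is an HLS optimiser). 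This error does not damage your second-derivative computation along dilations, which is correct and shows $\rho^*$ is a strict local maximum of $\mF_k$ restricted to that curve. The Lyapunov conclusion---that the orbit issued from $\rho_\mu$ cannot converge back to $\rho^*$ in any topology making $\mF_k$ lower semicontinuous---still holds, since the flow decreases $\mF_k$. What you obtain is \emph{non-asymptotic-stability}; the stronger Lyapunov instability (arbitrarily small perturbations eventually leave a \emph{fixed} neighbourhood) is not delivered by the energy argument alone, and is precisely what the comparison-principle or virial input you mention in your final paragraph would supply. In short: neither the paper nor your proposal proves the conjecture, but your energetic observation is a genuine partial result the paper does not record.
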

\begin{figure}[h!]
\centering
\subfloat[]{\includegraphics[width=.45\textwidth]{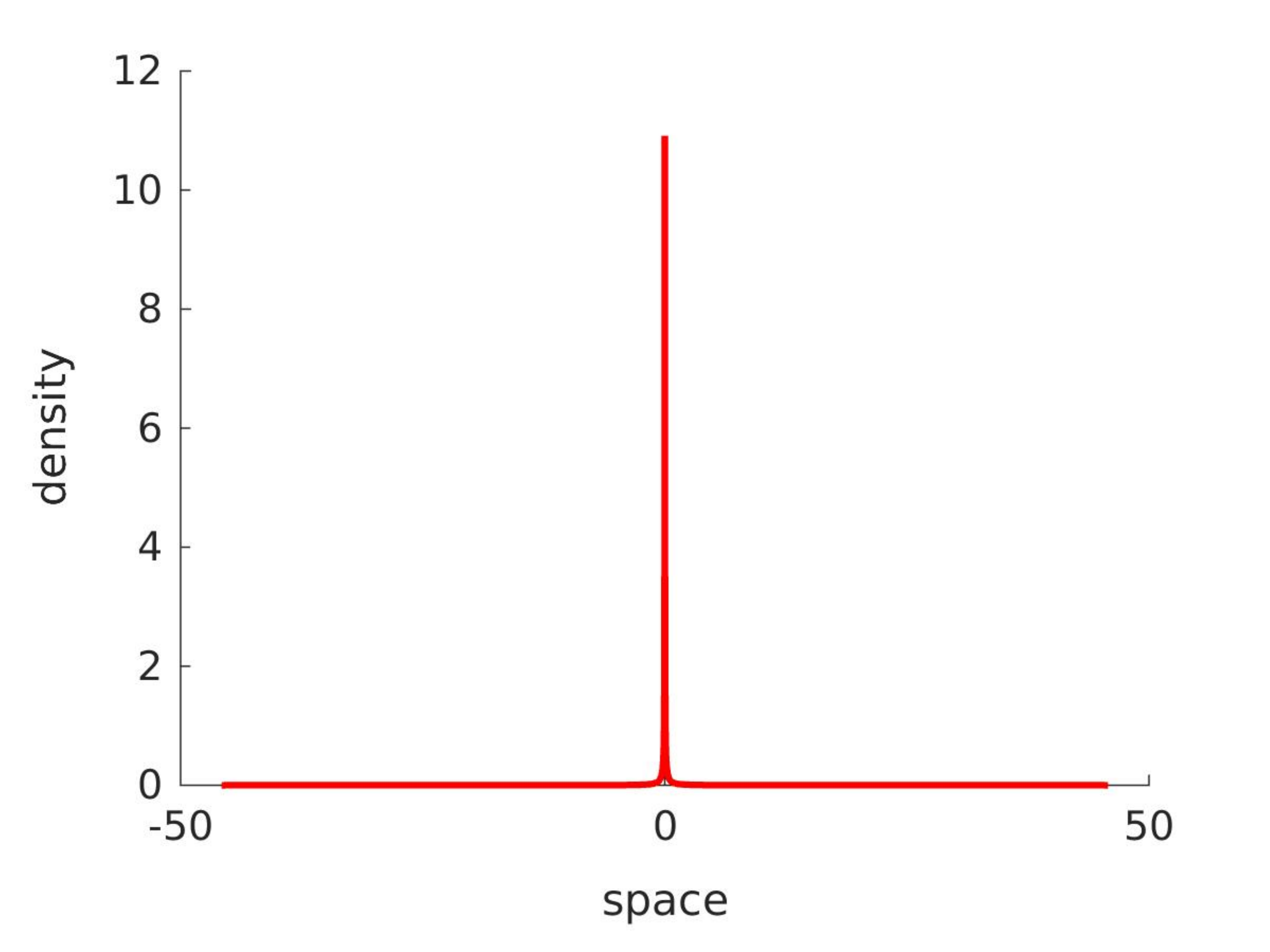}}%
\subfloat[]{\includegraphics[width=.45\textwidth]{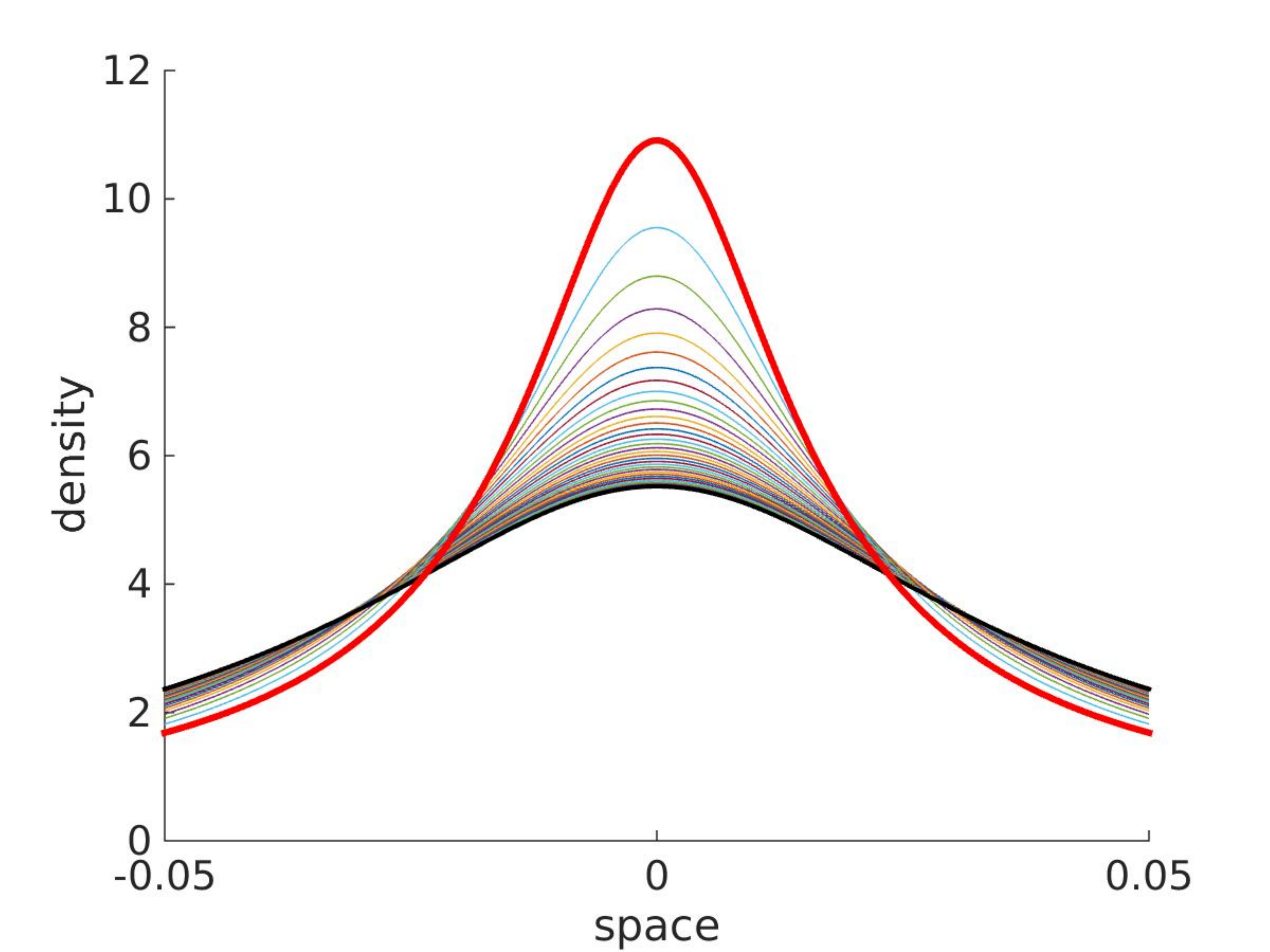}}%
\vspace{-0.4 cm}
\subfloat[]{\includegraphics[width=.45\textwidth]{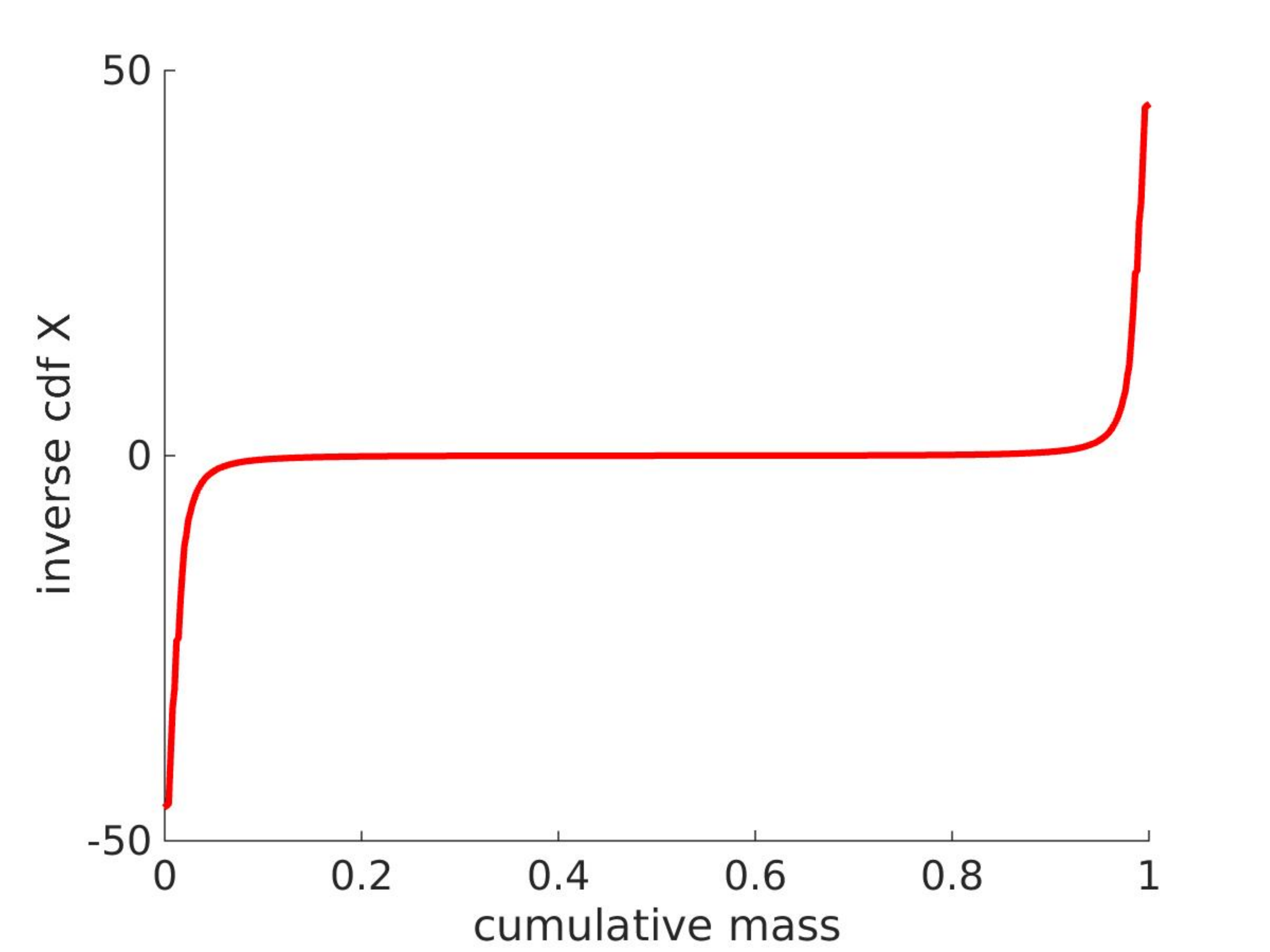}}%
\subfloat[]{\includegraphics[width=.45\textwidth]{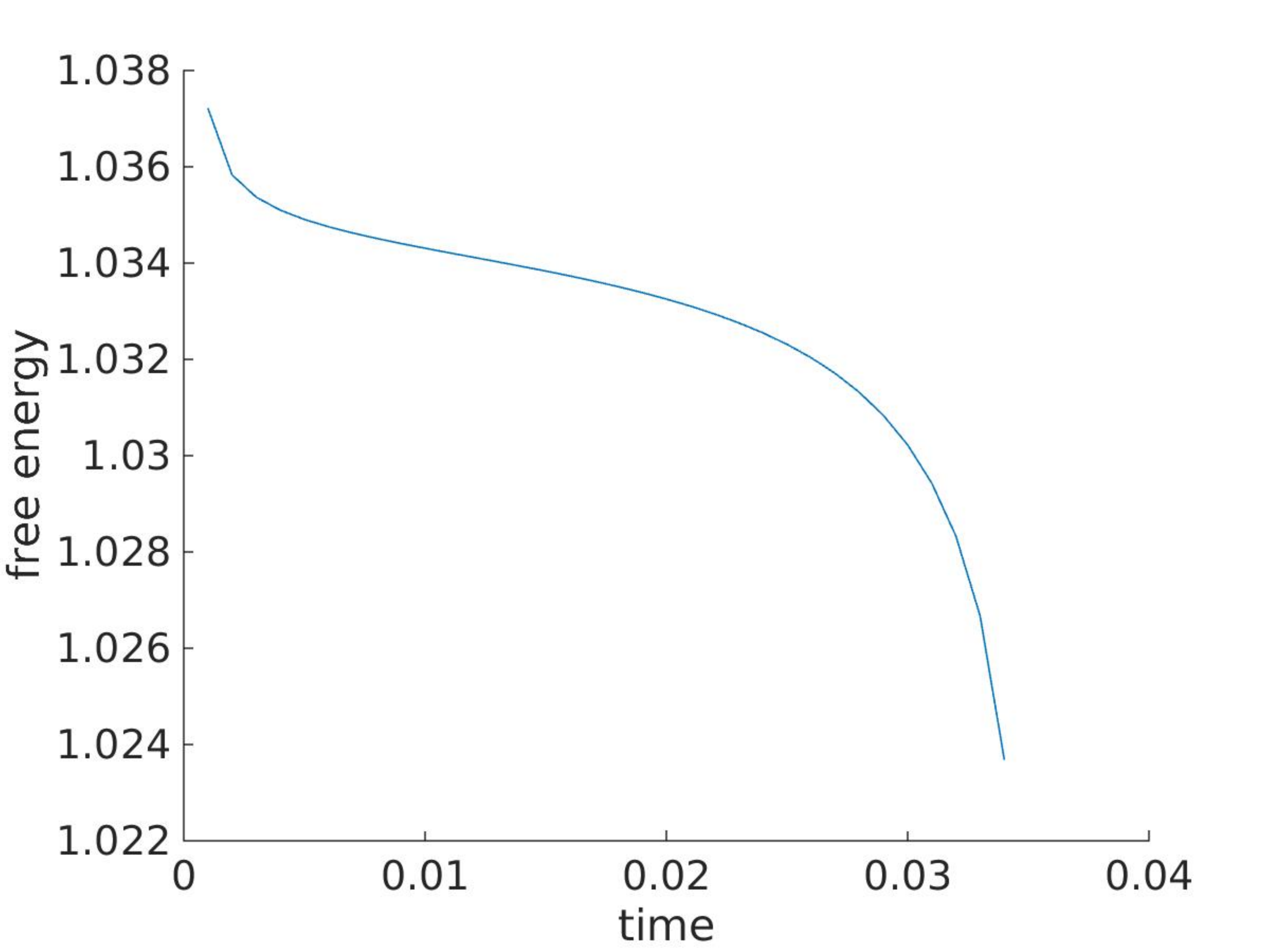}}%
\vspace{-0.3 cm}
\caption{Attraction-dominated regime: $\chi=0.35$, $k=-1/2$, $m=4/3$, $r=0$ with initial data $\rho(t=0,x)=\rho_{HLS,\lambda_0,c_0}(x)>\rho_{HLS,\lambda_0,c^*}(x)$ for all $x \in \RR$ with $c_0=1.1c^*$.\\
(a) Solution density from initial condition (black) to the profile at the last time step (red), 
(b) zoom of Figure (a),
(c) inverse cumulative distribution function from initial condition (black) to the profile at the last time step (red),
(d) free energy.}
\label{fig:conj_chi=035_k=-05_m=133_run8}
\end{figure}

\begin{figure}[h!]
\centering
\subfloat[]{\includegraphics[width=.45\textwidth]{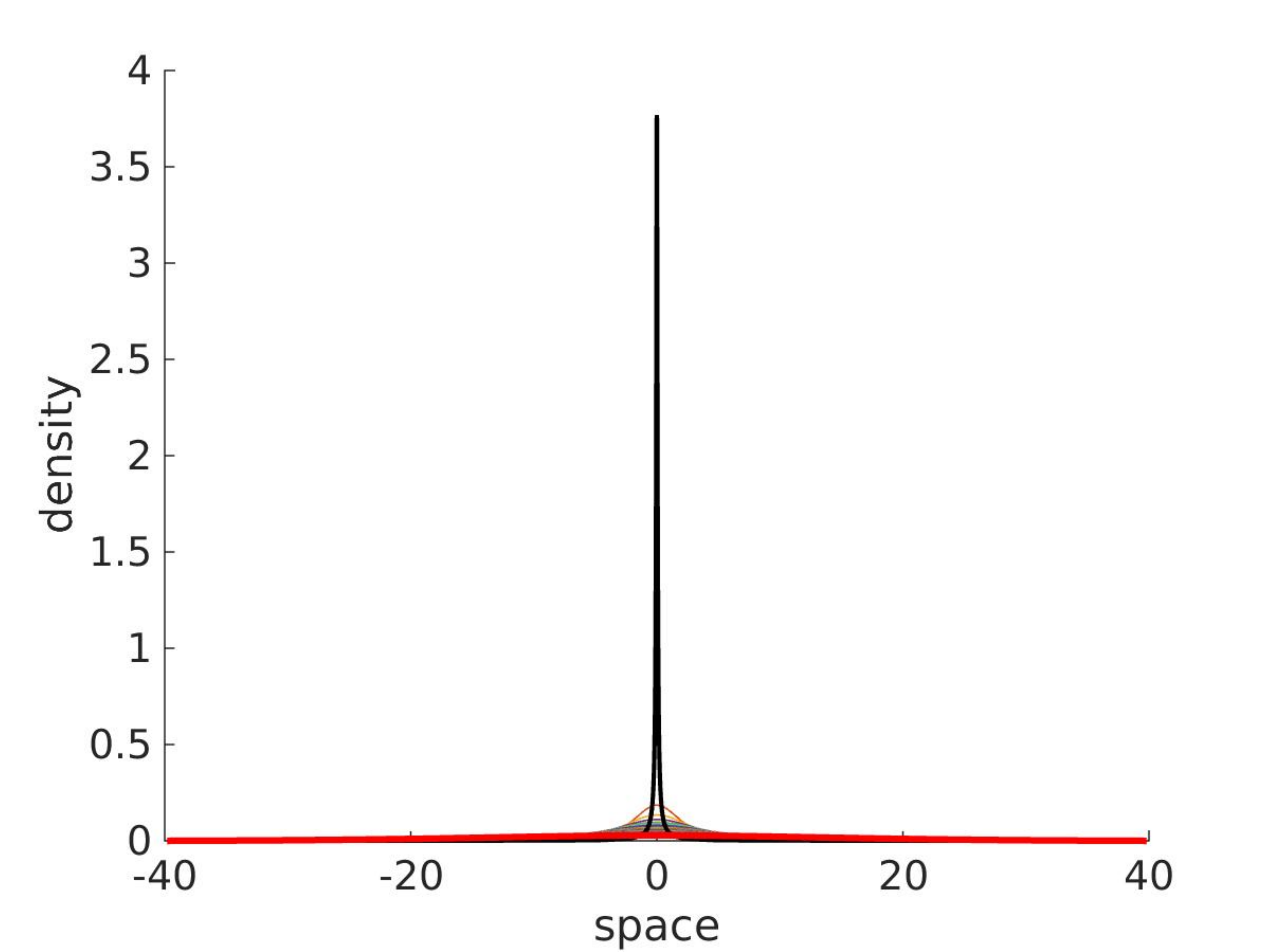}}%
\subfloat[]{\includegraphics[width=.45\textwidth]{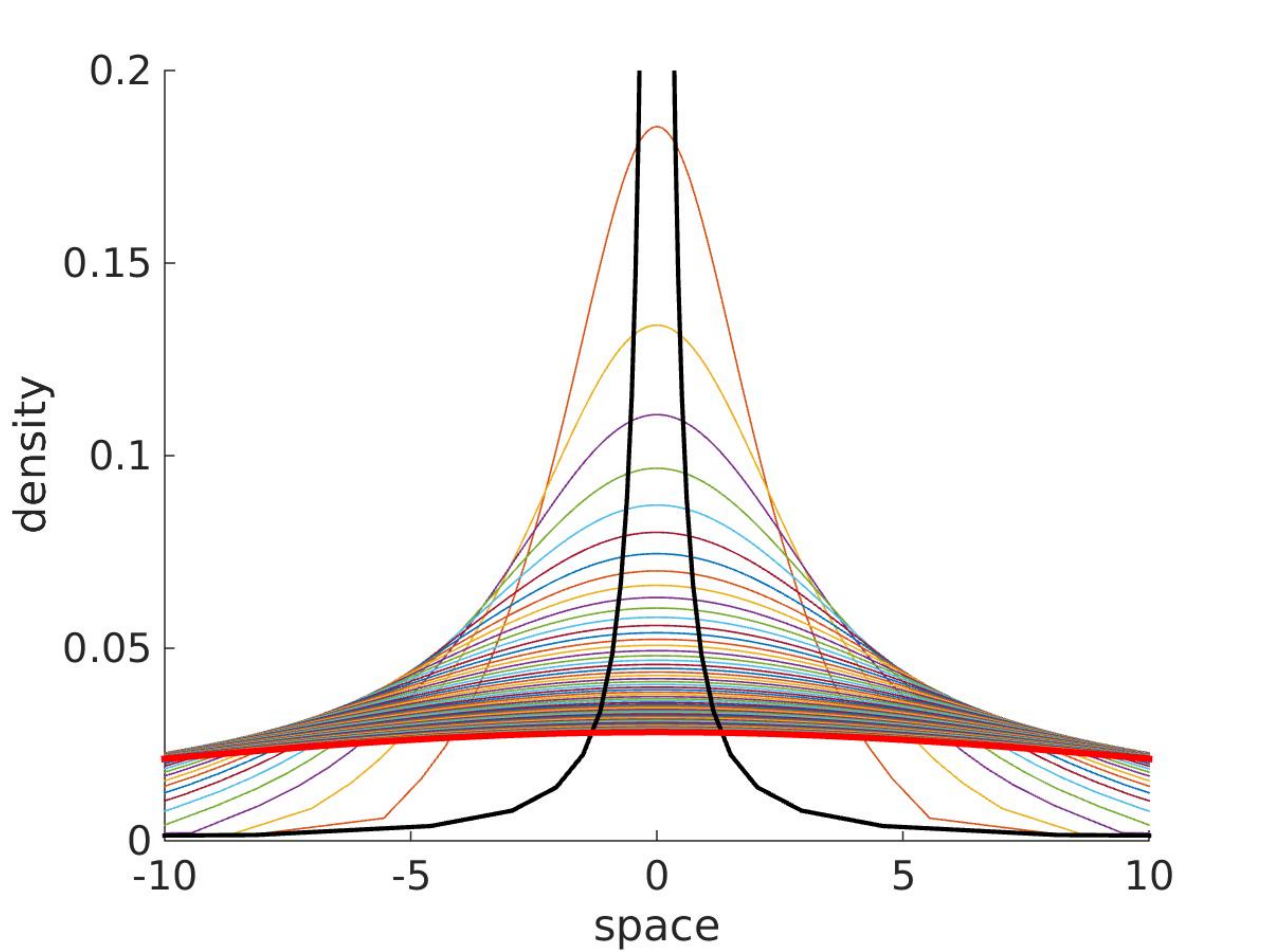}}%
\vspace{-0.4cm}
\subfloat[]{\includegraphics[width=.45\textwidth]{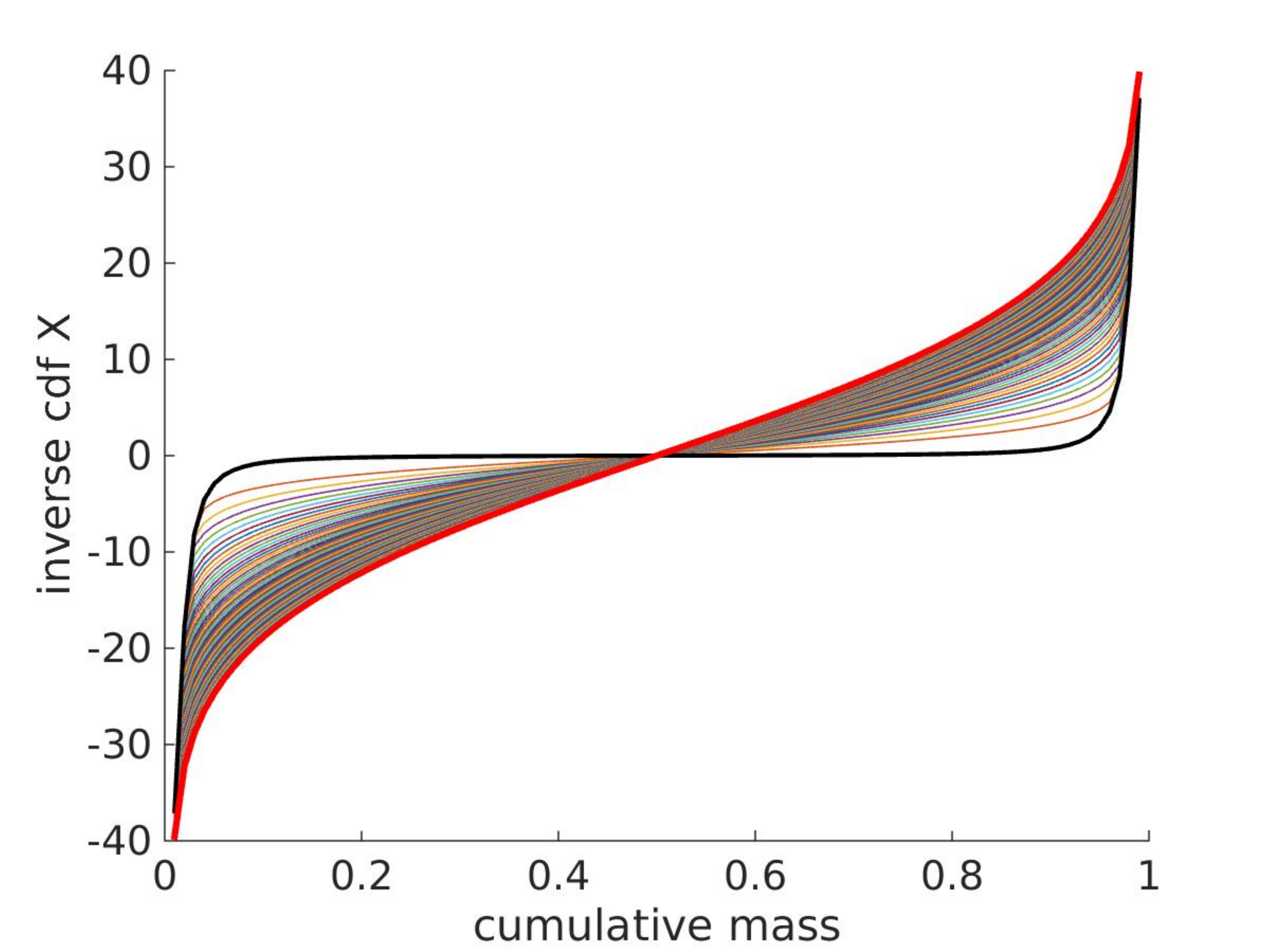}}%
\subfloat[]{\includegraphics[width=.45\textwidth]{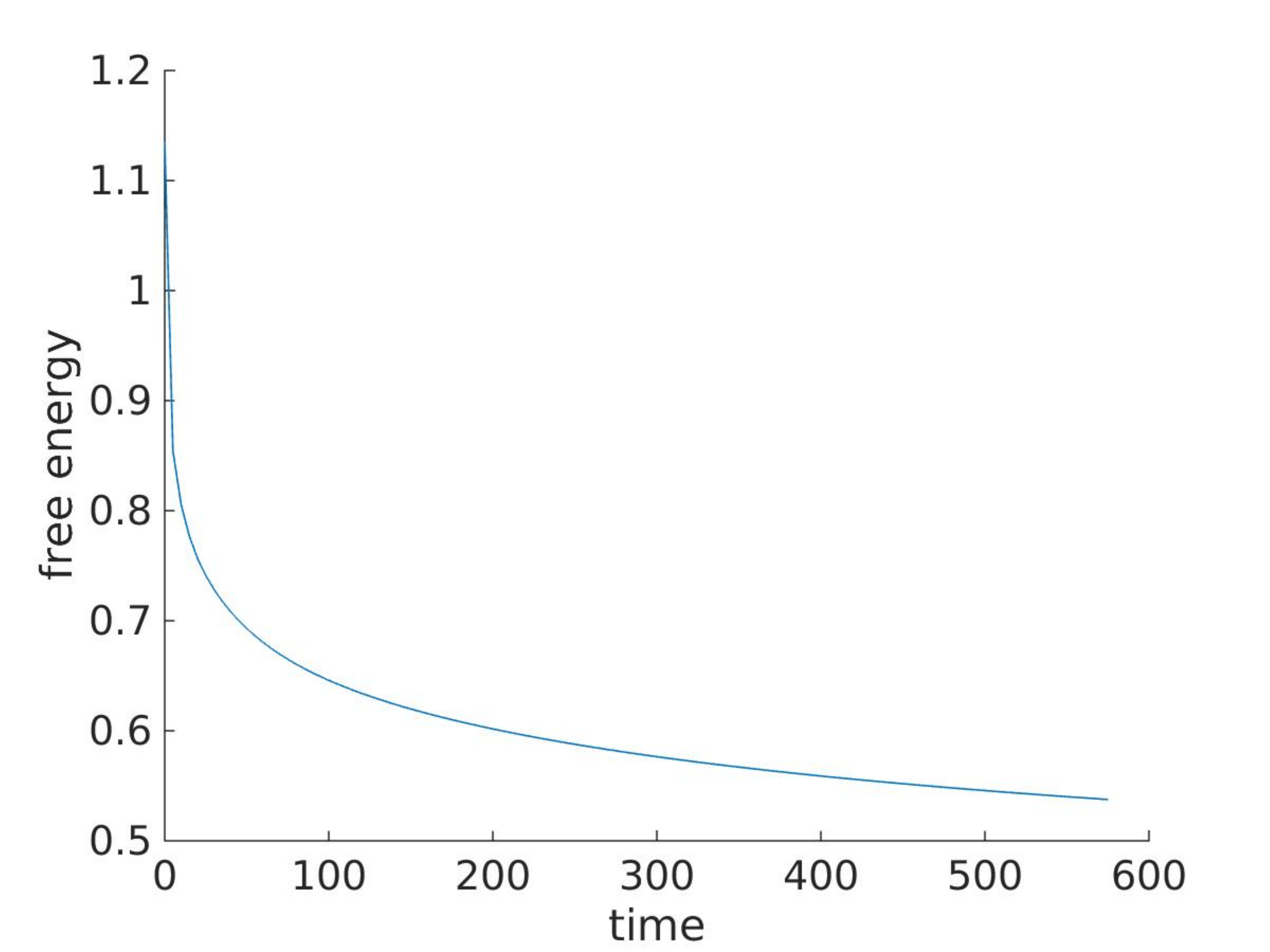}}%
\vspace{-0.4cm}
\subfloat[]{\includegraphics[width=.45\textwidth]{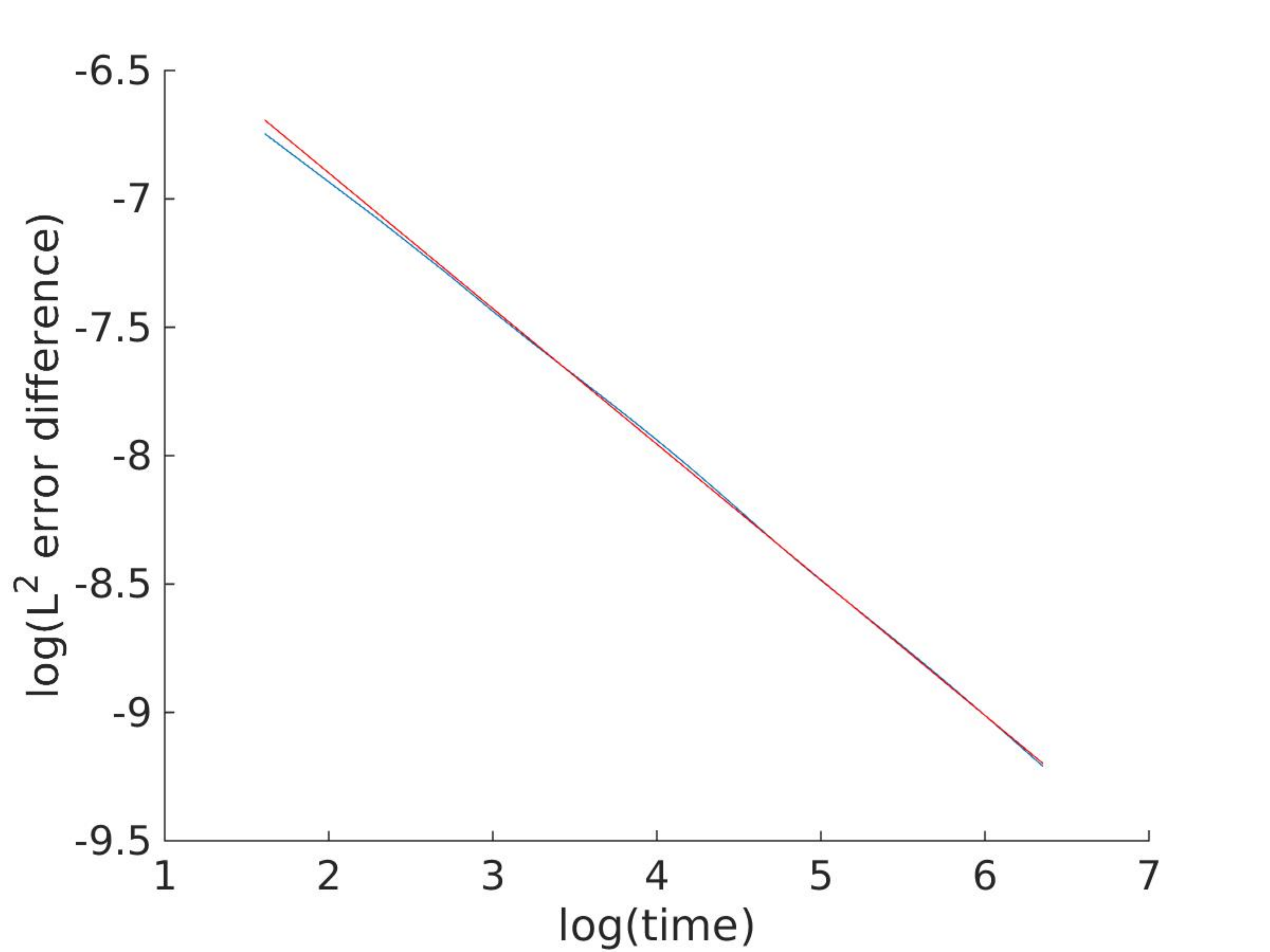}}%
\subfloat[]{\includegraphics[width=.45\textwidth]{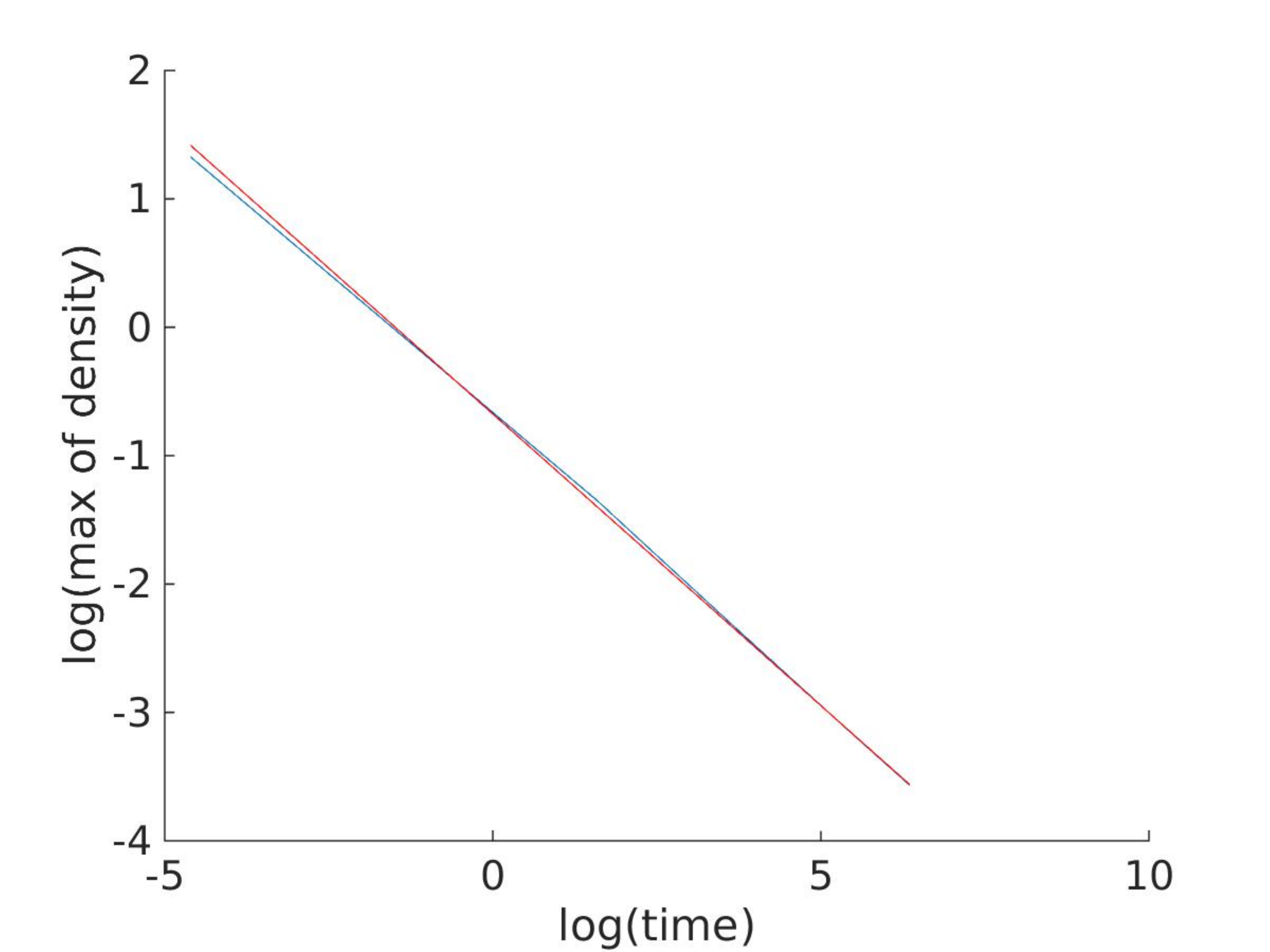}}%
\vspace{-0.2cm}
\caption{Attraction-dominated regime: $\chi=0.35$, $k=-1/2$, $m=4/3$, $r=0$ with initial data $\rho(t=0,x)=\rho_{HLS,\lambda^*,c^*}(x)$ given in \eqref{eq:rhoHLS}.\\
(a) Solution density from initial condition (black) to the profile at the last time step (red), 
(b) zoom of Figure (a),
(c) inverse cumulative distribution function from initial condition (black) to the profile at the last time step (red),
(d) free energy,
(e) log-log plot of the $L^2$-error difference between two consecutive solutions and fitted line with slope $-0.52817$,
(f) log-log plot of $\max_{x}\rho(t,x)$ and fitted line with slope $-0.45431$
.}
\label{fig:conj_chi=035_k=-05_m=133_run6}
\end{figure}
Numerically, we indeed observe the behaviour predicted in Conjecture \ref{conj1} for $N=1$. Using the scheme introduced in Section \ref{sec:numerics}, we choose as initial data the density $\rho_{HLS,\lambda_0,c_0}$ given by the optimisers of the HLS inequality \eqref{eq:rhoHLS}. For any choice of $c_0>0$, we fix $\lambda_0>0$ such that $\rho_{HLS,\lambda_0,c_0}$ has unit mass and is therefore in $\mY$. Note that $\rho_{HLS,\lambda_0,c_0}$ is not a critical point of $\mF_k$ unless $c_0=c^*$. Comparing with the stationary state $\rho_{HLS,\lambda_0,c^*}$, we have 
\begin{equation*}
 \sign\left(c^*-c_0\right)
 = \sign\left(\rho_{HLS,\lambda_0,c^*}(x)-\rho_{HLS,\lambda_0,c_0}(x)\right)\, ,
 \qquad \forall x \in \RR\, .
\end{equation*}
Note that the mass of the stationary state $\rho_{HLS,\lambda_0,c^*}$ is given by
$$
\int_{\RR^N} \rho_{HLS,\lambda_0,c^*(\chi)}(x)\,dx
=
\lambda_0^{-k/2}\int_{\RR^N} \rho_{HLS,1,c^*(\chi)}(x)\,dx\, ,
$$
which is equal to one if and only if $\lambda_0=\lambda^*$, that is $c_0=c^*$. If we choose $c_0<c^*$, then $\rho_0:=\rho_{HLS,\lambda_0,c_0}<\rho_{HLS,\lambda_0,c^*}$ and according to Conjecture \ref{conj1}(i), we would expect the solution $\rho(t,r)$ to vanish in $L_{loc}^1\left(\RR\right)$. This is exactly what can be observed in Figure \ref{fig:conj_chi=035_k=-05_m=133_run9b} for the choice of parameters $\chi=0.35$, $k=-1/2$, $m=4/3$ in original variables ($r=0$) and with $c_0=0.4\,c^*$. 
Here, we chose time steps of size $\Delta t=10^{-2}$ and particles spaced at $\Delta \eta=10^{-2}$. We let the density solution evolve until the $L^2$-error between two consecutive solutions is less than $10^{-4}$ (plotting every $1000$ iterations).\\
For the same choice of initial data, but with $c_0=1.1 \,c^*>c^*$ we observe numerically that the solution density concentrates at $x=0$ as predicted by Conjecture \ref{conj1}(ii), see Figure \ref{fig:conj_chi=035_k=-05_m=133_run8}. The Newton-Raphson procedure stops converging once the mass it too concentrated. Here, we chose time steps of size $\Delta t=10^{-3}$ and particles spaced at $\Delta \eta=2*10^{-3}$.\\

One may also take as initial condition exactly the steady state $\rho_0=\rho_{HLS,\lambda^*,c^*}$, see Figure \ref{fig:conj_chi=035_k=-05_m=133_run6}. However, the numerical approximation of the initial data is only accurate up to $\Delta \eta=10^{-2}$ and we observe indeed pointwise convergence to zero, in accordance with the statement of Conjecture \ref{conj2} that the stationary state $\rho_{HLS,\lambda^*,c^*}$ is unstable. Again, we let the Newton-Raphson procedure evolve with time steps of size $\Delta t=10^{-2}$ until the $L^2$-error between two consecutive solutions is less than the tolerance $10^{-4}$. 
 
\vspace{-0.4cm}

%
%
%
%

\section*{Acknowledgements}
\small{VC received funding for this project from the European Research Council (ERC) under the European Union's Horizon 2020 research and innovation programme (grant agreement No 639638). JAC was partially supported by the Royal Society via a Wolfson Research Merit Award. FH acknowledges support from the EPSRC grant number EP/H023348/1 for the Cambridge Centre for Analysis. The authors are very grateful to the Mittag-Leffler Institute for providing a fruitful working environment during the special semester \emph{Interactions between Partial Differential Equations \& Functional Inequalities}.}

\vspace{-0.2cm}

\bibliographystyle{abbrv}
\bibliography{biblio}
\end{document}